\numberwithin{equation}{section}				
\newcommand{\cadlag}{càdlàg}
\newcommand{\eg}{e.\,g.\relax}
\newcommand{\ie}{i.\,e.\relax}
\newcommand{\wrt}{w.\,r.\,t.\relax}
\newcommand{\Iota}{\mathrm{I}}
\newcommand{\dr}{\mathrm{d}r}
\newcommand{\dx}{\mathrm{d}x}
\newcommand{\dt}{\mathrm{d}t}
\newcommand{\ds}{\mathrm{d}s}
\newcommand{\dy}{\mathrm{d}y}
\newcommand{\dz}{\mathrm{d}z}
\newcommand{\du}{\mathrm{d}u}
\newcommand{\dw}{\mathrm{d}w}
\newcommand{\transpose}{{}^{^{\intercal}}}
\renewcommand{\complement}{\mathrm{c}}
\newcommand{\mAnd}{\quad\text{and}\quad}
\newcommand{\cpart}{\complement}
\newcommand{\ball}[2]{B_{#2}(#1)}
\newcommand{\e}{\mathrm{e}}
\newcommand{\loc}{\mathrm{loc}}
\newcommand{\bbn}{\mathbbm{N}}
\newcommand{\bbr}{\mathbbm{R}}
\newcommand{\bbrp}{\mathbbm{R}_+}
\DeclareMathOperator*{\tr}{tr}
\DeclareMathOperator{\E}{\mathbbm{E}}
\DeclareMathOperator{\p}{\mathbbm{P}}
\DeclareMathOperator*{\supp}{supp}
\newcommand{\nto}{{n\to\infty}}
\newcommand{\tto}{{t\to\infty}}
\newcommand{\limN}{\xrightarrow[\nto]{}}
\newcommand{\limT}{\xrightarrow[\tto]{}}
	\newcommand{\limpPiN}{\xrightarrow[\nto]{\p^\pi}}
	\newcommand{\limpPiT}{\xrightarrow[\tto]{\p^\pi}}
	\newcommand{\limdN}{\xrightarrow[\nto]{\mathscr{L}}}
	\newcommand{\limdK}{\xrightarrow[k\to\infty]{\mathscr{L}}}
	\newcommand{\limdstT}{\xrightarrow[\tto]{\mathscr{L}\mathrm{-st}}}
	\newcommand{\limdstN}{\xrightarrow[\nto]{\mathscr{L}\mathrm{-st}}}
	\newcommand{\limwT}{\xrightarrow[\tto]{\rm w}}
	\newcommand{\limwN}{\xrightarrow[\nto]{\rm w}}
	\newcommand{\limasT}{\xrightarrow[\tto]{\rm a.s.}}
	\newcommand{\LimdT}{\underset{\tto}{\stackrel{\mathscr{L}}{\Longrightarrow}}}
	\newcommand{\LimdN}{\underset{\nto}{\stackrel{\mathscr{L}}{\Longrightarrow}}}
	\newcommand{\LimdstT}{\underset{\tto}{\stackrel{\mathscr{L}\mathrm{-st}}{\Longrightarrow}}}
	\newcommand{\LimdstN}{\underset{\nto}{\stackrel{\mathscr{L}\mathrm{-st}}{\Longrightarrow}}}
	\newcommand{\LimucpN}{\underset{\nto}{\stackrel{\rm \!ucp}{\Longrightarrow}}}
\newcommand{\coloneqq}{\mathrel{\mathop:}=}
\newcommand{\eqqcolon}{=\mathrel{\mathop:}}
\newcommand{\Halmos}{\quad\hfill\ensuremath{_\diamond}\smallskip}
\theoremstyle{change}
\newtheorem{Theorem}{Theorem}[section]
\crefname{Theorem}{Theorem}{Theorems}
\Crefname{Theorem}{Theorem}{Theorems}
\newtheorem{Proposition}[Theorem]{Proposition}
\crefname{Proposition}{Proposition}{Propositions}
\Crefname{Proposition}{Proposition}{Propositions}
\newtheorem{Lemma}[Theorem]{Lemma}
\crefname{Lemma}{Lemma}{Lemmata}
\Crefname{Lemma}{Lemma}{Lemmata}
\newtheorem{Corollary}[Theorem]{Corollary}
\crefname{Corollary}{Corollary}{Corollaries}
\Crefname{Corollary}{Corollary}{Corollaries}
\newtheorem{CorollaryOP}[Theorem]{Corollary}
\crefname{CorollaryOP}{Corollary}{Corollaries}
\Crefname{CorollaryOP}{Corollary}{Corollaries}
\newtheorem{Definition}[Theorem]{Definition}
\crefname{Definition}{Definition}{Definitions}
\Crefname{Definition}{Definition}{Definitions}
\newtheorem{Assumption}[Theorem]{Assumption}
\crefname{Assumption}{Assumption}{Assumptions}
\Crefname{Assumption}{Assumption}{Assumptions}
\theoremstyle{nonumberplain}
\newtheorem{Remark}{Remark}
\crefname{Remark}{Remark}{Remarks}
\Crefname{Remark}{Remark}{Remarks}
\newtheorem{Example}[Theorem]{Example}
\crefname{Example}{Example}{Examples}
\Crefname{Example}{Example}{Examples}
\crefname{Counterexample}{Counter example}{Counter examples}
\Crefname{Counterexample}{Counter example}{Counter examples}
\newtheorem{proof}{Proof}
\newcommand{\btheo}{\begin{Theorem}}
\newcommand{\etheo}{\end{Theorem}}
\newcommand{\bpro}{\begin{Proposition}}
\newcommand{\epro}{\end{Proposition}}
\newcommand{\bcor}{\begin{Corollary}}
\newcommand{\ecor}{\end{Corollary}}
\newcommand{\bproof}{\begin{proof}}
\newcommand{\eproof}{\end{proof}}
\newcommand{\blemma}{\begin{Lemma}}
\newcommand{\elemma}{\end{Lemma}}
\newcommand{\beq}{\begin{equation}}
\newcommand{\eeq}{\end{equation}}
\def\input listofnotation.tex \clearpage{\input listofnotation.tex \clearpage}
\def\input listofabbr.tex \clearpage{\input listofabbr.tex \clearpage}
\def\addnotation #1: #2#3{#1 \hfill \parbox{5in}{#2 \dotfill  \pageref{#3}}\\}
\@citea\NAT@hyper@{\NAT@nmfmt{\NAT@nm}\NAT@date}}
\@citea\NAT@nmfmt{\NAT@nm}\NAT@hyper@{\NAT@date}}
\@citea\NAT@hyper@{%
     \NAT@nmfmt{\NAT@nm}%
     \hyper@natlinkbreak{\NAT@aysep\NAT@spacechar}{\@citeb\@extra@b@citeb}%
     \NAT@date}}
\@citea\NAT@nmfmt{\NAT@nm}%
\@citea\NAT@hyper@{%
     \NAT@nmfmt{\NAT@nm}%
     \hyper@natlinkbreak{\NAT@spacechar\NAT@@open\if*#1*\else#1\NAT@spacechar\fi}%
       {\@citeb\@extra@b@citeb}%
     \NAT@date}}
\@citea\NAT@nmfmt{\NAT@nm}%
\title{On Non-parametric Estimation of the~L\'evy~Kernel of Markov processes}
\author{Florian A. J. Ueltzhöfer\thanks{With the support of the Technische Universit\"at M\"unchen---Institute for Advanced Study, funded by the German Excellence Initiative; and support provided by the TUM International Graduate School of Science and Engineering (IGSSE).}~\thanks{\href{http://www-m4.ma.tum.de/personen/doktoranden/florian-ueltzhoefer/}{Lehrstuhl f\"ur mathematische Statistik} and \href{http://www.tum-ias.de/current-focus-groups/risk-analysis/risk-analysis-and-stochastic-modeling.html}{TUM Institute for Advanced Study}, Technische Universit\"at M\"unchen, Boltzmannstra{\ss}e 3, D--85\,748 Garching b.\,M.; \href{mailto:ueltzhoefer@ma.tum.de}{ueltzhoefer@ma.tum.de}}}
\date{}
\begin{document}
\maketitle
\vspace{-2\baselineskip}
\begin{abstract}\noindent
We consider a recurrent Markov process which is an Itô semi-martingale. The Lévy kernel describes the law of its jumps. Based on observations $X_0,X_\Delta,\dotsc,X_{n\Delta}$, we construct an estimator for the Lévy kernel's density. We prove its consistency (as $n\Delta\to\infty$ and $\Delta\to0$) and a central limit theorem. In the positive recurrent case, our estimator is asymptotically normal; in the null recurrent case, it is asymptotically mixed normal. 
Our estimator's rate of convergence equals the non-pa\-ra\-me\-tric minimax rate of smooth density estimation. The asymptotic bias and variance are analogous to those of the classical Nadaraya--Watson estimator for conditional densities. Asymptotic confidence intervals are provided.
\end{abstract}
{\small
\emph{AMS Subject Classification 2010}: Primary 62M05; secondary 62G07, 60F05, 60J25\\
\emph{Keywords}: Markov process, {Itô semi-martingale,} L\'evy system, {L\'evy kernel}, {null recurrence}, density estimation, central limit theorem

\smallskip
\footnotesize This is a preprint of a paper which has been accepted for publication in the journal \emph{Stochastic Processes and their Applications} on April 30, 2013.}
\allowdisplaybreaks

\section{Introduction}\label{C Intro}
Statistical inference for jumps in continuous-time models has received significant attention in recent years. Due to their well-known tractability properties, a vast amount of literature has been devoted to the class of processes with stationary and independent increments, called \emph{L\'evy processes}. The law of their jumps {is} characterised by their L\'evy measure. Parametric inference for L\'evy measures has a long history. For recent developments in non-pa\-ra\-me\-tric settings, we refer, for instance, to \citet{Comte2011}; to \citet{fig09}; to the special issue \citet{eurandomProc}, which contains a collection of interesting papers; to \citet{Neumann2009}; and to \citet{Ueltzhoefer2011}. Ample references to previous literature can be found within the aforementioned.

In this paper, we consider a Harris recurrent Markov process $X$ {which is an Itô semi-martingale. Such a process is} a solution of some stochastic differential equation 
\begin{gather}\label{J eq: MarkovSDE}
	\begin{aligned}
		\mathrm{d}X_t  = b(X_t)\dt + \sigma(X_t)\mathrm{d}W_t   & + \int \delta(X_{t-},y) \mathbbm{1}_{\{\lVert \delta(X_{t-},y)\rVert>1\}} \mathfrak{p}(\dt,\dy) \\ 
			& + \int \delta(X_{t-},y)\mathbbm{1}_{\{\lVert \delta(X_{t-},y)\rVert\le1\}} (\mathfrak{p}-\mathfrak{q})(\dt,\dy),
	\end{aligned}
\end{gather}
with coefficients $b$, $\sigma$ and $\delta$; the SDE is driven by some Wiener process $W$ and some Poisson random measure $\mathfrak{p}$ (with intensity measure $\mathfrak{q}(\dt,\dy) = \dt \otimes \lambda(\dy)$); $X_{t-}$ denotes the left-limit. {The law of its jumps is more or less described by the kernel $F$ where,} for each $x$, the measure $F(x,\cdot)$ coincides with the image of the measure $\lambda$ under the map $y \mapsto \delta(x,y)$ restricted to the set $\{y: \delta(x,y) \neq 0 \}$. We call $F$ the \emph{(canonical) L\'evy kernel} of $X$. We assume that the measures $F(x,\dy)$ admit a density $y\mapsto f(x,y)$, and we aim for non-parametric estimation of the function $(x,y)\mapsto f(x,y)$.

On an equidistant time grid, we observe a sample $X_0(\omega), X_\Delta(\omega), \dotsc, X_{n\Delta}(\omega)$ of the process; the jumps are latent. We study a kernel density estimator for $f(x,y)$. We show its consistency as $n\Delta\to\infty$ and $\Delta\to0$ under a smoothness hypothesis on the estimated density. In the ergodic case, we obtain asymptotic normality. In the null recurrent case, we impose a condition on the resolvent of the process which goes back to \citet{Darling1957}. Thereunder, we prove asymptotic mixed normality. We also provide a standardised version of our central limit theorem for the construction of asymptotic confidence intervals. 

Our results are comparable to those in classical non-pa\-ra\-me\-tric density estimation. In particular: Our estimator's asymptotic bias and variance resemble those of the Nadaraya--Watson estimator in classical conditional density estimation. Just as in the classical context, moreover, the bandwidth choice is crucial for our estimator's rate of convergence. 
{
We conjecture that, for instance, a cross-validation method applies here analogously; see \citet{Fan2004} and \citet{Hall2004}.} By an optimal choice, if $\Delta\to0$ fast enough, the rate is $v({n\Delta})^{\alpha_1\alpha_2/[d(\alpha_1+\alpha_2) + 2\alpha_1\alpha_2]}$, where $\alpha_1>0$ (resp., $\alpha_2>0$) stands for the smoothness of $f$ as a function in $x$ (resp., in $y$), and the function $v$ plays the role of an information rate. In the ergodic case, $v(t)=t$; in the null recurrent case with Darling--Kac's condition imposed, $v(t) = t^\delta\ell(t)$ for some $0<\delta\le1$ and some slowly varying function $\ell$. We remark that, in the case $\alpha_1=\alpha_2$, our achieved rate $v({n\Delta})^{\alpha_1/(2\alpha_1+2d)}$ equals the non-pa\-ra\-me\-tric minimax rate of smooth density estimation, related to the smoothness of $f$ as a $2d$-dimensional function and with respect to $v({n\Delta})$. 

{At the core of our statistical problem, we essentially have to study the case first, where the process is observed continuously in time and, in particular, all jumps are discerned. In this case, we can consider a more general class of quasi-left-continuous, strong Markov processes with \cadlag\ sample paths than just Itô semi-martingales. For these, the law of their jumps is again described by their L\'evy kernel. We present a version of our estimator which utilises that the sojourn time of certain sets and the jumps are observed. Under slightly weaker assumptions, we prove the estimator's consistency and asymptotic (mixed) normality. As these results are valid for a quite general class of processes, we believe that they are of independent interest, not only as a benchmark for all possible estimators which are based on some discrete observation scheme. }

{For discrete-time Markov chains, a related result is presented in \citet{Karlsen2001}. We are aware that our final steps of proof appear to be similar. We emphasise that the main difficulties in our context, however, come in two respects: on the one hand, from establishing an appropriate auxiliary framework where related methods apply; on the other hand, from the discrete observation scheme where our primary objects of interest --~the jumps~-- are latent.}

{For continuous-time Markov processes,} apart from the L\'evy process case {and} as far as known to us, estimation of {their} L\'evy kernel has been confined to the special case of Markov step processes. For these, there is a one-to-one correspondence between the L\'evy kernel and the infinitesimal generator. Efficient non-pa\-ra\-me\-tric estimation of Markov step process models has been studied by \citet{Greenwood1994}. They assume the mean holding times to be bounded, and the transition kernel to be uniformly ergodic. This excludes the null recurrent case. The work on parametric estimation of Markov step processes is more exhaustive. The null recurrent case has been studied, for instance, by \citet{Hoepfner1993}. There, the process is observed up to a random stopping time such that a deterministic amount of information (or more) has been discerned. Local asymptotic normality is shown in various situations. With a slightly different aim, in contrast, \citet{Hoepfner1990} considers Markov step processes observed up to a deterministic time. Accordingly, the observed amount of information is random. Local asymptotic mixed normality (of statistical experiments) is shown under Darling--Kac's condition. Here, we utilise some of their results and methods. 
We improve upon the restrictions within the aforementioned literature: First and foremost, we do not restrict ourselves to Markov step processes. Second, we consider processes, null recurrent in the sense of Harris, in a non-pa\-ra\-me\-tric setting. Third, we address the influence of observations on a discrete time grid.

We briefly outline {our} paper. {In \cref{D Estimation} we study the estimation of the L\'evy kernel based on discrete observations. Split into three subsections, we present the statistical problem with our standing assumptions; we give our estimator along with a bias correction; and state our main results --~the estimator's consistency and the central limit theorem. In \cref{C Estimation}, we study the case where continuous-time observations are available. This section is organised analogously to \cref{D Estimation}. The corresponding proofs are in \cref{C Proofs}. The proofs for our main results of \cref{D Estimation} are in \cref{D Proofs}. Each proofs section comes with its own short outline at its beginning.} Since we bring together potential theoretic aspects of Markov processes with functional and martingale limit theory, we put some of our technical considerations off to \cref{C OnZ}.
\section[Estimation from high-frequency observations]{Density estimation of the L\'evy kernel from high-frequency observations}\label{D Estimation}
\subsection{Preliminaries and assumptions}\label{D Setting}
On the filtered probability space(s) $(\Omega,\mathscr{F}, (\mathscr{F}_{t})_{t\ge0},(\p^{x})_{x\in E})$, let $X=(X_t)_{t\ge0}$ be a Markovian Itô semi-martingale with values in Euclidean space $E=(\bbr^d, \mathscr{B}^d)$, or a subset thereof, such that $\p^x(X_0=x)=1$ for all $x$. For $n\in\bbn$ and $\Delta>0$, we observe $X_0(\omega)$ and the increments
\begin{gather}\label{D eq: Observations}
	\Delta^n_kX(\omega) \coloneqq X_{k\Delta}(\omega) - X_{(k-1)\Delta}(\omega) \quad k=1,\dotsc,n.
\end{gather}
We emphasise that the jumps of the process are latent. 

Throughout this paper, we use the following notation: We abbreviate $E^\ast\coloneqq E\setminus\{0\}$. {We denote the Dirac measure at $x$ by $\epsilon_x$.} For $\pi$ an {(initial) probability} on $E$, we denote the expectation \wrt\ the law $\p^\pi\coloneqq \int \pi(\dx)\p^x$ by $\E^\pi$. For $\alpha\ge0$ and $A\subseteq E$, in addition, $\mathcal{C}^\alpha_\loc(A)$ denotes the class of all continuous functions on $A$ which are $\lfloor \alpha\rfloor$-times continuously differentiable such that every $x\in A$ has a neighbourhood on which the function's (partial) $\lfloor \alpha\rfloor$-de\-riv\-a\-tives are uniformly Hölder of order $\alpha-\lfloor\alpha\rfloor$. 

The characteristics $(B,C,\mathfrak{n})$ of $X$ are absolutely continuous with respect to Lebesgue measure; there are mappings $b: E \to E$ and $c: E \to E\otimes E$ {(with $c=\sigma\sigma\transpose$ in view of  \cref{J eq: MarkovSDE})}, and a kernel $F$ on $E$ with $F(x,\{0\})=0$ such that
\begin{gather}\label{D eq: Ito semimartingale}
	B_t = \int_0^t b(X_s)\ds, \quad C_t = \int_0^t c(X_s)\ds, \quad \text{and}\quad \mathfrak{n}(\dt,\dy) = \dt\otimes F(X_t,\dy).
\end{gather}
The integer-valued random measure $\sum_{\{s:\Delta X_s\neq0\}} \epsilon_{(s,\Delta X_s)}(\dt,\dy)$ on $\bbrp\times\bbr^d$ is called the process's \emph{jump measure}. The random measure {$\mathfrak{n}$} is its predictable compensator: For every Borel function $g: E\times E \to \bbrp$, (inital) probability $\pi$, and $t>0$, we have
\begin{gather}\label{D eq: Levy system}
	\E^\pi \sum_{0<s\le t}g(X_{s-}, \Delta X_{s})\mathbbm{1}_{\{X_{s-}\neq X_{s}\}}  = \E^\pi\int_{0}^t \ds \int_{E}F(X_{s},\dy)g(X_{s},y).
\end{gather}
We call $F$ the \emph{L\'evy kernel}. It is unique outside a set of potential zero. We assume it admits a density $(x,y) \mapsto f(x,y)$ which we want to estimate.

Throughout, we work under the following technical hypothesis on the characteristics: 
\begin{Assumption}~\label{D a: BCn}
	\begin{enumerate}
		\item The process $X$ satisfies the following (linear) growth condition: There exists a constant $\zeta<\infty$ and a L\'evy measure $\bar F$ on $E$ such that
		\begin{gather*}
			\lVert b(x) \rVert \le \zeta(1+\lVert x\rVert), \quad  \lVert c(x) \rVert  \le \zeta(1+\lVert x\rVert^2), \mAnd  F(x,A) \le (1+\lVert x\rVert)\bar F(A)
		\end{gather*}
		holds for all $x\in E$ and every Borel set $A\subseteq E$. We denote by $\beta\in[0,2]$ some constant such that $\int \bar F(\dw)(\lVert w\rVert^\beta \wedge 1)) < \infty$.
		\item The L\'evy measure $\bar F$ admits a density $\bar f$ which is continuous on $E^\ast$. 
		\item There exists a constant $\zeta<\infty$ such that $\sup_{\lVert z\rVert > 1} \lVert z\rVert\bar f(z) \le \zeta$.
	\end{enumerate}
\end{Assumption}
\begin{Remark}
	Apart from the growth condition, there is no assumption on $b$ and $c$. Whether $X$ is a weak or a strong solution of \cref{J eq: MarkovSDE} is irrelevant to us.
\end{Remark}

We impose assumptions on the recurrence of~$X$ and on the smoothness of $f$. To obtain consistency for our estimator below, we impose: 
\begin{Assumption}\label{D a: HR}\label{C a: HR}
	The process $X$ is Harris recurrent: {On $E$,} there exists a $\sigma$-finite, {invariant} measure $\mu$ {for $X$ such that}, for every Borel set $A\subseteq E$, we have
	\[
		\mu(A) > 0 \implies \p^x\left( \int\nolimits_{0}^\infty \mathbbm{1}_{A}(X_{s}) \ds = \infty \right) = 1\quad\forall x\in E.
	\]
\end{Assumption}
\begin{Assumption}\label{D a: C-weak}
	For some $\alpha>0$, the L\'evy kernel admits a density $f\in\mathcal{C}^\alpha_\loc(E\times E^\ast)$; and the invariant measure from \cref{D a: HR} admits a continuous density $\mu'$.
\end{Assumption}
\begin{Remark}
{Harris recurrence can be verified, for instance, by virtue of a Foster--Lyapunov type criteria \citep[see][]{Meyn1993a}. Moreover, for the existence of a smooth density $\mu'$ it is sufficient that the marginal distributions of $X_t$ admit a smooth density. We refer, for instance, to \citet{Picard1996} for criteria for the latter.}
\end{Remark}
To obtain a central limit theorem, we also impose:
\begin{Assumption}\label{D a: DK}\label{C a: DK}
	The process $X$ satisfies the following Darling--Kac condition: For some $0 < \delta \le 1$, there exists a function $v:\bbrp\to\bbrp$ -- at infinity, regularly varying of index $\delta$ -- such that, for every $\mu$-integrable $g$,
	\begin{gather}\label{D eq: Darling Kac}
		\frac{1}{v(1/\lambda)} \int_{0}^\infty \e^{-\lambda t} \E^x[g(X_t)] \dt \to \mu(g) \quad \mu\text{-a.\,e. as } \lambda\downarrow 0.
	\end{gather}
\end{Assumption}
\begin{Remark}
	In the positive recurrent case (that is, when $\mu$ is finite), \cref{D a: DK} indeed is satisfied for $\delta=1$ and with $v(t)=t/\mu(E)$. We refer the interested reader to \citet{Touati1987} and to \citet{Hoepfner2003}.
\end{Remark}
\begin{Assumption}\label{D a: C-strong}
	For some $\alpha_1,\alpha_2\ge2$, the L\'evy kernel admits a density $f$ which is twice continuously differentiable on $E\times E^\ast$ such that $x\mapsto f(x,y) \in \mathcal{C}^{\alpha_1}_\loc(E)$ for all $y\in E^\ast$, and $y\mapsto f(x,y) \in \mathcal{C}^{\alpha_2}_\loc(E^\ast)$ for all $x \in E$; and the invariant measure from \cref{D a: HR} admits a continuous density $\mu'$ which is $(\lceil\alpha_1\rceil-1)$-times continuously differentiable.
\end{Assumption}

\begin{Example}
	Suppose that $f$ is bounded and vanishes outside $\{\lVert x\rVert\le1,\lVert y\rVert\le1\}$; that is, there are neither jumps with left-limit outside the unit ball nor jumps of size bigger than one. Then our process's recurrence (or transience) is completely determined by drift and volatility. For instance:
\begin{enumerate}
	\item If the volatility $\sigma$ vanishes everywhere and the drift satisfies $b(x) = -x$, then $X$ is positive recurrent.
	\item If the drift $b$ vanishes everywhere, and the volatility satisfies $\sigma(x)=1$, then $X$ is not positive. In fact, $X$ has the recurrence (or transience) of Brownian motion: In the univariate case, $X$ is null recurrent and Darling--Kac's condition holds with $\delta=1/2$; in the bivariate case, $X$ is null recurrent and Darling--Kac's condition fails; and in all other multivariate cases, $X$ is transient.
\end{enumerate}
\end{Example}

\subsection{Kernel density estimator}\label{D Estimator}
In principle, we are free to choose our favourite estimation method, \eg, the method of sieves with projection estimators. Here, however, we introduce a kernel density estimator as it allows for a more comprehensible presentation of the proofs. Also, the method is well-understood in the context of classical (conditional) density estimation.

An outline: First, we choose smooth kernels $g_1$ and $g_2$ with support $B_1(0)$ (the unit ball centred at zero) which are, at least, of order $\alpha_1$ and $\alpha_2$, respectively; that is, for every multi-index $m=(m_1,\dotsc,m_d) \in\bbn^d\setminus\{0\}$ and each $i\in\{1,2\}$, we have
\begin{gather}\label{D eq:Kernel-g1}
	 |m| \coloneqq m_1+\dotsb+m_d < \alpha_i \implies \kappa_m(g_i)\coloneqq \int  x_1^{m_1} \cdot\dotsb\cdot  x_d^{m_d} g_i(x)\dx=0.
\end{gather}
Second, we choose a bandwidth vector $\eta = (\eta_1,\eta_2)>0$. Last, we construct an estimator for $f(x,y)$ using the kernels $g_i^{\eta,x}(z) \coloneqq \eta_i^{-d} g_i((z-x)/\eta_i)$.  If the bandwidth is chosen appropriately, we achieve a consistent estimator which follows a central limit theorem.
\begin{Definition}\label{D def:PE}
For $\eta=(\eta_1,\eta_2)>0$, we call $\hat f^{\Delta,\eta}_{n}$ defined by
\begin{gather}\label{D eq: Estimator}
	\hat f^{\Delta,\eta}_{n}(x,y) \coloneqq
	\begin{cases}
		\frac{\sum_{k=1}^{n} g_1^{\eta,x}(X_{(k-1)\Delta})g_2^{\eta,y}(\Delta^n_k X)}{\Delta\sum_{k=1}^{n}g_1^{\eta,x}(X_{(k-1)\Delta})} & \text{if } \sum_{k=1}^{n}g_1^{\eta,x}(X_{(k-1)\Delta})>0, \\
		0 & \text{otherwise},
	\end{cases}
\end{gather}
the \emph{kernel density estimator} of $f$ (\wrt\ bandwidth $\eta$ based on $X_0,X_\Delta,\dotsc,X_{n\Delta}$).
\end{Definition}

{In analogy to classical conditional density estimation, we also introduce a bias correction for our estimator.}
\begin{Definition}\label{D def: BC}
For $\eta=(\eta_1,\eta_2)>0$, we call $\hat \gamma^{\Delta,\eta}_n$ defined by 
\begin{gather*}
		\hat \gamma^{\Delta,\eta}_n(x,y) \coloneqq {} 
		\begin{cases}
			\begin{aligned}
				\lefteqn{\eta_{1}^{\alpha_1}\sum_{\substack{|m_1+m_2|=\alpha_1\\|m_2|\neq0}}\frac{\kappa_{m_1+m_2}(g_1)}{m_1!m_2!} \frac{\sum_{k=1}^{n} \frac{\partial^{m_1}}{\partial x^{m_1}} g_1^{\eta,x}(X_{(k-1)\Delta})}{\sum_{k=1}^{n}g_1^{\eta,x}(X_{(k-1)\Delta})}\frac{\partial^{m_2}}{\partial x^{m_2}}\hat f^{\Delta,\eta}_n(x,y)} \\
				&\hspace{0.5em}+ \eta_{2}^{\alpha_2} \sum_{|m|=\alpha_2} \frac{\kappa_{m}(g_2)}{m!}\frac{\partial^m}{\partial y^m}\hat f^{\Delta,\eta}_n(x,y), && \text{if }
				\genfrac{}{}{0pt}{}{\sum_{k=1}^{n}g_1^{\eta,x}(X_{(k-1)\Delta}) > 0}{\alpha_1,\alpha_2\in\bbn^\ast}, \\
				&0, && \text{otherwise},
				 \end{aligned}
		\end{cases}
\end{gather*}
the \emph{bias correction} for $\hat f^{\Delta,\eta}_n$. (The sums in the previous equation are over all multi-indices of {appropriate} length.)
\end{Definition}

\subsection{Consistency and central limit theorem}\label{D Results}
Here, we present our main results. 
%
We agree to the following conventions: {Under \cref{D a: HR,D a: DK}, $v$ denotes the regularly varying function given in \cref{D eq: Darling Kac}. Under \cref{D a: HR} only, 
$v$ denotes an arbitrary deterministic equivalent (see \cref{J def: DE} below) of the Markov process $X$. }
For typographical reasons, we may write $v_t$ for $v(t)$ or $X(t)$ for $X_t$ etc.\ as convenient. 

We utilise the following conditions as $n\Delta\to\infty$ and $\Delta\to0$, where $0\le\zeta_1,\zeta_2<\infty$:
\begin{align}
		v_{n\Delta}\eta_{1,n}^d\eta_{2,n}^d \to \infty, & \mAnd \eta_{1,n}\to0, \quad\eta_{2,n}\to 0; \label{D eq: LLN}\\
		v_{n\Delta}\eta_{1,n}^{d+2\alpha_1}\eta_{2,n}^d \to \zeta_1^2, &\mAnd    v_{n\Delta}\eta_{1,n}^{d}\eta_{2,n}^{d+2\alpha_2} \to \zeta_2^2; \label{D eq: CLT}
\end{align}
In addition, we also utilise the following conditions due to discretisation, where $\zeta<\infty$ is independent of $n$:
\begin{subequations}\label{D eq: Discrete}
\begin{align}
		\Delta\eta_{1,n}^{-2 - d[(1-2/(\beta+d))\vee0]} \to 0, &\mAnd  \Delta\eta_{2,n}^{-2\vee (\beta+d)}\to 0; \label{D eq: LLNb} \\
	n\Delta^2\eta_{1,n}^d\eta_{2,n}^d \le \zeta, & \quad v_{n\Delta}\Delta^2\eta_{1,n}^{d - 4 - 2d[(1-2/(\beta+d)\wedge0]}\eta_{2,n}^d \to 0, \label{D eq: CLTb} \\
		&\mAnd v_{n\Delta}\Delta^2\eta_{1,n}^d\eta_{2,n}^{d-4\vee 2(\beta+d)}\to 0. \label{D eq: CLTc}
\end{align}
\end{subequations} 	
\vspace{-1.75\baselineskip}
\begin{Remark}
	If $\Delta\to0$ fast enough, then \cref{D eq: LLN} and \cref{D eq: CLT} are the crucial conditions.
\end{Remark}

\begin{Theorem}\label{D t: LLN}
Grant \cref{D a: BCn,D a: C-weak,D a: HR}. Let $\eta_n=(\eta_{1,n},\eta_{2,n})$ be such that \cref{D eq: LLN} and \cref{D eq: LLNb} hold. Moreover, let $(x,y)\in E\times E^\ast$ be such that $\mu'(x)>0$ and $F(x,E)>0$.
\begin{enumerate}
	\item  If $n\Delta^2\to0$, then, under any law $\p^\pi$, we have the following convergence in probability:
\begin{gather}\label{D eq: Consistency}
	\hat f_{n}^{\Delta,\eta_n}(x,y) \limpPiN f(x,y).
\end{gather}
	\item Grant \cref{D a: DK} in addition. If $(n\Delta)^{1-\delta}\Delta\to 0$, then, under any law $\p^\pi$, \cref{D eq: Consistency} holds as well.
\end{enumerate}
\end{Theorem}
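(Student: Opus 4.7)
The plan is to reduce \cref{D t: LLN} to the continuous-observation consistency result established in \cref{C Estimation}, by separately controlling the discretisation errors in the numerator and the denominator. Write $\hat f_n^{\Delta,\eta_n}(x,y)=\hat N_n/\hat D_n$ where $\hat N_n:=\sum_{k=1}^n g_1^{\eta,x}(X_{(k-1)\Delta})g_2^{\eta,y}(\Delta_k^n X)$ and $\hat D_n:=\Delta\sum_{k=1}^n g_1^{\eta,x}(X_{(k-1)\Delta})$, and introduce the continuous-time analogues
\begin{equation*}
N_{n\Delta}:=\sum_{s\le n\Delta} g_1^{\eta,x}(X_{s-})g_2^{\eta,y}(\Delta X_s)\mathbbm{1}_{\{X_{s-}\neq X_s\}}, \qquad D_{n\Delta}:=\int_0^{n\Delta} g_1^{\eta,x}(X_s)\ds.
\end{equation*}
The continuous-observation theory of \cref{C Estimation}---built on the L\'evy system identity \cref{D eq: Levy system} together with a Harris ratio-limit theorem, augmented in case~(ii) by \cref{D a: DK} and the invariance principle of \citet{Touati1987}---gives $N_{n\Delta}/D_{n\Delta}\limpPiN f(x,y)$ and $D_{n\Delta}/(v_{n\Delta}\mu'(x))\limpPiN 1$, using that $\mu(g_1^{\eta,x})\to\mu'(x)$ as $\eta_1\to 0$ by continuity of $\mu'$ and $\int g_1=1$. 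It therefore suffices to show that $\hat D_n-D_{n\Delta}$ and $\hat N_n-N_{n\Delta}$ are of order $o(D_{n\Delta})$ in $\p^\pi$-probability.

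The denominator comparison is a standard Riemann-sum estimate: interval-wise the error is bounded by $\Delta\,\lVert\nabla g_1^{\eta,x}\rVert_\infty\,\omega_k$ where $\omega_k:=\sup_{s\in((k-1)\Delta,k\Delta]}\lVert X_s-X_{(k-1)\Delta}\rVert$. By \cref{D a: BCn}(i), $\E^\pi\omega_k^{2\vee\beta}$ is of order $\Delta$ (up to the growth factor $1+\lVert X_{(k-1)\Delta}\rVert$), while $\lVert\nabla g_1^{\eta,x}\rVert_\infty\lesssim\eta_{1,n}^{-d-1}$. A Cauchy--Schwarz aggregation then yields a total error that is $o(v_{n\Delta}\eta_{1,n}^d)\asymp o(D_{n\Delta})$ precisely under the first bandwidth condition in \cref{D eq: LLNb}.

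For the numerator, decompose $\Delta_k^n X=J_k+R_k$ into its jump part $J_k:=\sum_{s\in((k-1)\Delta,k\Delta]}\Delta X_s$ and its continuous remainder $R_k$, and stratify the intervals by $N_k^{(y)}:=\#\{s\in((k-1)\Delta,k\Delta]:\Delta X_s\in B_{2\eta_{2,n}}(y)\}$. On $\{N_k^{(y)}=0\}$, $g_2^{\eta,y}(\Delta_k^n X)$ can only be non-zero if $R_k$ alone drives the increment into $B_{\eta_{2,n}}(y)$; since $y\neq 0$ and $\eta_{2,n}\to 0$, this requires $\lVert R_k\rVert\gtrsim\lVert y\rVert$, an event whose aggregated contribution is $o(D_{n\Delta})$ under the second part of \cref{D eq: LLNb} via $\beta$-integrability from \cref{D a: BCn}(i). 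On $\{N_k^{(y)}=1\}$, letting $\tau_k$ be the unique relevant jump time and $R_k':=\Delta_k^n X-\Delta X_{\tau_k}$, Taylor-expand $g_2^{\eta,y}(\Delta X_{\tau_k}+R_k')=g_2^{\eta,y}(\Delta X_{\tau_k})+\mathcal{O}(\eta_{2,n}^{-d-1}\lVert R_k'\rVert)$: by the L\'evy system the expected number of such intervals is $\mathcal{O}(v_{n\Delta}\eta_{2,n}^d)$, and the summed Taylor error is again negligible under \cref{D eq: LLNb}. Finally, $\{N_k^{(y)}\ge 2\}$ has expected cardinality $\mathcal{O}(n\Delta^2\eta_{2,n}^{2d})$ by the second-order compensator of the jump measure; the corresponding contribution to $\hat N_n$ is bounded by $\eta_{1,n}^{-d}\eta_{2,n}^{-d}\cdot\mathcal{O}(n\Delta^2\eta_{2,n}^{2d})$, which is $o(D_{n\Delta})$ under $n\Delta^2\to 0$ in case~(i), respectively $(n\Delta)^{1-\delta}\Delta\to 0$ in case~(ii), since $v_{n\Delta}\asymp(n\Delta)^\delta$ up to slow variation.

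The hardest part is controlling the cumulative compensated small-jump contribution to $R_k$: under \cref{D a: BCn}(i), small jumps may dominate the diffusion scale $\sqrt\Delta$ and contribute an $L^p$-increment of order $\Delta^{1/(2\vee\beta)}$. This is handled by a two-level truncation argument---truncate the small-jump martingale at the $\eta_{2,n}$-scale, bound the truncated martingale via its predictable quadratic variation, and handle the supercritical truncated jumps through the L\'evy system as in the $N_k^{(y)}\ge 2$ analysis---and is the source of the intricate exponents $-2-d[(1-2/(\beta+d))\vee 0]$ and $-2\vee(\beta+d)$ appearing in \cref{D eq: LLNb}.
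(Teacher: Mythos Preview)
Your overall architecture---reduce to the continuous-time estimator by controlling the discretisation of numerator and denominator separately---matches the paper's, but two points deserve correction and one is a genuine strategic difference.

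\textbf{A factual error.} The claim $D_{n\Delta}/(v_{n\Delta}\mu'(x))\limpPiN 1$ is false in general. Under \cref{D a: HR} alone (case~(i)), $v$ is merely a deterministic equivalent, and the paper only obtains \emph{tightness} of $D_{n\Delta}/v_{n\Delta}$ and $v_{n\Delta}/D_{n\Delta}$ with limit points of the form $\mu'(x)\tilde L$ for some positive random $\tilde L$ (\cref{C lemma: TightFam1}). Even under \cref{D a: DK} (case~(ii)), the limit is $\mu'(x)L_1$ with $L_1$ Mittag--Leffler of order~$\delta$, which equals~$1$ only in the positive recurrent case $\delta=1$. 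Your reduction still works if you replace this by ``$v_{n\Delta}/D_{n\Delta}$ is tight'' and argue that $o_{\p}(v_{n\Delta})$ implies $o_{\p}(D_{n\Delta})$, but as written the statement is wrong and its invocation of Touati's invariance principle is a misreading.

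\textbf{The numerator: a different route.} The paper does \emph{not} compare $\hat N_n$ to the jump sum $N_{n\Delta}$ pathwise. Instead it uses the small-time asymptotic of \cref{D p: Small Time Asymptotic}---an It\^o-formula analysis bounding $|\Delta^{-1}\E^z[g_2^{\eta,y}(\Delta^n_1X)]-Fg_2^{\eta,y}(z)|$ uniformly in $z\in B_{\eta_1}(x)$---and then decomposes $G^{n,\Delta,\eta}=f(x,y)R^{n,\Delta,\eta}+H^{n}+M^{n}$ into a compensator error $H^n$ (controlled by \cref{D l: P2a}) and a discrete-time martingale $M^n$ whose predictable quadratic variation is handled by applying the same small-time asymptotic to $g_2^2$ (\cref{D l: P2}). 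This $L^2$/compensator route sidesteps the pathwise stratification entirely and is what actually produces the exponents in \cref{D eq: LLNb}. Your jump-counting decomposition is a plausible alternative, but you have not addressed the discrepancy between $g_1^{\eta,x}(X_{(k-1)\Delta})$ in $\hat N_n$ and $g_1^{\eta,x}(X_{\tau_k-})$ in $N_{n\Delta}$, and your ``two-level truncation argument'' for the compensated small-jump part is asserted rather than carried out; this is precisely where the delicate exponents arise, and the paper's It\^o-formula route (for both \cref{D p: Small Time Asymptotic} and \cref{D p: Sojourn Time Approximation}) is what makes them explicit.
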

\begin{Remark}
By this theorem, our estimator is consistent for every $x$ and $y \neq 0$ if $n\Delta\to\infty$ and $\Delta\to0$. In practice, however, both $n$ and $\Delta$ are given! Then, for instance, if a continuous martingale component is present, {or if there are infinitely many jumps over finite time intervals,} our estimator is unreliable for all $y$ close to the origin. To illustrate this important point, suppose that $X$ is a univariate process with constant volatility $\sigma^2>0$. Increments with absolute value less than $\zeta\sigma\Delta^{1/2}$, where $\zeta$ is quite a large constant (e.\,g., $\zeta=5$), are predominantly due to the continuous martingale and not due to jumps. On the set $\{y: |y|\le \zeta\sigma\Delta^{1/2}\}$, therefore, our estimator $\hat f^{\Delta,\eta}_n(x,\cdot)$ is unreliable regardless of the chosen bandwidth $\eta$.
\end{Remark}

For the next theorem, we establish additional notation. For $0 < \alpha < 1$, let $K$ denote the $\alpha$-stable L\'evy subordinator with Laplace transform $\E \e^{-\xi K_t} = \e^{-t\xi^\alpha}$ for $\xi,t\ge 0$. Its right inverse $L_{t}\coloneqq\inf\{s>0: K_{s}> t\}$ is called the \emph{Mittag-Leffler process of order $\alpha$}. By abuse of notation, we call $L_t=t$ the \emph{Mittag-Leffler process of order $1$}. On an extension
\begin{gather}\label{C eq:ExtendedPSpace}
	(\tilde\Omega,\tilde{\mathscr{F}}, \tilde\p) \coloneqq (\Omega\times\Omega',\mathscr{F}\otimes\mathscr{F}',\p^\pi\otimes\p')
\end{gather}
of the probability space, let $V=(V(x,y))_{x\in E,y\in E^\ast}$ be a standard Gaussian white noise random field (that is, the finite dimensional marginals of $V$ are i.\,i.\,d.\ standard normal) and let $L=(L_t)_{t\ge0}$ be the Mittag-Leffler process of order $\delta$ (from \cref{D a: DK}) such that $V$, $L$ and $\mathscr{F}$ are independent. In the theorem below, convergence holds \emph{stably in law}; that is,  pre-limiting and limiting random variables are defined on the extended space \cref{C eq:ExtendedPSpace} and we have joint convergence in law of our pre-limiting random variables with any bounded, $\mathscr{F}$-measurable random variable. This notion, labelled $\mathscr{L}\mathrm{-st}$, is due to \citet{Renyi1963}.

\begin{Theorem}\label{D t: CLT}
Grant \cref{D a: BCn,D a: C-weak,D a: HR,D a: DK,D a: C-strong}. Let $\eta_n=(\eta_{1,n},\eta_{2,n})$ be such that \cref{D eq: LLN} and \cref{D eq: Discrete} hold, and let $(x_i,y_i)_{i\in I}$ be a finite family of pairwise distinct points in $E\times E^\ast$ such that $\mu'(x_i)>0$ and $F(x_i,E)>0$ for each $i\in I$.  If $(n\Delta)^{1-\delta}\Delta \to 0$, then, under any law $\p^\pi$, we have the following stable convergence in law:
\begin{gather*}
	\left( \sqrt{v_{n\Delta}\eta_{1,n}^d\eta_{2,n}^d} \big(\hat f_{n}^{\Delta,\eta_n}(x_{i},y_{i}) - \frac{\mu(g_1^{\eta_n,x_i}Fg_2^{\eta_n,y_i})}{\mu(g_1^{\eta_n,x_i})}\big) \right)_{i\in I}
		\limdstN
	\left( \frac{\sigma(x_{i},y_{i})}{\sqrt{L_{1}}}V(x_{i},y_{i}) \right)_{i\in I},
\end{gather*}
where the asymptotic variance is given by
\begin{gather}\label{D eq: Variance}
	\sigma(x,y)^2 \coloneqq \frac{f(x,y)}{\mu'(x)}\int g_1(w)^2\dw\int g_2(z)^2\dz.
\end{gather}

In addition, let $\eta_n$ be such that \cref{D eq: CLT} holds as well. Suppose either that $\alpha_1,\alpha_2\in\bbn^\ast$ or that $\zeta_1=\zeta_2=0$ in \cref{D eq: CLT}. Then, under any law $\p^\pi$, we have the following stable convergence in law:
\begin{gather*}\label{D eq: AsymptoticMixedNormal}
	\left(
		\sqrt{v_{n\Delta}\eta_{1,n}^d\eta_{2,n}^d} \big(\hat f^{\Delta,\eta_{n}}_{n}(x_{i},y_{i}) - f(x_{i},y_{i})\big)
	\right)_{i\in I}
	\limdstT
	\left(
		\gamma(x_i,y_i) + \frac{\sigma(x_{i},y_{i})}{\sqrt{L_{1}}}V(x_{i},y_{i})
	\right)_{i\in I},
\end{gather*}
where -- in the former case -- the asymptotic bias $\gamma(x,y)$ is given by
\begin{gather}\label{D eq: Bias}
	\begin{aligned}
		\gamma(x,y) = {} & \frac{\zeta_1}{\mu'(x)}\sum_{\substack{|m_1+m_2|=\alpha_1\\|m_2|\neq0}}\frac{\kappa_{m_1+m_2}(g_1)}{m_1!m_2!}\frac{\partial^{m_1}}{\partial x^{m_1}}\mu'(x)\frac{\partial^{m_2}}{\partial x^{m_2}}f(x,y) \\
		& \hspace{15em} + \zeta_2\sum_{|m|=\alpha_2} \frac{\kappa_{m}(g_2)}{m!}\frac{\partial^m}{\partial y^m}f(x,y),
	\end{aligned}
\end{gather}
and -- in the latter case -- $\gamma(x,y)=0$.
\end{Theorem}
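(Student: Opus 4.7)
The overall plan is to reduce the discrete-observation problem to the continuous-observation analogue proved in \cref{C Estimation}, and then handle the bias term separately. Write
\[
	\hat f_{n}^{\Delta,\eta_n}(x,y) - \frac{\mu(g_1^{\eta_n,x}Fg_2^{\eta_n,y})}{\mu(g_1^{\eta_n,x})}
	= \frac{\mathcal{N}_n - \mathcal{D}_n\mu(g_1^{\eta_n,x}Fg_2^{\eta_n,y})/\mu(g_1^{\eta_n,x})}{\Delta\mathcal{D}_n},
\]
where $\mathcal{N}_n=\sum_{k=1}^n g_1^{\eta_n,x}(X_{(k-1)\Delta})g_2^{\eta_n,y}(\Delta^n_kX)$ and $\mathcal{D}_n=\sum_{k=1}^n g_1^{\eta_n,x}(X_{(k-1)\Delta})$. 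The denominator is a Riemann sum for $\int_0^{n\Delta} g_1^{\eta_n,x}(X_s)\,\ds$; the error is controlled by \cref{D eq: LLNb} (using Hölder continuity of $g_1^{\eta,x}$ at scale $\eta_{1,n}$ combined with the standard $\sqrt{\Delta}$-increment bound from \cref{D a: BCn}). Harris recurrence together with the Darling--Kac condition then gives, via the continuous-time analogue, $\Delta\mathcal{D}_n/v_{n\Delta}\to \mu(g_1^{\eta_n,x})L_1$ stably.

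For the numerator, the key reduction is to replace each $g_2^{\eta_n,y}(\Delta^n_kX)$ by $\sum_{s\in((k-1)\Delta,k\Delta]} g_2^{\eta_n,y}(\Delta X_s)$, at the cost of terms whose second moments are bounded using the growth condition in \cref{D a: BCn} and the assumption $y\neq 0$ (so that for $\eta_{2,n}$ small enough, $g_2^{\eta_n,y}$ vanishes near the origin and small increments from the continuous part or from small jumps do not contribute). The conditions in \cref{D eq: CLTb,D eq: CLTc} quantify this: they balance the size of the continuous increments and the $\beta$-activity of the small jumps against the kernel scale $\eta_{2,n}$. After this reduction, the numerator becomes, up to negligible terms, the continuous-time quantity
\[
	\int_0^{n\Delta} g_1^{\eta_n,x}(X_{s-}) \int g_2^{\eta_n,y}(z)\,\mathfrak{p}(\ds,\dz),
\]
to which the stable martingale CLT from \cref{C Estimation} applies: the compensator part recovers $v_{n\Delta}\mu(g_1^{\eta_n,x}Fg_2^{\eta_n,y})L_1+o_{\p}(\sqrt{v_{n\Delta}\eta_{1,n}^d\eta_{2,n}^d})$, which cancels against the centring, while the compensated-jump part produces the asymptotically mixed normal limit whose variance is computed from $\int F(x,\dy)g_1(w)^2 g_2(z)^2\cdot(\mu'(x)/[\mu(g_1^{\eta_n,x})]^2)$ rescaled by $\eta_{1,n}^{-d}\eta_{2,n}^{-d}$. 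Joint convergence for the family $(x_i,y_i)_{i\in I}$ follows because the kernels $g_1^{\eta_n,x_i}g_2^{\eta_n,y_i}$ have eventually disjoint supports, so the martingale covariations vanish and the Gaussian field $V$ has independent components.

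For the second statement, it remains to analyse the asymptotic bias
$\mu(g_1^{\eta_n,x}Fg_2^{\eta_n,y})/\mu(g_1^{\eta_n,x}) - f(x,y)$. By a change of variable, $\mu(g_1^{\eta_n,x})=\int g_1(w)\mu'(x+\eta_{1,n}w)\dw$, and similarly for the numerator. A Taylor expansion of $\mu'$ at $x$ to order $\lceil\alpha_1\rceil$ and of $f(\cdot,\cdot)$ at $(x,y)$ in both variables to orders $\alpha_1$ and $\alpha_2$, combined with the higher-order kernel property \cref{D eq:Kernel-g1} that kills the low-degree moments of $g_1,g_2$, leaves exactly the expression for $\gamma(x,y)$ in \cref{D eq: Bias} as the leading term, of order $\eta_{1,n}^{\alpha_1}+\eta_{2,n}^{\alpha_2}$. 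The rescaling $\sqrt{v_{n\Delta}\eta_{1,n}^d\eta_{2,n}^d}$ times $\eta_{i,n}^{\alpha_i}$ is controlled exactly by \cref{D eq: CLT}, giving $\zeta_i$ in the limit (and $0$ when $\zeta_1=\zeta_2=0$). The restriction $\alpha_i\in\bbn^\ast$ ensures the Taylor remainder is genuinely of higher order so that the bias expansion is sharp.

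The main obstacle is the discretisation step in the numerator. Because the continuous martingale part gives increments of order $\sqrt{\Delta}$ and the small jumps contribute $\Delta^{1/\beta}$-type activity, it is delicate to verify that $g_2^{\eta_n,y}(\Delta^n_kX)$ can be replaced by a sum of $g_2^{\eta_n,y}$ evaluated at actual jumps without blowing up the variance after multiplication by $\sqrt{v_{n\Delta}\eta_{1,n}^d\eta_{2,n}^d}$. This is precisely what the elaborate conditions \cref{D eq: Discrete} encode, and tracking the resulting error terms --- in particular on the event that several jumps occur in the same interval or that a big continuous increment coincides with a small jump near $y$ --- is where the technical work concentrates, before the continuous-time stable CLT of \cref{C Estimation} and the potential-theoretic tools of \cref{C OnZ} can be invoked.
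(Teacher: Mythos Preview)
Your proposal takes a genuinely different route from the paper's proof, and while the overall strategy is plausible, the execution diverges at the crucial step.

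The paper does \emph{not} replace $g_2^{\eta_n,y}(\Delta^n_kX)$ pathwise by $\sum_{s\in((k-1)\Delta,k\Delta]} g_2^{\eta_n,y}(\Delta X_s)$ and then invoke the continuous-time CLT of \cref{C Estimation}. Instead, it works directly with the discrete-time martingale
\[
	M^{n,\Delta,\eta}_s(x,y) = \sqrt{\tfrac{\eta_1^d\eta_2^d}{v_{n\Delta}}}\sum_{k=1}^{\lfloor sn\rfloor} g_1^{\eta,x}(X_{(k-1)\Delta})\bigl(g_2^{\eta,y}(\Delta^n_kX) - \E^{X_{(k-1)\Delta}}g_2^{\eta,y}(\Delta^n_1X)\bigr)
\]
with respect to $\mathscr{G}^n_s=\mathscr{F}_{\lfloor sn\rfloor\Delta}$, and connects the compensator to the L\'evy kernel through the small-time asymptotic bound (\cref{D p: Small Time Asymptotic}) for $\lvert\Delta^{-1}\E^z g_2^{\eta,y}(\Delta^n_1X) - Fg_2^{\eta,y}(z)\rvert$. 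This is a \emph{distributional} reduction: one only needs moment bounds on conditional expectations, not a pathwise comparison of increments with jumps. The CLT itself is then supplied by the tailored discrete martingale result \cref{D p: AuxMart}, which is designed to produce the Mittag--Leffler time-change in the null-recurrent case. The continuous-time machinery of \cref{C Proofs} is reused only at the level of specific lemmas (the sojourn-time approximation \cref{D p: Sojourn Time Approximation}, and step (i) of \cref{C lemma:UnCLT} for the drift part $H^{n,\Delta,\eta}$), not as a wholesale reduction to $\hat f^\eta_t$.

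What your approach buys is conceptual transparency: the pathwise replacement is the standard device in high-frequency statistics. What the paper's approach buys is that it sidesteps two difficulties you gloss over. First, you silently also replace $g_1^{\eta_n,x}(X_{(k-1)\Delta})$ by $g_1^{\eta_n,x}(X_{s-})$ when writing the limiting integral against $\mathfrak{p}$; this second replacement needs its own error analysis at CLT scale, and involves the same $\eta_{1,n}^{-1}$-Lipschitz blow-up that makes the denominator argument delicate. Second, the pathwise error from multiple jumps per interval and from Brownian drift near $y$ must be $o_{\p}((v_{n\Delta}\eta_{1,n}^d\eta_{2,n}^d)^{-1/2})$, and in the null-recurrent case $v_{n\Delta}\sim (n\Delta)^\delta$ with $\delta<1$: the usual bounds of the form $n\Delta^{1+\epsilon}$ do not automatically translate into the right smallness relative to $v_{n\Delta}$. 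The paper avoids this by never computing pathwise errors, only conditional-expectation errors controlled by \cref{D p: Small Time Asymptotic}, and by building the random normalisation $L_1$ directly into the martingale limit theorem \cref{D p: AuxMart}. Your bias analysis and the disjoint-support argument for joint convergence do match the paper.
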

\begin{Remark}
The asymptotic bias and variance of our estimator are analogous to those of the Nadaraya--Watson estimator in classical conditional density estimation: $\kappa_m(g_i)$ and $\int g_i(z)^2\dz$ are the relevant \emph{moment} and the \emph{roughness} of the kernel $g_i$, respectively; and $f$ (resp., $\mu'$) plays the role of the conditional (resp., marginal) density.
\end{Remark}

We recall that $v$ from \cref{D eq: Darling Kac} satisfies $v_t = t$ in the ergodic case, and $v_t = t^\delta\ell(t)$ for some slowly varying function $\ell$ in the null recurrent case. If we choose $\eta_{i,n} = v_{n\Delta}^{-\xi_i}$ with $\xi_1 = \alpha_2/[d(\alpha_1+\alpha_2) + 2\alpha_1\alpha_2]$ and $\xi_2 = \alpha_1/[d(\alpha_1+\alpha_2) + 2\alpha_1\alpha_2]$, then \cref{D eq: LLN} and \cref{D eq: CLT} hold with $\zeta_1=\zeta_2=1$. {If $\Delta\to0$ fast enough such that $n\Delta^{1 + [d(\alpha_1+\alpha_2)+2\alpha_1\alpha_2]/\zeta}\to0$ in addition, where $\zeta$ denotes the maximum of $(1-\delta)d(\alpha_1+\alpha_2)+2\alpha_1\alpha_2$, $\delta\alpha_1(\alpha_2+2+d)$ and $\delta\alpha_2(\alpha_1+2 + d^2/(2+d))$, then our choice of $\eta_n$ also satisfies \cref{D eq: Discrete} for every $\beta\le2$. Consequently,} our estimator's rate of convergence is
\begin{gather}\label{D eq: rate}
	v_{n\Delta}^{\alpha_1\alpha_2/[d(\alpha_1+\alpha_2) + 2\alpha_1\alpha_2]}.
\end{gather}
In the case  $\alpha_1=\alpha_2 {\eqqcolon \alpha}$, the achieved rate $v_{n\Delta}^{\alpha/(2\alpha+2d)}$ equals the non-pa\-ra\-me\-tric minimax rate of smooth density estimation, related to the smoothness of $f$ as a $2d$-di\-men\-sio\-nal function and \wrt\ $v_{n\Delta}$. 

\begin{Remark}
{Bandwidth selection has always been a crucial issue in these kind of studies. Although orders of magnitude are crucial from an asymptotic point of view and $\eta_{i,n}=(n\Delta)^{-\xi_i}$ for some $\xi_i>0$ may be a good choice , we note that, in practice, $\eta_{i,n}= \zeta(n\Delta)^{-\xi_i}$ with leading constant $\zeta\neq1$ could be a better one. A detailed analysis would go beyond the scope of this paper. 
We briefly comment on two problems: How to choose the bandwidths manually such that conditions (\ref{D eq: LLN}--\ref{D eq: Discrete}) are satisfied for the unknown $v_{n\Delta}$, $\alpha_1$, $\alpha_2$ and $\beta$? What needs to be considered when employing data-driven methods for selecting optimal bandwidths?}
\begin{enumerate}
	\item Let $\alpha_0\ge2$ and $0<\delta_0\le1$ such that $\delta_0 > d/(d + \alpha_0)$. If we choose $\eta_{i,n} = (n\Delta)^{-1/(2d+2\alpha_0)}$, then \cref{D eq: LLN} and \cref{D eq: CLT} hold for all processes $X$ such that  \cref{D a: C-strong,D a: DK} hold for some $\alpha_1,\alpha_2\ge\alpha_0$ and $\delta_0 < \delta \le 1$. {If $\Delta\to0$ fast enough such that $n\Delta^{1 + 2[\alpha_0+d]/[\alpha_0+(2+d)\vee\alpha_0]}\to0$ in addition, then our {chosen bandwidth} also satisfies \cref{D eq: Discrete}. } 
	\item {The asymptotic bias and variance are proportional to the value of $f$ and its derivatives at the point of interest. The optimal bandwidth choice in terms of the asymptotic mean squared error, therefore, may depend heavily on $x$ and $y$. Especially for processes with infinite activity --~where $y\mapsto f(x,y)$ has a pole at zero~-- this is an important issue in practice; {cf.\  simulations in \cref{sec:Simul}.} In a future study on data-driven bandwidth selection methods like cross-validation, this distinction from estimating a bounded probability density has to be addressed carefully.}
\end{enumerate}
\end{Remark}


\cref{D t: CLT} does not allow for a direct construction of confidence intervals. For this purpose, we also obtain the following standardised version. 
\begin{Corollary}\label{D c: CLT}
Grant \cref{D a: BCn,D a: HR,D a: C-weak,D a: DK,D a: C-strong}. Let $\eta_n=(\eta_{1,n},\eta_{2,n})$ be such that (\ref{D eq: LLN}--\ref{D eq: Discrete}) hold. Suppose either that $\alpha_1,\alpha_2\in\bbn^\ast$ or that $\zeta_1=\zeta_2=0$ in \cref{D eq: CLT}.
Then under any law $\p^\pi$, we have the following stable convergence in law:
\begin{gather*}
	\left(
		\sqrt{\frac{\eta_{1,n}^d\eta_{2,n}^d \Delta\sum_{k=1}^{n}g_1^{\eta_n,x_i}(X_{(k-1)\Delta})}{\xi_g^2 \hat f_{n}^{\Delta,\eta_n}(x_{i},y_{i})}}  \big( [\hat f_{n}^{\Delta,\eta_n} - \hat \gamma^{\eta_n}_n - f](x_{i},y_{i})\big)
	\right)_{i\in I}
	\limdstN
	\big( V(x_i,y_i) \big)_{i\in I},
\end{gather*}
where $\xi_g^2=\int g_1(w)^2\dw\int g_2(z)^2\dz$.
\end{Corollary}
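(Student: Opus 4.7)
The strategy is an algebraic factorisation combined with a Slutsky-type argument that exploits the Mittag-Leffler structure of the limit. Observe that the standardising prefactor can be written as
\[
\sqrt{\frac{\eta_{1,n}^d\eta_{2,n}^d\,\Delta\sum_{k=1}^{n} g_1^{\eta_n,x_i}(X_{(k-1)\Delta})}{\xi_g^2\,\hat f_n^{\Delta,\eta_n}(x_i,y_i)}} \;=\; \sqrt{v_{n\Delta}\eta_{1,n}^d\eta_{2,n}^d}\,\cdot\, R_n^{(i)},
\]
where
\[
R_n^{(i)} \coloneqq \sqrt{\frac{\Delta\sum_{k=1}^{n}g_1^{\eta_n,x_i}(X_{(k-1)\Delta})}{v_{n\Delta}\,\xi_g^2\,\hat f_n^{\Delta,\eta_n}(x_i,y_i)}}.
\]
Hence the left-hand side of the corollary equals the bias-corrected CLT deviation $\sqrt{v_{n\Delta}\eta_{1,n}^d\eta_{2,n}^d}\big[\hat f_n^{\Delta,\eta_n} - \hat\gamma_n^{\eta_n} - f\big](x_i,y_i)$ multiplied by the observable normaliser $R_n^{(i)}$, whose purpose is to absorb the random time-change $L_1$ that appears in the mixed-normal limit of \cref{D t: CLT}.

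For the first factor, \cref{D t: CLT} delivers $\mathscr{L}$-stable convergence to $\sigma(x_i,y_i)L_1^{-1/2}V(x_i,y_i)$ once one verifies the in-probability approximation $\sqrt{v_{n\Delta}\eta_{1,n}^d\eta_{2,n}^d}\,\hat\gamma_n^{\eta_n}(x_i,y_i)\limpPiN \gamma(x_i,y_i)$. This is the technically most delicate step. In the case $\zeta_1=\zeta_2=0$ it is immediate from the fact that the prefactors $\eta_{i,n}^{\alpha_i}\sqrt{v_{n\Delta}\eta_{1,n}^d\eta_{2,n}^d}$ tend to $\zeta_i=0$ while the derivative-type quantities inside $\hat\gamma_n^{\eta_n}$ are $O_{\p^\pi}(1)$. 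In the case $\alpha_1,\alpha_2\in\bbn^\ast$, one must establish genuine consistency of the derivative estimator $(\partial^{m}/\partial x^{m})\hat f_n^{\Delta,\eta_n}(x_i,y_i)\limpPiN(\partial^{m}/\partial x^{m})f(x_i,y_i)$ (a variant of \cref{D t: LLN} applied to derivative kernels) together with convergence of the ratio $\sum_k\partial^{m_1}_x g_1^{\eta_n,x_i}(X_{(k-1)\Delta})/\sum_k g_1^{\eta_n,x_i}(X_{(k-1)\Delta})\to\partial^{m_1}\mu'(x_i)/\mu'(x_i)$. The latter I would handle via the identity $\partial^{m_1}_x g_1^{\eta,x}=(-1)^{|m_1|}\partial^{m_1}_z g_1^{\eta,x}$, integration by parts, and the Darling-Kac ratio theorem applied to $\int g_1^{\eta_n,x_i}(z)\,\partial^{m_1}\mu'(z)\,\dz\to\partial^{m_1}\mu'(x_i)$ against $\int g_1^{\eta_n,x_i}\mu'\to\mu'(x_i)$.

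For $R_n^{(i)}$, I would chain three ingredients: (i) the Riemann-sum discretisation error $\Delta\sum_k g_1^{\eta_n,x_i}(X_{(k-1)\Delta})-\int_0^{n\Delta}g_1^{\eta_n,x_i}(X_s)\ds$ is $o_{\p^\pi}(v_{n\Delta})$, using the growth bounds of \cref{D a: BCn} and the discretisation conditions (\ref{D eq: LLN}--\ref{D eq: Discrete}); (ii) the Darling-Kac ratio theorem for Harris-recurrent additive functionals (a consequence of \cref{D a: HR,D a: DK}) yields $\int_0^{n\Delta}g_1^{\eta_n,x_i}(X_s)\ds/v_{n\Delta}\to\mu'(x_i)L_1$ stably in law, with the \emph{same} Mittag-Leffler variable $L_1$ as in \cref{D t: CLT}; (iii) \cref{D t: LLN} gives $\hat f_n^{\Delta,\eta_n}(x_i,y_i)\limpPiN f(x_i,y_i)$. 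Continuous mapping then yields $R_n^{(i)}\to\sqrt{\mu'(x_i)L_1/(\xi_g^2 f(x_i,y_i))}$ stably. Applying the stable-convergence version of Slutsky's theorem (valid because both stable limits are built from the same $L_1$, and $V$ is independent of $L$ and $\mathscr{F}$ on the extension \cref{C eq:ExtendedPSpace}), the product converges to
\[
\frac{\sigma(x_i,y_i)}{\sqrt{L_1}}V(x_i,y_i)\cdot\sqrt{\frac{\mu'(x_i)L_1}{\xi_g^2 f(x_i,y_i)}} \;=\; V(x_i,y_i),
\]
using $\sigma(x_i,y_i)^2=f(x_i,y_i)\xi_g^2/\mu'(x_i)$ from \cref{D eq: Variance}. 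Since the $L_1$-factor cancels identically for every $i\in I$, joint convergence over the finite index set $I$ is inherited from the joint convergence already asserted in \cref{D t: CLT}, and the independent standard normals $V(x_i,y_i)$ emerge as claimed.
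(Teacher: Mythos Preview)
Your approach is essentially the paper's, and the identification of the key step---consistency of $\sqrt{v_{n\Delta}\eta_{1,n}^d\eta_{2,n}^d}\,\hat\gamma^{\eta_n}_n(x_i,y_i)$ for $\gamma(x_i,y_i)$---matches exactly. The paper's proof is just the two-liner: ``by \cref{D l: P4} it remains to show that $(v_{n\Delta}\eta_{1,n}^d\eta_{2,n}^d)^{1/2}\hat\gamma^{\eta_n}_n(x,y)$ is consistent for $\gamma(x,y)$'', and that follows in analogy to \cref{C cor:CLT} (derivative kernels are again consistent, and the ratio of sojourn-time quantities converges as you describe).

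One structural difference worth noting: you route the CLT part through \cref{D t: CLT} and then separately argue that $R_n^{(i)}\to\sqrt{\mu'(x_i)L_1/(\xi_g^2 f(x_i,y_i))}$ stably, finally invoking a ``stable Slutsky'' to multiply the limits. The delicate point is your assertion that the two limits share the \emph{same} $L_1$; stable convergence alone does not give a product rule when both limits live on the extension. The paper sidesteps this by appealing directly to \cref{D l: P4}, which already delivers \emph{joint} stable convergence of $\bigl(R^{n,\Delta,\eta_n}(x_i),\,U^{n,\Delta,\eta_n}(x_i,y_i)\bigr)_{i\in I}$ to $\bigl(\mu'(x_i)L,\,\sigma(x_i,y_i)\mu'(x_i)W^i_L\bigr)_{i\in I}$. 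The continuous mapping theorem then applies to the ratio in one stroke, and the cancellation of $L_1$ is automatic. Your argument becomes rigorous as soon as you cite \cref{D l: P4} for the joint convergence rather than \cref{D t: CLT} for the marginal; everything else stands.
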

\begin{Remark}
	In principle, the results {of this section} are extendible to more general Markov models with L\'evy kernel $F$ such that \cref{D eq: Levy system} holds. 
	{In view of our proofs, the assumption that $X$ is an Itô semi-martingale is crucial for the analysis of the influence of discretisation (see \cref{D Discrete}). Suppose that an explicit upper bound for the small-time asymptotic ``error''
	\[
		\left| \frac{1}{\Delta}\E^x \left[g_2^{\eta,y}(\Delta_1^nX)\right] - \int F(x,\dw) g_2^{\eta,y}(w) \right|
	\]
	and an explicit sufficient condition which ensures
	\[
		\sup_{s\le 1} \frac{\xi_n}{v_{n\Delta}\eta_{1,n}^d} \left|{\Delta}\sum_{k=1}^{\lfloor sn\rfloor}h_n(X_{(k-1)\Delta})-\int_0^{\lfloor sn\rfloor\Delta} h_n(X_r)\dr\right| \limpPiN 0
	\]
	for $\xi_n = 1$ or $\xi_n^2 = v_{n\Delta}\eta_{1,n}^d\eta_{2,n}^d$ are available for some Markov process $X$. Then it is straightforward (see \cref{D l: P2a,D eq: P4X} --- \cref{D l: P1,D l: P1b,D l: P4}, respectively) to come up with sufficient conditions for \cref{D t: LLN,D t: CLT}, which replace \cref{D eq: Discrete}.
}
\end{Remark}

\section[Estimation from continuous-time observations]{Density estimation of the Lévy kernel from continuous-time observations --- A benchmark}\label{C Estimation}
The L\'evy kernel of a Markov process is related with jumps. In fact, our estimator \cref{D eq: Estimator} uses $X_{(k-1)\Delta}$ and $\Delta^n_kX$ as proxies for the pre-jump value $X_{t-}$ and the jump size $\Delta X_t$ if, at a time $t\in[(k-1)\Delta,k\Delta]$, there is a jump from a neighbourhood of $x$ and of size close to $y$. Eventually, such time intervals contain either zero or one such jump; never more. Certainly, the statistical analysis simplifies if we observed the whole path of $X$; introducing proxies would be useless. So, despite observing the whole path of $X$ is somewhat unrealistic, it is theoretically important to study what happens in this case. This section is devoted to this question and can be viewed as a benchmark for what properties are achievable with a more realistic, discrete observation scheme. 
\subsection{Preliminaries and assumptions}\label{C Setting}
On the filtered probability space(s) $(\Omega,\mathscr{F},(\mathscr{F}_{t})_{t\ge0},(\p^{x})_{x\in E})$, let $X = (X_{t})_{t\ge0}$ be a strong Markov process with values in Euclidean space $E=(\bbr^d,\mathscr{B}^d)$, or a subset thereof. Its sample paths are supposed to be \cadlag. We observe --~continuously in time~-- one sample path $\{X_{s}(\omega): s\in[0,t]\}$ for $t>0$; in particular, we discern all jumps.

{In addition to the notation introduced before,} we use some classical notation from \citet{Getoor75}: We denote the {shift semi-group} on $\Omega$ by $(\theta_t)_{t\ge0}$ so that  $X_{t+s}=X_t\circ\theta_s$ for all $s,t\ge0$. We denote the {transition semi-group} of $X$ on $E$ by $(P_t)_{t\ge0}$. 

A \emph{(perfect homogeneous) additive functional} $H$ of $X$ is an $\mathscr{F}_t$-adapted process such that $H_{t+s} = H_t \circ \theta_s + H_s$ for all $s,t\ge0$. A \emph{L\'evy system} $(F,H)$ of $X$ (in a wide sense) is a kernel $F$ on $E$ with $F(x,\{0\})=0$ and a non-decreasing additive functional $H$ of $X$ such that, for every Borel function $g: E \times E \to \bbrp$, probability $\pi$ on $E$, and $t>0$,
\begin{gather}\label{C eq:Levy-system}
	\E^\pi \sum_{0<s\le t}g(X_{s-}, \Delta X_{s})\mathbbm{1}_{\{X_{s-}\neq X_{s}\}}  = \E^\pi\int_{0}^t \mathrm{d} H_{s}\int_{E}F(X_{s},\dy)g(X_{s},y).
\end{gather}
The disintegration into $F$ and $H$ is by no means unique. For an appropriate reference function $g_0$ with $Fg_0(x)>0$, nevertheless, ratios of the form $Fg(x)/Fg_0(x)$ are unique outside a set of potential zero. In the cases where $X$ is quasi-left-continuous (that is, when all jump times are totally inaccessible) \citet{BenvenisteJacod73} proved the existence of a L\'evy system $(F,H)$ where $H$ is continuous. Such a process --~\cadlag, strong Markov, quasi-left-continuous~-- is called a \emph{Hunt process}.
\begin{Remark}
	The continuity of the additive functional was included as a part of the original definition of L\'evy systems due to \citet{Watanabe1964}.
\end{Remark}

Throughout this section, we work under the following hypothesis:
\begin{Assumption}\label{C a:AC}
There exists a L\'evy system $(F,H)$ of $X$ where $H_t=t$.
\end{Assumption}
Recalling \cref{D eq: Levy system}, we observe that all Markovian Itô semi-martingales satisfy \cref{C a:AC}. {In analogy to the semi-martingale case,} we call this $F$ in \cref{C a:AC} the \emph{(canonical) L\'evy kernel} of $X$. It is unique outside a set of potential zero. {Again,} we assume it admits a density $(x,y) \mapsto f(x,y)$ which we want to estimate.

{Compared to \cref{D Estimation}, we slightly weaken the assumptions imposed on the smoothness of $f$.} To obtain consistency for our estimator below, we impose \cref{D a: HR} and:
\begin{Assumption}\label{C a: C-weak}
	The canonical L\'evy kernel admits a density $f$, continuous on $E\times E^\ast$; and the invariant measure from \cref{C a: HR} admits a continuous density $\mu'$.
\end{Assumption}
To obtain a central limit theorem, we also impose \cref{D a: DK} and:
\begin{Assumption}\label{C a: C-strong}
For some $\alpha_1,\alpha_2>0$, the canonical L\'evy kernel admits a density $f$ such that $x\mapsto f(x,y) \in \mathcal{C}^{\alpha_1}_\loc(E)$ for all $y\in E^\ast$, and $y\mapsto f(x,y) \in \mathcal{C}^{\alpha_2}_\loc(E^\ast)$ for all $x \in E$; and the invariant measure from \cref{C a: HR} admits a continuous density $\mu'$ which is $(\lceil\alpha_1\rceil-1)$-times continuously differentiable.
\end{Assumption}


\subsection{Kernel density estimator}\label{C Estimator}
{In \cref{D Estimator}, we introduced a kernel density estimator and its bias correction based on discrete observations. Here, we present corresponding versions which utilise the continuous-time observation scheme. We recall that $g_1$ and $g_2$ are kernels with support $B_1(0)$ which are, at least, of order $\alpha_1$ and $\alpha_2$, respectively. Given some bandwidth vector $\eta = (\eta_1,\eta_2)>0$, we utilise the kernels $g_i^{\eta,x}(z) = \eta_i^{-d} g_i((z-x)/\eta_i)$. 
}
%
%
\begin{Definition}\label{C def:PE}
	For $\eta=(\eta_1,\eta_2)>0$, we call $\hat f^{\eta}_{t}$ defined by
	\begin{equation*}
		\hat f^{\eta}_{t}(x,y) \coloneqq
		\begin{cases}
				\frac{\sum_{0<s\le t} g_1^{\eta,x}(X_{s-})g_2^{\eta,y}(\Delta X_s)\mathbbm{1}_{\{X_{s-}\neq X_s\}}}{\int_0^t g_1^{\eta,x}(X_s)\ds} & \text{if } \int_0^t g_1^{\eta,x}(X_s)\ds>0, \\
			0 & \text{otherwise},
		\end{cases}
	\end{equation*}
	the \emph{kernel density estimator} of $f$ (\wrt\ bandwidth $\eta$ up to time $t$).
\end{Definition}

Our estimator in \cref{D def:PE} is the discretised analogue from the one presented here: In the numerator of the former, the jumps $\Delta X_t$ and the pre-jump left-limits $X_{t-}$ are replaced by the increments $\Delta^n_kX$ and the pre-increment values $X_{(k-1)\Delta}$, respectively. In the denominator, the sojourn time $\int_0^t g_1^{\eta,x}(X_s)\ds$ is replaced by its Riemann sum approximation $\Delta\sum_{k=1}^{n}g_1^{\eta,x}(X_{(k-1)\Delta})$. In analogy to \cref{D def: BC}, we also introduce a bias correction for our estimator: 

\begin{Definition}
For $\eta=(\eta_1,\eta_2)>0$, we call $\hat \gamma^{\eta}_t$ defined by 
\begin{gather*}
		\hat \gamma^{\eta}_t(x,y) \coloneqq {} 
		\begin{cases}
			\begin{aligned}
				\lefteqn{\eta_{1}^{\alpha_1}\sum_{\substack{|m_1+m_2|=\alpha_1\\|m_2|\neq0}}\frac{\kappa_{m_1+m_2}(g_1)}{m_1!m_2!} \frac{\int_0^t \frac{\partial^{m_1}}{\partial x^{m_1}} g_1^{ \eta,x}(X_s)\ds}{\int_0^t g_1^{\eta,x}(X_s)\ds}\frac{\partial^{m_2}}{\partial x^{m_2}}\hat f^{\eta}_t(x,y)} \\
				&\hspace{3em}+ \eta_{2}^{\alpha_2} \sum_{|m|=\alpha_2} \frac{\kappa_{m}(g_2)}{m!}\frac{\partial^m}{\partial y^m}\hat f^{\eta}_t(x,y), &\qquad& \text{if }
				\genfrac{}{}{0pt}{}{\int_0^t g_1^{\eta,x}(X_s)\ds > 0}{\alpha_1,\alpha_2\in\bbn^\ast}, \\
				&0, &\qquad& \text{otherwise},
				 \end{aligned}
		\end{cases}
\end{gather*}
the \emph{bias correction} for $\hat f^{\eta}_t$. 
\end{Definition}

\subsection{Consistency and central limit theorem}\label{C Results}
Here, we present our results of this section. 
%
{We continue to use the notation and conventions from \cref{D Results}.}

We utilise the following conditions as $\tto$, where $0 \le \zeta_1,\zeta_2 < \infty$:
\begin{align}
	v_t\eta_{1,t}^d\eta_{2,t}^d \to \infty, &\quad\text{and}\quad \eta_{1,t} \to 0, \eta_{2,t} \to 0; \label{C eq:LLN}\\
	v_t\eta_{1,t}^{d+2\alpha_1}\eta_{2,t}^d \to \zeta^2_1, & \mAnd v_t\eta_{1,t}^{d}\eta_{2,t}^{d+2\alpha_2} \to \zeta^2_2. \label{C eq:CLT}
\end{align}
\vspace{-\baselineskip}
\begin{Theorem}\label{C theo:LLN}
	Grant \cref{C a:AC,C a: C-weak,C a: HR}. Let $\eta_t=(\eta_{1,t},\eta_{2,t})$ be such that \cref{C eq:LLN} holds. Moreover, let $(x,y) \in E \times E^\ast$ be such that $\mu'(x)>0$ and $F(x,E)>0$. Then, under any law $\p^\pi$, we have the following convergence in probability:
	\begin{gather*}
		\hat f_t^{\eta_t}(x,y) \limpPiT f(x,y).
	\end{gather*}
\end{Theorem}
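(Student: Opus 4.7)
\textbf{Proof plan for Theorem \ref{C theo:LLN}.} The natural decomposition is to split the numerator of $\hat f_t^\eta(x,y)$ into its predictable compensator plus a purely discontinuous local martingale. Setting $h(u,w) = g_1^{\eta,x}(u) g_2^{\eta,y}(w)$ and invoking the Lévy system formula \cref{C eq:Levy-system} with $H_s=s$, I will write
\begin{gather*}
 N_t^\eta := \sum_{0<s\le t} g_1^{\eta,x}(X_{s-}) g_2^{\eta,y}(\Delta X_s) \mathbbm{1}_{\{X_{s-}\neq X_s\}} = M_t^\eta + A_t^\eta,
\end{gather*}
where $A_t^\eta = \int_0^t g_1^{\eta,x}(X_s)\,(Fg_2^{\eta,y})(X_s)\,\ds$ is the compensator and $M_t^\eta$ is a local martingale. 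Denoting the denominator by $D_t^\eta = \int_0^t g_1^{\eta,x}(X_s)\,\ds$, this yields the key identity $\hat f_t^\eta(x,y) = A_t^\eta/D_t^\eta + M_t^\eta/D_t^\eta$ whenever $D_t^\eta>0$, and I have to show that each summand behaves well.

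For the first summand I plan to invoke a ratio ergodic theorem for Harris recurrent processes (the continuous-time analogue of Hopf's theorem, available via the deterministic equivalent $v$ from Assumption \ref{C a: HR}, whose construction is the subject of Appendix \ref{C OnZ}). For each fixed $\eta>0$ this gives, $\p^\pi$-a.s.\ (or at least in $\p^\pi$-probability) as $\tto$,
\begin{gather*}
 \frac{A_t^\eta}{D_t^\eta} \longrightarrow \frac{\mu(g_1^{\eta,x}\cdot Fg_2^{\eta,y})}{\mu(g_1^{\eta,x})}.
\end{gather*}
Using that $\mu$ has a continuous density $\mu'$ (Assumption \ref{C a: C-weak}), that $(x,y)\mapsto f(x,y)\mu'(x)$ is continuous, and that $g_1,g_2$ are compactly supported kernels integrating to one, standard approximate identity arguments give $\mu(g_1^{\eta,x})\to\mu'(x)>0$ and $\mu(g_1^{\eta,x}\cdot Fg_2^{\eta,y})\to f(x,y)\mu'(x)$ as $\eta\to 0$. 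The conclusion $A_t^\eta/D_t^\eta \limpPiT f(x,y)$ then follows from a joint diagonal argument along $\eta_t\to0$, which I have to carry out with some care since the test functions shrink with $t$.

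For the martingale term, the predictable quadratic variation computes (again by the Lévy system) to
\begin{gather*}
 \langle M^\eta\rangle_t = \int_0^t g_1^{\eta,x}(X_s)^2 \!\int F(X_s,\dw) g_2^{\eta,y}(w)^2\,\ds,
\end{gather*}
which on the set where $D_t^\eta\asymp v_t\mu'(x)$ is heuristically of order $v_t\,\eta_{1,t}^{-d}\eta_{2,t}^{-d}\mu'(x) f(x,y)\int g_1^2\int g_2^2$, again by the ratio ergodic theorem applied to the squared kernels. Thus $(D_t^\eta)^{-2}\langle M^\eta\rangle_t$ is of order $(v_t\eta_{1,t}^d\eta_{2,t}^d)^{-1}$, which vanishes by \cref{C eq:LLN}. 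A Lenglart--Rebolledo inequality then yields $M_t^\eta/D_t^\eta\limpPiT 0$.

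The main obstacle is the interplay between the two limits $\tto$ and $\eta_t\to0$: the test functions $g_1^{\eta_t,x}$, $g_2^{\eta_t,y}$ and $g_1^{\eta_t,x}\cdot Fg_2^{\eta_t,y}$ all depend on $t$ and concentrate at $x$ (resp.\ $(x,y)$), so the standard ratio ergodic theorem is not directly applicable. Handling this uniformly in $\eta$ is exactly what the deterministic-equivalent framework of \cref{C OnZ} is designed for, and the argument hinges on establishing the relevant uniform convergence along shrinking neighbourhoods; the remaining steps (continuity, martingale bound) are then routine.
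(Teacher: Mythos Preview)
Your decomposition $\hat f_t^\eta = A_t^\eta/D_t^\eta + M_t^\eta/D_t^\eta$ via the L\'evy system is clean and is genuinely different from the paper's route, which never writes this martingale-plus-compensator split for the jump sum directly but instead passes to an auxiliary discrete-time Markov chain built from successive jump times from a neighbourhood of $x$. Both approaches must, however, confront the same core difficulty, and here your proposal has a real gap.

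The gap is precisely at the step you yourself flag as ``the main obstacle'': you need a ratio ergodic theorem for \emph{triangular arrays} of test functions --- that is, control of $v_t^{-1}\int_0^t h_t(X_s)\,\ds$ where $h_t = g_1^{\eta_t,x}$ (or $g_1^{\eta_t,x}Fg_2^{\eta_t,y}$, or their squares) shrinks to a point mass --- for a process that may be \emph{null} recurrent. The classical Chacon--Ornstein theorem gives you the fixed-$\eta$ limit, but the passage to $\eta_t\to 0$ is not a routine diagonal argument: in the null recurrent case $D_t^{\eta_t}/v_t$ does not converge to a constant, it is only tight with random limit points, and you must show simultaneously that $D_t^{\eta_t}/v_t$ stays bounded away from zero and that the ratio $A_t^{\eta_t}/D_t^{\eta_t}$ stabilises. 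Your claim that ``the deterministic-equivalent framework of \cref{C OnZ}'' handles this is a misreading: \cref{C OnZ} is not about deterministic equivalents at all --- it collects potential-theoretic identities for the auxiliary chains $Z,Z'$. The existence of a deterministic equivalent (due to L\"ocherbach) gives tightness of $v_t^{-1}\int_0^t h(X_s)\,\ds$ for \emph{fixed} $\mu$-integrable $h$, but says nothing directly about the shrinking array $h_t$.

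The paper's resolution of this obstacle is the entire content of \cref{C Birkhoff,C AuxYZ}: one constructs a \emph{positive} recurrent chain $Z$ (sampled at the jump times $T_k$) whose state space is petite, hence uniformly ergodic, and proves a triangular-array Birkhoff theorem (\cref{C lemma:AdditiveFunctional-LLN}) for it --- the uniform geometric ergodicity in \cref{C eq:Local-Rho-Existence} is what makes the $n$-dependence of $h_n$ tractable. The original time scale is then recovered via the random time change $J_t/v_t$, and only at that last step does the deterministic equivalent enter, supplying two-sided tightness of $J_t/v_t$ (\cref{C lemma: TightFam1}). Your Lenglart step for $M_t^\eta/D_t^\eta$ would go through once this triangular control is available (indeed the paper's \cref{C lemma:Gn-LLN} contains an analogous martingale argument), but as written the proposal assumes the hard part rather than proving it.
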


\begin{Theorem}\label{C theo:CLT}
Grant \cref{C a:AC,C a: C-weak,C a: HR,C a: DK}. Let $\eta_t=(\eta_{1,t},\eta_{2,t})$ be such that \cref{C eq:LLN} holds. Moreover, let $(x_i,y_i)_{i\in I}$ be a finite family of pairwise distinct points in $E\times E^\ast$ such that $\mu'(x_i)>0$ and $F(x_i,E)>0$ for each $i\in I$. Then, under any law $\p^\pi$, we have the following stable convergence in law:
\begin{gather*}
	\left(
		\sqrt{v_{t}\eta_{1,t}^d\eta_{2,t}^d} \left(\hat f^{\eta_{t}}_{t}(x_{i},y_{i}) - \frac{\mu(g_1^{\eta_t,x_i}Fg_2^{\eta_t,y_i})}{\mu(g_1^{\eta_t,x_i})}\right)
	\right)_{i\in I}
	\limdstT
	\left(
		\frac{\sigma(x_{i},y_{i})}{\sqrt{L_{1}}}V(x_{i},y_{i})
	\right)_{i\in I},
\end{gather*}
where the asymptotic variance is given by
\begin{gather}\label{C eq: Variance}
	\sigma(x,y)^2 \coloneqq \frac{f(x,y)}{\mu'(x)}\int g_1(w)^2\dw\int g_2(z)^2\dz.
\end{gather}

In addition, grant \cref{C a: C-strong} and let $\eta_t$ be such that \cref{C eq:CLT} holds as well. 
Suppose either that $\alpha_1,\alpha_2\in\bbn^\ast$ or that $\zeta_1=\zeta_2=0$ in \cref{C eq:CLT}. Then, under any law $\p^\pi$, we have the following stable convergence in law:
\begin{gather}\label{C eq: AsymptoticMixedNormal}
	\left(
		\sqrt{v_{t}\eta_{1,t}^d\eta_{2,t}^d} \big(\hat f^{\eta_{t}}_{t}(x_{i},y_{i}) - f(x_{i},y_{i})\big)
	\right)_{i\in I}
	\limdstT
	\left(
		\gamma(x_i,y_i) + \frac{\sigma(x_{i},y_{i})}{\sqrt{L_{1}}}V(x_{i},y_{i})
	\right)_{i\in I},
\end{gather}
where -- in the former case -- the asymptotic bias $\gamma(x,y)$ is given by
\begin{gather}\label{C eq: Bias}
	\begin{aligned}
		\gamma(x,y) = {} & \frac{\zeta_1}{\mu'(x)}\sum_{\substack{|m_1+m_2|=\alpha_1\\|m_2|\neq0}}\frac{\kappa_{m_1+m_2}(g_1)}{m_1!m_2!}\frac{\partial^{m_1}}{\partial x^{m_1}}\mu'(x)\frac{\partial^{m_2}}{\partial x^{m_2}}f(x,y) \\
		& \hspace{15em} + {\zeta_2}\sum_{|m|=\alpha_2} \frac{\kappa_{m}(g_2)}{m!}\frac{\partial^m}{\partial y^m}f(x,y),
	\end{aligned}
\end{gather}
and -- in the latter case -- $\gamma(x,y)=0$.
\end{Theorem}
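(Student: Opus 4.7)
The plan is to decompose the centered estimator via the Lévy system~\cref{C eq:Levy-system}. Writing $\bar f^\eta(x,y) \coloneqq \mu(g_1^{\eta,x}Fg_2^{\eta,y})/\mu(g_1^{\eta,x})$ and $D^\eta_t(x) \coloneqq \int_0^t g_1^{\eta,x}(X_s)\ds$, \cref{C a:AC} yields a Doob-type decomposition $N^\eta_t(x,y) = M^\eta_t(x,y) + \int_0^t g_1^{\eta,x}(X_s)Fg_2^{\eta,y}(X_s)\ds$ of the jump-sum numerator, where $M^\eta(x,y)$ is a purely discontinuous local martingale. Subtracting $\bar f^\eta(x,y)\,D^\eta_t(x)$ and dividing by $D^\eta_t(x)$ produces
\begin{equation*}
	\hat f^\eta_t(x,y) - \bar f^\eta(x,y) = \frac{M^\eta_t(x,y) + A^\eta_t(x,y)}{D^\eta_t(x)},
\end{equation*}
where $A^\eta_t(x,y) \coloneqq \int_0^t g_1^{\eta,x}(X_s)\bigl[Fg_2^{\eta,y}(X_s) - \bar f^\eta(x,y)\bigr]\ds$ is an additive functional integrating a $\mu$-mean-zero function.

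I would first handle the denominator: continuity of $\mu'$ gives $\mu(g_1^{\eta_t,x_i}) \to \mu'(x_i)$, and combined with \cref{C a: DK} and the additive-functional ratio-limit theorem for Harris-recurrent processes (from \cref{C OnZ}), this yields $D^{\eta_t}_t(x_i)/v_t \to \mu'(x_i)\,L_1$ in probability, jointly in $i\in I$. The heart of the proof is then a stable martingale central limit theorem applied to the vector $(M^{\eta_t}_t(x_i,y_i))_{i\in I}$. Its predictable quadratic covariation is
\begin{equation*}
	\langle M^\eta(x_i,y_i),M^\eta(x_j,y_j)\rangle_t = \int_0^t g_1^{\eta,x_i}(X_s)g_1^{\eta,x_j}(X_s)\,F(g_2^{\eta,y_i}g_2^{\eta,y_j})(X_s)\ds,
\end{equation*}
which for $i\ne j$ vanishes eventually by disjointness of the kernels' shrinking supports. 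For $i=j$, continuity of $f$ together with the ratio-limit theorem delivers $(\eta_{1,t}^d\eta_{2,t}^d/v_t)\langle M^{\eta_t}(x_i,y_i)\rangle_t \limpPiT \sigma(x_i,y_i)^2\mu'(x_i)^2 L_1$; combined with the denominator scaling, this yields the claimed mixed-normal limit with independence across $i$ inherited from the vanishing cross-covariations. The Lindeberg jump condition is immediate: each jump of the rescaled martingale is of size $O(\eta_{1,t}^{-d/2}\eta_{2,t}^{-d/2}/\sqrt{v_t}) \to 0$ by~\cref{C eq:LLN}.

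Third, I would show $\sqrt{v_t\eta_{1,t}^d\eta_{2,t}^d}\,A^{\eta_t}_t(x_i,y_i)/D^{\eta_t}_t(x_i) \limpPiT 0$. Since its integrand is $\mu$-centered by construction, a second-moment bound on $A^\eta$ -- via the Lévy system and a Dynkin/resolvent-type argument -- combined with the Darling--Kac ratio limit shows $A^\eta_t = O_{\p^\pi}(\sqrt{v_t\,\eta_{1,t}^{-d}})$, which is of strictly smaller order than $M^\eta$ by a factor $\eta_{2,t}^{d/2}$: the extra $\eta_{2,t}^d$ factor arises because the integrand of $A^\eta$ does not contain $(Fg_2^{\eta,y})^2$ but only $Fg_2^{\eta,y}$, whose invariant integral stays bounded as $\eta_{2,t}\to0$. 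For the second (asymptotically mixed normal) assertion I would replace $\bar f^{\eta_t}(x_i,y_i)$ by $f(x_i,y_i)$ through Taylor expansions: $Fg_2^{\eta,y}(z) = \int f(z,y+\eta_2 u)g_2(u)\du$ is expanded around $u=0$ to order $\alpha_2$, and the ratio $\mu(g_1^{\eta,x}\varphi)/\mu(g_1^{\eta,x})$ is expanded around $x$ to order $\alpha_1$. The moment-cancellation property~\cref{D eq:Kernel-g1} and the smoothness hypothesis~\cref{C a: C-strong} leave exactly the leading remainder terms of sizes $\eta_{1,t}^{\alpha_1}$ and $\eta_{2,t}^{\alpha_2}$, which under \cref{C eq:CLT} produce the constants $\zeta_1$ and $\zeta_2$ multiplying the two sums in~\cref{C eq: Bias}; in the case $\zeta_1=\zeta_2=0$ the bias vanishes in the limit.

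The principal obstacle is the joint stable convergence of the martingale vector together with the Mittag--Leffler normalizer $L_1$ that drives the denominator: a standard martingale stable CLT yields a mixed-normal limit when the quadratic-variation limit is $\mathscr{F}_\infty$-measurable, but here the same random time $L_1$ governs both numerator and denominator, so one must invoke either a CLT with an $\mathscr{F}_\infty$-measurable random normalization, or equivalently argue via the time-change by the inverse of $L$; both rest crucially on the Darling--Kac framework assembled in~\cref{C OnZ}. A secondary technical hurdle is the $\eta$-uniform control of $A^{\eta_t}$, whose integrand blows up at rate $\eta_{1,t}^{-d}$ in sup-norm, so the analysis must exploit the $\mu$-mean-zero property rather than any pointwise bound.
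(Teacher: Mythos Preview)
Your decomposition and the bias analysis are correct and match the paper's treatment. The genuine gap is the martingale step in the null recurrent case. You write that the scaled quadratic variation converges \emph{in probability} to $\sigma(x_i,y_i)^2\mu'(x_i)^2 L_1$, but $L_1$ is a Mittag--Leffler variable living on the extended space, not an $\mathscr{F}$-measurable random variable; the convergence is only stable \emph{in law}. Standard stable martingale CLTs (Jacod--Shiryaev VIII.3) require the limiting bracket to be $\mathscr{F}$-measurable, so they do not apply as stated. You flag this as the ``principal obstacle'' and gesture at a time-change by the inverse of $L$, but you do not say how to carry this out---and this is precisely the hard part.

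The paper's resolution is to pass to an auxiliary, \emph{positive} recurrent chain $Z$ built from successive jump times $T_k$ from a fixed compact set (\cref{C AuxYZ}). In chain-time the invariant measure is a probability, the Birkhoff-type theorem (\cref{C lemma:AdditiveFunctional-LLN}) applies, and the quadratic variation of the relevant martingale converges to a \emph{deterministic} linear function of $s$ (step~(iii) in the proof of \cref{C lemma:UnCLT}). Thus the standard CLT gives a Brownian limit $W$ in chain-time. The Mittag--Leffler factor enters only afterwards, when one composes with the random clock $L^t_s=v_t^{-1}J_{st}$ and invokes Touati's theorem (\cref{C prop:ML-Convergence}); the independence of $W$ and $L$ is then established by a separate argument borrowed from H\"opfner (\cref{C lemma:JointCLT}). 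This two-step route---deterministic-bracket CLT followed by random time-change---is exactly the time-change idea you allude to, but it requires the auxiliary-chain scaffolding to make rigorous.

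The same gap affects your treatment of $A^\eta$. A Dynkin/resolvent bound $A^\eta_t=O_{\p^\pi}(\sqrt{v_t\eta_{1,t}^{-d}})$ presupposes that the potential of the $\mu$-centered integrand exists and is suitably bounded; in null recurrence this is not automatic. The paper handles the analogous term (step~(i) in the proof of \cref{C lemma:UnCLT}) again through the auxiliary chain, where potentials exist because the chain is uniformly ergodic on its petite state space, and \cref{C lemma:AdditiveFunctional-LLN} with $\psi(h_n)\equiv0$ gives the ucp vanishing directly.
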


{We compare \cref{C theo:CLT,D t: CLT}. First, we remark that the asymptotic bias and variance of $\hat f^{\Delta,\eta}_n$ are equal to those of our benchmark estimator $\hat f^\eta_t$. Second, if we choose $\eta_{i,t} = v_t^{-\xi_i}$ with $\xi_1 = \alpha_2/[d(\alpha_1+\alpha_2) + 2\alpha_1\alpha_2]$ and $\xi_2 = \alpha_1/[d(\alpha_1+\alpha_2) + 2\alpha_1\alpha_2]$ again, then \cref{C eq:LLN} and \cref{C eq:CLT} hold with $\zeta_1=\zeta_2=1$. The rate of convergence in \cref{C theo:CLT} is
\begin{gather}\label{C eq: rate}
	v_t^{\alpha_1\alpha_2/[d(\alpha_1+\alpha_2) + 2\alpha_1\alpha_2]};
\end{gather}
the rates \cref{D eq: rate} and \cref{C eq: rate} are equivalent. Third, we observe that our remark on the issue of bandwidth selection holds analogously. Last, we note that \cref{C theo:CLT} does not allow for a direct construction of confidence intervals just as \cref{D t: CLT}. In analogy to \cref{D c: CLT}, we also obtain the following standardised version. }
\begin{Corollary}\label{C cor:CLT}
Grant \cref{C a:AC,C a: HR,C a: C-weak,C a: DK,C a: C-strong}. Let $\eta_t=(\eta_{1,t},\eta_{2,t})$ be such that \cref{C eq:LLN} and \cref{C eq:CLT} hold. Suppose either that $\alpha_1,\alpha_2\in\bbn^\ast$ or that $\zeta_1=\zeta_2=0$ in \cref{C eq:CLT}.
Then under any law $\p^\pi$, we have the following stable convergence in law:
\begin{gather*}
	\left(
		\sqrt{\frac{\eta_{1,t}^d\eta_{2,t}^d\int_{0}^tg_1^{\eta_t,x_i}(X_{s})\ds}{\xi_g^2 \hat f^{\eta_t}_{t}(x_{i},y_{i})}}
		\big([\hat f^{\eta_t}_{t} - \hat \gamma^{\eta_t}_{t}](x_{i},y_{i}) - f(x_{i},y_{i})\big)
	\right)_{i\in I}
	\limdstT
	\big( V(x_i,y_i) \big)_{i\in I},
\end{gather*}
where $\xi_g^2=\int g_1(w)^2\dw\int g_2(z)^2\dz$.
\end{Corollary}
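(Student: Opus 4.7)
The plan is to deduce the corollary by combining the stable central limit theorem \cref{C theo:CLT} with Slutsky-type arguments and a ratio limit theorem for the sojourn times. The key observation is that the random normaliser on the left-hand side asymptotically factors as $\sqrt{v_t\eta_{1,t}^d\eta_{2,t}^d}\cdot\sqrt{L_1}/\sigma(x_i,y_i)$, which exactly cancels the mixing weight $\sigma(x_i,y_i)/\sqrt{L_1}$ appearing in the limit \cref{C eq: AsymptoticMixedNormal}.

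First, I would establish $\sqrt{v_t\eta_{1,t}^d\eta_{2,t}^d}\,\hat\gamma^{\eta_t}_t(x_i,y_i) \limpPiT \gamma(x_i,y_i)$. From \cref{C eq:CLT}, the scalar prefactors satisfy $\sqrt{v_t\eta_{1,t}^d\eta_{2,t}^d}\,\eta_{i,t}^{\alpha_i} \to \zeta_i$. It then suffices to prove that the random coefficients in $\hat\gamma^{\eta_t}_t$ are consistent plug-ins. For the first sum, the ratio $\int_0^t (\partial^{m_1}_x g_1^{\eta_t,x})(X_s)\ds / \int_0^t g_1^{\eta_t,x}(X_s)\ds$ converges in probability to $\partial^{m_1}\mu'(x)/\mu'(x)$: integration by parts and change of variables yield $\mu(\partial^{m_1}_x g_1^{\eta_t,x}) = \int g_1(u)\partial^{m_1}\mu'(x+\eta_{1,t}u)\du \to \partial^{m_1}\mu'(x)$, and the Darling--Kac/Harris ratio limit theorem cancels the common Mittag--Leffler factor $L_1$ from numerator and denominator. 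Similarly, $\partial^{m_2}_x\hat f^{\eta_t}_t(x_i,y_i)$ and $\partial^m_y\hat f^{\eta_t}_t(x_i,y_i)$ are of the same ratio form as $\hat f^{\eta_t}_t$ with $g_1$ or $g_2$ replaced by their derivatives; via the L\'evy system identity, integration by parts, and the arguments of \cref{C theo:LLN}, these converge in probability to $\partial^{m_2}_x f(x_i,y_i)$ and $\partial^m_y f(x_i,y_i)$ under \cref{C a: C-strong}. Matching with \cref{C eq: Bias} and combining with \cref{C theo:CLT} yields the bias-corrected stable CLT
\[
\sqrt{v_t\eta_{1,t}^d\eta_{2,t}^d}\,\bigl[(\hat f^{\eta_t}_t - \hat\gamma^{\eta_t}_t)(x_i,y_i) - f(x_i,y_i)\bigr] \limdstT \frac{\sigma(x_i,y_i)}{\sqrt{L_1}}\,V(x_i,y_i).
\]

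Second, I would analyse the random normaliser. The Darling--Kac condition \cref{C a: DK} yields
\[
\frac{1}{v_t}\int_0^t g_1^{\eta_t,x_i}(X_s)\ds \limdstT \mu'(x_i)\,L_1,
\]
\emph{jointly} with the previous display, with the same Mittag--Leffler variable $L_1$ as in \cref{C theo:CLT}; this follows from the functional ratio limit theorem attached to \cref{C a: DK}, together with $\mu(g_1^{\eta_t,x_i}) \to \mu'(x_i)$ by continuity of $\mu'$. Combining with $\hat f^{\eta_t}_t(x_i,y_i) \limpPiT f(x_i,y_i)$ from \cref{C theo:LLN} and the identity $\sigma(x_i,y_i)^2 = \xi_g^2\,f(x_i,y_i)/\mu'(x_i)$ from \cref{C eq: Variance}, the normaliser satisfies
\[
\sqrt{\frac{\eta_{1,t}^d\eta_{2,t}^d\int_0^t g_1^{\eta_t,x_i}(X_s)\ds}{\xi_g^2\,\hat f^{\eta_t}_t(x_i,y_i)}} = \sqrt{v_t\eta_{1,t}^d\eta_{2,t}^d}\cdot\frac{\sqrt{L_1}}{\sigma(x_i,y_i)}\bigl(1+o_{\p^\pi}(1)\bigr).
\]

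Finally, multiplying the two displays and invoking the Slutsky property of stable convergence, the factor $\sqrt{L_1}/\sigma(x_i,y_i)$ cancels the mixing weight, leaving $V(x_i,y_i)$. Joint convergence across $i\in I$ is preserved because \cref{C theo:CLT} is already stated jointly, the $L_1$ factor is common to all $i\in I$ and cancels uniformly, and the remaining random ingredients tend to deterministic limits in probability. The main technical obstacle is the first step: verifying that derivatives of the ratio estimator $\hat f^{\eta_t}_t$ are themselves consistent estimators of the partial derivatives of $f$, and that the analogous ratio-limit argument for the kernel derivatives $\partial^{m_1}_x g_1^{\eta_t,x}$ goes through. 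Both reduce to checking that the derivative kernels $\partial^m g_i$ satisfy the structural hypotheses underlying the proof of \cref{C theo:LLN} -- which is guaranteed by the smoothness assumed in \cref{C a: C-strong}.
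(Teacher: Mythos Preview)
Your proposal is correct and follows essentially the same route as the paper: both argue that $\sqrt{v_t\eta_{1,t}^d\eta_{2,t}^d}\,\hat\gamma^{\eta_t}_t(x,y)\to\gamma(x,y)$ in probability by showing consistency of the derivative-type ingredients (the paper phrases this as ``in analogy to \cref{C lemma: TightFam1}''), and both rely on the joint stable convergence of the sojourn time and the centred statistic to the same Mittag--Leffler variable $L_1$ (the paper invokes \cref{C cor:Joint-CLT} directly, whereas you cite \cref{C theo:CLT} plus the Darling--Kac functional limit). The only cosmetic difference is that the paper packages the joint convergence via \cref{C cor:Joint-CLT} and the continuous mapping theorem in one step, while you spell out the Slutsky factorisation explicitly; substantively the arguments coincide.
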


\section{Proofs for results of \texorpdfstring{\cref{C Estimation}}{Section 3}}\label{C Proofs}
{The notion of a deterministic equivalent of a Markov process plays a crucial role in the limit theory for our estimator.
\begin{Definition}\label{J def: DE}
A non-decreasing function $v:\bbrp \to \bbrp$ is called a \emph{deterministic equivalent} of the Markov process $X$ if the families
\[
	\left\{\mathscr{L}(v(t)^{-1}H_t \mid \p^\pi) : t>0\right\} \mAnd \left\{\mathscr{L}(v(t)H^{-1}_t \mid \p^\pi) : t>0\right\}
\] 
are tight for every probability $\pi$ on $E$ and every non-decreasing additive functional $H$ of $X$ with $0<\E^\mu\!H_1<\infty$.
\end{Definition}
We emphasise the following consequence of Théorème 3 of \citet{Touati1987}: Under Darling--Kac's condition, the function $v$ in \cref{D eq: Darling Kac} is a deterministic equivalent of $X$. 
For every $H$ as in \cref{J def: DE}, furthermore, we have that $(v(t)^{-1}H_{st})_{s\ge0}$ converges in law to a non-trivial process as $\tto$. For Markov processes violating Darling--Kac's condition, the latter convergence may not hold. Nevertheless, \citet{Loecherbach2008} showed that some {deterministic equivalent} already exists when $X$ is Harris recurrent.}

Throughout the proofs, we denote convergence of processes by double arrow (``$\Rightarrow$'') and understand it as convergence on the relevant Skorokhod space. For instance, we denote by $\mathcal{D}(\bbr^d)\coloneqq\mathcal{D}(\bbrp;\bbr^d)$ the space of all \cadlag\ functions from $\bbrp$ to $\bbr^d$ equipped with Skorokhod's topology. For a kernel $F$, a measurable function $g$, and a $\sigma$-finite measure $\nu$, the function $Fg$, the measure $\nu F$, and the number $\nu(g)$ are given by
\begin{gather*}
	F g(x) \coloneqq \int F(x,\dy)g(y), \quad \nu F(A) \coloneqq \int\nu(\dx)F(x,A), \quad 	\nu(g) \coloneqq \int \nu(\dx)g(x).
\end{gather*}
A kernel $F$ is called \emph{strong Feller} if $F g$ is in the class of continuous functions for every bounded $g$. 

This section is organised as follows: First, in \cref{C Birkhoff} we prove a triangular array extension of Birkhoff's theorem for additive functionals. Second, in \cref{C AuxYZ} we introduce auxiliary Markov chains $Z$ and $Z^\prime$ derived from our Markov process $X$. We show that our result from \cref{C Birkhoff} applies to these chains. Some technicalities are put off to \cref{C OnZ}. Third, in \cref{C ProofLLN} we demonstrate a preliminary version of \cref{C theo:LLN} which depends only on $Z$ and $Z^\prime$; we conclude with the final steps in the proof of consistency.
Last, in \cref{C ProofCLT} we demonstrate a preliminary central limit theorem which depends only on $Z$ and $Z^\prime$; we conclude with the final steps in the proof of \cref{C theo:CLT,C cor:CLT}.

\subsection{An extension of Birkhoff's theorem}\label{C Birkhoff}
The theorem presented in this subsection is the underlying key result for our proofs. It is a triangular array extension of Birkhoff's theorem for additive functionals
\citep[cf. Théo\-rème~II.2 of][]{Azema1967}. We prove a rather general version.
\begin{Theorem}\label{C lemma:AdditiveFunctional-LLN}
Let $Z = (Z_k)_{k\in\bbn^\ast}$ be a Markov chain with values in some state space $D$, with invariant probability $\psi$, and with transition kernel $\Psi$.
Assume that the state space is \emph{petite}, that is, there exist a probability $\rho$ on $\bbn^\ast$ and a non-trivial measure $\nu_\rho$ on $D$ such that, for every Borel set $A \subseteq D$,
\[
	\inf_{x\in D} \sum_{k=1}^\infty \rho(k)\Psi^k(x,A) \ge \nu_\rho(A).
\]
Let $(h_{n})_{n\in\bbn^\ast}$ be a sequence of functions such that $(\Psi h_{n})_{n\in\bbn^\ast}$ is uniformly bounded. Let $\xi_n>0$ be such that
\[
	n\xi_{n}\to\infty, \quad \xi_{n}^{-1}\psi(h_{n}) \to c <\infty, \quad (n\xi_{n}^2)^{-1}\psi(|h_n|)\to0 \quad \text{and}\quad (n\xi_{n}^2)^{-1}\psi(h^2_n)\to0
\]
as $\nto$. Then, under every law $\p^\pi$ for some probability $\pi$ on $D$, the following convergence holds uniformly on compacts in probability:
\begin{gather}\label{C eq:AF-LLN}
	G^n_s \LimucpN cs, \qquad \text{where} \quad G^n_{s} \coloneqq \frac{1}{n\xi_{n}} \sum_{k=1}^{\lfloor sn\rfloor} h_{n}(Z_{k}).
\end{gather}
\end{Theorem}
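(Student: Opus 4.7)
The plan is to decompose the centred partial sum into a martingale plus vanishing boundary terms via the Poisson equation associated with $\Psi$, apply Doob's $L^2$-maximal inequality under the stationary law $\p^\psi$, and transfer the result to $\p^\pi$ via the coupling afforded by the petite hypothesis.

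First, I write
\[
    G^n_s - cs \;=\; \frac{1}{n\xi_n}\sum_{k=1}^{\lfloor sn\rfloor}\bigl[h_n(Z_k)-\psi(h_n)\bigr] \;+\; \Bigl[\frac{\lfloor sn\rfloor}{n}\,\frac{\psi(h_n)}{\xi_n} - cs\Bigr].
\]
The deterministic remainder vanishes uniformly on $[0,T]$ by $\psi(h_n)/\xi_n\to c$, so it suffices to control the centred sum. Integrating the minorization $K_\rho(x,\cdot)\ge \nu_\rho$ against $\psi$ yields $\nu_\rho\le\psi$, making $K_\rho:=\sum_k \rho(k)\Psi^k$ a Doeblin operator and hence $\Psi$ uniformly ergodic: $\|\Psi^k(x,\cdot)-\psi\|_{\mathrm{TV}} \le C\beta^k$ for some $C<\infty$ and $\beta\in(0,1)$. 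Because $\varphi_n := \Psi h_n - \psi(h_n)$ is $\psi$-centred and uniformly bounded in $n$, the Poisson series $g_n := \sum_{k\ge 0}\Psi^k\varphi_n$ converges in supremum norm with $\sup_n\|g_n\|_\infty < \infty$, and $u_n := h_n - \psi(h_n) + g_n$ solves $u_n - \Psi u_n = h_n - \psi(h_n)$ and $\psi(u_n)=0$.

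Second, telescoping gives
\[
    \sum_{k=1}^{N}\bigl[h_n(Z_k)-\psi(h_n)\bigr] \;=\; \bigl[u_n(Z_1) - u_n(Z_{N+1})\bigr] + M^n_N,
    \qquad M^n_N := \sum_{k=1}^{N}\bigl[u_n(Z_{k+1}) - \Psi u_n(Z_k)\bigr],
\]
with $M^n$ a martingale whose increments are, under $\p^\psi$, orthogonal and identically distributed. Thus $\mathrm{Var}^\psi(M^n_N) = N\bigl[\psi(u_n^2) - \psi((\Psi u_n)^2)\bigr]$. Using $u_n - \Psi u_n = h_n - \psi(h_n)$ together with $\psi(u_n) = \psi(\Psi u_n) = 0$, this simplifies to $\psi\bigl(h_n(u_n + \Psi u_n)\bigr)$, and observing that $u_n + \Psi u_n - h_n$ is uniformly bounded (since $\Psi h_n$ and $g_n$ are) further reduces it to $\psi(h_n^2) + O(\psi(|h_n|)) = o(n\xi_n^2)$ by hypothesis. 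Doob's $L^2$-maximal inequality therefore yields $\sup_{s\le T}|M^n_{\lfloor sn\rfloor}|/(n\xi_n) \to 0$ in $L^2(\p^\psi)$. The boundary term $|u_n(Z_{N+1})|/(n\xi_n) \le |h_n(Z_{N+1})|/(n\xi_n) + o(1)$ is controlled by a union-bound plus Chebyshev's inequality on $\{|h_n(Z_k)| > \epsilon n\xi_n\}$, again using $\psi(h_n^2) = o(n\xi_n^2)$.

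Third, to pass from $\p^\psi$ to $\p^\pi$ I would invoke the coupling implicit in the Doeblin condition: couple the $\p^\pi$- and $\p^\psi$-versions of $(Z_k)$ so that they coincide from a coupling time $\tau$ with $\p(\tau > k)\le C\beta^k$; the pre-coupling block contributes negligibly to $G^n_s/(n\xi_n)$ because $\tau$ is tight while $n\xi_n\to\infty$. I expect the main technical obstacle to be transferring the stationary $L^2$-bounds to $\p^\pi$ with enough uniformity in the index $n$ --- this is where the petite hypothesis enters in full force, supplying both the minorization needed for coupling and the mixing rate underlying the variance estimate.
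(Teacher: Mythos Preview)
Your approach is essentially the paper's: solve the Poisson equation for $h_n-\psi(h_n)$, form the associated martingale, and apply Doob's inequality under $\p^\psi$. Two points deserve attention, one a genuine gap and one a simplification you are missing.

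\textbf{Periodicity.} The inference ``$K_\rho$ Doeblin, hence $\Psi$ uniformly ergodic with $\lVert\Psi^k(x,\cdot)-\psi\rVert_{\mathrm{TV}}\le C\beta^k$'' fails if $\Psi$ is periodic: a petite state space does not preclude periodicity, and for a periodic chain $\Psi^k\varphi_n$ oscillates rather than decays, so your series $g_n=\sum_{k\ge0}\Psi^k\varphi_n$ diverges. The paper handles this by first decomposing the state space into the cyclic classes $D_0,\dotsc,D_{m-1}$ (Meyn--Tweedie, Theorems~5.4.4 and~10.4.5); the $m$-skeleton restricted to each $D_i$ \emph{is} aperiodic with petite state space, hence uniformly ergodic, and the geometric bound is obtained there. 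The Poisson solution then exists and is bounded, but you must build it through the skeleton, not directly through $\sum\Psi^k$.

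\textbf{Transfer to $\p^\pi$.} Your coupling plan is workable but delicate, precisely for the reason you flag: $h_n$ is not uniformly bounded, so controlling the pre-coupling block uniformly in $n$ needs care. The paper bypasses this entirely. Convergence in probability is equivalent to ``every subsequence has a further a.s.-convergent subsequence''; by Harris recurrence (Meyn--Tweedie, Proposition~17.1.6) the $\p^\pi$-null sets coincide with the $\p^\psi$-null sets on the invariant $\sigma$-field, so a.s.\ convergence under $\p^\psi$ automatically gives a.s.\ convergence under every $\p^\pi$. This one-line reduction is much cleaner than coupling. A smaller point in the same spirit: if you shift your martingale by one step to $\sum_{k}[\hat h_n(Z_k)-\Psi\hat h_n(Z_{k-1})]$, the telescoping boundary becomes $\Psi\hat h_n(Z_0)-\Psi\hat h_n(Z_{\lfloor sn\rfloor})$, which is uniformly bounded by hypothesis on $(\Psi h_n)_n$, and the union-bound argument for $h_n(Z_{N+1})$ becomes unnecessary.
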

\begin{Remark}
If $(h_n)_{n\in\bbn^\ast}$ is non-negative (resp., uniformly bounded), then $n\xi_n\to\infty$ and $\xi_n^{-1}\psi(h_n)\to c<\infty$ already imply $(n\xi_n^2)^{-1}\psi(|h_n|)\to0$ (resp., $(n\xi_n^2)^{-1}\psi(h_n^2)\to0$).
\end{Remark}
\begin{proof}[of \cref{C lemma:AdditiveFunctional-LLN}]
Convergence in probability is equivalent to the property that --~given any subsequence~-- there exists a further subsequence which converges almost surely. By Proposition 17.1.6 of \citet{MeynTweedie}, therefore, 
it is sufficient to prove this theorem under the law $\p^{\psi}$ only.

For each $s\ge0$ and $n\in\bbn^\ast$, we observe $G^n_{s} = H^n_{s} + H^{\prime n}_{s}$, where
\[
	H^n_{s} = \frac{\lfloor sn\rfloor\psi(h_{n})}{n\xi_{n}}
	\mAnd
	H^{\prime n}_{s} = \frac{1}{n\xi_{n}}\sum_{k=1}^{\lfloor sn\rfloor} \big(h_{n}(Z_{k}) - \psi(h_{n})\big).
\]
By assumption, we have $H^n_{s} \to sc$ uniformly in $s$ as $\nto$. It remains to show that $H^{\prime n}_s$ converges to zero uniformly on compacts in probability.

We note $\E^{\psi}[h_{n}(Z_{k})]=\psi(h_{n})$ for every $k,n\in\bbn^\ast$; thus, $\E^{\psi}[H^{\prime n}_{s}]=0$ {for all $s\ge0$.}
Moreover, its second moment satisfies \hbox{$\E^\psi[(H^{\prime n}_{s})^2] = K^{n}_{s} + K^{\prime n}_{s}$,} where
\begin{align*}
	K^{n}_{s}  &= \frac{1}{n^2\xi_{n}^2} \sum_{k=1}^{\lfloor sn\rfloor} \big(\psi(h^2_{n}) - \psi(h_{n})^2\big) \\
\shortintertext{and}
	K^{\prime n}_{s}  &= \frac{2}{n^2\xi_{n}^2} \sum_{k=1}^{\lfloor sn\rfloor-1} \int \psi(\dz)h_{n}(z) \sum_{l=k+1}^{\lfloor sn\rfloor}\big(\Psi^{l-k}h_{n}(z)-\psi(h_{n})\big).
\end{align*}
First, we note 
\begin{gather}\label{C eq:V1}
	|K^{n}_{s}| \le \frac{\lfloor sn\rfloor}{n} \left| \frac{\psi(h^2_{n})}{n\xi^{2}_{n}} - \frac{\psi(h_{n})^2}{n\xi^{2}_{n}} \right| \limN 0.
\end{gather}
Second, let $m\in\bbn^\ast$ denote the period of $Z$. {By Theorems\,5.4.4 and 10.4.5 and Proposition\,5.4.6 of \citet{MeynTweedie}, there exists a partition $D_{0},\dotsc,D_{m-1}$ of the state space such that the restriction of the sampled chain with transition kernel $\Psi^m$ to each set $D_i$ is aperiodic and Harris recurrent with invariant probability $m\psi({}\cdot{}\cap D_{i})$.} For every $i\in\{1,\dotsc,m\}$ and  $z\in D_{i}$, we denote~$j(l,z) \coloneqq (i+l)\bmod m$, where `$\mathrm{mod}$' stands for the modulo operator.
For every $n_0 \in \bbn^\ast$, we observe
\begin{gather}\label{C eq:Summation-ReOrder}
\begin{aligned}
	\sum_{l=1}^{n_0}\big(\Psi^{l}h_{n}(z)-\psi(h_{n})\big) = {} &
	\sum_{k=0}^{\left\lfloor \frac{n_0}{m}\right\rfloor} \sum_{l=1}^m \big( \Psi^{km+l}{h_{n}}_{\vert D_{j(l,z)}}(z) - m\psi({h_{n}}_{\vert D_{j(l,z)}}) \big) \\
	& \hspace{3em}{}+ \sum_{l=1}^{n_0 \bmod m} \big(\Psi^{\left\lfloor \frac{n_0}{m}\right\rfloor m+l}{h_{n}}_{\vert D_{j(l,z)}}(z)-\psi(h_{n})\big).
\end{aligned} 
\end{gather}
Hence,
\begin{align*}
{\left|\sum_{l=1}^{n_0}\big(\Psi^{l}h_{n}(z)-\psi(h_{n})\big)\right|}
		&   \le {}  \sum_{k=0}^\infty \sum_{l=1}^m \left|\Psi^{km+l}{h_{n}}_{\vert D_{j(l,z)}}(z) - m\psi({h_{n}}_{\vert D_{j(l,z)}})\right| + m|\psi(h_n)|.
\end{align*}
{As the state space is petite \wrt\ $\Psi$, so is each $D_i$ \wrt\ $\Psi^m$. By Theorems\,16.2.1 and 16.2.2} of \citet{MeynTweedie}, there exists a $\zeta<1$ such that, for every $l=1,\dotsc,m$ and each $k\in\bbn$,
\begin{gather}\label{C eq:Local-Rho-Existence}
	\sup_{z\in D}\left|\Psi^{km+l}{h_{n}}_{\vert D_{j(l,z)}}(z) - m\psi({h_{n}}_{\vert D_{j(l,z)}})\right| \le \zeta^k.
\end{gather}
Consequently,
\begin{gather}
	|K^{\prime n}_{s}| \le \frac{2\lfloor sn\rfloor m}{n} \left(\frac{\zeta\psi(|h_n|)}{(1-\zeta)n\xi_n^2} + \frac{\psi(|h_n|)|\psi(h_n)|}{n\xi_n^2} \right) \limN 0. \label{C eq:V2}
\end{gather}
By \cref{C eq:V1} and \cref{C eq:V2}, $\E^\psi[(H^{\prime n}_{s})^2] \to 0$, hence $H^{\prime n}_{s} \to 0$ in probability as $\nto$. It remains to show the local uniformity in $s$ of this convergence.

By \cref{C eq:Summation-ReOrder} and \cref{C eq:Local-Rho-Existence}, we have that $h_{n}-\psi(h_{n})$ is in the range of $(\Iota-\Psi)$. Let $\hat{h}_{n}$ denote its pre-image under $(\Iota-\Psi)$ (that is, its \emph{potential}), and define the process $M^n$ by
\[
	M^n_{s} \coloneqq \frac{1}{n\xi_{n}} \sum_{k=1}^{\lfloor sn\rfloor} \big(\hat{h}_{n}(Z_{k}) - \Psi\hat{h}_{n}(Z_{k-1})\big).
\]
We note that $M^n$ is a $\mathscr{G}^n_{s}$-martingale where $\mathscr{G}^n_{s}\coloneqq \sigma(Z_k: k\le\lfloor sn\rfloor)$. 
Since $(\Psi h_{n})_{n\in\bbn^\ast}$ is uniformly bounded by assumption, so is $(\Psi\hat h_n)_{n\in\bbn^\ast}$. As $\nto$, therefore, we have 
$|H^{\prime n}_s-M^n_s| = ({n\xi_{n}})^{-1}{|\Psi\hat{h}_{n}(Z_{0})-\Psi\hat{h}_{n}(Z_{\lfloor sn\rfloor})|}\to0$. Likewise, $\E^\psi[(M^n_{s})^2] \le 2\E^\psi(H^{\prime n}_{s})^2 + 2\E^\psi|H^{\prime n}_s-M^n_s|^2 \to 0$.
By Doob's inequality, therefore, $M^n \Rightarrow 0$ in ucp. Hence, also $H^{\prime n} \Rightarrow 0$ uniformly on compacts in probability as $\nto$.
\end{proof}

\subsection{The auxiliary Markov chains}\label{C AuxYZ}
In this subsection, we construct auxiliary Markov chains $Z$ and $Z^\prime$ to which \cref{C lemma:AdditiveFunctional-LLN} applies.
Once and for all, we fix our points of interest, \ie, $\{(x_i,y_i):i\in I\}$ of \cref{C theo:CLT} such that $\mu'(x_i)>0$ and $F(x_i,E)>0$ for each $i$. Moreover, we choose a compact set $C \supset \{x_i:i\in I\}$ and constants $0<\varepsilon,\varepsilon'<\infty$ such that $\varepsilon < \lVert y_i \rVert < \varepsilon' $ for all $i\in I$ and such that
\begin{gather}
	\inf_{x\in C} F\big(x,\{ y: \varepsilon < \lVert y \rVert < \varepsilon' \}\big) >0. \label{C eq:Choice}
\end{gather}
\vspace{-\baselineskip}
\begin{Remark}
	Under \cref{C a: C-weak,C a: HR}, such a set $C$ always exists by the choice of the points $x_i$ and the continuity of $f$ on $E\times E^\ast$.
\end{Remark}

Let $T_1,T_2,\dotsc$ denote the successive times of jumps of size between $\varepsilon$ and $\varepsilon'$ starting from $C$; that is,
\[
	T_1 \coloneqq \inf\big\{t>0: \varepsilon < \lVert\Delta X_t\rVert < \varepsilon', X_{t-} \in C\big\}
	\quad\text{and}\quad
	T_{n+1} \coloneqq T_1 \circ \theta_{T_n} + T_n.
\]
The conditional expectation \wrt\ the strict past of the stopping times $T_n$ plays a key role. We set
\begin{align*}
	q(x)		& 
		\coloneqq F\big(x,\{ y: \varepsilon < \lVert y \rVert < \varepsilon' \}\big)\mathbbm{1}_C(x), \\ 
	p(x,y)	& 
		\coloneqq
	\begin{cases}
		q^{-1}(x)f(x,y), 	&	\text{if } x\in C \text{ and } \varepsilon<\lVert y\rVert<\varepsilon', \\
		0, 						&	\text{else.}
	\end{cases}
\end{align*}
It is well-known that $T_1<\infty$ a.\,s.\ if, and only if, $\mu(q)>0$. In our case, this holds by \cref{C eq:Choice}. Therefore, $T_n < \infty$ a.\,s.\ for all $n$ as well.
For convenience, we abbreviate the kernel with density $p$ by $\Pi$; its shifted version with density $(x,y)\mapsto p(x,y-x)$ we denote by~$\bar\Pi$. By \citet{Weil1971}, $\Pi$ (resp., $\bar\Pi$) is the conditional transition probability kernel of the jumps at the time(s) $T_n$ in the following sense: On the set $\{T_n < \infty\}$, for every random variable $Y$, measurable function $g$, and all $x$, we have 
\begin{align}
	\E^x[ g(\Delta X_{T_n}) \mid \mathscr{F}_{T_n-}] & =
		\Pi g(X_{T_n-}), \label{C eq:Pi-CondExp} \\ 
	\E^x[ Y\circ \theta_{T_n}\mid \mathscr{F}_{T_n-}] & =
		\bar\Pi\E^\cdot[Y](X_{T_n-}).  \label{C eq:Pibar-CondExp}
\end{align}
We note $\bar\Pi\E^\cdot[Y](x) = \int p(x,y)\E^{x+y}[Y]\dy$.

Let $\mathbf{D}  \coloneqq \mathcal{D}([0,1[{}; E)\times\bbrp{}\times C$. For every $k\in\bbn^\ast$, we define the $\mathbf{D}$-valued and $C$-valued random variables
\[
	Z_{k} \coloneqq \left(s\mapsto X_{(1-s)T_{k-1}+sT_{k}}, T_{k}-T_{k-1}, X_{T_{k}-}\right)
	\quad\text{and}\quad
	Z^\prime_k \coloneqq X_{T_{k}-}.
\]
The corresponding filtration $(\mathscr{G}_k)_{k\in \bbn^\ast}$ is given by $\mathscr{G}_k \coloneqq \mathscr{F}_{T_k-}$. We emphasise that we exclude time $k=0$. From \cref{C eq:Pibar-CondExp} and $T_1<\infty$ a.\,s., we deduce that $Z=(Z_k)_{k\in\bbn^\ast}$ and $Z^\prime=(Z^\prime_k)_{k\in\bbn^\ast}$ are $\mathscr{G}_k$-Markov chains. We denote their transition probabilities by $\Psi$ and $\Phi$, respectively. We refer to \cref{C OnZ} for technical results on these auxiliary Markov chains.

\begin{Lemma}\label{C lemma:OnYZ}
Let $(g,t,x)\in \mathbf{D}$, let $A\subseteq C$ and $\mathbf{A}\subseteq \mathbf{D}$ be measurable, and let $k\in\bbn^\ast$. Then
	\begin{gather}
		\Phi(x,A) = \bar\Pi \p^\cdot(Z^\prime_1\in A)(x),	\label{C eq:Upsilon}\\
		\Psi^{k+1}((g,t,x),\mathbf{A}) = \Phi^k\Psi(x,\mathbf{A}).	\label{C eq:PsiK}
	\end{gather}
\end{Lemma}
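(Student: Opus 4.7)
The plan is to derive both identities from the shift relations
\[
  Z_{k+1} = Z_1 \circ \theta_{T_k}
  \quad\text{and}\quad
  Z^\prime_{k+1} = Z^\prime_1 \circ \theta_{T_k},
\]
combined with the conditional expectation formula \cref{C eq:Pibar-CondExp}. These shift relations hold coordinate-wise: since $T_{k+1} - T_k = T_1 \circ \theta_{T_k}$, one has both $X_{(1-s)T_k + sT_{k+1}} = X_{sT_1} \circ \theta_{T_k}$ and $X_{T_{k+1}-} = X_{T_1-} \circ \theta_{T_k}$.

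For \cref{C eq:Upsilon}, I apply \cref{C eq:Pibar-CondExp} with $Y = \mathbbm{1}_A(Z^\prime_1)$, yielding
\[
  \E^\pi\bigl[\mathbbm{1}_A(Z^\prime_{k+1}) \mid \mathscr{F}_{T_k-}\bigr] = \bar\Pi\,\p^\cdot(Z^\prime_1 \in A)(X_{T_k-}).
\]
Since $Z^\prime_k = X_{T_k-}$ is $\mathscr{F}_{T_k-}$-measurable, the tower property together with the Markov property of $(Z^\prime_k)$ identifies the left-hand side with $\Phi(X_{T_k-}, A)$, from which the first identity follows.

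For \cref{C eq:PsiK}, the analogous computation with $Y = \mathbbm{1}_{\mathbf{A}}(Z_1)$ gives $\Psi((g,t,x), \mathbf{A}) = \bar\Pi\,\E^\cdot[\mathbbm{1}_{\mathbf{A}}(Z_1)](x)$, so $\Psi(z, \mathbf{A})$ depends on $z$ only through its last coordinate --- justifying the notation $\Psi(x, \mathbf{A})$. Moreover, specialising $\mathbf{A}$ to a product set $\mathcal{D}([0,1[{};E) \times \bbrp \times A$ and comparing with \cref{C eq:Upsilon} shows that the push-forward of $\Psi(x, \cdot)$ by the last-coordinate projection equals $\Phi(x, \cdot)$. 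With these two facts in hand, \cref{C eq:PsiK} follows by induction on $k$: the base case $k=0$ is immediate, and in the inductive step Chapman--Kolmogorov $\Psi^{k+1} = \Psi \cdot \Psi^k$, the inductive hypothesis (so that $\Psi^k(z_1, \mathbf{A})$ depends on $z_1$ only through its last coordinate), and the push-forward identity combine to give
\[
  \Psi^{k+1}((g,t,x), \mathbf{A}) = \int \Psi(x, dz_1)\,\Psi^k(z_1, \mathbf{A}) = \int \Phi(x, dx_1)\,\Phi^{k-1}\Psi(x_1, \mathbf{A}) = \Phi^k \Psi(x, \mathbf{A}).
\]

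The main obstacle is purely bookkeeping: one must track at each step that $\Psi$'s first-argument dependence collapses to the last coordinate, and invoke the image-measure identity only after that collapse has been verified. The analytic content, however, is entirely encoded in the single conditional-expectation formula \cref{C eq:Pibar-CondExp}, so beyond organising the induction no further probabilistic input is needed.
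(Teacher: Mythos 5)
Your proof is correct and takes essentially the same route as the paper, which deduces both identities directly from \cref{C eq:Pibar-CondExp} (applied with $Y=\mathbbm{1}_A(Z^\prime_1)$, resp.\ $Y=\mathbbm{1}_{\mathbf{A}}(Z_1)$) together with the Markov property; your version merely spells out the shift relations $Z_{k+1}=Z_1\circ\theta_{T_k}$, the collapse of $\Psi$'s first-argument dependence to the last coordinate, and the induction that the paper leaves implicit.
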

\begin{proof}
We deduce \cref{C eq:Upsilon} and \cref{C eq:PsiK} directly from \cref{C eq:Pibar-CondExp} and the Markov property of $X$, respectively.
\end{proof}
By \cref{C lemma:OnYZ}, \cref{C lemma:AdditiveFunctional-LLN} applies to $Z^\prime$ and, also, to $Z$.
\begin{Lemma}\label{C lemma: C-petite}
Grant \cref{C a: C-weak,C a: HR}. Then the Markov chain $Z^\prime$ is strong Feller. Its state space $C$ is \emph{petite} with respect to $\Phi$.
\end{Lemma}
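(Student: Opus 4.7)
My plan is to treat the strong Feller property and the petiteness of $C$ separately, with the first feeding into the second.

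For the strong Feller claim, I would start from \cref{C eq:Upsilon}, which gives $\Phi g(x) = \int p(x,y)\E^{x+y}[g(Z^\prime_1)]\dy$ for every bounded measurable $g$. The change of variables $z=x+y$ yields
\begin{equation*}
  \Phi g(x) = \int p(x, z-x)\E^z[g(Z^\prime_1)]\dz.
\end{equation*}
For $x\in C$, the integrand is supported on the annulus $\{z: \varepsilon<\lVert z-x\rVert<\varepsilon'\}$, which stays inside a common bounded set as $x$ ranges over the compact set $C$. The joint continuity of $f$ on $E\times E^\ast$ from \cref{C a: C-weak}, combined with the continuity and strict positive lower bound of $q$ on $C$ coming from \cref{C eq:Choice}, then lets me conclude continuity of $\Phi g$ by dominated convergence, i.e.\ the strong Feller property.

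For petiteness of $C$, the plan is to invoke the classical Meyn--Tweedie framework: a strong Feller chain (a fortiori a $T$-chain) has every compact set petite as soon as the chain is $\psi$-irreducible. I would take $\psi$ to be the restriction of $\mu$ to $C$, which is non-trivial by \cref{C a: HR} together with $\mu'(x_i)>0$ and continuity of $\mu'$. To establish $\psi$-irreducibility, I would apply the Lévy-system identity \cref{C eq:Levy-system} with $g(x,y)=\mathbbm{1}_A(x)\mathbbm{1}_{\{\varepsilon<\lVert y\rVert<\varepsilon'\}}$ for $A\subseteq C$ with $\mu(A)>0$: this expresses $\E^x\sum_{k}\mathbbm{1}_A(X_{T_k-})$ as $\E^x\int_0^\infty \mathbbm{1}_A(X_s)q(X_s)\ds$, which is infinite by Harris recurrence and $\inf_C q>0$. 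Hence $\sum_k \Phi^k(x,A)>0$ for every $x\in C$.

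The main obstacle will be the passage from this pointwise positivity to the genuine uniform minorisation $\sum_k \rho(k)\Phi^k(x,\cdot)\ge\nu_\rho(\cdot)$ demanded by the definition of petite. This is precisely where I would lean on strong Feller together with compactness of the state space: the Meyn--Tweedie machinery for $T$-chains promotes $\psi$-irreducibility to petite-ness of compact sets, in particular of $C$ itself. Once that abstract passage is invoked, the proof is complete; the genuine work lies in the Lévy-system transfer of Harris recurrence from $X$ to the embedded chain $Z^\prime$, and in verifying that the strong Feller property is enough to run the Meyn--Tweedie argument in this embedded-chain context.
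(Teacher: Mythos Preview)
Your proposal is correct and follows essentially the same route as the paper. For the strong Feller property, the paper factors through $\bar\Pi$ (showing $\bar\Pi$ is strong Feller and then composing with the bounded map $z\mapsto\p^z(Z'_1\in\cdot)$), whereas you unfold $\Phi g$ directly; both rest on the continuity and boundedness of $p$ on $C\times\{\varepsilon<\lVert y\rVert<\varepsilon'\}$ and dominated convergence. For petiteness, both arguments establish that (a measure equivalent to) $\mu\vert_C$ is an irreducibility measure for $Z'$ and then invoke the Meyn--Tweedie result that compact sets are petite for a $\psi$-irreducible $T$-chain; your L\'evy-system computation makes explicit what the paper compresses into ``by the same argument as for the equivalence of $T_1<\infty$ a.s.\ and $\mu(q)>0$''.
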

\begin{proof}
Let $f$ be a bounded Borel function and $x_{0}\in C$. Under \cref{C a: C-weak}, we deduce from Lebesgue's dominated convergence theorem that $q$ is continuous. By \cref{C eq:Choice}, we have that $x\mapsto p(x,y)$ is also continuous for every $y$ and $\sup\{p(x,y): x\in C, y\in E\}<\infty$. Again by Lebesgue's dominated convergence theorem, we conclude that
\[
	\lim_{x\to x_{0}} \bar\Pi g(x) = \lim_{x\to x_{0}} \int p(x,y) g(x+y) \dy = \int p(x_{0},y)g(x+y)\dy = \bar\Pi g(x_{0}).
\]
By \cref{C eq:Upsilon}, consequently, $\Phi = \bar\Pi\p^\cdot(Z^\prime_1\in\cdot{})$ is strong Feller on $C$.

By the same argument as for the equivalence of $T_1<\infty$ a.\,s.\ and $\mu(q)>0$, we have that the measure with $\mu$-density $q$ is an irreducibility measure of $Z^\prime$. Under \cref{C a: HR}, it is absolutely continuous. Thus, its support has non-empty interior. By Theorem\,6.2.5~(ii) of \citet{MeynTweedie}, therefore, every compact set -- hence the state space $C$ of $Z^\prime$ -- is petite with respect to $\Phi$.
\end{proof}
\begin{Corollary}
Grant \cref{C a: C-weak,C a: HR}. Then the state space $\mathbf{D}$ of $Z$ is petite \wrt\ $\Psi$.
\end{Corollary}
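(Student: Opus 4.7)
The plan is to lift the petiteness of $C$ with respect to $\Phi$ (provided by \cref{C lemma: C-petite}) to petiteness of $\mathbf{D}$ with respect to $\Psi$, by a one-step shift argument based on the identity
\[
\Psi^{k+1}((g,t,x),\mathbf{A}) = \Phi^k\Psi(x,\mathbf{A})
\]
from \cref{C eq:PsiK}. The crucial point, already implicit in \cref{C eq:PsiK}, is that $\Psi((g,t,x),\mathbf{A})$ depends on the starting triple only through its last coordinate $x\in C$. This is a consequence of the strong Markov property of $X$ at $T_k$ together with \cref{C eq:Pi-CondExp,C eq:Pibar-CondExp}: the next excursion $Z_{k+1}$ is a functional of the post-$T_k$ trajectory, whose conditional law given $\mathscr{G}_k$ depends only on $X_{T_k-}=x$. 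Hence there is a well-defined kernel $\tilde\Psi:C\times\mathscr{B}(\mathbf{D})\to[0,1]$ such that $\Psi((g,t,x),\mathbf{A})=\tilde\Psi(x,\mathbf{A})$ for every $(g,t,x)\in\mathbf{D}$ and every Borel $\mathbf{A}\subseteq\mathbf{D}$.

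Next, by \cref{C lemma: C-petite}, there exist a probability $\rho'$ on $\bbn^\ast$ and a non-trivial measure $\nu_{\rho'}$ on $C$ with
\[
\inf_{x\in C}\sum_{k=1}^\infty\rho'(k)\Phi^k(x,A)\ge\nu_{\rho'}(A)
\]
for every Borel $A\subseteq C$. I would then set $\rho(1)\coloneqq 0$ and $\rho(k+1)\coloneqq\rho'(k)$ for $k\ge1$ (this is again a probability on $\bbn^\ast$), and define $\nu_\rho\coloneqq\nu_{\rho'}\tilde\Psi$, i.e.\ $\nu_\rho(\mathbf{A})=\int\nu_{\rho'}(\dz)\,\tilde\Psi(z,\mathbf{A})$.

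Using \cref{C eq:PsiK}, the shifted summation identity, and Fubini, for every $(g,t,x)\in\mathbf{D}$ and every Borel $\mathbf{A}\subseteq\mathbf{D}$,
\[
\sum_{k=1}^\infty\rho(k)\Psi^k((g,t,x),\mathbf{A})=\sum_{k=1}^\infty\rho'(k)\Phi^k\tilde\Psi(x,\mathbf{A})=\int\Bigl(\sum_{k=1}^\infty\rho'(k)\Phi^k(x,\dz)\Bigr)\tilde\Psi(z,\mathbf{A})\ge\nu_{\rho'}\tilde\Psi(\mathbf{A})=\nu_\rho(\mathbf{A}),
\]
taking the infimum over $(g,t,x)\in\mathbf{D}$ on the left. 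Finally, $\nu_\rho$ is non-trivial because $\tilde\Psi(z,\mathbf{D})=1$ for all $z\in C$, so $\nu_\rho(\mathbf{D})=\nu_{\rho'}(C)>0$. This establishes petiteness of $\mathbf{D}$ with respect to $\Psi$.

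The only step that requires care is the justification that $\Psi$ factors through $C$ via $\tilde\Psi$; everything else is a straightforward manipulation of the petiteness inequality. I do not expect any serious obstacle beyond stating this factorisation cleanly.
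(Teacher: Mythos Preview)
Your proof is correct and follows essentially the same approach as the paper: shift the sampling distribution by one step (setting $\rho(1)=0$, $\rho(k+1)=\rho'(k)$), apply \cref{C eq:PsiK}, and take $\nu_{\rho'}\Psi$ as the minorising measure on $\mathbf{D}$. The only difference is cosmetic: you introduce the explicit kernel $\tilde\Psi:C\to\mathbf{D}$, whereas the paper simply writes $\Psi(x,\mathbf{A})$ as a mild abuse of notation already implicit in \cref{C eq:PsiK}; the factorisation you spell out is exactly the case $k=0$ of that identity, so no separate justification is needed.
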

\begin{proof}
By \cref{C lemma: C-petite}, there exists a probability $\rho$ on $\bbn^\ast$ and a non-trivial measure $\nu_\rho$ on $C$ such that, for every Borel set $A \subseteq C$,
\[
	\inf_{x\in C} \sum_{k=1}^\infty \rho(k)\Phi^k(x,A) \ge \nu_b(A).
\]
Let $(g,t,x) \in \mathbf{D}$, $\mathbf{A}\subseteq \mathbf{D}$ be measurable, and $\tilde\rho$ be the probability on $\bbn^\ast$ given by $\tilde\rho(1)=0$ and $\tilde\rho(k)=\rho(k-1)$ for $k>1$. By \cref{C eq:PsiK}, then
\begin{align*}
	\sum_{k=1}^\infty \tilde\rho(k)\Psi^k((g,t,x),A) &= \sum_{k=1}^\infty \rho(k) \Phi^k\Psi(x, A) \ge \nu_{\rho}\Psi(A) \eqqcolon \tilde\nu_{\tilde\rho}(A).
\end{align*}
Since $\nu_{\rho}$ is non-trival, so is $\tilde\nu_{\tilde\rho}$.
\end{proof}

\subsection{Proof of \texorpdfstring{\cref{C theo:LLN}}{Theorem 3.6}}\label{C ProofLLN}
Throughout the remainder of \cref{C Proofs}, we work under the law $\p^\pi$ for some initial probability $\pi$ on $E$ and, for presentational purposes, we suppose w.\,l.\,o.\,g. that $\mu(q)=1$. 

We consider the processes $G^{n,\eta}$, $J^{n,\eta}$ and $S^{n,\eta}$ given by
\begin{gather}
	G^{n,\eta}_{s}(x,y) \coloneqq \frac{1}{n}\sum_{k=1}^{\lfloor sn\rfloor} g^{\eta,x}_{1}(X_{T_{k}-})g^{\eta,y}_{2}(\Delta X_{T_{k}}), \label{C eq: Gn def} \\
	J^{n,\eta}_{s}(x) \coloneqq \frac{1}{n}\sum_{k=1}^{\lfloor sn\rfloor} g^{\eta,x}_{1}(X_{T_{k}-}) \mAnd
	S^{n,\eta}_{s}(x)  \coloneqq \frac{1}{n}\int_{0}^{T_{\lfloor sn\rfloor}}g^{\eta,x}_{1}(X_{r})\dr. \label{C eq: JnSn def}
\end{gather}
We emphasise that these processes are of the form $\sum_{k=1}^{\lfloor sn\rfloor}h_n(Z_k)$ where $Z$ is the auxiliary Markov chain defined in \cref{C AuxYZ}.
We utilise the following preliminary condition as $\nto$ (cf., \cref{C eq:LLN}):
\begin{gather}
	n\eta_{1,n}^d\eta_{2,n}^d \to \infty, \mAnd \eta_{1,n}\to 0,\eta_{2,n} \to 0. \label{C eq:pLLN}
\end{gather}
\vspace{-\baselineskip}
\begin{Lemma}\label{C lemma:JnSn}
Grant \cref{C a:AC,C a: C-weak,C a: HR}. Let $\eta_n=\eta_{1,n}$ be such that \cref{C eq:pLLN} holds. 
Then the following convergences hold uniformly on compacts in probability:
\[
	J_s^{n,\eta_{n}}(x) \LimucpN s q(x)\mu'(x)
		\quad\text{and}\quad
	S_s^{n,\eta_{n}}(x) \LimucpN s \mu'(x). 
\]
\end{Lemma}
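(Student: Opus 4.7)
The plan is to realize both $J^{n,\eta_n}(x)$ and $S^{n,\eta_n}(x)$ as normalized partial sums along the auxiliary chains of \cref{C AuxYZ}, and then invoke \cref{C lemma:AdditiveFunctional-LLN} with $\xi_n=1$. For $J^{n,\eta_n}$ this representation is immediate: $J^{n,\eta_n}_s(x)=\frac{1}{n}\sum_{k=1}^{\lfloor sn\rfloor}h_n(Z^\prime_k)$ with $h_n(z)=g_1^{\eta_n,x}(z)$, and the relevant transition kernel is $\Phi$. For $S^{n,\eta_n}$ we split the time integral across consecutive $T_k$'s so that $S^{n,\eta_n}_s(x)=\frac{1}{n}\sum_{k=1}^{\lfloor sn\rfloor}\tilde h_n(Z_k)$, where $\tilde h_n(Z_k)=\int_{T_{k-1}}^{T_k}g_1^{\eta_n,x}(X_r)\,\dr$ is a measurable function of the path--duration--endpoint triple $Z_k$ and the relevant transition kernel is $\Psi$. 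The petite-state-space hypothesis required by \cref{C lemma:AdditiveFunctional-LLN} is already supplied by \cref{C lemma: C-petite} and its corollary.

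To verify the mean conditions $\psi^\prime(h_n)\to q(x)\mu^\prime(x)$ and $\psi(\tilde h_n)\to\mu^\prime(x)$, we first identify the invariant probabilities. Combining \cref{C eq:Levy-system} with Harris recurrence, the process $\sum_{T_k\le t}f(X_{T_k-})-\int_0^t f(X_s)q(X_s)\,\ds$ is a martingale, whence a ratio ergodic argument yields $\psi^\prime(f)=\mu(fq)/\mu(q)$; under the normalization $\mu(q)=1$ this reads $\psi^\prime(\dz)=q(z)\mu^\prime(z)\,\dz$. Likewise, writing $\sum_{k=1}^n\int_{T_{k-1}}^{T_k}g(X_r)\,\dr=\int_0^{T_n}g(X_r)\,\dr$ and using $T_n/n\to 1/\mu(q)=1$ together with the ratio ergodic theorem gives $\psi(\tilde h_n)=\int g_1^{\eta_n,x}(z)\mu^\prime(z)\,\dz$. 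A change of variables together with continuity of $q\mu^\prime$ and $\mu^\prime$ at $x$ (by \cref{C a: C-weak}) then deliver the claimed limits.

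The second-moment and uniform-boundedness hypotheses are the technical core. For $J$ the pointwise bound $h_n^2\le\|g_1\|_\infty\eta_{1,n}^{-d}h_n$ gives $\psi^\prime(h_n^2)=O(\eta_{1,n}^{-d})$, and since \cref{C eq:pLLN} combined with $\eta_{2,n}\to 0$ forces $n\eta_{1,n}^d\to\infty$, we obtain $n^{-1}\psi^\prime(h_n^2)\to 0$; uniform boundedness of $\Phi h_n$ follows from \cref{C eq:Upsilon} because the density $p$ is bounded on its support (by continuity of $f$ on $C\times\{\varepsilon<\|y\|<\varepsilon'\}$), yielding $\Phi h_n(z)=\int p(z,y)g_1^{\eta_n,x}(z+y)\,\dy\le\|p\|_\infty$. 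For $S$ we plan to use the Cauchy--Schwarz-style bound $\tilde h_n(Z_1)^2\le\|g_1\|_\infty\eta_{1,n}^{-d}(T_1-T_0)\tilde h_n(Z_1)$ to reduce $\psi(\tilde h_n^2)=O(\eta_{1,n}^{-d})$ to the estimate $\E^\psi[(T_1-T_0)\tilde h_n(Z_1)]=O(1)$, and to control $\Psi\tilde h_n$ via the killed resolvent $w\mapsto\E^w\!\int_0^{T_1}g_1^{\eta_n,x}(X_r)\,\dr$ uniformly in $w$ and $n$.

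The main obstacle is precisely this last step for the $S$-integral: both required estimates demand potential-theoretic control of $X$ killed at $T_1$, together with an understanding of how the excursion length $T_1-T_0$ interacts with the occupation measure of a small ball around $x$. We expect to derive them from the technical analysis of $Z$ deferred to \cref{C OnZ}, leveraging smoothness of the transition density $p$ and Harris recurrence of $X$. Once these bounds are in hand, \cref{C lemma:AdditiveFunctional-LLN} applied separately to $h_n$ and $\tilde h_n$ delivers both claimed ucp convergences directly.
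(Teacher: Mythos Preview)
Your overall strategy is the paper's: write $J^{n,\eta_n}$ and $S^{n,\eta_n}$ as partial sums along the auxiliary chains and apply \cref{C lemma:AdditiveFunctional-LLN}. The gap is in the verification of the hypothesis that $(\Phi h_n)_n$ (resp.\ $(\Psi\tilde h_n)_n$) be uniformly bounded. Your displayed formula $\Phi h_n(z)=\int p(z,y)\,g_1^{\eta_n,x}(z+y)\,\dy$ is $\bar\Pi h_n(z)$, not $\Phi h_n(z)$: by \cref{C eq:Upsilon} one has $\Phi h_n(z)=\int p(z,y)\,\E^{z+y}[h_n(X_{T_1-})]\,\dy$, which involves the full law of $X_{T_1-}$ from the post-jump point. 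With your choice $\xi_n=1$, $h_n=g_1^{\eta_n,x}$ (so $\|h_n\|_\infty\asymp\eta_n^{-d}$), a uniform bound on $\E^w[g_1^{\eta_n,x}(X_{T_1-})]$ would amount to a uniform density bound for $X_{T_1-}$, which is not available under \cref{C a:AC,C a: C-weak,C a: HR}. The same obstruction hits $\Psi\tilde h_n$: you would need $\sup_w R_q^\ast g_1^{\eta_n,x}(w)$ bounded in $n$, i.e.\ a uniform occupation-density estimate for the killed process on shrinking balls.

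The paper avoids this by taking $\xi_n=\eta_n^d$ and the \emph{unnormalised} $h_n(z)=g_1((z-x)/\eta_n)$, resp.\ $h_n(g,t,z)=t\int_0^1 g_1((g(s)-x)/\eta_n)\,\ds$. For $J$, $h_n$ is itself uniformly bounded, so $\Phi h_n$ is trivially bounded. For $S$, $|h_n|\le\|g_1\|_\infty\,t\int_0^1\mathbbm{1}_C(g(s))\,\ds$, and hence $\Psi h_n$ is dominated by $\|g_1\|_\infty R_q^\ast\mathbbm{1}_C\le\|g_1\|_\infty/\inf_C q$ (killing at rate $\ge\inf_C q$ while in $C$). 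The mean condition $\xi_n^{-1}\psi(h_n)\to c$ and the $\psi(|h_n|)$ condition follow from \cref{C cor:upsilon-psi-invariant}, \cref{C p:Representation} and \cref{C lemma:Mu-Invariant} exactly as you outline; for the second moment of the $S$-functional the paper applies \cref{C cor:kthMoment-Inequality} with $k=2$ directly, giving $\psi(h_n^2)\le 2\|g_1\|_\infty(\inf_C q)^{-1}\mu(|g_1((\cdot-x)/\eta_n)|)=O(\eta_n^d)$, which is cleaner than your proposed detour through $\E^\psi[(T_1-T_0)\tilde h_n(Z_1)]$.
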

\begin{proof}
Let $\psi$ and $\varphi$ denote the invariant probabilities of $Z$ and $Z^\prime$, respectively. We apply \cref{C lemma:AdditiveFunctional-LLN}:

\emph{(i)} We note that $J^{n,\eta_{n}}(x)$ is of the form \cref{C eq:AF-LLN} with $\xi_n=\eta_{n}^d$ and $h_n: C\to\bbr$ given by $h_n(z)=g_1((z-x)/\eta_n)$; $(h_n)_{n\in\bbn^\ast}$ is uniformly bounded. 
 By \cref{C cor:upsilon-psi-invariant} where $\mu(q)=1$, $q$ is the $\mu$-density of $\varphi$.
Also $q$ and $\mu'$ are continuous. 
By Lebesgue's differentiation theorem, thus,
\[
	\eta_{n}^{-d}\varphi(h_n) = \eta_{n}^{-d}\int \mu(\dz)q(z)g_1((z-x)/\eta_n) \limN q(x)\mu'(x).
\]
Since $n\eta_{n}^d\to\infty$, likewise, $(n\eta_n^{2d})^{-1}\varphi(|h_n|) \to 0$ as $\nto$.
\Halmos

\emph{(ii)} We note that $S^{n,\eta_n}(x)$  is of form \cref{C eq:AF-LLN} with $\xi_n=\eta_n^d$ and $h_n: \mathbf{D}\to\bbr$ given by $h_n(g,t,z) = t\int_0^{1} g_1((g(s)-x)/\eta_n)\ds$. By \cref{C cor:upsilon-psi-invariant}, $\psi=\varphi\Psi$. By \cref{C p:Representation,C lemma:Mu-Invariant}, thus,
\[
	\eta_{n}^{-d}\varphi(h_n) = \eta_{n}^{-d}\int\mu(\dz)g_1((z-x)/\eta_n) \limN \mu'(x).
\]
Likewise, $(n\eta_n^{2d})^{-1}\varphi(|h_n|) \le (n\eta_n^{2d})^{-1}\int\mu(\dz)|g_1((z-x)/\eta_n)|\to 0$. By \cref{C cor:kthMoment-Inequality}, in addition, we observe
\[
	\frac{\psi(h^2_n)}{n\eta_n^{2d}} \le \frac{2\lVert g_1\rVert_\infty}{\inf_{z\in C} q(z)} \frac{\int\mu(\dz)|g_1((z-x)/\eta_n)|}{n\eta_n^{2d}} \limN 0.
\]
\end{proof}

\begin{Lemma}\label{C lemma:Gn-LLN}
Grant \cref{C a:AC,C a: C-weak,C a: HR}. Let $\eta_n=(\eta_{1,n},\eta_{2,n})$ be such that \cref{C eq:pLLN} holds.
Then the following convergence holds uniformly on compacts in probability:\begin{equation*}
	G_s^{n,\eta_{n}}(x,y) \LimucpN sf(x,y)\mu'(x).  
\end{equation*}
\end{Lemma}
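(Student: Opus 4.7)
The plan is to isolate the randomness in $\Delta X_{T_k}$ via a martingale decomposition keyed to the conditional expectation identity \cref{C eq:Pi-CondExp}, and then to reduce the remaining predictable part to the process $J^{n,\eta_n}$ handled by \cref{C lemma:JnSn}. I set
\begin{gather*}
	\tilde G^{n,\eta_n}_s(x,y) \coloneqq \frac{1}{n}\sum_{k=1}^{\lfloor sn\rfloor} g_1^{\eta_n,x}(X_{T_k-})\,\Pi g_2^{\eta_n,y}(X_{T_k-}), \quad M^n_s \coloneqq G^{n,\eta_n}_s(x,y) - \tilde G^{n,\eta_n}_s(x,y),
\end{gather*}
so that by \cref{C eq:Pi-CondExp} the increments in $k$ of $M^n$ form a $(\mathscr{G}_k)$-martingale-difference sequence. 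Arguing exactly as at the start of the proof of \cref{C lemma:AdditiveFunctional-LLN}, it suffices to establish ucp convergence when $Z^\prime$ is in its stationary regime, i.e.\ $Z^\prime_k \sim \varphi$ for every $k$; here $\varphi$ denotes the invariant probability of $Z^\prime$, which has $\mu$-density $q$ by \cref{C cor:upsilon-psi-invariant}.

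First I would dispose of $M^n$ via Doob's maximal inequality. Orthogonality of the martingale increments together with the conditional identity and stationarity yield
\begin{gather*}
	\E\bigl[(M^n_s)^2\bigr] = \frac{1}{n^2}\sum_{k=1}^{\lfloor sn\rfloor}\E\bigl[(g_1^{\eta_n,x}(X_{T_k-}))^2\,\mathrm{Var}(g_2^{\eta_n,y}(\Delta X_{T_k})\mid\mathscr{F}_{T_k-})\bigr].
\end{gather*}
A change of variables and the boundedness of $p$ on the compact $C\times\{w:\varepsilon\le\lVert w\rVert\le\varepsilon'\}$ give $\Pi(g_2^{\eta_n,y})^2(z) = O(\eta_{2,n}^{-d})$ uniformly in $z \in C$; combined with the estimate $\E\bigl[(g_1^{\eta_n,x}(X_{T_k-}))^2\bigr] = O(\eta_{1,n}^{-d})$ (obtained from boundedness of the density $q\mu'$ of $\varphi$ on the support of $g_1^{\eta_n,x}$), this produces $\E[(M^n_s)^2] \lesssim s\,/\,(n\eta_{1,n}^d\eta_{2,n}^d)$, which tends to zero by \cref{C eq:pLLN}. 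Doob then yields $M^n\Rightarrow0$ ucp.

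Next I would reduce $\tilde G^{n,\eta_n}$ to $J^{n,\eta_n}$. The uniform continuity of $p$ on $C\times\{w:\varepsilon\le\lVert w\rVert\le\varepsilon'\}$ together with $\int g_2(u)\,\du = 1$ implies
\begin{gather*}
	\Pi g_2^{\eta_n,y}(z) = \frac{1}{q(z)}\int f(z,\,y+\eta_{2,n}u)\,g_2(u)\,\du \longrightarrow \frac{f(z,y)}{q(z)} = p(z,y)
\end{gather*}
uniformly in $z \in C$ as $\eta_{2,n}\to0$. On the support of $g_1^{\eta_n,x}(\cdot)$, continuity of $p(\cdot,y)$ furthermore allows replacing $p(X_{T_k-},y)$ by $p(x,y) = f(x,y)/q(x)$ up to an error $o(1)$. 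Since $J^{n,\eta_n}_s(x)$ is ucp-bounded on compact $s$-intervals by \cref{C lemma:JnSn}, these two uniform approximations combined give
\begin{gather*}
	\tilde G^{n,\eta_n}_s(x,y) = \frac{f(x,y)}{q(x)}\,J^{n,\eta_n}_s(x) + o_\p(1) \; \Rightarrow \; \frac{f(x,y)}{q(x)}\cdot s\,q(x)\,\mu'(x) = s\,f(x,y)\,\mu'(x)
\end{gather*}
ucp by \cref{C lemma:JnSn}; combined with $M^n\Rightarrow0$ this yields the claim.

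The principal obstacle is the martingale variance bound: both $g_1^{\eta_n,x}$ and $g_2^{\eta_n,y}$ are singular of orders $\eta_{1,n}^{-d}$ and $\eta_{2,n}^{-d}$, so a naïve estimate would yield $\eta_{1,n}^{-2d}\eta_{2,n}^{-2d}$. Recovering the single power $\eta_{1,n}^{-d}\eta_{2,n}^{-d}$ --~precisely the rate matched by \cref{C eq:pLLN}~-- requires exploiting the integration against the density $p$ on the jump side and against the invariant probability $\varphi$ on the state side.
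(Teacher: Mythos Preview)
Your proof is correct and follows essentially the same decomposition as the paper: split $G^{n,\eta_n}$ into the compensator $\tilde G^{n,\eta_n}$ (the paper's $H^{n,\eta_n}$) plus a martingale, reduce the compensator to $p(x,y)J^{n,\eta_n}$ via uniform continuity of $p$, and show the martingale vanishes. The only technical difference is how the martingale is killed: you reduce to the stationary regime and bound $\E[(M^n_s)^2]$ directly via Doob's inequality, whereas the paper bounds the predictable quadratic variation $\langle M^n,M^n\rangle_s$ pathwise (by $\tfrac{1}{n\eta_{1,n}^d\eta_{2,n}^d}$ times a $J'$-type functional) and invokes Theorem~VIII.3.33 of \citet{jacodshir}; both routes exploit exactly the same $O(\eta_{1,n}^{-d}\eta_{2,n}^{-d})$ gain you highlight in your final paragraph.
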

\vspace{-\baselineskip}
\begin{proof}
Let $(\mathscr{H}^n_{s})_{s\ge0}$ be the filtration given by $\mathscr{H}^n_{s}\coloneqq \mathscr{F}_{T_{\lfloor sn\rfloor+1}-}$.
By \cref{C eq:Pi-CondExp}, we have $\E[\Delta G^{n,\eta_n}_{s} \mid \mathscr{H}^n_{s-}] = g^{\eta_n,x}_1(Z^\prime_{k})\Pi g_{2}^{\eta_{n},y}(Z^\prime_k)$ for $s=k/n$. Thus, the compensator of $G^{n,\eta_{n}}$ \wrt\ $(\mathscr{H}^n_{s})_{s\ge0}$ is given by $H^{n,\eta_n}_{s} \coloneqq {n}^{-1} \sum_{k=1}^{\lfloor sn\rfloor} g^{\eta_n,x}_1(Z^\prime_{k})\Pi g_{2}^{\eta_{n},y}(Z^\prime_k)$.

Fix $s\ge0$. In analogy to the proof of \cref{C lemma: C-petite}, $\Pi g_2^{\eta_{n},y}$ is continuous under \cref{C a: C-weak}. In analogy to \cref{C lemma:JnSn}, $n^{-1}\sum_{k=1}^{\lfloor sn\rfloor}|g^{\eta_n,x}_1(Z^\prime_{k})|$ converges in ucp to a non-trivial process as $\nto$. Therefore,
\begin{align*}
	\left|H^{n,\eta_n}_{s} - \Pi g_2^{\eta_{n},y}(x)J^{n,\eta_{n}}_{s}(x)\right| & \le \sup_{z\in B_{\eta_n}(x)} \left| \Pi  g_2^{\eta_{n},y}(z) -  \Pi g_2^{\eta_{n},y}(x) \right| \cdot \frac{1}{n}\sum_{k=1}^{\lfloor sn\rfloor}|g^{\eta_n,x}_1(Z^\prime_{k})|
	 \limN 0.
\end{align*}
Since $p$ is continuous under \cref{C a: C-weak}, $\lim_{\nto} \Pi g_2^{\vartheta_{n},y}(x) = p(x,y)$ by Lebesgue's differentiation theorem. We recall $f(x,y)=q(x)p(x,y)$. By \cref{C lemma:JnSn}, hence,
\[
	H^{n}_{s} \LimucpN sf(x,y)\mu'(x). 
\]

It remains to prove $M^{n}_s \coloneqq G^n_s - H^n_s \Rightarrow 0$ uniformly on compacts in probability. By \cref{C eq:pLLN}, we have $\sup_s \lVert\Delta M^n_s\rVert_\infty \le (n\eta_n^d\vartheta_n^d)^{-1}\lVert g_1\rVert_\infty\lVert g_2\rVert_\infty \to 0$. By Theorem\,VIII.3.33 of \citet{jacodshir}, thus, it is sufficient to show that the predictable quadratic variation $\langle M^{n},M^{n}\rangle_s$ of $M^n$ converges in probability to zero for all $s$. We observe
\begin{align*}
	\big\langle M^{n},M^{n}\big\rangle_{s}
	  &= \frac{1}{n^2} \sum_{k=1}^{\lfloor sn\rfloor} \E^\pi\big[g_1^{\eta_n,x}(Z^\prime_k)^2\big(g_2^{\eta_n,y}(\Delta X_{T_k})-\Pi g_2^{\eta_n,y}(Z^\prime_k)\big)^2 \,\big\vert\, \mathscr{H}^n_{k/n}\big] \\
	  &\le \frac{1}{n\eta_{1,n}^d\eta_{2,n}^d}\cdot\frac{1}{n} \sum_{k=1}^{\lfloor sn\rfloor} \eta_{1,n}^dg_1^{\eta_n,x}(Z^\prime_k)^2 \int_{B_1(0)} p(Z^\prime_k, y + \eta_{2,n}z)g_2(z)^2\dz.
\end{align*}
In analogy to \cref{C lemma:JnSn} again, $n^{-1}\sum_{k=1}^{\lfloor sn\rfloor}\eta_{1,n}^dg^{\eta_n,x}_1(Z^\prime_{k})^2$ converges in ucp to a non-trivial process as $\nto$. As in the proof of \cref{C lemma: C-petite}, moreover, $p$ is bounded on $C\times E$. Consequently, $\langle M^{n},M^{n}\rangle_{s} \to 0$ in probability as $\nto$.
\end{proof}

Next, we carry \cref{C lemma:JnSn,C lemma:Gn-LLN} over to the time-scale of $X$.
Let $J$ be the process given by
\begin{gather}\label{C eq:J}
	J_t \coloneqq \sum_{k=1}^\infty \mathbbm{1}_{[0,t]}(T_k).
\end{gather}
We note that $J$ is a non-decreasing additive functional of $X$. It is the random clock of $Z$ (and $Z^\prime$) in terms of $X$. By \cref{C eq:Levy-system} -- where $H_t=t$ --, and by $\mu(q)=1$, we have $\E^\mu\!J_t=t$ for all $t>0$.
\begin{Lemma}\label{C lemma: TightFam1}
 Grant \cref{C a:AC,C a: C-weak,C a: HR}. Let $v: \bbrp\to\bbrp$ denote a deterministic equivalent of $X$, and let $\eta_t$ and $(x,y)\in E\times E^\ast$ be as in \cref{C theo:LLN}. Then 
\begin{gather}\label{C eq: TightFam1}
	\text{the family } \left\{ \mathscr{L}\left( G^{v_{t},\eta_t}_{J_t/v_t}(x,y), S^{v_t,\eta_t}_{J_t/v_t}(x) \mid \p^\pi \right) : t>0 \right\} \text{ is tight.}
\end{gather}
Moreover, each limit point of the family in \cref{C eq: TightFam1} is the law $\mathscr{L}(f(x,y)\mu'(x)\tilde L, \mu'(x)\tilde L)$ for some positive random variable $\tilde L$.
\end{Lemma}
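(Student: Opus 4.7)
The strategy is to translate the chain-time ucp convergences from \cref{C lemma:JnSn,C lemma:Gn-LLN} to real time via composition with the random clock $J_t/v_t$. I would first check that $J$ defined in \cref{C eq:J} is a non-decreasing additive functional of $X$ with $\E^\mu\!J_1=\mu(q)=1\in(0,\infty)$, combining the Lévy system relation \cref{C eq:Levy-system} (with $H_t=t$) and the standing normalization $\mu(q)=1$. By \cref{J def: DE}, therefore, both families $\{\mathscr{L}(v_t^{-1}J_t\mid\p^\pi)\}_{t>0}$ and $\{\mathscr{L}(v_tJ_t^{-1}\mid\p^\pi)\}_{t>0}$ are tight. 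Hence, along any sequence $t_k\to\infty$, one can extract a sub-sequence on which $v_{t_k}^{-1}J_{t_k}\limd\tilde L$, where $\tilde L$ is a.\,s.\ finite and strictly positive.

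Next, since $J_t\in\bbn$ almost surely, substituting $n=v_t$ and $s=J_t/v_t$ into \cref{C eq: Gn def,C eq: JnSn def} gives $\lfloor sn\rfloor=J_t$, so
\[
	G^{v_t,\eta_t}_{J_t/v_t}(x,y)=v_t^{-1}\sum_{k=1}^{J_t}g_1^{\eta_t,x}(X_{T_k-})g_2^{\eta_t,y}(\Delta X_{T_k}),\qquad S^{v_t,\eta_t}_{J_t/v_t}(x)=v_t^{-1}\int_0^{T_{J_t}}g_1^{\eta_t,x}(X_r)\dr.
\]
Given any sequence $t_k\to\infty$, I would set $n_k\coloneqq\lceil v_{t_k}\rceil$: condition \cref{C eq:LLN} ensures $n_k\eta_{1,t_k}^d\eta_{2,t_k}^d\to\infty$ and $\eta_{i,t_k}\to0$, so \cref{C lemma:JnSn,C lemma:Gn-LLN} supply the ucp convergences of the processes $G^{n_k,\eta_{t_k}}_\cdot(x,y)$ toward $s\mapsto sf(x,y)\mu'(x)$ and of $S^{n_k,\eta_{t_k}}_\cdot(x)$ toward $s\mapsto s\mu'(x)$. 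Since $v_{t_k}/n_k\to1$, the same limits hold after rescaling the normalization by $v_{t_k}$ in place of $n_k$.

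The final step is to pass these ucp convergences through the random time $J_{t_k}/v_{t_k}$. Because both ucp limits are deterministic and continuous (linear) in $s$, and $J_{t_k}/v_{t_k}$ is tight, a routine Slutsky/continuous-mapping argument yields jointly
\[
	\big(G^{v_{t_k},\eta_{t_k}}_{J_{t_k}/v_{t_k}}(x,y),\ S^{v_{t_k},\eta_{t_k}}_{J_{t_k}/v_{t_k}}(x)\big)\limd\big(f(x,y)\mu'(x)\tilde L,\ \mu'(x)\tilde L\big),
\]
where joint convergence is automatic because both components are evaluated at a common random time. As the sequence $(t_k)$ was arbitrary, this establishes both the tightness of the family in \cref{C eq: TightFam1} and the claimed form of its weak limit points.

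The main delicate point is this last composition step; fortunately it collapses to the elementary observation that, for every $K<\infty$, $\sup_{s\le K}|G^{n_k,\eta_{t_k}}_s(x,y)-sf(x,y)\mu'(x)|\limpPiN 0$ (and similarly for $S$), and the tightness of $v_t^{-1}J_t$ confines the random evaluation times to such a compact $[0,K]$ with arbitrarily high probability, allowing one to substitute $s=J_{t_k}/v_{t_k}$ with vanishing error before applying continuous mapping to deduce the joint weak convergence.
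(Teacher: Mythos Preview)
Your proposal is correct and follows essentially the same route as the paper's proof: both use that $J$ is a non-decreasing additive functional with $\E^\mu J_1=\mu(q)\in(0,\infty)$, invoke the definition of a deterministic equivalent to get tightness of $J_t/v_t$ and $v_t/J_t$, apply the ucp limits of \cref{C lemma:JnSn,C lemma:Gn-LLN} for the processes $G^{v_t,\eta_t}$ and $S^{v_t,\eta_t}$, and then compose with the random time $J_t/v_t$ using that the limiting processes are continuous (linear) and deterministic. The paper packages the joint tightness and composition steps via Corollary~VI.3.33 of \citet{jacodshir}, whereas you spell out the Slutsky-type argument by hand; both are equivalent here. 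One small point worth tightening: to conclude that $\tilde L>0$ a.s.\ you should pass to a subsequence along which the \emph{pair} $(J_{t_k}/v_{t_k},\,v_{t_k}/J_{t_k})$ converges jointly, so that the limit $(\tilde L,\tilde L')$ satisfies $\tilde L\tilde L'=1$ --- tightness of each family separately is not quite enough as stated.
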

\begin{proof}
As $J$ is a non-decreasing additive functional of $X$, by \citet{Loecherbach2008}, the families $\{\mathscr{L}(J_t/v_t\mid\p^\pi):{t>0}\}$ and $\{\mathscr{L}(v_t/J_t\mid\p^\pi):{t>0}\}$ are tight. By  Corollary VI.3.33 of \citet{jacodshir} and \cref{C lemma:Gn-LLN}, thus,
\begin{gather}\label{C eq: FamTight2}
	\text{the family } \left\{\mathscr{L}(G^{v_{t},\eta_{t}}(x,y), S^{v_{t},\eta_{t}}(x), J_t/v_t, v_t/J_t \mid \p^\pi) : t>0\right\}  \text{ is tight}.
\end{gather}
Let $\mathbbm{Q}$ denote a limit point of the family in \cref{C eq: FamTight2}, and let $(t_n)_{n\in\bbn}$ a sequence such that
\[
	\mathscr{L}(G^{v_{t_n},\eta_{t_n}}(x,y), S^{v_{t_n},\eta_{t_n}}(x), J_{t_n}/v_{t_n},  v_{t_n}/J_{t_n} \mid \p^\pi) \limwN \mathbbm{Q}.
\]
On some extension of the probability space, w.\,l.\,o.\,g., there exists a random variable $\tilde L>0$ such that $\mathbbm{Q}=\mathscr{L}( s\mapsto {sf(x,y)\mu'(x)}, s\mapsto {s\mu'(x)}, \tilde L, 1/\tilde L )$. Since its first and second marginal are the laws of continuous processes, we have
\begin{gather*}
	\mathscr{L}\big(G^{v_{t_n},\eta_{t_n}}_{J_{t_n}/v_{t_n}}(x,y), S^{v_{t_n},\eta_{t_n}}_{J_{t_n}/v_{t_n}}(x) \mid \p^\pi\big) \limwN \mathscr{L}\left(f(x,y)\mu'(x)\tilde L, \mu'(x)\tilde L \right).
\end{gather*}
\end{proof}

\begin{proof}[of \cref{C theo:LLN}]
For every $t\ge0$ and each $x$ and $y$, we have
\begin{gather*}
	\hat f^{\eta_{t}}_{t}(x,y) = \frac{G^{v_{t},\eta_{t}}_{J_{t}/v_{t}}(x,y)}{S^{v_{t},\eta_{t}}_{J_{t}/v_{t}}(x) + v_t^{-1}\int_{T_{J_t}}^t g_1^{\eta_{t},x}(X_s)\ds}.
\end{gather*}
Let $h_n:\mathbf{D}\to\bbr$ be given by $h_n(g,t,z)\coloneqq t\int_0^1 |g_1^{\eta_n,x}(g(s))|\ds$. By \cref{C p:Representation,C cor:kthMoment-Inequality,C cor:upsilon-psi-invariant}, we have $\psi(h_n^2) \le {2\lVert g_1\rVert_\infty}{\eta_{1,n}^{-d}(\inf_{z\in C}q(z))^{-1}}\mu(|g_1^{\eta_n,x}|)$.
By Markov's inequality, since $v_t^2\eta_{1,t}^d\to\infty$, therefore,
\begin{gather}\label{C eq: RemainderConvergence}
	v_t^{-1}\int_{T_{J_t}}^t g_1^{\eta_{t},x}(X_s)\ds \le v_t^{-1}h_{v_t}(Z_{J_t+1}) \xrightarrow[\tto]{\p^\psi} 0.
\end{gather}
By Proposition 17.1.6 of \citet{MeynTweedie}, in analogy to the proof of \cref{C lemma:AdditiveFunctional-LLN}, this convergence in probability holds under every law $\p^\pi$.

We recall the results from \cref{C lemma: TightFam1}. Let $\tilde L>0$ be a random variable such that the law $\mathscr{L}(f(x,y)\mu'(x)\tilde L, \mu'(x)\tilde L)$ is a limit point of the family in \cref{C eq: TightFam1}. Moreover, let $(t_n)_{n\in\bbn^\ast}$ be a sequence such that
\[
	\left(G^{v_{t_n},\eta_{t_n}}_{J_{t_n}/v_{t_n}}(x,y), S^{v_{t_n},\eta_{t_n}}_{J_{t_n}/v_{t_n}}(x)\right) \limdN \big(f(x,y)\mu'(x)\tilde L, \mu'(x)\tilde L\big).
\]
We recall $\mu'(x)>0$. Consequently, $\hat f^{\eta_{t_n}}_{t_n}(x,y) \to f(x,y)$ in law as $\nto$ by the continuous mapping theorem. As this limit is unique and independent of the particular limit point of the family in \cref{C eq: TightFam1}, we have that $\hat f^{\eta_t}_{t}(x,y)$ converges to $f(x,y)$ in law, hence, in probability.
\end{proof}

\subsection{Proofs of \texorpdfstring{\cref{C theo:CLT,C cor:CLT}}{Theorem 3.7 and Corollary 3.8}}\label{C ProofCLT}
In this subsection, we work on the extended space \cref{C eq:ExtendedPSpace}, $L$ denotes the Mittag-Leffler process of order $0<\delta\le1$, and $W=(W^i)_{i\in I}$ denotes an $I$-dimensional standard Wiener process such that $L$, $W$ and $\mathscr{F}$ are independent.

In addition to the processes $G^{n,\eta}$, $J^{n,\eta}$ and $S^{n,\eta}$ given in \cref{C eq: Gn def} and \cref{C eq: JnSn def}, we consider the process $U^{n,\eta}$ given by
\begin{gather}\label{C eq: Un def}
	U^{n,\eta}_s(x,y) \coloneqq \sqrt{n \eta_1^d \eta_2^d} \left( G^{n,\eta}_{s}(x,y) - \frac{\mu(g_1^{\eta,x}Fg_2^{\eta,y})}{\mu(g_1^{\eta,x})}S^{n,\eta}_s(x) \right).
\end{gather}
We emphasise again that these processes are of the form $\sum_{k=1}^{\lfloor sn\rfloor}h_n(Z_k)$ where $Z$ is the auxiliary Markov chain defined in \cref{C AuxYZ}. 
\begin{Lemma}\label{C lemma:UnCLT}
Grant \cref{C a:AC,C a: C-weak,C a: HR,C a: DK}. Let $\eta_n=(\eta_{1,n},\eta_{2,n})$ be such that \cref{C eq:pLLN} holds. Then we have the following convergence in law in $\mathcal{D}(\bbr^{I})$:
\[
	\big( U_s^{n,\eta_{n}}(x_{i},y_{i}) \big)_{i\in I}
		\LimdN
	\big(\mu'(x_i) \sigma(x_{i},y_{i}) W^{i}_s \big)_{i\in I},
\]
where $\sigma(x,y)^2$ is given by \cref{C eq: Variance}.
%
\end{Lemma}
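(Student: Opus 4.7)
The plan is to split $U^{n,\eta_n}$ into an $(\mathscr{H}^n_s)$-martingale whose predictable variation tends to the deterministic limit $s\mu'(x)^2\sigma(x,y)^2$ plus a ucp-negligible remainder. With the filtration $\mathscr{H}^n_s=\mathscr{F}_{T_{\lfloor sn\rfloor+1}-}$ and the compensator $H^{n,\eta}_s$ of $G^{n,\eta}_s$ from the proof of \cref{C lemma:Gn-LLN}, and writing $c^\eta(x,y)\coloneqq\mu(g_1^{\eta,x}Fg_2^{\eta,y})/\mu(g_1^{\eta,x})$, decompose
\begin{equation*}
U^{n,\eta_n}_s=\sqrt{n\eta_{1,n}^d\eta_{2,n}^d}\,\bigl(G^{n,\eta_n}_s-H^{n,\eta_n}_s\bigr)+\sqrt{n\eta_{1,n}^d\eta_{2,n}^d}\,\bigl(H^{n,\eta_n}_s-c^{\eta_n}S^{n,\eta_n}_s\bigr).
\end{equation*}
The first summand is a square-integrable $(\mathscr{H}^n_s)$-martingale; the second is the remainder $\bar U^{n,\eta_n}_s$.

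For the martingale term, I would apply a multivariate CLT for triangular-array martingales, for instance Theorem VIII.3.11 of \citet{jacodshir}. The Lindeberg condition follows from the uniform jump bound $\lVert \Delta M^{n,\eta_n}\rVert_\infty\le\lVert g_1\rVert_\infty\lVert g_2\rVert_\infty/\sqrt{n\eta_{1,n}^d\eta_{2,n}^d}\to 0$. By \cref{C eq:Pi-CondExp}, the predictable quadratic variation is
\begin{equation*}
\frac{\eta_{1,n}^d\eta_{2,n}^d}{n}\sum_{k=1}^{\lfloor sn\rfloor}g_1^{\eta_n,x}(Z'_k)^2\bigl[\Pi(g_2^{\eta_n,y})^2(Z'_k)-(\Pi g_2^{\eta_n,y}(Z'_k))^2\bigr];
\end{equation*}
changing variables $w=y+\eta_{2,n}u$ inside $\Pi$ gives $\eta_{2,n}^d\Pi(g_2^{\eta_n,y})^2(z)\to p(z,y)\int g_2(u)^2\du$ uniformly on $C$, while $(\Pi g_2^{\eta_n,y}(z))^2=O(1)$ contributes only an $O(\eta_{2,n}^d)$ error. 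Applying \cref{C lemma:AdditiveFunctional-LLN} with $\xi_n=1$ together with Lebesgue's differentiation theorem, exactly as in the proof of \cref{C lemma:JnSn}, yields the ucp-limit $s\mu'(x)f(x,y)\int g_1(w)^2\dw\int g_2(z)^2\dz=s\mu'(x)^2\sigma(x,y)^2$. For pairwise distinct $(x_i,y_i)$ the cross-covariations vanish identically once $\eta_{1,n}<|x_i-x_j|/2$ or $\eta_{2,n}<|y_i-y_j|/2$, since then either $g_1^{\eta_n,x_i}g_1^{\eta_n,x_j}\equiv 0$ or $g_2^{\eta_n,y_i}g_2^{\eta_n,y_j}\equiv 0$; this supplies the asymptotic independence that gives the joint Gaussian limit $(\mu'(x_i)\sigma(x_i,y_i)W^i_s)_{i\in I}$.

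For the remainder, both $S^{n,\eta_n}$ and $H^{n,\eta_n}$ are ergodic sums against the auxiliary chain $Z$: with $\phi_n(g,t,z)\coloneqq t\int_0^1 g_1^{\eta_n,x}(g(s))\ds$ and $\psi_n(g,t,z)\coloneqq g_1^{\eta_n,x}(z)\Pi g_2^{\eta_n,y}(z)$, one has $\bar U^{n,\eta_n}_s=\sqrt{\eta_{1,n}^d\eta_{2,n}^d/n}\sum_{k=1}^{\lfloor sn\rfloor}(\psi_n-c^{\eta_n}\phi_n)(Z_k)$. By \cref{C cor:upsilon-psi-invariant} and the identities used in the proofs of \cref{C lemma:JnSn,C lemma:Gn-LLN}, $\psi(\psi_n)=\mu(g_1^{\eta_n,x}Fg_2^{\eta_n,y})$ and $\psi(\phi_n)=\mu(g_1^{\eta_n,x})$, whence $\psi(\psi_n-c^{\eta_n}\phi_n)\equiv 0$ by the very definition of $c^{\eta_n}$. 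Since $\Pi g_2^{\eta_n,y}$ is bounded uniformly in $\eta_{2,n}$, a direct computation gives $\psi((\psi_n-c^{\eta_n}\phi_n)^2)=O(\eta_{1,n}^{-d})$, so that $\E^\psi|\bar U^{n,\eta_n}_s|^2=O(\eta_{2,n}^d)\to 0$ at each fixed $s$. Local uniformity in $s$ follows from the potential/martingale decomposition of the centred sum used in the proof of \cref{C lemma:AdditiveFunctional-LLN}, and Proposition 17.1.6 of \citet{MeynTweedie} extends the convergence from $\p^\psi$ to any $\p^\pi$.

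The main obstacle is the reconciliation of two different normalisations in the remainder: $U^{n,\eta_n}$ compensates $G^{n,\eta_n}$ by a deterministic multiple of the continuous-time sojourn $S^{n,\eta_n}$ rather than by the discrete compensator $H^{n,\eta_n}$; the key insight is that $c^{\eta_n}$ is chosen precisely to center $\psi_n-c^{\eta_n}\phi_n$ under the invariant law of $Z$, while the boundedness of $\Pi g_2^{\eta_n,y}$ in $\eta_{2,n}$ provides the extra factor $\eta_{2,n}^d$ that is exactly what is needed to beat the normalisation $\sqrt{n\eta_{1,n}^d\eta_{2,n}^d}$.
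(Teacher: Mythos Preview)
Your argument is correct, but it rests on a genuinely different martingale decomposition than the paper's. The paper chooses the filtration $\mathscr{H}^n_s=\mathscr{F}_{T_{\lfloor sn\rfloor}}$ and compensates $G^{n,\eta}$ by the path-integral $\int_{T_{k-1}}^{T_k}g_1^{\eta,x}(X_s)Fg_2^{\eta,y}(X_s)\ds$; its martingale increment is therefore $g_1^{\eta,x}(Z'_k)g_2^{\eta,y}(\Delta X_{T_k})-\int_{T_{k-1}}^{T_k}g_1^{\eta,x}Fg_2^{\eta,y}(X_s)\ds$, and the remainder is of the form $\int_{T_{k-1}}^{T_k}g_1^{\eta,x}(X_s)(Fg_2^{\eta,y}(X_s)-c^{\eta})\ds$. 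You instead take $\mathscr{H}^n_s=\mathscr{F}_{T_{\lfloor sn\rfloor+1}-}$ and compensate by the discrete $g_1^{\eta,x}(Z'_k)\Pi g_2^{\eta,y}(Z'_k)$, as in the proof of \cref{C lemma:Gn-LLN}. This buys you a simpler martingale: its predictable quadratic variation is a pure functional of $Z'_k$, and the big-jump bound $\lVert\Delta M^n\rVert_\infty\to0$ suffices where the paper needs an explicit fourth-moment Lyapunov estimate (its step (iv)). The price you pay is in the remainder: your $h_n=\psi_n-c^{\eta_n}\phi_n$ has $\Psi h_n=\bar\Pi R^*_q\bigl[g_1^{\eta_n,x}(Fg_2^{\eta_n,y}-c^{\eta_n})\bigr]$, which is only $O(\eta_{1,n}^{-d})$ rather than uniformly bounded, so \cref{C lemma:AdditiveFunctional-LLN} does not apply as stated---but, as you indicate, re-running its proof shows the covariance sum still contributes $O(\eta_{2,n}^d)$ and the potential boundary term is $O\bigl(\sqrt{\eta_{2,n}^d/(n\eta_{1,n}^d)}\bigr)$, both vanishing. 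Two small corrections: for the quadratic-variation convergence you want $\xi_n=\eta_{1,n}^d$ (with $h_n(z)=g_1((z-x)/\eta_{1,n})^2\cdot\eta_{2,n}^d[\cdots]$ uniformly bounded), not $\xi_n=1$; and your ``direct computation'' of $\E^\psi|\bar U^{n}_s|^2$ should mention the off-diagonal covariances, which are controlled by the same uniform-ergodicity bound with the extra factor $\lVert\Psi h_n\rVert_\infty=O(\eta_{1,n}^{-d})$.
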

\begin{proof}
For $n\in\bbn^\ast$, let $M^{n,\eta}$ be the process given by
\begin{align*}
	{M^{n,\eta}_{s}(x,y)}
	&  \coloneqq
		\frac{\sqrt{\eta_1^d\eta_2^d}}{\sqrt{n}} \sum_{k=1}^{\lfloor sn\rfloor}
			\left(g_1^{\eta,x}(Z^\prime_k)g_2^{\eta,y}(\Delta X_{T_{k}}) - \int_{T_{k-1}}^{T_{k}} g_1^{\eta,x}(X_s)Fg_2^{\eta,y}(X_s)\ds \right),
\end{align*}
and let $(\mathscr{H}^n_{s})_{s\ge0}$ be given by $\mathscr{H}^n_{s}\coloneqq \mathscr{F}_{T_{\lfloor sn\rfloor}}$. By Theorem VIII.3.33 of \citet{jacodshir}, it is sufficient to prove (i)--(iv) as follows:
\begin{enumerate}
	\item We have $U^{n,\eta_{n}}_s(x,y)-M_s^{n,\eta_{n}}(x,y) \Rightarrow 0$ in ucp as $\nto$.
	\item The process $M^{n,\eta}$ 	is an $\mathscr{H}^n_s$-martingale for each $n$.
	\item For all $i,j\in I$, we have
	\begin{gather*}
		\big\langle M^{n,\eta_{n}}(x_{i},y_{i}), M^{n,\eta_{n}}(x_{j},y_{j})\big\rangle_{s} \limpPiN s[\sigma(x_{i},y_{i})\mu'(x)]^2\delta_{ij}.
		\vspace{-\baselineskip}
	\end{gather*}
	\item We have the ``conditional Lyapunov condition''
	\begin{equation*}
		K^{n,\eta_n}_s(x,y) \coloneqq \sum_{k=1}^{\lfloor sn\rfloor} \E^\pi\left[\big(\Delta M_{k/n}^{n,\eta_{n}}(x,y)\big)^4 \,\middle|\, \mathscr{H}^n_{k/n-}\right] \limpPiN 0.
	\end{equation*}
\end{enumerate}

\emph{(i)} We note that $U^{n,\eta}(x,y)-M^{n,\eta}(x,y)$ is of form \cref{C eq:AF-LLN} with $h_n: \mathbf{D}\to\bbr$ given by
\[
	h_n(g,t,z) = t\int_0^1 g_1\left(\frac{g(s)-x}{\eta_{1,n}}\right)\left(Fg_2^{\eta_n,y}(g(s)) - \frac{\mu(g_1^{\eta_n,x}Fg_2^{\eta_n,y})}{\mu(g_1^{\eta_n,x})} \right)\ds,
\]
and $\xi_n=\eta_{1,n}^{d/2}\eta_{2,n}^{-d/2}n^{-1/2}$. By \cref{C p:Representation,C lemma:Mu-Invariant,C cor:upsilon-psi-invariant}, we have
\[
	\xi_n^{-1}\psi(h_n) = \sqrt{n\eta_{1,n}^d\eta_{2,n}^d}\int \mu(\dz) g_1^{\eta,x}(z)\left(Fg_2^{\eta_n,y}(z) - \frac{\mu(g_1^{\eta_n,x}Fg_2^{\eta_n,y})}{\mu(g_1^{\eta_n,x})} \right) \equiv 0.
\]
Since $\eta_{2,n}\to0$, we also observe
\[
	\frac{\psi(|h_n|)}{n\xi_n^2} \le \eta_{2,n}^d \left(\mu(|g_1^{\eta_n,x}Fg_2^{\eta_n,y}|) + \mu(|g_1^{\eta_n,x}|)\cdot\left\lvert\frac{\mu(g_1^{\eta_n,x}Fg_2^{\eta_n,y})}{\mu(g_1^{\eta_n,x})}\right\rvert \right) \limN 0.
\]
By \cref{C cor:kthMoment-Inequality}, likewise,
\[
	\frac{\psi(h_n^2)}{n\xi_n^2} \le \frac{2\eta_{2,n}^d\lVert g_1\rVert_\infty\lVert Fg_2^{\eta_n,y}\rVert_\infty}{\inf_{z\in C}q(z)} \left(\mu(|g_1^{\eta_n,x}Fg_2^{\eta_n,y}|) + \mu(|g_1^{\eta_n,x}|)\left\lvert\frac{\mu(g_1^{\eta_n,x}Fg_2^{\eta_n,y})}{\mu(g_1^{\eta_n,x})}\right\rvert \right) \limN 0.
\]
Since $n\xi_n\to\infty$, we deduce from \cref{C lemma:AdditiveFunctional-LLN} that (i) holds. \Halmos

\emph{(ii)}  By construction,  $M^{n,\eta}$ is integrable and adapted to $(\mathscr{H}^n_s)_{s\ge0}$. For $s = k/n$, we note  $\mathscr{H}^n_{s-} = \mathscr{F}_{T_{k-1}}$.
By \cref{C eq:Levy-system} -- where $H_t=t$ -- the compensator of our process's jump measure is given by $\dt \otimes F(X_t,\dy)$. By Doob's optional sampling theorem, thus, 
\[
	\E^\pi \left[ g_1^{\eta,x}(Z^\prime_k)g_2^{\eta,y}(\Delta X_{T_{k}}) - \int_{T_{k-1}}^{T_k} g_1^{\eta,x}(X_s)Fg_2^{\eta,y}(X_s)\ds \,\middle\vert\, \mathscr{F}_{T_{k-1}}\right] = 0
\]
for all $k\in\bbn^\ast$. Therefore, $M^{n,\eta}(x,y)$ is an $\mathscr{H}^n_s$-martingale. \Halmos

\emph{(iii)} Let $i,j\in I$. In analogy to step (ii), we deduce
\begin{align*}
	\lefteqn{\big\langle M^{n,\eta_n}(x_{i},y_{i}), M^{n,\eta_n}(x_{j},y_{j})\big\rangle_{s}} \\
	&\hspace{10em} = \frac{{\eta_{1,n}^d\eta_{2,n}^d}}{{n}} \sum_{k=1}^{\lfloor sn\rfloor} \E^\pi \left[g_1^{\eta_n,x_i}g_1^{\eta_n,x_j}(Z^\prime_k)g_2^{\eta_n,y_i}g_2^{\eta_n,y_j}(\Delta X_{T_k}) \,\middle|\, \mathscr{F}_{T_{k-1}} \right].
\end{align*}
For all $n$ large enough, we have $g_1^{\eta_n,x_i}g_1^{\eta_n,x_j}=0$ whenever $x_i\neq x_j$, and $g_2^{\eta_n,y_i}g_2^{\eta_n,y_j}=0$ whenever $y_i\neq y_j$. For all $\omega$, if $i\neq j$, thus, $\langle M^{n,\eta_n}(x_{i},y_{i}), M^{n,\eta_n}(x_{j},y_{j})\rangle_{s} \to 0$.

Moreover, let $J_{s}^{\prime n,\eta_{n}}(x) \coloneqq n^{-1}\eta_{1,n}^d\sum_{k=1}^{\lfloor sn\rfloor} \E^{X_{T_{k-1}}}[g_1^{\eta,x}(Z^\prime_1)^2]$. We note that ${J}^{\prime n,\eta_{n}}$ is of form \cref{C eq:AF-LLN} with $\xi_n = \eta_{1,n}^d$ and $h_n:\mathbf{D}\to\bbr$ given by $h_n(g,t,z) = \E^{g(0)}\left[g_1((Z^\prime_1-x)/\eta_{1,n})^2\right]$.
By \cref{C lemma:Mu-Invariant,C cor:upsilon-psi-invariant} and under \cref{C a: C-weak}, we observe 
\[
	\eta_{1,n}^{-d}\psi(h_n) = \int {\mu'(x+\eta_{1,n}z)}q(x+\eta_{1,n}z) g_1(z)^2\dz \limN {\mu'(x)q(x)}\int g_1(z)^2\dz.
\]
By \cref{C lemma:AdditiveFunctional-LLN}, since $h_n$ is non-negative and uniformly bounded, thus,
\begin{gather}\label{C eq:Jprime}
	{J_s'}^{n,\eta_{n}}(x) \LimucpN {sq(x)\mu'(x)\int g_1(z)^2\dz}.
\end{gather}
Hence, we observe 
\begin{align*}
	\lefteqn{\left|\big\langle M^{n,\eta_{n}}(x,y), M^{n,\eta_{n}}(x,y)\big\rangle_{s} - {J'_{s}}^{n,\eta_{n}}(x)p(x,y)\int g_2(w)^2\dw\right|} \\
	& \qquad \le {J'_{s}}^{n,\eta_{n}}(x)\int g_2(w)^2\dw\sup_{z,w\in B_1(0)}\big|p(x+\eta_{1,n}z,y+\eta_{2,n}w) - p(x,y)\big| \limpPiN 0.
\end{align*}
Since $f(x,y)=q(x)p(x,y)$, consequently,
\[
	\big\langle M^{n,\eta_{n}}(x,y), M^{n,\eta_{n}}(x,y)\big\rangle_{s} \limpPiN {sf(x,y)\mu'(x)}\int g_1(w)^2\dw\int g_2(z)^2\dz;
\]
that is, (iii) holds.
\Halmos

\emph{(iv)} We observe  $| K^{n,\eta_n}_s(x,y) | \le K^{\prime n,\eta_n}_s + K^{\prime\prime n,\eta_n}_s$, where
\begin{align*}
	K^{\prime n,\eta_n}_s &\coloneqq \frac{4\eta^{2d}_{1,n}\eta^{2d}_{2,n}}{n^2}\sum_{k=1}^{\lfloor sn\rfloor} \E^{X_{T_{k-1}}}\left[\big(g_1^{\eta,x}(Z^\prime_1)g_2^{\eta,y}(\Delta X_{T_1})\big)^4\right], \\
\shortintertext{and}
	K^{\prime\prime n,\eta_n}_s &\coloneqq \frac{4\eta^{2d}_{1,n}\eta^{2d}_{2,n}}{n^2}\sum_{k=1}^{\lfloor sn\rfloor} \E^{X_{T_{k-1}}}\left[\left(\int_0^{T_1}g_1^{\eta,x}Fg_2^{\eta,y}(X_s)\ds \right)^4\right].
\end{align*}
We note that $K^{\prime n,\eta_n}$ and $K^{\prime\prime n,\eta_n}$ are of form \cref{C eq:AF-LLN} with $\xi_n =n\eta_{1,n}^{2d}\eta_{1,n}^{2d}/4$ and, respectively,
\begin{align*}
 	h_{n}(g,t,z) & = \E^{g(0)}\left[g_1((Z^\prime_1-x)/\eta_{1,n})^4g_2((\Delta X_{T_1}-y)/\eta_{2,n})^4 \right],  \\
\shortintertext{and}
	h_{n}(g,t,z) & = \E^{g(0)}\left[ \left(\int_0^{T_1}g_1\left(\frac{X_s-x}{\eta_{1,n}}\right)\int F(X_s,\mathrm{d}w)g_2\left(\frac{w-y}{\eta_{2,n}}\right)\right)^4 \right].
\end{align*}
By \cref{C lemma:Mu-Invariant,C cor:upsilon-psi-invariant}, for $K^{\prime n}$, we have 
\begin{align*}
	\frac{\psi(h_{n})}{\xi_n} = \frac{4}{n\eta_{1,n}^d\eta_{2,n}^d}\iint {\mu'(x+\eta_{1,n}z)} g_1(z)^4  f(x+\eta_{1,n}z,y+\eta_{2,n}w)g_2(w)^4\mathrm{d}w\dz \limN 0.
\end{align*}
By \cref{C cor:kthMoment-Inequality} and \cref{C lemma:Mu-Invariant}, for $K^{\prime\prime n}$ moreover, there exists a $\zeta<\infty$ such that 
\begin{align*}
	\frac{\psi(h_{n})}{\xi_n} & \le \frac{4\zeta}{n\eta_{1,n}^d\eta_{2,n}^d} \iint {\mu'(x+\eta_{1,n}z)} |g_1(z)|f(x+\eta_{1,n}z,y+\eta_{2,n}w)|g_2(w)|\dw\dz \limN 0.
\end{align*}
Since, in both cases, $h_n$ is non-negative and uniformly bounded, we deduce from \cref{C lemma:AdditiveFunctional-LLN} that $| K^{n,\eta_n}_s(x,y) \big| \le K^{\prime n,\eta_n}_s + K^{\prime\prime n,\eta_n}_s \Rightarrow 0$ in ucp as $\nto$.
\end{proof}

Next, we carry \cref{C lemma:UnCLT} over to the time-scale of $X$. We recall that the additive functional $J$ of $X$, given in \cref{C eq:J}, is the random clock of $Z$ (and $Z^\prime$) in terms of $X$. In addition, let $L^t$ denote the process given by $L^t_{s} \coloneqq v^{-1}_{t}J_{st}$.

Under Darling--Kac's condition, we have the important Th\'eor\`eme 3 of \citet{Touati1987} at hand; see also p.\,119 of \citet{Hoepfner1990} and Theorem 3.15 of \citet{Hoepfner2003}. For reference, we include it as the following proposition.
\begin{Proposition}\label{C prop:ML-Convergence}
Grant \cref{C a: HR,C a: DK}. Let $H = (H^1,\dotsc,H^l)$ be a $\mu$-integrable additive functional of $X$ with (component-wise) non-decreasing paths. Then, under every law $\p^\pi$, we have the following convergence in law in $\mathcal{D}(\bbr^l)$:
\begin{gather}\label{C eq:ML-Convergence}
	(v_{t}^{-1}H_{st})_{s\ge0} \LimdT \big(\!\E^\mu[H^1_{1}]L,\cdots,\E^\mu[H^l_{1}]L\big).
\end{gather}
\end{Proposition}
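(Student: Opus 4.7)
My plan is to first reduce the multivariate statement to the one-dimensional case, then use the Darling--Kac condition to identify the Laplace transform of a candidate limit, and finally promote one-dimensional marginal convergence to functional convergence in $\mathcal{D}(\bbr^l)$. Since this is essentially a restatement of Th\'eor\`eme~3 of Touati~(1987), I would follow the strategy there and in \citet{Hoepfner2003}.

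First, I would fix a single non-decreasing, $\mu$-integrable additive functional $H$ of $X$ with $c \coloneqq \E^\mu[H_1]>0$ and show $(v_t^{-1}H_{st})_{s\ge 0} \Rightarrow (cL_s)_{s\ge 0}$ under $\p^\pi$. The core computation is the Laplace--Stieltjes transform identity
\[
\int_0^\infty \e^{-\lambda t}\E^x[\e^{-\xi H_t}]\dt = \frac{1}{\lambda}\Bigl(1 - \xi \int_0^\infty \e^{-\lambda t}\E^x[g(X_t)]\dt + o(\xi)\Bigr)
\]
for $g$ the Revuz density of $H$ relative to $\mu$ (so that $\mu(g)=c$). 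Combined with \cref{D eq: Darling Kac}, this gives $v(1/\lambda)^{-1}(1-\lambda\int_0^\infty \e^{-\lambda t}\E^x[\e^{-\xi H_t}]\dt) \to c\xi$ $\mu$-a.e. as $\lambda\downarrow 0$. A Tauberian argument of Karamata/Bingham--Goldie--Teugels type then converts this into $\E^x[\e^{-\xi H_t/v(t)}] \to \E[\e^{-c\xi L_1}]$ as $\tto$, which identifies the one-dimensional marginals of the limit as those of the Mittag-Leffler process of order $\delta$.

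Second, to upgrade to process-level convergence, I would use tightness in $\mathcal{D}(\bbrp)$: since $s\mapsto v_t^{-1}H_{st}$ is non-decreasing with a well-defined, continuous limit $s\mapsto csL^{1/\delta}_{\cdot}$ of a monotone process (self-similar of index $\delta$), tightness follows from tightness of the one-dimensional marginals plus the C-tightness criterion for monotone processes (Dini's theorem argument, cf.\ VI.3.37 of \citet{jacodshir}). Harris recurrence is used here via Proposition 17.1.6 of \citet{MeynTweedie} to push the convergence from $\p^\mu$ (or $\p^x$ $\mu$-a.e.) to every $\p^\pi$.

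Third, for the joint convergence in \cref{C eq:ML-Convergence}, the essential observation is that all coordinates converge to the \emph{same} Mittag-Leffler time change $L$. To establish this, I would apply the one-dimensional result to every positive linear combination $H^a \coloneqq \sum_i a_iH^i$ (still a non-decreasing additive functional of $X$ with $\E^\mu[H^a_1] = \sum_i a_i\E^\mu[H^i_1]$), obtaining $v_t^{-1}H^a_{st} \Rightarrow (\sum_i a_i\E^\mu[H^i_1])L^a_s$ for some Mittag-Leffler process $L^a$. Uniqueness of the limit in law and linearity force $L^a = L$ independently of $a$, whence the Cramér--Wold device yields joint convergence of the finite-dimensional distributions; tightness of the vector-valued process follows componentwise. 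The main obstacle, I anticipate, is the careful Tauberian step that turns the $\mu$-almost-everywhere Laplace convergence in \cref{D eq: Darling Kac} into pathwise Mittag-Leffler convergence; the monotonicity of $H$ and the regular variation of $v$ are what make this work.
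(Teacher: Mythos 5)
The first thing to say is that the paper does not prove \cref{C prop:ML-Convergence} at all: it is stated explicitly as a quotation of Th\'eor\`eme~3 of \citet{Touati1987} (see also \citet{Hoepfner1990} and \citet{Hoepfner2003}) and included ``for reference''. So there is no in-paper argument to compare against; you are reproving a cited result, and your sketch must stand on its own against the literature proof. Its architecture --- Laplace transform plus Tauberian theorem for the marginals, monotonicity to upgrade to functional convergence, transfer from $\mu$-a.e.\ initial points to arbitrary $\p^\pi$ via Proposition~17.1.6 of \citet{MeynTweedie}, reduction of the multivariate case to the scalar one --- is the right one, but three steps are genuinely incomplete.

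(i) A first-order expansion in $\xi$ of $\int_0^\infty \e^{-\lambda t}\E^x[\e^{-\xi H_t}]\dt$ only identifies $\E^x[H_t]$; an $o(\xi)$ error term cannot determine the limit law of $H_t/v(t)$. The Darling--Kac argument is a method of moments: the exact identity expressing $\int_0^\infty \e^{-\lambda t}\E^x[H_t^n]\dt$ as an $n$-fold iterate of the resolvent against the Revuz measure of $H$, each factor contributing $v(1/\lambda)\E^\mu[H_1]$ by \cref{D eq: Darling Kac}, followed by Karamata's Tauberian theorem and a monotone-density argument, yields $\E^x[H_t^n]\sim n!\,(\E^\mu[H_1])^n v(t)^n/\Gamma(1+n\delta)$ --- precisely the moments of $\E^\mu[H_1]L_1$, and the Mittag--Leffler law is moment determined. (ii) Introducing ``the Revuz density $g$ of $H$ relative to $\mu$'' tacitly assumes $H_t=\int_0^t g(X_s)\ds$; the proposition, and its use in the paper for the jump-counting functional $J$ of \cref{C eq:J}, concerns general non-decreasing $\mu$-integrable additive functionals whose Revuz measures need not be absolutely continuous with respect to $\mu$. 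Extending \cref{D eq: Darling Kac} from integral functionals to these is itself a nontrivial step of Touati's proof. (iii) ``Uniqueness of the limit in law and linearity force $L^a=L$'' is an assertion, not an argument, and a Cram\'er--Wold device applied jointly over coordinates \emph{and} time points would require the limit of $\sum_j H^{a_{\cdot j}}_{s_j t}$, which is not a single additive functional evaluated at a single time. The standard repair is the Chacon--Ornstein ratio limit theorem for Harris recurrent processes, $H^i_t/H^j_t\to\E^\mu[H^i_1]/\E^\mu[H^j_1]$ $\p^\pi$-a.s., which makes the vector asymptotically a deterministic multiple of one scalar functional and reduces the statement to $l=1$; alternatively your route can be salvaged by a second-moment argument showing that any limit point has almost surely proportional components, but that must be written out.
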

Recalling \cref{C lemma:JnSn}, by eq.\ (3.4) of \citet{Hoepfner1990}, we obtain the following corollary to \cref{C prop:ML-Convergence}.
\begin{CorollaryOP}\label{C cor: LtSvt}
Grant \cref{C a:AC,C a: C-weak,C a: HR,C a: DK}. Let $\eta_t=\eta_{1,t}$ be such that \cref{C eq:LLN} holds. Then we have the following convergence in law in $\mathcal{D}(\bbr^{1+I})$:
\[
	\left(L^t, \big( S^{v_t,\eta_t}_{L^t}(x_i) \big)_{i\in I}\right) \LimdT \left( L, \big(\mu'(x_i)L\big)_{i\in I} \right).
\]
\end{CorollaryOP}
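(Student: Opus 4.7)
The plan is to combine the Touati-type convergence of \cref{C prop:ML-Convergence} applied to the additive functional $J$ with the ucp-convergence of $S^{n,\eta_n}(x_i)$ established in \cref{C lemma:JnSn}, and then exploit continuity of the limit to pass to the time-changed process.

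First, I would verify that $J$ (given in \cref{C eq:J}) is a $\mu$-integrable, non-decreasing additive functional of $X$ with $\E^\mu[J_1] = 1$ (this is noted just below \cref{C eq:J}, using $\mu(q)=1$). Applying \cref{C prop:ML-Convergence} therefore yields $(L^t_s)_{s\ge0} = (v_t^{-1}J_{st})_{s\ge0} \LimdT L$ in $\mathcal{D}(\bbr)$. Next, from \cref{C lemma:JnSn} I would extract $S^{v_t,\eta_t}_s(x_i) \LimucasT s\,\mu'(x_i)$ in ucp along $t\to\infty$ for each $i\in I$, noting that the subsequence argument from Proposition 17.1.6 of \citet{MeynTweedie} covers the transition from the discrete index $n$ to the continuous index $v_t$.

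Since the limiting process $s\mapsto s\,\mu'(x_i)$ is \emph{deterministic} and continuous, joint convergence comes for free: the family
\[
	\left(L^t, \big( S^{v_t,\eta_t}(x_i) \big)_{i\in I}\right) \LimdT \left( L, \big( s\mapsto s\,\mu'(x_i)\big)_{i\in I} \right)
\]
in $\mathcal{D}(\bbr^{1+I})$, by the standard Slutsky-type argument (Corollary VI.3.33 of \citet{jacodshir}) combined with the fact that ucp-convergence to a continuous limit implies joint weak convergence whenever the other component converges in law. Finally, I would compose with the time change: since $L$ is non-decreasing and the inner limit $s\mapsto s\,\mu'(x_i)$ is continuous, the time-change mapping $(l, \phi) \mapsto (l, \phi\circ l)$ is continuous at such pairs in the Skorokhod topology. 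Applying the continuous mapping theorem yields
\[
	\left(L^t, \big( S^{v_t,\eta_t}_{L^t}(x_i) \big)_{i\in I}\right) \LimdT \left( L, \big(\mu'(x_i) L\big)_{i\in I} \right),
\]
which is the claim.

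The main subtlety is the joint convergence in $\mathcal{D}(\bbr^{1+I})$ together with the time-change step: one has to ensure that the ucp convergence of $S^{v_t,\eta_t}(x_i)$ combines with the law convergence of $L^t$ in the Skorokhod topology (not just finite-dimensional marginals), and that the time-change functional is continuous at the limit point. Both are standard once one observes that the inner limit is deterministic and continuous and that $L$ is non-decreasing, so I do not expect real obstacles beyond careful bookkeeping with Skorokhod's topology.
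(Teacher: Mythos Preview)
Your proposal is correct and follows essentially the same route as the paper: the paper's one-line proof invokes \cref{C prop:ML-Convergence} for $L^t\Rightarrow L$, \cref{C lemma:JnSn} for the ucp-convergence of $S^{v_t,\eta_t}(x_i)$, and then cites eq.~(3.4) of \citet{Hoepfner1990} for the composition with the time change---exactly the continuous-mapping argument you spell out (continuity of the limit in the Skorokhod topology making the time-change functional continuous at the limit point). Your explicit invocation of Corollary~VI.3.33 of \citet{jacodshir} for the joint convergence is precisely the bookkeeping the paper leaves implicit.
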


\begin{Lemma}\label{C lemma:JointCLT}
Grant \cref{C a:AC,C a: C-weak,C a: HR,C a: DK}. Let $\eta_t=(\eta_{1,t},\eta_{2,t})$ be such that \cref{C eq:LLN} holds. Then we have the following convergence in law in $\mathcal{D}(\bbr^{1+I})$:
\[
	\left(L^t, \left(	U^{v_{t},\eta_t}(x_{i},y_{i}) \right)_{i\in I}\right)
		\LimdT
	\left(L, \left( 	\mu'(x_i)\sigma(x_{i},y_{i})  W^{i} \right)_{i\in I}\right),
\]
where $\sigma(x,y)^2$ is given by \cref{C eq: Variance}.
\end{Lemma}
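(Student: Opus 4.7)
\medskip
\noindent\textbf{Proof plan.}
The strategy has three steps: (i) strengthen Lemma~\ref{C lemma:UnCLT} from convergence in law to stable convergence with respect to $\mathscr{F}$; (ii) obtain $L^t\Rightarrow L$ from Proposition~\ref{C prop:ML-Convergence}; and (iii) combine the two using the $\mathscr{F}$-measurability of $L^t$ and the asymptotic $\mathscr{F}$-independence of the limit of $U^{v_t,\eta_t}$.

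For step (i), I would revisit the proof of Lemma~\ref{C lemma:UnCLT}: items~(i)--(iv) there verify precisely the hypotheses of a \emph{functional} martingale central limit theorem with \emph{stable} convergence (e.g.\ Theorem~IX.7.28 of \citet{jacodshir}), namely: approximation by the $\mathscr{H}^n_s$-martingale $M^{n,\eta_n}$ with ucp-negligible remainder, convergence of the predictable quadratic variation to the \emph{deterministic} matrix-valued function $s\mapsto s[\sigma(x_i,y_i)\mu'(x_i)]^2\delta_{ij}$, and the conditional Lyapunov condition. Since the limiting covariance is deterministic (hence a fortiori $\mathscr{F}$-measurable), this theorem yields that $(U^{n,\eta_n}(x_i,y_i))_{i\in I}$ converges $\mathscr{L}$-st\,-stably to $(\mu'(x_i)\sigma(x_i,y_i)W^i)_{i\in I}$ on the extension~\cref{C eq:ExtendedPSpace}, with $W$ independent of $\mathscr{F}$.

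For step (ii), the process $J$ from \cref{C eq:J} is a non-decreasing additive functional of $X$; from the L\'evy system identity~\cref{C eq:Levy-system} with $H_t=t$ and the normalisation $\mu(q)=1$, one obtains $\E^\mu\!J_1=\mu(q)=1$. Proposition~\ref{C prop:ML-Convergence} (the Touati theorem under \cref{C a: HR,C a: DK}) therefore gives $L^t=(v_t^{-1}J_{st})_{s\ge0}\limdT L$ in $\mathcal{D}(\bbr_+)$ under every~$\p^\pi$.

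For step (iii), I observe that $L^t$ is $\mathscr{F}$-measurable for each~$t$. Because $U^{v_t,\eta_t}$ converges $\mathscr{L}$-st\,-stably with respect to $\mathscr{F}$, the pair $(Z, U^{v_t,\eta_t})$ converges jointly in law to $(Z, (\mu'(x_i)\sigma(x_i,y_i)W^i)_i)$ for every bounded, $\mathscr{F}$-measurable random element $Z$; taking $Z=\varphi(L^t)$ for $\varphi$ a bounded continuous functional on Skorokhod space and combining with the tightness of $\{\mathscr{L}(L^t):t>0\}$ from step~(ii) yields joint weak convergence of $(L^t, U^{v_t,\eta_t})$. The independence between $L$ and $W$ in the limit is the hallmark of stable convergence against $\mathscr{F}$-measurable sequences, and matches the definition of $L,V,\mathscr{F}$ on the extended space in \cref{C eq:ExtendedPSpace}.

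The main obstacle is step~(i): one must ensure that the martingale CLT invoked yields stable (rather than merely weak) convergence of the vector of martingales $(M^{n,\eta_n}(x_i,y_i))_{i\in I}$ as processes, with the correct diagonal covariance structure (cross-terms vanish asymptotically because the supports of the kernels $g_1^{\eta_n,x_i}g_1^{\eta_n,x_j}$ or $g_2^{\eta_n,y_i}g_2^{\eta_n,y_j}$ disentangle for distinct points, as already exploited in item~(iii) of Lemma~\ref{C lemma:UnCLT}). Once stable convergence is in place, step~(iii) is a routine application of Slutsky-type joint convergence results under stable convergence.
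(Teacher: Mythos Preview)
Your step~(iii) contains a genuine gap. Stable convergence of $U^{v_t,\eta_t}$ with respect to $\mathscr{F}$, together with $L^t$ being $\mathscr{F}$-measurable and $L^t\Rightarrow L$ in law, does \emph{not} imply joint convergence of $(L^t, U^{v_t,\eta_t})$ to $(L,\bm W)$ with $L$ and $\bm W$ independent. The defining property of stable convergence --- that $(Y,U^{v_t,\eta_t})\to(Y,\bm W)$ in law for every \emph{fixed} $\mathscr{F}$-measurable $Y$ --- says nothing about the joint behaviour with a \emph{$t$-dependent} $\mathscr{F}$-measurable sequence that converges only in law, not in probability. For a schematic counterexample, take $U_n=Y_n=r_n$, the $n$-th Rademacher function on $[0,1]$: each sequence converges stably to a Rademacher variable $R$ independent of the Borel $\sigma$-field, yet $(U_n,Y_n)=(r_n,r_n)$ converges to the diagonal $(R,R)$, not to an independent pair. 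In the null recurrent case $\delta<1$ the limit $L$ is genuinely random and lives on the extension, so exactly this obstruction is present; your argument would go through only in the ergodic case $\delta=1$, where $L_s=s$ is deterministic and $L^t\to L$ in probability, so that Slutsky applies.

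The paper takes a different route that addresses precisely this point. It first records the two marginal convergences (from \cref{C cor: LtSvt} and \cref{C lemma:UnCLT}) and deduces C-tightness of the family of joint laws. For an arbitrary limit point $\bar\p$ on the canonical space the marginals are already identified; the remaining task is to show that, under $\bar\p$, the two components are independent. This is done by passing to the right inverse $K$ of $L$ and proving --- following Steps~6--7 on pp.~122--124 of \citet{Hoepfner1990} --- that $(K,\bm W)$ is a process with independent increments relative to its own filtration under $\bar\p$. Since $K$ is a $\delta$-stable subordinator (hence purely discontinuous) and $\bm W$ is continuous, this forces $K$ and $\bm W$, and therefore $L$ and $\bm W$, to be independent under every limit point; uniqueness of the limit follows. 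Stable convergence is not invoked in this lemma at all; it is established only afterwards, in \cref{C lemma:Stable}, via a separate shift-by-$\theta_u$ argument.
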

\begin{proof}
From \cref{C cor: LtSvt,C lemma:UnCLT}, we infer
\begin{gather}\label{C eq:Margins}
	L^t \LimdT L
	\quad\text{and}\quad
	\left(	U^{v_{t},\eta_t}(x_{i},y_{i})
		\right)_{i\in I}
			\LimdT
	\left( 	\mu'(x_i)\sigma(x_{i},y_{i})  W^{i}
		\right)_{i\in I}.
\end{gather}
Thus, the families
\[
	\left\{ \mathscr{L}(L^t\mid\p^\pi) : t\ge0 \right\}
	\quad\text{and}\quad
	\left\{  \mathscr{L}\big((	U^{v_{t},\eta_t}(x_{i},y_{i}) )_{i\in I} \mid\p^\pi\big) : t\ge0 \right\}
\]
are C-tight. By Corollary\,VI.3.33 of \citet{jacodshir}, we conclude that
\begin{gather}\label{C eq:JointCtight}
	\text{the family}\quad \left\{ \mathscr{L}\big(L^t, (	U^{v_{t},\eta_t}(x_{i},y_{i})
	)_{i\in I} \mid\p^\pi\big)  : t\ge0 \right\}
	\quad\text{ is C-tight.}
\end{gather}
In the remainder of this proof, we abbreviate ${\bm U}^{v_{t}}\coloneqq(U^{v_{t},\eta_t}(x_{i},y_{i}))_{i\in I}$.

Let $(\bar\Omega,\bar{\mathscr{F}}) \coloneqq (\mathcal{D}(\bbr\times\bbr^{I}),\mathscr{D}(\bbr\times\bbr^{I}))$ denote the canonical space, and let $({ L},{\bm W})$ be the canonical process. Moreover, let $\bar\p$ be an arbitrary limit point of the family in \cref{C eq:JointCtight}. We deduce from \cref{C eq:Margins} that its marginals are given by the Mittag-Leffler law of order~$\delta$ and the $I$-dimensional (scaled) Wiener law, respectively. For convenience, we abbreviate $\mathbbm{Q}_{1}\coloneqq\mathscr{L}({ L}\mid\bar\p)$ and $\mathbbm{Q}_{2}\coloneqq\mathscr{L}({\bm W}\mid\bar\p)$. Suppose that $L$ and ${\bm W}$ are independent processes under $\bar\p$. Then $\bar \p = \mathbbm{Q}_{1} \otimes \mathbbm{Q}_{2}$ holds. As $\bar\p$ is an arbitrary limit point of the family in \cref{C eq:JointCtight}, then it has to be unique.
Hence, $(\mathscr{L}((L^{t}, {\bm U}^{v_{t}})\mid \p^\pi) \to \mathbbm{Q}_{1} \otimes \mathbbm{Q}_{2}$ weakly as $\tto$.
\Halmos

Let $K$ denote the right-inverse of ${ L}$, \ie, $K_{t}\coloneqq\inf\{s:{ L}_{s}> t\}$, and let~$(\mathscr{H}_t)_{t\ge0}$ be the filtration on $\bar\Omega$ which is generated by the process $(K,{\bm W})$. Suppose that --~under $\bar\p$~-- $K$ and ${\bm W}$ are processes with independent increments relative to $(\mathscr{H}_t)_{t\ge0}$. (That is, $K_{t+s}-K_t$ and $\mathscr{H}_t$ are independent for all $s,t>0$, and ${\bm W}_{t+s}-{\bm W}_t$ and $\mathscr{H}_t$ are independent for all $s,t>0$.)
Then, in analogy to Step\,6 on p.\,122 of \citet{Hoepfner1990}, we deduce that --~under $\bar\p$~-- the pair~$(K,{\bm W})$ itself is a process with independent increments relative to $(\mathscr{H}_t)_{t\ge0}$. We recall that~$K$ is a $\delta$-stable subordinator, thus, purely discontinuous (resp., deterministic if $\delta=1$). Since ${\bm W}$ is continuous, hence, $K$ and ${\bm W}$ are independent processes --~under~$\bar \p$. Consequently,~$\bar\p=\mathbbm{Q}_{1} \otimes \mathbbm{Q}_{2}$. \Halmos

It remains to show that --~under $\bar\p$~-- $K$ and ${\bm W}$ are processes with independent increments relative to $(\mathscr{H}_t)_{t\ge0}$. This, however, follows in analogy to Step\,7 on pp.\,123f of \citet{Hoepfner1990} with obvious notation.
\end{proof}

Next, we demonstrate that the convergence in \cref{C lemma:JointCLT} holds stably in law.
\begin{Lemma}\label{C lemma:Stable}
Grant \cref{C a:AC,C a: HR,C a: C-weak,C a: DK}. Let $\eta_t$ be as in \cref{C lemma:JointCLT}.
Then, we have the following stable convergence in law in $\mathcal{D}(\bbr^{1+I})$:
\[
	\left(L^t, \left(	U^{v_{t},\eta_t}_{L^t}(x_{i},y_{i})
		\right)_{i\in I}\right)
		\LimdstT
	\left(L, \left(\mu'(x_i)\sigma(x_{i},y_{i})  W^{i}_{L}
		\right)_{i\in I}\right),
\]
where $\sigma(x,y)^2$ is given by \cref{C eq: Variance}.
\end{Lemma}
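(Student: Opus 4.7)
The plan is to derive \cref{C lemma:Stable} from \cref{C lemma:JointCLT} in two steps: first perform the random time-change by composing $U^{v_t,\eta_t}$ with $L^t$, then upgrade ordinary convergence in law to stable convergence via the strong Markov property.

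For the composition I would invoke the continuous mapping theorem. The map $(\ell, u) \mapsto (\ell, u \circ \ell)$ from $\mathcal{D}(\bbrp)\times\mathcal{D}(\bbr^{I})$ into $\mathcal{D}(\bbr^{1+I})$ is continuous in Skorokhod's topology at pairs $(\ell, u)$ where $u$ has continuous sample paths and $\ell$ is continuous and non-decreasing, by Proposition~VI.1.23 of \citet{jacodshir}. The limit $(L, (\mu'(x_i)\sigma(x_i,y_i)W^i)_{i\in I})$ in \cref{C lemma:JointCLT} lies in this continuity set almost surely, since $L$ is either the identity or a Mittag-Leffler process and $W$ is a Wiener process. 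Consequently
\begin{equation*}
	\bigl(L^t, (U^{v_{t},\eta_t}_{L^t}(x_i,y_i))_{i\in I}\bigr) \LimdT \bigl(L, (\mu'(x_i)\sigma(x_i,y_i) W^i_L)_{i\in I}\bigr)
\end{equation*}
in ordinary law under $\p^\pi$ for every initial $\pi$.

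To upgrade to stable convergence I would invoke Rényi's criterion: since the limit $Z$ is independent of $\mathscr{F}$ by construction, it is enough to show $\E^\pi[Y f(Z^t)] \to \E^\pi[Y]\,\tilde\E[f(Z)]$ for every bounded continuous $f$ and every bounded $\mathscr{F}$-measurable $Y$. A monotone class argument reduces this to $Y = g(X_{s_1},\dots,X_{s_k})$ with $g$ bounded continuous and $s_j\le T$ for some fixed deterministic $T$. The strong Markov property at time $T$ then gives
\begin{equation*}
	\E^\pi[Y f(Z^t)] = \E^\pi\bigl[Y \cdot \E^{X_T}\bigl[f\bigl(\bar Z^{t}+R^t_T\bigr)\bigr]\bigr],
\end{equation*}
where $\bar Z^t$ is built from the shifted path $(X_{T+s})_{s\ge0}$ via the same recipe as $Z^t$, and $R^t_T$ absorbs the pre-$T$ contributions. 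The first step, applied under $\p^x$, gives $\E^x[f(\bar Z^t)] \to \tilde\E[f(Z)]$ with a limit independent of $x$, and bounded convergence yields the desired factorisation.

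The main technical obstacle is the control of the remainder $R^t_T$. Its pre-$T$ contribution to $L^t$ is $J_T/v_t \to 0$ a.\,s.\ because $J_T<\infty$ a.\,s.\ and $v_t\to\infty$; its pre-$T$ contribution to $U^{v_t,\eta_t}_{L^t}$ is bounded in absolute value by $\zeta J_T/\sqrt{v_t\eta_{1,t}^d\eta_{2,t}^d}$ and vanishes under \cref{C eq:LLN}. A subtler bookkeeping issue is that the first ``marked'' jump time after $T$ for the shifted path need not coincide with $T_{J_T+1}-T$ for the original path, since this depends on whether $X_T$ lies in the compact set $C$ fixed in \cref{C AuxYZ}; this reindexing of the auxiliary chain was handled in Steps~6 and 7 of \citet{Hoepfner1990}, and the same argument transfers verbatim here.
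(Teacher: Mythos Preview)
Your strategy is the paper's: deduce ordinary convergence in law from \cref{C lemma:JointCLT} by composition, then upgrade to stable convergence via the Markov property at a fixed finite time $u$ together with the negligibility of the pre-$u$ contribution. The paper executes this more cleanly by introducing the deterministic shift $a^t_s=(s-u/t)^+$ and comparing $h(L^t,{\bm U}^{v_t}\circ L^t)$ directly with $h(L^t_{a^t}\circ\theta_u,({\bm U}^{v_t}\circ L^t_{a^t})\circ\theta_u)$; this device makes precise what your vague additive splitting ``$\bar Z^t+R^t_T$'' of the \emph{composed} process $s\mapsto U^{v_t}_{L^t_s}$ is meant to express.

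Two small corrections to your sketch. Your remainder bound omits the $S^{v_t,\eta_t}$-part of $U^{v_t,\eta_t}$, which contributes a further term of order $u\sqrt{\eta_{2,t}^d/(v_t\eta_{1,t}^d)}\to0$; this is harmless but must be recorded. And your ``reindexing'' concern is a phantom: since $J$ is an additive functional of $X$, the identity $T_{J_T+k}=T+T_k\circ\theta_T$ holds exactly, independently of whether $X_T\in C$, so no realignment of the auxiliary chain is needed. Steps~6 and~7 of \citet{Hoepfner1990} are indeed cited in the paper, but in the proof of \cref{C lemma:JointCLT} (to establish independence of the limiting subordinator and Wiener process), not here.
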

\begin{proof}
Let $h$ be a bounded, Lipschitz continuous function on $\mathcal{D}(\bbr^{1+I})$ and $Y$ be a bounded $\mathscr{F}$-measurable random variable. 
With $\sigma(x,y)^2$ given by \cref{C eq: Variance}, we abbreviate
\begin{align*}
	{\bm U}^{v_{t}} \coloneqq \big(U^{v_{t},\eta_t}(x_{i},y_{i})\big)_{i\in I}
		\quad\text{and}\quad
	{\bm W} \coloneqq \big(\mu'(x_i)\sigma(x_{i},y_{i})W^{i}\big)_{i\in I}.
\end{align*}
We have to demonstrate
\begin{gather}\label{C eq:Stable}
	\E^\pi\left[h(L^t, {\bm U}^{v_{t}}_{L^t})Y\right]
	\limT
	\tilde\E\left[h\left(L, {\bm W}_{L} \right)\right] \E^\pi Y.
\end{gather}

First, we suppose that $Y$ is $\mathscr{F}_{u}$-measurable for some $u\ge0$. Let $a^t$ be given by $a^t_{s}= (s-ut^{-1})^+$. Then $a^t$ converges to $a_{s}=s$ as $t\to\infty$. By \cref{C lemma:JointCLT}, since $a^t$ is non-random, $\mathscr{L}(a^t,L^t, {\bm U}^{v_{t}}\mid\p^\pi) \to \mathscr{L}(a, L, {\bm W} \mid\tilde\p)$ weakly as $\tto$.
The paths of the limit process are a.\,s.\ continuous. By eq. (3.4) of {\citet{Hoepfner1990}}, therefore,
\begin{align*}
	\mathscr{L}(a^t, L^t_{a^t}, {\bm U}^{v_{t}}\circ{L^t_{a^t}} \mid \p^{\pi})
	 &\limwT \mathscr{L}(a, L, {\bm W}_{L} \mid \tilde\p).
\end{align*}
Since $\E^\pi[h(L^t_{a^t}\circ\theta_{u},({\bm U}^{v_{t}}\circ{L^t_{a^t}})\circ\theta_{u})Y] = {} \E^\pi[\E^{X_{u}}[h(L^t_{a^t},{\bm U}^{v_{t}}\circ{L^t_{a^t}})]Y]$ by the Markov property, and since $\E^\pi[\tilde\E[h(L,{\bm W}_{L})]Y]={\tilde{\E}}[h(L,{\bm W}_{L})]\E^\pi Y$, consequently,
\begin{align*}
	\E^\pi[h(L^t_{a^t}\circ\theta_{u},({\bm U}^{v_{t}}\circ{L^t_{a^t}})\circ\theta_{u})Y]
	 &{}\limT {} {\tilde{\E}}[h(L,{\bm W}_{L})]\E^\pi Y.
\end{align*}

For every $r>0$, we note 
\begin{gather*}
	\sup_{s\le r}\big|L^t_{s} - L^t_{a^t_{s}}\circ\theta_{u}\big| = \sup_{s\le r}\big|v_{t}^{-1}J_{st\wedge u}\big| \le v_{t}^{-1}J_{u} \limasT 0, \\
\shortintertext{and}
	\sup_{s\le r}\big\lVert ({\bm U}^{v_{t}}\circ{L^t_{a^t_{s}}})\circ\theta_u - {\bm U}^{v_{t}}\circ{L^t_{s}}\big\rVert_{\infty}
	 \le \frac {\lVert g_1\rVert_\infty(\lVert g_2\rVert_\infty J_u + \eta_{2,t}^d\lVert Fg_2^{\eta,y}\rVert_\infty u)}{\sqrt{v_{t}\eta_{1,t}^d\eta_{2,t}^d}} \limasT 0.
\end{gather*}
Since $h$ is Lipschitz, therefore,
\[
	\big|h(L^t,{\bm U}^{v_{t}}\circ{L^t}
		)
	- h(L^t_{a^t}\circ\theta_{u},({\bm U}^{v_{t}}\circ{L^t_{a^t}})\circ\theta_{u}
		)\big| \limasT 0.
\]
Since $h$ and $Y$ are bounded, we deduce from Lebesgue's dominated convergence theorem that \cref{C eq:Stable} holds for all bounded $\mathscr{F}_{u}$-measurable random variables $Y$.

Second, for arbitrary bounded $\mathscr{F}$-measurable $Y$, we have $\E^\pi[Y\vert\mathscr{F}_u] \to Y$ in $\mathcal{L}^1$ as $u\to\infty$. Consequently, again by Lebesgue's dominated convergence theorem,
\begin{align*}
	\lim_{u\to\infty} \sup_{t>0} \left\lvert \E^\pi\big[h(L^t,{\bm U}^{v_{t}}\circ{L^t},\bar{\bm U}^{v_{t}}\circ{L^t})(\E^\pi[Y\vert\mathscr{F}_u]-Y)] \right\rvert = 0.
\end{align*}
Thus, \cref{C eq:Stable} holds in general.
\end{proof}
By \cref{C cor: LtSvt} and by eq.\ (3.5) of \citet{Hoepfner1990}, we obtain the following corollary to \cref{C lemma:Stable}.
\begin{CorollaryOP}\label{C cor:Joint-CLT}
Grant \cref{C a:AC,C a: HR,C a: C-weak,C a: DK}. Let $\eta_t$ be as in \cref{C lemma:JointCLT}.
Then we have the following stable convergence in law in $\mathcal{D}(\bbr^{2I})$:
\begin{align*}
	\left( S^{v_{t},\eta_{t}}_{L^t}(x_i), U^{v_{t},\eta_t}_{L^t}(x_{i},y_{i})
	\right)_{i\in I}
	& \LimdstT
	\left({\mu'(x_i)}L, \mu'(x_i)\sigma(x_{i},y_{i})  W^{i}_{L}
	\right)_{i\in I},
\end{align*}
where $\sigma(x,y)^2$ is given by \cref{C eq: Variance}.
\end{CorollaryOP}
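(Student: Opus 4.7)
The plan is to augment the stable convergence provided by \cref{C lemma:Stable} with a joint convergence for $S^{v_t,\eta_t}_{L^t}(x_i)$, using a Slutsky-type argument within stable convergence. The key observation is that $S^{v_t,\eta_t}$ concentrates on a deterministic limit, so after composition with the random time change $L^t$ it is asymptotically a continuous functional of $L^t$ alone.

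First, I would invoke \cref{C lemma:JnSn} (applied with $n=v_t$, $\eta_n = \eta_{1,t}$) to get the ucp convergence $S^{v_t,\eta_t}_s(x_i) \Rightarrow s\mu'(x_i)$ as $\tto$, for each $i\in I$. Since the limit is deterministic and continuous, and since $L^t\LimdT L$ by \cref{C prop:ML-Convergence} applied to the additive functional $J$ of \cref{C eq:J}, an application of eq.~(3.5) of \citet{Hoepfner1990} -- the composition lemma stating that the ucp convergence of a sequence can be composed with a weakly convergent time change whose limit has a.s.\ continuous paths -- yields, for every $r>0$,
\[
	\sup_{s\le r} \left| S^{v_t,\eta_t}_{L^t_s}(x_i) - \mu'(x_i)\,L^t_s \right| \limpPiT 0.
\]
This is exactly the information encoded in \cref{C cor: LtSvt}: the $S$-component is asymptotically a deterministic multiple of $L^t$, up to a remainder that vanishes in probability uniformly on compacts.

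Second, by \cref{C lemma:Stable}, we already have the stable convergence
\[
	\left(L^t, \left(U^{v_t,\eta_t}_{L^t}(x_i,y_i)\right)_{i\in I}\right) \LimdstT \left(L, \left(\mu'(x_i)\sigma(x_i,y_i)W^i_L\right)_{i\in I}\right).
\]
Combining this with the previous display via a standard Slutsky-type lemma for stable convergence -- which is legitimate because replacing $S^{v_t,\eta_t}_{L^t}(x_i)$ by $\mu'(x_i)L^t$ costs only a ucp-vanishing error, and adding (via a continuous functional) a ucp-convergent term to a stably convergent sequence preserves stable convergence -- one obtains the claimed stable convergence in $\mathcal{D}(\bbr^{2I})$.

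The main point where care is needed is the composition step: $L^t$ itself converges only weakly (not in ucp), so I cannot simply use the continuous mapping theorem with the raw convergence $S^{v_t,\eta_t}_s(x_i)\Rightarrow s\mu'(x_i)$. This is precisely the situation that eq.~(3.5) of \citet{Hoepfner1990} is designed to handle, by exploiting the continuity of the paths of the limit $L$ (which is deterministic when $\delta=1$ and the continuous inverse of a strictly increasing $\delta$-stable subordinator when $0<\delta<1$). Once this technical composition is settled, the remainder of the argument is routine manipulation of stable convergence.
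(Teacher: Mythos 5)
Your proposal is correct and follows essentially the same route as the paper: the paper's one-line proof likewise combines \cref{C cor: LtSvt} (which encodes exactly your identification $S^{v_t,\eta_t}_{L^t}(x_i)=\mu'(x_i)L^t+o_{\mathrm{ucp}}(1)$, obtained from \cref{C lemma:JnSn} and the Höpfner composition lemma) with the stable convergence of $(L^t,(U^{v_t,\eta_t}_{L^t}(x_i,y_i))_{i\in I})$ from \cref{C lemma:Stable}, closing with the same Slutsky-type argument. No gaps.
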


\begin{proof}[of \cref{C theo:CLT}]
For every $t\ge0$ and each $x$ and $y$, we have
\begin{gather*}
	\sqrt{v_{t}\eta_{1,t}^d\eta_{2,t}^d} \left(\hat f^{\eta_t}_{t}(x,y) - \bar f^{\eta_t}(x,y) \right)
	= \frac{U^{v_{t},\eta_t}_{J_t/v_t}(x,y) - \bar f^{\eta_t}(x,y)\sqrt{\eta_{1,t}^d\eta_{2,t}^d/v_t}\int_{T_{J_t}}^t g_1^{\eta_{t},x}(X_s)\ds }{S^{v_t,\eta_t}_{J_t/v_t}(x) + v_t^{-1}\int_{T_{J_t}}^t g_1^{\eta_{t},x}(X_s)\ds},
\end{gather*}
where $\bar f^{\eta}(x,y)\coloneqq {\mu(g_1^{\eta,x}Fg_2^{\eta,y})}/{\mu(g_1^{\eta,x})}$. Let $h_n:\mathbf{D}\to\bbr$ be as in the proof of \cref{C theo:LLN}. We recall $\psi(h_n^2)\le\zeta\eta_{1,n}^{-d}$ for some $\zeta<\infty$. We also note $v_t\eta_{2,t}^{-d} \to \infty$. In analogy to \cref{C eq: RemainderConvergence}, thus,
\[
	\sqrt{\eta_{1,t}^d\eta_{2,t}^d/v_t}\int_{T_{J_t}}^t g_1^{\eta_{t},x}(X_s)\ds \le \sqrt{\eta_{1,t}^d\eta_{2,t}^d/v_t}h_{v_t}(Z_{J_t+1}) \limpPiT 0.
\]

Since $L$ and $W$ are independent, $V(x_i,y_i) \coloneqq L_1^{-1/2}W^i_{L_1}$ defines an $I$-dimensional standard Gaussian random vector such that $L$, $V$ and $\mathscr{F}$ are independent. By the continuous mapping theorem and \cref{C cor:Joint-CLT}, consequently,
\begin{equation*}
	\left(\sqrt{v_{t}\eta_{1,t}^d\eta_{2,t}^d} \big(\hat f^{\eta_t}_{t}(x_{i},y_{i}) - \bar f^{\eta_t}(x_{i},y_{i})\big)  \right)_{i\in I}
	\limdstT
	\left( \sigma(x_{i},y_{i})  V(x_{i},y_{i})L_{1}^{-1/2}  \right)_{i\in I},
\end{equation*}
where $\sigma(x,y)^2$ is given by \cref{C eq: Variance}. \Halmos

In addition, grant \cref{C a: C-strong} and let $\eta_t=(\eta_{1,t},\eta_{2,t})$ be such that \cref{C eq:CLT} holds as well. We abbreviate $\bar\gamma^{\eta}(x,y) = \bar f^{\eta}(x,y) -f(x,y)$ and note
\[
	\mu(g_1^{\eta,x})\bar\gamma^{\eta}(x,y) = \iint \mu'(x+\eta_{1}z)\big(f(x+\eta_{1}z,y+\eta_{2}w)-f(x,y) \big) g_1(z)g_2(w)\dw\dz.
\]
We apply Taylor's theorem to $\mu'$ and $f$: In $x$, we expand up to the order $\lceil \alpha_1\rceil-1$ and, in $y$, we expand up to the order $\lceil \alpha_2\rceil-1$. We recall   from \cref{D eq:Kernel-g1} that $g_1$ and $g_2$ are, at least, of order $\alpha_1$ and $\alpha_2$, respectively. By a classical approximation argument, therefore, there exists a constant $\zeta<\infty$ such that $|\mu(g_1^{\eta,x})\bar\gamma^{\eta_t}(x,y)| \le \zeta(\eta_{1,t}^{\alpha_1} + \eta_{2,t}^{\alpha_2})$. If $\zeta_1=\zeta_2=0$ in \cref{C eq:CLT}, then it is immediate that $({v_{t}\eta_{1,t}^d\eta_{2,t}^d})^{1/2}\bar\gamma^{\eta_t}(x,y) \to 0$. If $\alpha_1,\alpha_2\in\bbn^\ast$, more explicitly, 
\begin{align*}
	\lefteqn{\mu(g_1^{\eta,x})\bar\gamma^{\eta}(x,y) = \eta_{1,t}^{\alpha_1} \sum_{\substack{|m_1+m_2|=\alpha_1\\|m_2|\neq0}}\frac{\kappa_{m_1+m_2}(g_1)}{m_1!m_2!}\frac{\partial^{m_1}}{\partial x^{m_1}}\mu'(x)\frac{\partial^{m_2}}{\partial x^{m_2}}f(x,y)} \\
		& \hspace{12em} + \eta_{2,t}^{\alpha_2}\sum_{|m|=\alpha_2} \frac{\kappa_{m}(g_2)}{m!}\mu'(x)\frac{\partial^m}{\partial y^m}f(x,y) + o(\eta_{1,t}^{\alpha_1} + \eta_{2,t}^{\alpha_2}).
\end{align*}
Since $\mu(g_1^{\eta,x})\to \mu'(x)$, we have $({v_{t}\eta_{1,t}^d\eta_{2,t}^d})^{1/2}\bar\gamma^{\eta_t}(x,y) \to \gamma(x,y)$ given by \cref{C eq: Bias}.
\end{proof}
\begin{proof}[of \cref{C cor:CLT}]
In analogy to the proof of \cref{C theo:CLT}, by \cref{C cor:Joint-CLT} it remains to show that $({v_{t}\eta_{1,t}^d\eta_{2,t}^d})^{1/2}\hat\gamma^{\eta_t}_t(x,y)$ is a consistent estimator for $\gamma(x,y)$. 

We recall that in classical (conditional) density estimation,  the (partial) derivatives of a consistent density estimator --~provided they exist~-- are consistent for the (partial) derivatives of the estimated density. 
In analogy to \cref{C lemma: TightFam1}, we observe that this is also true in our context. In particular, 
\begin{gather*}
	\frac{\partial^{m_1+m_2}}{\partial x^{m_1}\partial y^{m_2}}\hat f^{\eta_t}_t(x,y) \limpPiT \frac{\partial^{m_1+m_2}}{\partial x^{m_1}\partial y^{m_2}} f(x,y)
	\mAnd	
	\frac{\int_0^t \frac{\partial^m}{\partial x^m}g_1^{\eta_t,x}(X_s)\ds}{\int_0^t g_1^{\eta_t,x}(X_s)\ds} \limpPiT \frac{\frac{\partial^m}{\partial x^m}\mu'(x)}{\mu'(x)}.
\end{gather*}
If either $\alpha_1,\alpha_2\in\bbn^\ast$ or $\zeta_1=\zeta_2=0$ in \cref{C eq:CLT}, consequently, $({v_{t}\eta_{1,t}^d\eta_{2,t}^d})^{1/2}\hat\gamma^{\eta_t}_t(x,y) \to \gamma(x,y)$ in probability as $\tto$.
\end{proof}

\section{Proofs for results of \texorpdfstring{\cref{D Estimation}}{Section 2}}\label{D Proofs}
Throughout this section, $\zeta<\infty$ denotes some generic constant which may depend on the variables specified at the beginning of each proof. It may change from line to line. 

This section is organised as follows:
{First, in \cref{D Discrete} we study the influence of discretisation on our estimator. We prove results for the small-time asymptotic of Itô semi-martingales and for the sojourn time discretisation error.} Second, in \cref{D ProofMartingale} we prove an auxiliary, non-standard martingale limit theorem.
Third, in \cref{D ProofLLN} we prove the consistency of our estimator (\cref{D t: LLN}) utilising our results from \cref{D Discrete,C ProofLLN}. Last, in \cref{D ProofCLT} we apply \cref{D p: AuxMart} from \cref{D ProofMartingale} to our case and conclude with the final steps in the proof of the central limit theorem (\cref{D t: CLT,D c: CLT}) utilising our results from \cref{D Discrete,C ProofCLT}.

\subsection{Small-time asymptotic and sojourn time discretisation error}\label{D Discrete}
In this subsection, we study the influence of discretisation. 

We compare our estimators in \cref{D def:PE,C def:PE}: In the numerator of the former, the jumps $\Delta X_t$ and the pre-jump left-limits $X_{t-}$ are replaced by the increments $\Delta^n_kX$ and the pre-increment values $X_{(k-1)\Delta}$, respectively. Our Itô semi-martingale meets the following small-time asymptotic:
\begin{Proposition}\label{D p: Small Time Asymptotic}
Let $A$ be a compact subset of $E \times E^\ast$, \hbox{$\eta_0 < \min\{\lVert y\rVert: (x,y)\in A\}$}, and let $g$ be a twice continuously differentiable kernel with compact support. Grant \cref{D a: BCn,D a: C-weak}. Then, for every $m\in\bbn^\ast$, there exists $\zeta<\infty$ such that
\begin{gather}\label{D eq: Small Time Asymptotic}
	\begin{aligned}
		\lefteqn{\left| \frac{1}{\Delta}\E^x \left[g^{\eta,y}(\Delta_1^nX)\right] - \int F(x,\dw) g^{\eta,y}(w) \right|} \\
		& \hspace{5em} \le \zeta\left[{\Delta}^{(\alpha\wedge1)/2} + \frac{\Delta}{\eta^{2\vee (\beta+d)}}\left(1 + \sum_{k=1}^m\frac{\Delta^{k}}{\eta^{2k}}\right) + \frac{\Delta^m}{\eta^{2(m+1)+d}}\right]
	\end{aligned}
\end{gather}
holds for every $(x,y) \in A$, $\eta<\eta_0$ and $\Delta\le1$, where $g^{\eta,y}(w) = \eta^{-d}g((w-y)/\eta)$.
\end{Proposition}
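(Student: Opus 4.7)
Fix $(x,y)\in A$ and set $h(z):=g^{\eta,y}(z-x)$. Because $g$ is supported in $B_1(0)$ and $\eta<\eta_0\le\lVert y\rVert$, the function $h$ has support contained in $B_\eta(x+y)$, which is disjoint from $\{x\}$; hence $h(x)=0$, $\nabla h(x)=0$ and $\nabla^2 h(x)=0$. An application of Dynkin's formula (or equivalently, Itô's formula together with the martingale property of the compensated stochastic integrals, which is justified by the $C^2$-regularity of $g$ and the linear growth conditions of \cref{D a: BCn}) gives
\[
	\E^x[h(X_\Delta)] = \int_0^\Delta \E^x[Ah(X_s)]\,\ds,
\]
where the extended generator acts by $Ah(z)=b(z)\cdot\nabla h(z)+\tfrac12\tr(c(z)\nabla^2 h(z))+\int F(z,\dw)[h(z+w)-h(z)-w\cdot\nabla h(z)\mathbbm{1}_{\{\lVert w\rVert\le 1\}}]$. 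The key point is that the vanishing conditions at $z=x$ collapse $Ah(x)$ to exactly the target quantity $\int F(x,\dw)g^{\eta,y}(w)$, so the error of interest becomes
\[
	\frac{1}{\Delta}\int_0^\Delta \E^x[Ah(X_s)-Ah(x)]\,\ds.
\]

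The polynomial-in-$\Delta$ bounds are produced by iterating Dynkin's formula up to order $m$: $\E^x[Ah(X_s)] = \sum_{j=0}^{m-1}\frac{s^j}{j!}A^{j+1}h(x) + R_m(s,x)$ with $|R_m(s,x)|\le\frac{s^m}{m!}\lVert A^{m+1}h\rVert_\infty$. Integrating and dividing by $\Delta$ produces the telescoping expansion $\sum_{j=1}^{m-1}\frac{\Delta^j}{(j+1)!}A^{j+1}h(x)+O(\Delta^m\lVert A^{m+1}h\rVert_\infty)$. The pointwise estimates on the iterates are obtained via the Taylor expansion $h(z+w)-h(z)-w\cdot\nabla h(z)\mathbbm{1}_{\{\lVert w\rVert\le 1\}}=\tfrac12 w^\intercal\nabla^2 h(\xi)w\mathbbm{1}_{\{\lVert w\rVert\le 1\}}+[h(z+w)-h(z)]\mathbbm{1}_{\{\lVert w\rVert>1\}}$, combined with the linear growth of $b,c,F$, the $\eta^{-(d+2)}$ scaling of $\nabla^2 h$, and the bound $\int\bar F(\dw)(\lVert w\rVert^\beta\wedge 1)<\infty$. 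A careful accounting yields $|A^{j+1}h(x)|\le\zeta\,\eta^{-[2\vee(\beta+d)]-2j}$ and $\lVert A^{m+1}h\rVert_\infty\le\zeta\,\eta^{-d-2(m+1)}$; the exponent $2\vee(\beta+d)$ arises from comparing the pure diffusion term $\tr(c\nabla^2 h)$ (contributing $\eta^{-2}$) against the small-jump contribution $\tfrac12\int F(\cdot,\dw)w^\intercal\nabla^2 h(\xi)w\mathbbm{1}_{\{\lVert w\rVert\le 1\}}$ weighted by the volume $\eta^d$ of the support of $\nabla^2 h$.

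The $\eta$-free contribution $\Delta^{(\alpha\wedge1)/2}$ is obtained by a refined treatment of the principal piece of $\E^x[Ah(X_s)-Ah(x)]$. After isolating the (already bounded) local terms involving $\nabla h(X_s)$, $\nabla^2 h(X_s)$ and $h(X_s)$, what remains is $\E^x\bigl[\int F(X_s,\dw)g^{\eta,y}(w+(X_s-x))\bigr]-\int F(x,\dw)g^{\eta,y}(w)$. After the substitution $u=w+(X_s-x)$, both integrals take the form $\int f(\cdot,\cdot)\,g^{\eta,y}(u)\du$, and their difference equals $\E^x\int[f(X_s,u-(X_s-x))-f(x,u)]g^{\eta,y}(u)\du$. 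The local $\alpha$-Hölder regularity of $f$ on $E\times E^\ast$ from \cref{D a: C-weak} (valid because $\eta<\eta_0$ keeps $u$ bounded away from $0$) together with the semimartingale moment estimate $\E^x\lVert X_s-x\rVert^{p}\le \zeta s^{p/2}$ for $p\le 2$ (which follows by standard BDG-type arguments from the growth conditions in \cref{D a: BCn}) bounds this difference by $\zeta s^{(\alpha\wedge 1)/2}$; integration and division by $\Delta$ yields the stated $\zeta\Delta^{(\alpha\wedge 1)/2}$.

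The main obstacle is rigorously justifying iterated Dynkin when $g$ is only $C^2$: the iterates $A^{j+1}h$ are not a priori defined for $j\ge 1$, so one must either approximate $g$ by a sequence of smooth kernels $g^\varepsilon$ (passing to the limit using the uniform $C^2$-bounds) or implement the expansion directly through Itô's formula applied to a sequence of semimartingale functionals. A secondary difficulty is controlling the $\eta$-dependence of the non-local part of $A$, whose support is not contained in $B_\eta(x+y)$: uniformity over $(x,y)\in A$ and $\eta<\eta_0$ must be secured by invoking the growth bounds on $F$ from \cref{D a: BCn} and the continuity of $\bar f$ on $E^\ast$.
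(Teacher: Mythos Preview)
Your approach has two genuine gaps that prevent it from working under the stated hypotheses.

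\textbf{Iterating the generator is not available.} The expansion $\E^x[Ah(X_s)]=\sum_{j=0}^{m-1}\frac{s^j}{j!}A^{j+1}h(x)+R_m$ requires $h$ to lie in the domain of $A^{m+1}$. This fails for two independent reasons. First, the statement only assumes $g\in C^2$, so $A^{j}h$ involves derivatives of $h$ of order $2j$ that do not exist; your proposed fix of mollifying $g$ does not help, because the claimed bound $\lVert A^{m+1}h\rVert_\infty\le\zeta\eta^{-d-2(m+1)}$ needs uniform control on $\partial^{2(m+1)}g$, which blows up under mollification. Second, and more fundamentally, \cref{D a: BCn} imposes only growth conditions on $b$, $c$ and $F$, with no regularity. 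Even for $g\in C^\infty$, the function $Ah$ is in general not $C^2$ (already $z\mapsto\int F(z,\dw)h(z+w)=\int f(z,w)g^{\eta,y}(w)\dw$ need not be differentiable when $\alpha<1$ in \cref{D a: C-weak}), so $A^2h$ is undefined. The paper avoids this entirely: it applies It\^o's formula \emph{once}, obtaining three terms $H^\eta_\Delta+H^{\prime\eta}_\Delta+H^{\prime\prime\eta}_\Delta$, and then bounds $|H^\eta_\Delta|+|H^{\prime\eta}_\Delta|$ via $\zeta\eta^{-2-d}\int_0^\Delta\p^x(X_t\in B_{\zeta'\eta}(x+y))\dt$. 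The polynomial-in-$\Delta/\eta^2$ structure of \cref{D eq: Small Time Asymptotic} then comes from iterating the \emph{integral inequality} for $h(t,\eta):=\p^x(X_t\in B_\eta(x+y))$ established in a preliminary lemma (the ``Small Time Ball Asymptotic''), namely $h(t,\eta)\le\zeta t\eta^d(1+t\eta^{-(\beta+d)})+\zeta\eta^{-2}\int_0^t h(s,\zeta'\eta)\ds$. This iteration needs no smoothness of the coefficients.

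\textbf{The moment bound $\E^x\lVert X_s-x\rVert^p\le\zeta s^{p/2}$ is false here.} Under \cref{D a: BCn} the L\'evy measure $\bar F$ is only assumed to integrate $\lVert w\rVert^\beta\wedge1$; large jumps need have no finite moments at all, so $\E^x\lVert X_s-x\rVert$ may be infinite. Consequently your derivation of the $\Delta^{(\alpha\wedge1)/2}$ term via $\E^x\int|f(X_s,u-(X_s-x))-f(x,u)|g^{\eta,y}(u)\du\le\zeta\,\E^x\lVert X_s-x\rVert^{\alpha\wedge1}$ breaks down. The paper handles this by localising: it introduces the stopping times $T^\xi$ (first jump of size $>\xi$) and $S^{\varepsilon,\xi}$ (first exit of the small-jump part from a ball of radius $3\varepsilon$), splits $\Omega\times[0,\Delta]$ into three pieces $A_1,A_2,A_3$, and on $A_1$ uses the finite moment bound $\E^x\sup_{s\le t}\lVert X^\xi_{s\wedge T^\xi}\rVert^p\le\zeta(1+\lVert x\rVert^p)t$ for the \emph{stopped, truncated} process, while on $A_2,A_3$ it uses the smallness of $\p^x(T^\xi\le t)$ and $\p^x(\Omega^{\varepsilon,\xi}_t)$ together with the crude $\eta^{-(\beta+d)}$ bound for $\int F(z,\dw)h_\eta(z+w)$.
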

\begin{Remark}
For presentational purposes, we have left a small gap in the finite activity case. For instance, if $f$ is locally bounded on $E\times E$, then we can improve the bound in \cref{D eq: Small Time Asymptotic} replacing $\eta^{2\vee (\beta+d)}$ by $\eta^2$ independently of the dimension $d$.
\end{Remark}
In the former estimator's denominator, the sojourn time $\int_0^t g_1^{\eta,x}(X_s)\ds$ is replaced by its Riemann sum approximation $\Delta\sum_{k=1}^{n}g_1^{\eta,x}(X_{(k-1)\Delta})$.
\begin{Proposition}\label{D p: Sojourn Time Approximation}
Let $x\in E$, $v:\bbrp\to\bbrp$ be a non-decreasing function, $\xi_n>0$, $\eta_n \to 0$, and $(h_n)_{n\in\bbn^\ast}$ be a uniformly bounded family of twice continuously differentiable functions supported on $\ball{x}{\eta_n}$ such that $(\eta_n^{|m|}\partial^m h_n)_{n\in\bbn^\ast}$ is uniformly bounded for every multi-index $m$ with $|m|\in\{1,2\}$. As $n\Delta\to\infty$ and $\Delta\to0$, we suppose $v(n\Delta)\eta_n^d\to \infty$ and $\xi_n\Delta\eta_n^{-2 - d[(1-2/(\beta+d))\vee0]}\to 0$. 
\begin{enumerate}
	\item Grant \cref{C a: HR,D a: BCn,D a: C-weak}. If $n\Delta^2\xi_n \to 0$ and $v(s)=\bar v(st)$ for some deterministic equivalent $\bar v$ of $X$ and some $t>0$, then, under any law $\p^\pi$, we have the following convergence in probability:
		\begin{gather}\label{D eq: Sojourn Time Convergence}
			\sup_{s\le t} \frac{\xi_n}{v(n\Delta)\eta_n^d} \left|{\Delta}\sum_{k=1}^{\lfloor sn\rfloor}h_n(X_{(k-1)\Delta})-\int_0^{\lfloor sn\rfloor\Delta} h_n(X_r)\dr\right| \limpPiN 0.
		\end{gather}
	\item Grant \cref{C a: HR,C a: DK,D a: BCn,D a: C-weak}. If $(n\Delta)^{1-\delta}\Delta\xi_n \to 0$ and $v$ is the regularly varying function from \cref{D eq: Darling Kac}, then, under any law $\p^\pi$, \cref{D eq: Sojourn Time Convergence} holds for all $t>0$.
\end{enumerate}
\end{Proposition}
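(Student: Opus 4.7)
My plan is to decompose the Riemann-sum error via It\^o's formula and a stochastic Fubini, and then bound the resulting bounded-variation and martingale pieces by invoking occupation-time theory for Harris recurrent Markov processes. Writing
\[
	D_n(s) \coloneqq \Delta\sum_{k=1}^{\lfloor sn\rfloor}h_n(X_{(k-1)\Delta})-\int_0^{\lfloor sn\rfloor\Delta}h_n(X_r)\dr = \sum_{k=1}^{\lfloor sn\rfloor}\int_{(k-1)\Delta}^{k\Delta}[h_n(X_{(k-1)\Delta})-h_n(X_r)]\dr,
\]
I would apply It\^o's formula $h_n(X_r)-h_n(X_{(k-1)\Delta}) = \int_{(k-1)\Delta}^r \mathcal{A} h_n(X_{u-})\du + (M^n_{r}-M^n_{(k-1)\Delta})$, where $\mathcal{A}$ is the extended generator of $X$ and $M^n$ the associated local martingale, and then swap integration orders to obtain
\[
	D_n(s) = -\int_0^{\lfloor sn\rfloor\Delta}\phi_n(u)\mathcal{A} h_n(X_{u-})\du \,-\, \int_0^{\lfloor sn\rfloor\Delta}\phi_n(u)\,dM_u^n \eqqcolon D_n^{\text{bv}}(s) + D_n^{\text{mg}}(s),
\]
with $\phi_n(u)=k\Delta-u$ on $((k-1)\Delta,k\Delta]$, so that $0\le\phi_n\le\Delta$.

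Next, I would bound $|D_n^{\text{bv}}(s)|\le \Delta\int_0^{\lfloor sn\rfloor\Delta}|\mathcal{A} h_n(X_u)|\du$ and apply Doob's $L^2$-inequality to get $\E^\pi[\sup_{s\le t}|D_n^{\text{mg}}(s)|^2]\le 4\Delta^2\,\E^\pi\langle M^n,M^n\rangle_{\lfloor tn\rfloor\Delta}$, with predictable quadratic variation density
\[
	Q_n(z) = \nabla h_n(z)^\top c(z)\nabla h_n(z) + \int [h_n(z+w)-h_n(z)]^2 F(z, \dw).
\]
The occupation-time integrals $\int_0^{n\Delta}|\mathcal{A} h_n(X_u)|\du$ and $\int_0^{n\Delta}Q_n(X_u)\du$ are $O_p(v(n\Delta)\mu(|\mathcal{A} h_n|))$ and $O_p(v(n\Delta)\mu(Q_n))$ under any $\p^\pi$: in part (ii) this follows from \cref{C prop:ML-Convergence}, and in part (i) from the same tightness argument based on the general deterministic equivalent $\bar v$ of $X$ (cf.\ \citet{Loecherbach2008}) together with the rescaling $v(s) = \bar v(st)$.

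The crux is the moment bound $\mu(|\mathcal{A} h_n|)\vee \mu(Q_n) \le \zeta\,\eta_n^{d-a}$ with $a = 2+d[(1-2/(\beta+d))\vee 0]$. The continuous piece $\mathcal{A}_c h_n$ directly yields $\zeta\,\eta_n^{d-2}$ from the derivative bounds $\|\partial^m h_n\|_\infty\le \zeta\,\eta_n^{-|m|}$, the growth hypothesis \cref{D a: BCn}(i), and the support of $h_n$ in $\ball{x}{\eta_n}$. For the jump piece $\mathcal{A}_j h_n$, I would split the $w$-integral into three regions: small jumps $\|w\|\le\eta_n$ via a second-order Taylor expansion combined with $\int(\|w\|^\beta\wedge 1)\bar F(\dw)<\infty$; medium jumps $\eta_n<\|w\|\le 1$ via the tail estimate $\int_{\|w\|>\eta_n}\bar F(\dw)\le\zeta\,\eta_n^{-\beta}$ together with the fact that the supports of $h_n(\cdot)$ and $h_n(\cdot+w)$ are disjoint whenever $\|w\|>2\eta_n$; large jumps $\|w\|>1$ via the decay hypothesis \cref{D a: BCn}(iii) on $\bar f$ paired with the substitution $z'=z+w$ to exploit the $\eta_n^d$-volume of the support of $h_n$. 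The exponent $a$ then emerges from interpolating between the $L^1$- and $L^\infty$-bounds on $h_n$, distributing the volume factor $\eta_n^d$ between the support of $h_n$ and the jump intensity at scale $\eta_n$.

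Dividing by the normalisation $v(n\Delta)\eta_n^d/\xi_n$, $D_n^{\text{bv}}$ becomes $O_p(\xi_n\Delta\eta_n^{-a})$, which vanishes by the standing hypothesis. Markov's inequality applied to the $L^2$-bound reduces $D_n^{\text{mg}}$ to an error of order $\xi_n\Delta(v(n\Delta)\eta_n^{a+d})^{-1/2}$, killed either by $n\Delta^2\xi_n\to 0$ in part (i) (using $v(n\Delta)\sim c\,n\Delta$) or by $(n\Delta)^{1-\delta}\Delta\xi_n\to 0$ in part (ii) (using the regular variation of index $\delta$ of $v$). The main obstacle will be the three-regime jump splitting producing the sharp exponent $a$: one must balance the $\eta_n^{-2}$ loss from the second-derivative bound against the $\eta_n^{2-\beta}$ gain from $\int_{\|w\|\le\eta_n}\|w\|^2\bar F(\dw)$, and separately exploit the small volume of the support of $h_n$ together with \cref{D a: BCn}(iii) for large jumps in order to avoid requiring integrability of $\|w\|\bar f(w)$ at infinity.
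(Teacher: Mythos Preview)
Your overall architecture---It\^o's formula plus a stochastic Fubini to write the Riemann-sum error as a bounded-variation piece plus a martingale---is exactly what the paper does. But there is a genuine gap in your treatment of large jumps, and a secondary one in your use of Doob's $L^2$-inequality.

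\textbf{Large jumps.} Your claim that $\mu(|\mathcal{A}_j h_n|)\le\zeta\eta_n^{d-a}$ fails for the piece $\int_{\|w\|>1}[h_n(z+w)-h_n(z)]F(z,\dw)$. After your substitution $z'=z+w$, the inner integral becomes $\int_{\|w\|>1}\bar f(w)(1+\|z'-w\|)\mu'(z'-w)\dw$, and since you only know $\bar f(w)\le\zeta/\|w\|$ (not integrable at infinity) this is controlled by $\mu(E)$, which is \emph{infinite} in the null recurrent case. More directly: $z\mapsto\int_{\|w\|>1}|h_n(z+w)|F(z,\dw)$ is supported on $\{\|z-x\|>1-\eta_n\}$, not on $\ball{x}{\eta_n}$, so the occupation-time tightness argument (which needs $\mu$-integrability) is unavailable. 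The paper instead keeps large jumps in the \emph{uncompensated} jump measure, bounds the integrand pointwise by $\zeta(1+\|x\|)\eta_n^d$ uniformly in $z$, and integrates crudely over $[0,n\Delta]$ to pick up a factor $n\Delta$ rather than $v(n\Delta)$. This is precisely where the hypotheses $n\Delta^2\xi_n\to 0$ and $(n\Delta)^{1-\delta}\Delta\xi_n\to 0$ enter; your route never uses them for this purpose (and your assertion ``$v(n\Delta)\sim c\,n\Delta$'' in case~(i) is false---$v$ there is a general deterministic equivalent, not necessarily linear).

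\textbf{Moments versus tightness.} You invoke Doob's $L^2$-inequality, which requires $\E^\pi\langle M^n,M^n\rangle_{n\Delta}<\infty$, and then say this expectation is $O(v(n\Delta)\mu(Q_n))$. But for a null recurrent process under an arbitrary initial law, moments of occupation times need not be finite, let alone comparable to $v(n\Delta)$; the deterministic-equivalent theory only gives $O_p$-bounds. The paper avoids this by bounding $\langle M^n,M^n\rangle_s$ \emph{pathwise} by a multiple of a normalised occupation time $S'^{n,\Delta,\eta}_s$, shows the bound tends to zero in probability, and then uses that $\langle M^n,M^n\rangle\to0$ in probability forces $M^n\Rightarrow 0$ in ucp. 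Replacing Doob by this Lenglart-type step would patch your argument---but only after the large-jump issue above is fixed. Note also that the paper splits the small/medium jump integral according to the \emph{position} $\|z-x\|$ at the threshold $\eta_n^{\kappa}$ with $\kappa=1\wedge 2/(\beta+d)$, not according to the jump size $\|w\|$ as you propose; this is how the sharp exponent $a=2+d[(1-2/(\beta+d))\vee0]$ arises cleanly.
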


{Before we turn to the proofs of \cref{D p: Small Time Asymptotic,D p: Sojourn Time Approximation}, we present two auxiliary upper bounds for the small-time asymptotic of Itô semi-martingales.} Below, we heavily utilise results and notation from the books \citet{jacodshir} (esp., Chapter II) and \citet{Jacod2012} (esp., Section 2.1).

We recall that our underlying process $X$ is an Itô semi-martingale with absolutely continuous characteristics $(B,C,\mathfrak{n})$ satisfying \cref{D eq: Ito semimartingale}, and that its jump measure $\mathfrak{m}$ is the random measure on $\bbrp\times E$ given by $\mathfrak{m}(\dt,\dx) \coloneqq \sum_{\{s:\Delta X_s\neq0\}} \epsilon_{(s,\Delta X_s)}(\dt,\dx).$ For a function $g$ on $\Omega\times\bbrp\times E$, we define the stochastic integrals
\[
	g \star \mathfrak{m}_t \coloneqq \int_{[0,t]\times E} g(\omega, s, w)\mathfrak{m}(\omega; \ds,\dw) \mAnd g \star \mathfrak{n}_t \coloneqq \int_{[0,t]\times E} g(\omega, s, w)\mathfrak{n}(\omega; \ds,\dw),
\]
and also the purely discontinuous martingale $g\star(\mathfrak{m}-\mathfrak{n})_t$, as soon as these integrals are well-defined. By L\'evy--Itô and Grigelionis decomposition, we can assume w.\,l.\,o.\,g.\ that there exists a $d$-dimensional Wiener process $W$, defined on $(\Omega,\mathscr{F},(\mathscr{F}_t)_{t\ge0},(\p^x)_{x\in E})$, and an $E\otimes E$-valued function $\sigma$ with $c=\sigma\sigma\transpose$ such that
\[
	X_t = X_0 + \int_0^t b(X_s)\dt + \int_0^t \sigma(X_s)\mathrm{d}W_s + (w\mathbbm{1}_{\lVert w\rVert\le1}) \star (\mathfrak{m}-\mathfrak{n})_t + (w\mathbbm{1}_{\lVert w\rVert>1})\star\mathfrak{m}_t.
\]

Itô's formula plays a crucial role in the sequel. {By a version derived from (2.1.20) of \citet{Jacod2012}, if $g: E \to \bbr$ is twice continuously differentiable, then}
	\begin{gather}\label{D eq: Ito formula}
		\begin{aligned}
			g(X_t) = g(X_0) & +\int_0^t b(X_s)\transpose \nabla\!g(X_s)\ds + \frac{1}{2}\int_0^t  \tr\big(c(X_s)\nabla^2\!g(X_s)\big)\ds  \\
				&  + \left(g(X_{-}+ w) - g(X_{-}) - w\transpose \nabla\!g(X_{-}) \right)\mathbbm{1}_{\lVert w\rVert\le1}\star\mathfrak{n}_t \\
				&  + \int_0^t \sigma(X_s)\mathrm{d}W_s + \big(g(X_{-} + w) - g(X_{-})\big)\mathbbm{1}_{\lVert w\rVert\le1}\star(\mathfrak{m}-\mathfrak{n})_t\\
				&  + \big(g(X_{-} + w) - g(X_{-})\big)\mathbbm{1}_{\lVert w\rVert>1}\star\mathfrak{m}_t,
		\end{aligned}
	\end{gather}
	where $\tr(\cdot)$ denotes the trace operator on $E\otimes E$ and $\nabla^2\!g$ denotes the Hessian of $g$.

For $\xi>0$, we denote by $T^\xi\coloneqq\inf\{t>0: \lVert\Delta X_t\rVert>\xi\}$ the first time of a jump greater than $\xi$ . Also, we introduce the following decomposition of our semi-martingale $X$:
\[
	X_t = X_0 + X^\xi_t + X^{\prime\xi}_t, \quad\text{where } X^{\prime\xi}_t \coloneqq (w\mathbbm{1}_{\lVert w\rVert>\xi})\star\mathfrak{m}_t = \sum_{s\le t} \Delta X_s\mathbbm{1}_{\lVert \Delta X_s\rVert > \xi}.
\]
We note that $X^\xi$ and $X^{\prime\xi}$ are again Itô semi-martingales; we denote their characteristics by $(B^\xi,C, \mathfrak{n}^\xi)$ and $(B^{\prime\xi},0,\mathfrak{n}^{\prime\xi})$, respectively. Furthermore, we decompose $X^\xi$ into drift $B^\xi$, continuous martingale part $M^\cpart$, and purely discontinuous martingale part $M^\xi$. These are given by
\[
	B^\xi_t = \int_0^t b^\xi(X_s)\ds, \quad M^\cpart_t = \int_0^t \sigma(X_s)\mathrm{d}W_s \mAnd  M^\xi_t = (w\mathrm{1}_{\lVert w\rVert\le\xi})\star(\mathfrak{m}-\mathfrak{n})_t,
\]
where $b^\xi(x) = b(x) - \int_{\xi < \lVert w\rVert \le 1} F(x,\dw) w$ if $\xi<1$, and $b^\xi(x) = b(x) + \int_{1< \lVert w\rVert \le \xi}F(x,\dw)w$ if $\xi\ge1$.
Under \cref{D a: BCn}, we derive the following two lemmata.
\begin{Lemma}\label{D l: Small Time Moments}
	Let $\xi_0>0$ and  $p\ge2$. Grant \cref{D a: BCn}. Then, there exists a constant $\zeta<\infty$ such that, for every $0<\xi\le\xi_0$, $x\in E$, and $t\le 1$, we have
	\[
		\E^x \sup_{s\le t} \lVert X^\xi_{s\wedge T^\xi}\rVert^p \le \zeta(1+\lVert x\rVert^p)t.
	\]
\end{Lemma}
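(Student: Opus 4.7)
The strategy is to reduce everything to quantities involving $X$ stopped strictly before its first big jump, where all moments are under control. The key observation is that $X^{\prime\xi}_{s}=0$ for every $s<T^\xi$, so that $X^\xi_{s}=X_{s}-X_{0}$ on $[0,T^\xi)$, and since $\Delta X^\xi_{T^\xi}=\Delta X_{T^\xi}-\Delta X^{\prime\xi}_{T^\xi}=0$, also $X^\xi_{T^\xi}=X_{T^\xi-}-X_{0}$. Consequently,
\[
\sup_{s\le t}\lVert X^\xi_{s\wedge T^\xi}\rVert \le \sup_{s\le t\wedge T^\xi}\lVert X_{s-}-X_{0}\rVert,
\]
and the right-hand side is $X$ stripped of its first jump exceeding $\xi$ in size.

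I then use the Itô--L\'evy decomposition
\[
X-X_{0}=\int_{0}^{\cdot}b(X_{r})\,\dr+M^{\mathrm{c}}+\bigl(w\mathbbm{1}_{\lVert w\rVert\le 1}\bigr)\star(\mathfrak{m}-\mathfrak{n})+\bigl(w\mathbbm{1}_{\lVert w\rVert>1}\bigr)\star\mathfrak{m},
\]
and bound the $p$-th moment of each summand up to $t\wedge T^\xi$ separately. The drift is handled by Hölder (factor $t^{p-1}$); $M^{\mathrm{c}}$ by Burkholder--Davis--Gundy (factor $t^{p/2}$); the compensated small-jump martingale by a Kunita inequality, producing $t^{p/2}$ and $t$ factors from the $\lVert w\rVert^{2}$- and $\lVert w\rVert^{p}$-compensator terms, respectively. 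The relevant integrals $\int_{\lVert w\rVert\le 1}\lVert w\rVert^{q}\bar F(\dw)$ for $q\in\{2,p\}$ are finite because $\lVert w\rVert^{q}\le\lVert w\rVert^{\beta}$ on the unit ball (using $q\ge 2\ge\beta$) combined with $F(x,\cdot)\le(1+\lVert x\rVert)\bar F$. For $\xi\ge 1$, the last summand restricted to $[0,T^\xi)$ reduces pathwise to $(w\mathbbm{1}_{1<\lVert w\rVert\le\xi})\star\mathfrak{m}$, which I split into compensator plus compensated martingale; the integrals $\int_{1<\lVert w\rVert\le\xi_{0}}\lVert w\rVert^{q}\bar F(\dw)$ are finite for every $q$ and uniformly bounded in $\xi\le\xi_{0}$ by \cref{D a: BCn}~(iii). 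Since $t\le 1$ and $p\ge 2$, one has $t^{p-1}\vee t^{p/2}\le t$, so every contribution carries at least one factor of $t$.

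Each estimate ultimately controls $\E\int_{0}^{t\wedge T^\xi}(1+\lVert X_{r}\rVert^{p})\,\dr$. Using $\lVert X_{r}\rVert\le\lVert x\rVert+\lVert X^\xi_{r}\rVert$ for $r<T^\xi$ and setting $\phi(t)\coloneqq \E^{x}\sup_{s\le t}\lVert X^\xi_{s\wedge T^\xi}\rVert^{p}$, this quantity is dominated by $\zeta t(1+\lVert x\rVert^{p})+\zeta\int_{0}^{t}\phi(r)\,\dr$. Collecting everything yields
\[
\phi(t)\le \zeta t(1+\lVert x\rVert^{p})+\zeta\int_{0}^{t}\phi(r)\,\dr,
\]
and Gronwall's lemma on $[0,1]$ delivers $\phi(t)\le \zeta'(1+\lVert x\rVert^{p})t$, as claimed.

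The main subtlety is to keep the constants independent of $\xi\in(0,\xi_{0}]$. This forces me to avoid working with the truncated drift $b^\xi$ (which blows up as $\xi\to 0$ whenever the Blumenthal--Getoor-type index $\beta$ exceeds $1$) and to avoid invoking $\E\sup_{s\le t}\lVert X_{s}\rVert^{p}$ directly, as such moments may fail to exist due to heavy-tailed big jumps. The stopping at $T^\xi$, equivalently the identity $X^\xi=X-X_{0}$ on $[0,T^\xi)$, is precisely the device that circumvents both obstacles and enables the Gronwall closure with $\xi$-free constants.
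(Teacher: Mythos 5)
Your argument is essentially the paper's: reduce to the observation that $X^{\prime\xi}$ vanishes strictly before $T^\xi$ (so only jumps of size at most $\xi$, plus possibly those in $(1,\xi]$, contribute), bound each piece of the semimartingale decomposition stopped at $T^\xi$ by standard $L^p$ estimates carrying at least one factor of $t$, and close with Grönwall. The only organisational difference is that the paper treats $\xi\ge 1$ first (using the truncated drift $b^\xi$, which is harmless there, and the moment inequality (2.1.43) of Jacod--Protter) and then reduces $0<\xi<1$ to $\xi=1$ by the pathwise domination $\sup_{s\le t}\lVert X^\xi_{s\wedge T^\xi}\rVert\le\sup_{s\le t}\lVert X^1_{s\wedge T^1}\rVert$, whereas you keep the truncation level at $1$ throughout and handle all $\xi\in(0,\xi_0]$ in one pass; both routes correctly sidestep the blow-up of $b^\xi$ as $\xi\to0$.

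One step is missing: Grönwall's lemma requires $\phi(t)=\E^x\sup_{s\le t}\lVert X^\xi_{s\wedge T^\xi}\rVert^p$ to be finite (or at least locally bounded) \emph{before} you may conclude, and under the mere linear-growth hypothesis of \cref{D a: BCn} this is not known a priori. The paper fixes this by introducing the stopping time $S^{\zeta'}=\inf\{s:\lVert X^\xi_s\rVert>\zeta'\}$, on which the stopped supremum is bounded by $\zeta'+\xi$, applying Grönwall to the localised quantity, and then letting $\zeta'\to\infty$ by monotone convergence; the resulting constant is independent of $\zeta'$. You should insert the same localisation; without it the final inequality $\phi(t)\le\zeta'(1+\lVert x\rVert^p)t$ is not justified.
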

\begin{proof}
	In this proof, $\zeta<\infty$ may depend on $\xi_0$ and $p$ but neither on $t$, $x$, $\xi$ nor $\zeta'$.
	
	First, let $1 \le \xi \le \xi_0$. We emphasise that, in this case,
	\begin{gather}\label{D eq: bEta bound}
		\lVert b^\xi(x)\rVert \le \lVert b(x)\rVert + \xi_0^{d+1}F(x, \{1 < \lVert w\rVert \le \xi_0\}).
	\end{gather}
	By \cref{D eq: Ito semimartingale}, we have $\mathfrak{n}^\xi(\dt,A)=\dt F^\xi(X_t,A)\coloneqq\dt F(X_t,A\cap B_\xi(0))$ for every Borel set $A$. By construction, $X^{\prime\xi}_t=0$ on $\{t < T^\xi\}$. By (2.1.43) of \citet{Jacod2012}, thus,
	\begin{align*}
		\E^x \sup_{s\le t}\lVert X^\xi_{s\wedge T^\xi}\rVert^p & \le \zeta\E^x\left[ t^{p-1}\int_0^{t} \lVert b^\xi(X_0 + X^\xi_{s\wedge T^\xi})\rVert^p\ds +  t^{p/2-1}\int_0^t \lVert c(X_0+X^\xi_{s\wedge T^\xi})\rVert^{p/2}\ds\right] \\
		& \quad + \zeta \E^x \int_0^t\ds\int F^{\xi_0}(X_0+X^\xi_s,\dw) \lVert w\rVert^p \\
		&	\quad + \zeta  \E^x t^{p/2-1}\int_0^t\ds\left(\int F^{\xi_0}(X_0+X^\xi_s,\dw)\lVert w\rVert^2\right)^{p/2}.
	\end{align*}
	Under \cref{D a: BCn}, for all $t\le 1$, we observe
	\[
		\E^x \sup_{s\le t} \lVert X_{s\wedge T^\xi}^\xi \rVert^p \le \zeta \int_0^t (1 + \E^x\lVert X_0 + X^\xi_{s\wedge T^\xi}\rVert^p)\ds.
	\]
	For $\zeta'>0$, let $S^{\zeta'}\coloneqq \inf\{s>0: \lVert X^\xi_s \rVert > \zeta'  \}$. Then
	\[
		\E^x \sup_{s\le t} \lVert X_{s\wedge T^\xi \wedge S^{\zeta'}}^\xi \rVert^p \le \zeta \int_0^t (1 + \E^x\lVert X_0 + X^\xi_{s\wedge T^\xi \wedge S^{\zeta'}}\rVert^p)\ds,
	\]
	where we note $\sup_{s\le t} \lVert X_{s\wedge T^\xi \wedge S^{\zeta'}}^\xi \rVert \le \zeta' + \xi$. By the Grönwall--Bellmann inequality, thus,
	\begin{align*}
		\E^x \sup_{s\le t} \lVert X_{s\wedge T^\xi  \wedge S^{\zeta'}}^\xi \rVert^p & \le \zeta(1+\lVert x\rVert^p)\left(t + \int_0^t \zeta\e^{\zeta(t-s)}\ds\right) =  \zeta(1+\lVert x\rVert^p)(\e^{\zeta t} - 1).
	\end{align*}
	Since $S^{\zeta'}\wedge T^\xi \to T^\xi$ as $\zeta'\to\infty$, consequently, $\E^x \sup_{s\le t} \lVert X_{s\wedge T^\xi}^\xi \rVert^p\le\zeta(1+\lVert x\rVert^p)t$.

	Second, let $0<\xi<1$. We note that $X^\xi_t\mathbbm{1}_{t<T^\xi} = (X_t - X_0)\mathbbm{1}_{t<T^\xi}$ holds, and that $X^\xi$ is continuous at $T^\xi$ outside the null set $\{\lVert\Delta X_{T^\xi}\rVert = \xi\}$. As $T^\xi \le T^1$ for all $\omega$, thus,
	\[
		\sup_{s\le t} \lVert X^\xi_{s\wedge T^\xi}\rVert = \sup_{s\le t} \lVert (X_s-X_0)\mathbbm{1}_{s<T^\xi} \rVert \le
		\sup_{s\le t} \lVert (X_s-X_0)\mathbbm{1}_{s<T^1} \rVert = \sup_{s\le t} \lVert X^1_{s\wedge T^1}\rVert
	\]
	almost surely. By case $\xi\ge 1$, consequently, $\E^x \sup_{s\le t}\lVert X^\xi_{s\wedge T^\xi}\rVert^p \le \zeta(1+\lVert x\rVert^p)t$.
\end{proof}
\begin{Lemma}\label{D l: Small Time Ball Asymptotic}
	Let $y\neq 0$ and $\eta_0 < \lVert y\rVert$. Grant \cref{D a: BCn}. Then, for every $m\in\bbn^\ast$, there exists a constant $\zeta < \infty$ --~non-increasing in $\lVert y\rVert$~-- such that, for every $x\in E$, $\eta < \eta_0$, and $t\le1$,
	\begin{gather}\label{D eq: Small Time Ball Asymptotic}
		\begin{aligned}
		\lefteqn{\p^x(X_t \in B_\eta(X_0+y))} \\
		& \quad \le \zeta\left(1+\lVert x\rVert^{2(m+1)}+\lVert y\rVert^{2(m+1)}\right)\left[t\eta^d\left(1 +\sum_{k=1}^{m} t^k\eta^{-2\vee(\beta+d)-2(k-1)}\right) + \frac{t^m}{\eta^{2m}}\right].
		\end{aligned}
	\end{gather}
\end{Lemma}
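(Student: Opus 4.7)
The plan is to iterate Itô's formula \cref{D eq: Ito formula} applied to a smooth approximation of $\mathbbm{1}_{B_\eta(x+y)}$, exploiting the separation $\lVert y\rVert>\eta_0>0$ of the target ball from the origin.

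I would fix a smooth bump $g\colon E\to[0,1]$ supported in $B_1(0)$ with $g\equiv 1$ on $B_{1/2}(0)$, and set $G(z)\coloneqq g((z-x-y)/\eta)$; then $\mathbbm{1}_{B_{\eta/2}(x+y)}\le G\le \mathbbm{1}_{B_\eta(x+y)}$ and $\lVert\partial^\alpha G\rVert_\infty\le C_\alpha\eta^{-|\alpha|}$. Since $\eta<\eta_0<\lVert y\rVert$, the point $x$ lies outside $\supp G$, so $G(x)=0$; Itô's formula together with the vanishing of the martingale expectations then yields
\[
  \p^x\big(X_t\in B_{\eta/2}(x+y)\big)\le\E^x G(X_t) = \E^x\int_0^t \mathcal{A}G(X_s)\,\ds,
\]
where $\mathcal{A}=\mathcal{A}_c+\mathcal{A}_j$ splits into drift/diffusion and jump parts of the infinitesimal generator.

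Next, I would bound $\mathcal{A}G$ pointwise, splitting according to the position of $z$. On $B_\eta(x+y)$, the drift/diffusion part is controlled via \cref{D a: BCn} and $\lVert\nabla^2 G\rVert_\infty\le C\eta^{-2}$ by $|\mathcal{A}_cG(z)|\le\zeta(1+\lVert z\rVert^2)\eta^{-2}$, and the Taylor remainder of the compensated small-jump part of $\mathcal{A}_j G$ admits the same order of bound using $\int\bar F(\dw)(\lVert w\rVert^2\wedge1)<\infty$ (valid since $\beta\le 2$). Off $B_\eta(x+y)$ only the source term $\int F(z,\dw)G(z+w)$ survives, which I would bound in two sub-regimes: (i) when the admissible jumps $w\in B_\eta(x+y-z)$ keep $\lVert w\rVert$ above a positive constant depending only on $\lVert y\rVert$, the uniform boundedness of $\bar f$ there yields the leading contribution $\zeta(1+\lVert z\rVert)\eta^d$; (ii) when $z$ lies in the thin annulus near $x+y$ where the admissible jumps may be small, the tail estimate $\bar F(\{\lVert w\rVert\ge r\})\le\zeta r^{-\beta}$ for $0<r\le 1$ (derived from $\int\bar F(\dw)(\lVert w\rVert^\beta\wedge 1)<\infty$) combined with pointwise control of $\bar f$ near the origin produces the singular factor responsible for the exponent $2\vee(\beta+d)$. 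Assembling these estimates and taking $\E^x$-expectations delivers a Gronwall-type inequality relating $\p^x(X_t\in B_{\eta/2}(x+y))$ to $\int_0^t \p^x(X_s\in B_{2\eta}(x+y))\,\ds$, with polynomial-in-$(\lVert x\rVert,\lVert y\rVert)$ coefficients coming from \cref{D a: BCn} and the moment bound \cref{D l: Small Time Moments}.

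Iterating this inequality $m$ times produces the advertised sum: the leading term $t\eta^d$ corresponds to a single direct jump into the target ball; the $k$-th correction $t^{k+1}\eta^{d-2\vee(\beta+d)-2(k-1)}$ arises from one singular/source factor of magnitude $\eta^{-2\vee(\beta+d)}$ combined with $k-1$ further diffusion factors of cost $t/\eta^2$ each, plus the final $\eta^d$ from the landing jump; and the residual $t^m/\eta^{2m}$ is the pure-diffusion remainder of $m$ iterations, at which step the occupation probability is bounded trivially by $1$. The polynomial prefactor $\zeta(1+\lVert x\rVert^{2(m+1)}+\lVert y\rVert^{2(m+1)})$ is obtained by multiplying the quadratic growth factors $(1+\lVert X_s\rVert^2)$ entering at each of the $m+1$ iterations. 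The principal obstacle is controlling the source term $\int F(z,\dw)G(z+w)$ when $z$ is close to the target ball, because the jumps must then be small and $\bar f$ is only assumed to be continuous on $E^\ast$ (hence possibly singular at the origin); here the condition $\int\bar F(\dw)(\lVert w\rVert^\beta\wedge 1)<\infty$ is indispensable and precisely pins down the exponent $2\vee(\beta+d)$ appearing in the correction terms. A secondary difficulty is closing the iteration on slightly enlarged balls without the radius blowing up, which is resolved by stopping after the finite number $m$ of iterations and absorbing the moderate geometric dilation into the constant $\zeta$.
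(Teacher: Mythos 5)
Your overall strategy---a smooth majorant of the indicator, Itô's formula, a position-dependent bound on the generator terms, and an $m$-fold Gronwall iteration on dilated balls---is the same as the paper's. The gap sits in the one step that carries the real content: the contribution of the source term $\int F(z,\dw)G(z+w)$ for $z$ outside the (dilated) target ball. For such $z$ that are not close to the starting point $x$, the only available pointwise bound is of order $\eta^{-\beta}$ (cf.\ \cref{D eq: Fg bound}), and integrating it naively over $[0,t]$ yields a contribution of order $t\eta^{-\beta}=t\eta^{d}\cdot\eta^{-(\beta+d)}$; this is of the same order in $t$ as the leading term but carries the singular factor, and for $m\ge2$ it is \emph{not} absorbed by the right-hand side of \cref{D eq: Small Time Ball Asymptotic}, whose only order-$t$ term is $t\eta^d$. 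What makes the lemma true is an extra factor of $t$: the expected occupation time up to time $t$ of the region where only the bad bound applies is $O(t^2)$, because the process starts at $x$, at distance roughly $\lVert y\rVert>\eta_0$ from the target, and the probability of having travelled that far by time $s\le t$ is itself $O(t)$. Your power count in the last paragraph silently awards the ``singular/source factor'' two powers of $t$ without producing the second one. The paper earns it by localising with the first large-jump time $T^\xi$ and the exit time $S^{\varepsilon,\xi}$ of $X^\xi$ from $B_{3\varepsilon}(x)$, splitting $\Omega\times[0,t]$ into three pieces, and combining the Markov property with $\p^x(T^\xi\le t\wedge S^{\varepsilon,\xi})\le\zeta(1+\lVert x\rVert)t$ and $\p^x(S^{\varepsilon,\xi}\le T^\xi\wedge t)\le\zeta(1+\lVert x\rVert^2)t$, the latter coming from \cref{D l: Small Time Moments}; see \cref{D eq: L63-2} and \cref{D eq: L63-3}. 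You cite \cref{D l: Small Time Moments} only as a source of the polynomial prefactor, which misses its actual role.

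The same localisation also repairs a secondary flaw in your sketch: your pointwise generator bounds carry factors $\zeta(1+\lVert z\rVert^{2})$, and \cref{D a: BCn} does not guarantee $\E^x\int_0^t(1+\lVert X_s\rVert^2)\ds<\infty$ (the large jumps need not have finite moments), so ``taking $\E^x$-expectations'' of these bounds is not legitimate before stopping at $T^\xi\wedge S^{\varepsilon,\xi}$. The remaining points you raise are indeed minor: the smooth kernel should dominate $\mathbbm{1}_{B_\eta(x+y)}$ rather than be dominated by it (otherwise you only bound the probability of the half-radius ball), and the dilation factor $\zeta'$ must be chosen with $\zeta'^{m+1}\eta_0<\lVert y\rVert$ so that all $m$ enlarged balls stay away from the starting point; both are handled essentially as you suggest.
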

\begin{proof}
	Let $1 < \zeta' < (\lVert y\rVert / \eta_0)^{1/(m+1)}$, $\varepsilon\coloneqq(\zeta'^{m+1}\eta_0-\zeta'^m\eta_0)/6>0$ and $\xi < \varepsilon/2$. In addition, let $g$ be a $\mathcal{C}^2$-kernel such that $\mathbbm{1}_{B_1(0)} \le g \le \mathbbm{1}_{B_{(\zeta'+1)/2}(0)}$. We set $g_\eta(z) = g((z-x-y)/\eta)$ and abbreviate
	\[
		h(t,\eta) \coloneqq \p^x(X_t\in B_\eta(x+y)) \le \E^x g_\eta(X_t).
	\]
	In this proof, $\zeta <\infty$ may depend on $\eta_0$, $\zeta'$, $\beta$ and $m$, but neither on $x$, $t$ nor $\eta$.

	By Itô's formula \cref{D eq: Ito formula}, we have $h(t,\eta) \le |H_t^{\eta}| + |H_t^{\prime\eta}| + |H_t^{\prime\prime\eta}|$, where
	\begin{align*}
		H_t^{\eta} & \coloneqq \E^x\int_0^t b(X_s)\transpose \nabla\!g_\eta(X_s)ds + \frac{1}{2}\E^x\int_0^t \tr\big(c(X_s)\nabla^2\!g_\eta(X_s)\big)\ds, \\
		H_t^{\prime\eta} & \coloneqq \E^x\int_0^t \ds \mathbbm{1}_{B_{\zeta'\eta}(x+y)}(X_s)\int F(X_s,\dw)\{ g_\eta(X_s+w)-g_\eta(X_s) - w\transpose\nabla\!g_\eta(X_s)\mathbbm{1}_{\lVert w\rVert\le1} \}, \\
		H_t^{\prime\prime\eta} & \coloneqq \E^x\int_0^t  \ds \mathbbm{1}_{B_{\zeta'\eta}(x+y)^\mathrm{c}}(X_s)\int F(X_s,\dw)g_\eta(X_s+w).
	\end{align*}
Under \cref{D a: BCn}, $b(z)$ and $c(z)$ are bounded in norm by $\zeta(1+\lVert z\rVert^2)$. Moreover, the gradient and Hessian of $g_\eta$ vanish outside $\ball{x+y}{(\zeta'+1)\eta/2}$ and satisfy $\lVert \partial_ig_\eta\rVert\le \zeta\eta^{-1}$ and $\lVert \partial_{ij}g_\eta\rVert\le \zeta\eta^{-2}$.
Hence,
\[
	\left\lvert H_t^{\eta}\right\rvert \le {\zeta(1+\lVert x\rVert^2 + \lVert y\rVert^2)}{\eta^{-2}} \E^x\int_0^t \mathbbm{1}_{\ball{x+y}{(\zeta'+1)\eta/2}}(X_s)\ds.
\]
For $z \in B_{\zeta'\eta}(x+y)$, furthermore,
\[
	\int F(z,\dw)\{ g_\eta(z+w)-g_\eta(z) - w\transpose \nabla\!g_\eta(z)\mathbbm{1}_{\lVert w\rVert\le1} \} \le \frac{\zeta(1+\lVert z\rVert)}{\eta^2}\int \bar F(\dw)(1\wedge\lVert w\rVert^2).
\]
Therefore,
\begin{gather}\label{D eq: HHprime bound}
	\left\lvert H_t^{\eta} \right\rvert+ 	\left\lvert H_t^{\prime\eta} \right\rvert \le \frac{\zeta(1+\lVert x\rVert^2 +\lVert y\rVert^2)}{\eta^2}\int_0^t h(s,\zeta'\eta)\ds.
\end{gather}

Suppose that $|H_t^{\prime\prime\eta}| \le \zeta(1 + \lVert x\rVert^{3}+ \lVert y\rVert^3) (t\eta^d + t^2\eta^{-\beta})$ holds. 
Then,
\[
	h(t,\eta) \le \zeta(1 + \lVert x\rVert^{3}+ \lVert y\rVert^3) t\eta^d(1+t\eta^{-(\beta+d)}) + \frac{\zeta(1+\lVert x\rVert^2+\lVert y\rVert^2)}{\eta^2}\int_0^t h(s,\zeta'\eta)\ds.
\]
By iteration, we obtain \cref{D eq: Small Time Ball Asymptotic} after $m$ steps. \Halmos 

It remains to prove  $|H_t^{\prime\prime\eta}| \le \zeta(1 + \lVert x\rVert^{3}+ \lVert y\rVert^3) (t\eta^d + t^2\eta^{-\beta})$.
Under \cref{D a: BCn}~(iii), on the one hand, we have
\begin{align*}
	\int F(z,\dw)g_\eta(z+w) &\le \zeta(1 + \lVert z\rVert) \eta^d\int \bar f(y+x-z+\eta w)g(w)\dw \\
	& \le \begin{cases}
		\zeta(1+ \lVert x\rVert)\eta^d, & \text{if } z\in \ball{x}{3\varepsilon}, \\
		\zeta(1 + \lVert x + y\rVert)\eta^d & \text{if } z\in  \ball{x+y}{1 + \zeta'\eta}^\complement.
	\end{cases}
\end{align*}
For $z \in \ball{x+y}{1 + \zeta'\eta} \setminus \ball{x+y}{\zeta'\eta}$, on the other hand, we have
\[
	\int F(z,\dw)g_\eta(z+w) \le \frac{\zeta(1 + \lVert z\rVert)} {((\zeta'-1)\eta/2)^{\beta}} \int \dw g\left(\frac{w+z-x-y}{\eta}\right)\bar{f}(w)\lVert w\rVert^{\beta}.
\]
Since $\eta^d\le\eta^{-\beta}$ and $\int\bar F(\dw)(\lVert w\rVert^\beta\wedge1)<\infty$ by assumption, thus,
\begin{gather}\label{D eq: Fg bound}
	\begin{aligned}
	\int F(z,\dw)g_\eta(z+w)
		& \le \begin{cases}
			\zeta(1+\lVert x+y\rVert)\eta^{-\beta},	& \text{if } z\in \ball{x+y}{\zeta'\eta}^\complement, \\ 
			\zeta(1+\lVert x\rVert)\eta^{d},	& \text{if } z\in\ball{x}{3\varepsilon}.
		\end{cases}
	\end{aligned}
\end{gather}

Let $S^{\varepsilon,\xi}\coloneqq \inf\{t>0: \lVert X^\xi_t\rVert > 3\varepsilon \}$, and $\Omega_t^{\varepsilon,\xi}\coloneqq \{S^{\varepsilon,\xi} \le T^\xi\wedge t\}$. We split the set $\Omega\times[0,t]$ into $A_1\coloneqq\Omega\times[\![0,t\wedge T^\xi\wedge S^{\varepsilon,\xi}[\![$, $A_2\coloneqq(\Omega_t^{\varepsilon,\xi})^\complement\times[\![T^\xi\wedge t,t]\!]$ and $A_3\coloneqq \Omega_t^{\varepsilon,\xi}\times [\![S^{\eta,\xi},t]\!]$. Then we obtain the following:

First: Since $\sup_{s\le t}\lVert X^\xi_{s\wedge T^\xi\wedge S^{\varepsilon,\xi}} - X_0\rVert \le 3\varepsilon$, by \cref{D eq: Fg bound}, we obtain
\[
	\iint_{A_1} \mathrm{d}\!\p^x\ds \mathbbm{1}_{\ball{x+y}{\zeta'\eta}^\complement}(X_s)\int F(X_s,\dw)g_\eta(X_s+w) \le \zeta(1+\lVert x\rVert)t\eta^d.
\]
Second: Under \cref{D a: BCn}, we have
\[
	\p^x(T^\xi \le t\wedge S^{\varepsilon,\xi}) \le \E^x\int_0^t \ds \mathbbm{1}_{\ball{x}{3\varepsilon}}(X_s) F(X_s, \lVert w\rVert>\xi) \le \zeta(1+\lVert x\rVert)t.
\]
By the Markov property and \cref{D eq: Fg bound}, therefore,
\begin{gather}\label{D eq: L63-2}
	\begin{aligned}
		\lefteqn{\iint_{A_2} \mathrm{d}\!\p^x\ds \mathbbm{1}_{\ball{x+y}{\zeta'\eta}^\complement}(X_s)\int F(X_s,\dw)g_\eta(X_s+w)} \\
		& \hspace{4em} \le \E^x \mathbbm{1}_{\{T^\xi \le t\wedge S^{\varepsilon,\xi}\}}\E^{X_{T^\xi}}\int_0^t\ds \mathbbm{1}_{\ball{x+y}{\zeta'\eta}^\complement}(X_s)\int F(X_s,\dw)g_\eta(X_s+w) \\
		& \hspace{4em} \le \zeta (1 + \lVert x+y\rVert)t\eta^{-\beta} \p^x(T^\xi \le t\wedge S^{\varepsilon,\xi}) \\
		& \hspace{4em} \le \zeta(1 + \lVert x\rVert^2 + \lVert y\rVert^2) t^2\eta^{-\beta}.
	\end{aligned}
\end{gather}

Third: By \cref{D l: Small Time Moments}, we have $\p^x(\Omega_t^{\varepsilon,\xi})\le \zeta(1+\lVert x\rVert^{2})t$. By the Markov property and \cref{D eq: Fg bound}, therefore,
\begin{gather}\label{D eq: L63-3}
	\begin{aligned}
		\lefteqn{\iint_{A_3} \mathrm{d}\!\p^x\ds \mathbbm{1}_{\ball{x+y}{\zeta'\eta}^\complement}(X_s)\int F(X_s,\dw)g_\eta(X_s+w)} \\
		& \hspace{15em} \le \zeta(1 + \lVert x+y\rVert)t\eta^{-\beta}\p^x(\Omega_t^{\varepsilon,\xi}) \\
		& \hspace{15em} \le \zeta(1 + \lVert x\rVert^{3}+\lVert y\rVert^3)t^2\eta^{-\beta}.
	\end{aligned}
\end{gather}
\end{proof}

{We turn to the proofs of \cref{D p: Small Time Asymptotic,D p: Sojourn Time Approximation}.}
\begin{proof}[of \cref{D p: Small Time Asymptotic}]
Let $1 < \zeta' < (\min\{\lVert y\rVert : (x,y)\in A\} / \eta_0)^{1/(m+2)}$, and $\varepsilon,\xi>0$ be given as in the proof of \cref{D l: Small Time Ball Asymptotic}. In this proof, $\zeta<\infty$ may depend on $\eta_0$, $\zeta'$, $\beta$, $m$ and the set $A$, but neither on $x$, $y$, $\Delta$ nor $\eta$.

Let $\eta\le\eta_0$, and $(x,y) \in A$. W.\,l.\,o.\,g., we assume that $g$ is supported on $B_1(0)$. To avoid cumbersome notation, we abbreviate $h_\eta=g^{\eta,x+y}$.
From \cref{D eq: Ito semimartingale} and Itô's formula \cref{D eq: Ito formula}, we obtain $\E^xh_\eta(X_\Delta) = H^{\eta}_\Delta + H^{\prime\eta}_\Delta + H^{\prime\prime\eta}_\Delta$, where
\begin{align*}
	H^{\eta}_\Delta & = \E^x \int_0^\Delta b(X_t)\transpose\nabla\!h_\eta(X_t)\dt + \frac{1}{2}\E^x\int_0^\Delta \tr\big(c(X_t)\nabla^2\!h_\eta(X_t)\big)\dt, \\
	H^{\prime\eta}_\Delta	& = \E^x \int_0^\Delta \dt \mathbbm{1}_{\ball{x+y}{\zeta'\eta}}(X_t) \int F(X_t,\dw) \{h_\eta(X_{t}+ w) - h_\eta(X_{t}) - w\transpose \nabla\!h_\eta(X_{t})\mathbbm{1}_{\lVert w\rVert\le1}\}, \\
	H^{\prime\prime\eta}_\Delta	& = \E^x \int_0^\Delta \dt \mathbbm{1}_{\ball{x+y}{\zeta'\eta}^\complement}(X_t) \int F(X_t,\dw)h_\eta(X_t+w).
\end{align*}
By \cref{D eq: HHprime bound}, we observe 
\begin{gather*}
	\left\lvert H^{\eta}_\Delta \right\rvert + \left\lvert H^{\prime\eta}_\Delta \right\rvert
	\le \frac{\zeta}{\eta^{d+2}} \int_0^\Delta\p^x(X_t\in \ball{x+y}{\zeta'\eta})\dt.
\end{gather*}
By the choice of $\zeta'$, \cref{D l: Small Time Ball Asymptotic} implies
\begin{gather}\label{D eq: Bad Set Bound}
	\left\lvert H^{\eta}_\Delta \right\rvert + \left\lvert H^{\prime\eta}_\Delta \right\rvert
	\le \zeta\left[\frac{\Delta^2}{\eta^{2}}\left(1 + \sum_{k=1}^{m}\frac{\Delta^{k}}{\eta^{2\vee(\beta+d)+2(k-1)}}\right) + \frac{\Delta^{m+1}}{\eta^{2(m+1)+d}}  \right].
\end{gather}
{Suppose}
\begin{gather}
	\begin{aligned}
		\lefteqn{\left\lvert H^{\prime\prime\eta}_\Delta - \int F(x,\dw)h_\eta(x+w)\int_0^\Delta\p^x(X_t \not\in \ball{x+y}{\zeta'\eta})\right\rvert} \\
		& \hspace{20em} \le \zeta(\Delta^{1+(\alpha\wedge1)/2}+ \Delta^2\eta^{-(\beta+d)}).
	\end{aligned}\label{D eq: Good Set Bound}
\end{gather}
Combining \cref{D eq: Bad Set Bound} and \cref{D eq: Good Set Bound}, we obtain \cref{D eq: Small Time Asymptotic}. \Halmos

It remains to prove \cref{D eq: Good Set Bound}. By \cref{D eq: Fg bound}, we observe 
\begin{gather}\label{D eq: Fh bound}
	\int F(z, \dw)h_\eta(z+w) \le
	\begin{cases}
		\zeta\eta^{-(\beta+d)},	&	\text{if } z\in\ball{x+y}{\zeta'\eta}^\complement, \\
		\zeta,	&	\text{if } z\in \ball{x}{3\varepsilon}.
	\end{cases}
\end{gather}
Let the stopping time $S^{\varepsilon,\xi}$, and the event $\Omega_\Delta^{\varepsilon,\xi}$ be given as in the proof of \cref{D l: Small Time Ball Asymptotic}. We split the set $\Omega\times[0,\Delta]$ into $A_1\coloneqq \Omega\times[\![0,\Delta\wedge T^\xi \wedge S^{\varepsilon,\xi}[\![$, $A_2\coloneqq(\Omega_\Delta^{\varepsilon,\xi})^\complement\times[\![T^\xi\wedge \Delta,\Delta]\!]$ and $A_3\coloneqq \Omega_\Delta^{\varepsilon,\xi}\times [\![S^{\eta,\xi},\Delta]\!]$. For convenience, we also abbreviate
\[	
	\tilde f_{x,y}^\eta(z,w) \coloneqq f(z,y+x-z+\eta w) - f(x,y+\eta w).
\]
Then we obtain, first: By the choice of $\varepsilon$, we have that the convex hull of the set 
\[
	\{(z,y+(x-z)+\eta w): (x,y)\in A, \lVert z-x\rVert \le 3\varepsilon, \lVert w\rVert\le 1\}
\] 
is a compact subset of $E\times E^\ast$. By \cref{D a: C-weak} and for all $(z,w)\in\ball{x}{3\varepsilon}\times\ball{0}{1}$, we have $|\tilde f_{x,y}^\eta(z,w)|\le \zeta \lVert z-x\rVert^{\alpha\wedge1}$. By \cref{D l: Small Time Moments}, therefore,
\begin{align*}
	\iint_{A_1}\mathrm{d}\!\p^x\dt \int \dw g(w) \tilde f_{x,y}^\eta(X_t,w) \le \zeta \Delta \E^x\sup_{t\le \Delta}\lVert X^\xi_{t\wedge T^\xi\wedge S^{\varepsilon,\xi}}\rVert \le \zeta \Delta^{1+(\alpha\wedge1)/2}.
\end{align*}

Second and third: We compare \cref{D eq: Fg bound} and \cref{D eq: Fh bound}. In analogy to \cref{D eq: L63-2} and \cref{D eq: L63-3}, respectively, by the Markov property and \cref{D eq: Fh bound}, therefore,
\begin{align*}
	\iint_{A_i}\mathrm{d}\!\p^x\dt \mathbbm{1}_{\ball{x+y}{\zeta'\eta}^\complement}(X_t)\int \dw g(w) \tilde f_{x,y}^\eta(X_t,w)
	& \le \zeta\Delta^2\eta^{-(\beta+d)},
\end{align*}
for $i\in\{2,3\}$. In summary, we proved \cref{D eq: Good Set Bound}. 
\end{proof}
\begin{proof}[of \cref{D p: Sojourn Time Approximation}]
W.\,l.\,o.\,g., we assume $\eta < 1/4$. In this proof, $\zeta<\infty$ may neither depend on $n$, $\Delta$ nor $\eta$.

By Itô's formula \cref{D eq: Ito formula}, we observe
\begin{align*}
	\lefteqn{\frac{\xi_n}{v_{n\Delta}\eta_n^d}	\left\lvert\int_0^{\lfloor sn\rfloor\Delta} h_n(X_r)\dr - \Delta\sum_{k=1}^{\lfloor sn\rfloor} h_n(X_{(k-1)\Delta})\right\rvert
	\le \lvert H^{n}_s\rvert + \lvert H^{\prime n}_s\rvert + \lvert H^{\prime\prime n}_s\rvert + \lvert M^{n}_s\rvert,}\\
\shortintertext{where}
			H^{n}_s
				& \coloneqq \frac{\xi_n}{v_{n\Delta}\eta_n^d}	\sum_{k=1}^{\lfloor sn\rfloor} \int_{(k-1)\Delta}^{k\Delta}\dt\int_{(k-1)\Delta}^t \left(b(X_r)\transpose\nabla\!h_n(X_r) + \frac{1}{2}\tr\big(c(X_r)\nabla^2\!h_n(X_r)\big)\right)\dr, \\
			H^{\prime n}_s
				& \coloneqq \frac{\xi_n}{v_{n\Delta}\eta_n^d}\sum_{k=1}^{\lfloor sn\rfloor} \int_{(k-1)\Delta}^{k\Delta}\dt \int_{(k-1)\Delta}^t \dr  \\
				& \qquad\quad  \int_{\lVert w\rVert\le 1} F(X_r,\dw)\{h_n(X_r+w)-h_n(X_r) - w\transpose \nabla\!h_n(X_r)\}, \\
			H^{\prime\prime n}_s
				& \coloneqq \frac{\xi_n}{v_{n\Delta}\eta_n^d}\sum_{k=1}^{\lfloor sn\rfloor} \int_{(k-1)\Delta}^{k\Delta}\dt \sum_{(k-1)\Delta<r\le t} \mathbbm{1}_{\lVert \Delta X_r\rVert>1} \{h_n(X_{r-}+\Delta X_r)-h_n(X_{r-})\}, \\
\shortintertext{and }
			M^n_s
				& \coloneqq \frac{\xi_n}{v_{n\Delta}\eta_n^d}	\sum_{k=1}^{\lfloor sn\rfloor} \int_{(k-1)\Delta}^{k\Delta}\dt \left( \int_{(k-1)\Delta}^t \nabla\!h_n(X_r)\transpose\sigma(X_r)\mathrm{d}W_r  \right. \\
				& \left.\qquad\quad + \int_{(k-1)\Delta}^t\int_{\lVert w\rVert\le 1}  \{h_n(X_{r-}+w)-h_n(X_{r-})\}(\mathfrak{m}-\mathfrak{n})(\dr,\dw)\right).
\end{align*}
It remains to show:
\begin{enumerate}
	\item Under \cref{D a: BCn,D a: C-weak,C a: HR}, if $v(s)=\bar v(st)$ for some deterministic equivalent $\bar v$ of $X$ and some $t>0$, and if $n\Delta^2\xi_n\to0$, then $H^n_s$, $H^{\prime n}_s$, $H^{\prime\prime n}_s$ and $M^n_s$ converge to zero uniformly on $\{0\le s\le t\}$ in probability.
	\item Under \cref{D a: BCn,D a: C-weak,C a: HR,C a: DK}, if $v$ is the regularly varying function from \cref{D eq: Darling Kac}, and if $(n\Delta)^{1-\delta}\Delta\xi_n\to0$, then $H^n_s$, $H^{\prime n}_s$, $H^{\prime\prime n}_s$ and $M^n_s$ converge to zero uniformly for $\{0\le s\le t\}$ in probability for all $t>0$.
\end{enumerate}

\emph{(a)} Under \cref{D a: BCn}, $b(z)$ and $c(z)$ are bounded in norm by $\zeta(1+\lVert z\rVert^2)$. Moreover, the gradient and Hessian of $h_n$ vanish outside $\ball{x}{\eta_n}$ and satisfy $\lVert \partial_ih_n\rVert\le\zeta\eta_n^{-1}$ and $\lVert \partial_{ij}h_n\rVert\le\zeta\eta^{-2}$, by assumption. Thus,
\[
	\left| b(z)\transpose\nabla\!h_n(z) + \frac{1}{2}\tr\big(c(z)\nabla^2\!h_n(z)\big)\right| \le \zeta(1 + \lVert z\rVert)\eta^{-2}\mathbbm{1}_{\ball{x}{\eta_n}}(z).
\]
By Fubini's theorem, therefore,
\[
	\sup_{r\le s}|H^n_r| \le \zeta(1 + \lVert x\rVert^2)\frac{\Delta\xi_n}{\eta^{2}} S^{\prime n,\Delta,\eta_n}_s,
	\quad\text{where } S^{\prime n,\Delta,\eta}_s = \frac{1}{v_{n\Delta}\eta^{d}} \int_0^{\lfloor sn\rfloor\Delta} \mathbbm{1}_{\ball{x}{\eta}}(X_r)\dr.
\]
In case (i), we deduce from \cref{C lemma: TightFam1} 
that the family $\{\mathscr{L}(S_t^{\prime n,\Delta,\eta_n}\mid\p^x):n\in\bbn^\ast\}$ is tight under \cref{C a: HR,D a: C-weak}. As $\Delta\xi_n\eta_n^{-2}\to0$, $\sup_{s\le t} |H^n_s| \to 0$ in probability. In case (ii), we obtain from \cref{C cor: LtSvt} that $S^{\prime n,\Delta,\eta_n}$ converges stably in law to a non-trivial process. As $\Delta\xi_n\eta_n^{-2}\to0$, $\sup_{s\le t} |H^n_s| \to 0$ in probability for all $t>0$.
\Halmos

\emph{(b)} Let $\zeta'>1$ and $\kappa = 1 \wedge 2/(\beta+d)$. Under \cref{D a: BCn}, we have
\begin{gather}\label{D eq: Fh-eta bound}
\begin{aligned}
\lefteqn{\left|\int_{\lVert w\rVert\le 1} F(z,\dw)\{h_n(z+w) - h_n(z) - w\transpose \nabla h_n(z)\}\right|} \\
& \quad \le
		\begin{cases}
			\zeta(1+\lVert z\rVert)\eta_n^{-2}\int_{\lVert w\rVert\le 1} \bar F(\dw)\lVert w\rVert^2, & \text{for } \lVert z-x\rVert \le \zeta'\eta_n^\kappa, \\
			\zeta(1+\lVert z\rVert)\eta_n^{-\kappa\beta}\int_{\lVert w\rVert\le 1} \bar F(\dw)\lVert w\rVert^\beta, & \text{for } \zeta'\eta_n^\kappa<\lVert z -x\rVert\le1+\eta_n, \\
			0, & \text{else.}
		\end{cases}
\end{aligned}
\end{gather}
Again by Fubini's theorem, therefore,
\[
	\sup_{t\le s}|H^{\prime n}_t| \le \zeta(1 + \lVert x\rVert) \left(\frac{\Delta\xi_n\eta_n^{\kappa d}}{\eta_n^{d+2}} S_s^{\prime n,\Delta,\zeta'\eta_n^\kappa} + \frac{\Delta\xi_n}{\eta^{d+\kappa\beta}} S_s^{\prime n,\Delta,1+\eta_n}\right).
\]
In analogy to step (a), since $\Delta\xi_n\eta_n^{-2 - d(1-\kappa)}\to 0$, $H^{\prime n}_s \to 0$ uniformly on $\{0 \le s \le t\}$ in probability in case (i); and for all $t>0$ in case (ii).
\Halmos

\emph{(c)} In analogy to steps (a) and (b), we note 
\begin{align*}
			|H^{\prime\prime n}_s| & \le \xi_n{\Delta}{(v_{n\Delta}\eta_n^d)^{-1}}(|h_n(X_-+w)| + |h_n(X_-)|)\mathbbm{1}_{\lVert w\rVert>1}\star \mathfrak{m}_{\lfloor sn\rfloor\Delta} \\
			& \le |K^n_s| + |N^n_{\lfloor sn\rfloor/n}| + |K^{\prime n}_s| + | N^{\prime n}_{\lfloor sn\rfloor/n}|,
\\ \shortintertext{where}
	K^n_s  &\coloneqq \xi_n{\Delta}{(v_{n\Delta}\eta_n^d)^{-1}} |h_n(X_-+w)|\mathbbm{1}_{\lVert w\rVert>1}\star \mathfrak{n}_{\lfloor sn\rfloor\Delta},
	\\
	K^{\prime n}_s &\coloneqq \xi_n{\Delta}{(v_{n\Delta}\eta_n^d)^{-1}} |h_n(X_-)|\mathbbm{1}_{\lVert w\rVert>1}\star \mathfrak{n}_{\lfloor sn\rfloor\Delta},	\\
	N^n_s &\coloneqq \xi_n{\Delta}{(v_{n\Delta}\eta_n^d)^{-1}} |h_n(X_-+w)|\mathbbm{1}_{\lVert w\rVert>1}\star (\mathfrak{m}-\mathfrak{n})_{sn\Delta}, \\
	N^{\prime n}_s  &\coloneqq \xi_n{\Delta}{(v_{n\Delta}\eta_n^d)^{-1}} |h_n(X_-)|\mathbbm{1}_{\lVert w\rVert>1}\star (\mathfrak{m}-\mathfrak{n})_{sn\Delta}.
\end{align*}
Under \cref{D a: BCn}, since $\int_{\lVert w\rVert>1} F(z,\dw)|h_n(z+w)| = 0$ for $z\in \ball{x}{1-2\eta_n}$, we have
\[
	\int_{\lVert w\rVert>1} F(z,\dw)|h_n(z+w)| \le \zeta(1+\lVert x\rVert).
\]
In both cases (i) and (ii), therefore,
\[
	\sup_{s\le t} |K^n_s| \le  \zeta(1+\lVert x\rVert) \frac{ tn\Delta^2\xi_n}{v_{n\Delta}} \limN 0,
\]
for all $t>0$. Furthermore, we observe that $N^n$ is a martingale \wrt\ the filtration $(\mathscr{F}_{sn\Delta})_{s\ge0}$. Its predictable quadratic variation satisfies
\begin{align*}
	\langle N^n,N^n \rangle_s & = \frac{\Delta^2\xi_n^2}{v_{n\Delta}^2} |h_n(X_-+w)|^2\mathbbm{1}_{\lVert w\rVert>1} \star \mathfrak{n}_{sn\Delta} \le \zeta(1+\lVert x\rVert) \frac{sn\Delta^3\xi_n^2}{v^2_{n\Delta}\eta_n^d} \limN 0.
\end{align*}
Since $\lfloor sn\rfloor/n\to s$, $N^n_{\lfloor sn\rfloor/n} \to 0$ uniformly on $\{0 \le s \le t\}$ in probability for all $t>0$.

In addition, we recall that $F(z,\{\lVert w\rVert>1\} \le \zeta(1+\lVert z\rVert)$ under \cref{D a: BCn}. Thus,
\begin{align*}
	\sup_{s\le t} |K^{\prime n}_s| \le \zeta(1+ \lVert x\rVert) \xi_n\Delta S_t^{\prime n,\Delta,\eta_n} \limpPiN 0
\end{align*}
in case (i); and for all $t>0$ in case (ii). Again, we observe that $N^{\prime n}$ is a martingale \wrt\ the filtration $(\mathscr{F}_{sn\Delta})_{s\ge0}$. Its predictable quadratic variation satisfies
\begin{align*}
	\langle N^{\prime n}, N^{\prime n} \rangle_s & = \frac{\Delta^2\xi_n^2}{v_{n\Delta}^2\eta_n^{2d}} |h_n(X_-)|^2\mathbbm{1}_{\lVert w\rVert>1} \star \mathfrak{n}_{sn\Delta}
	 \le \frac{\zeta(1+\lVert x\rVert)\Delta^2\xi_n^2}{v_{n\Delta}\eta_n^d} S^{\prime n,\Delta,\eta_n}_s \limN 0.
\end{align*}
Thus, $N^{\prime n}_{\lfloor sn\rfloor/n} \to 0$ uniformly on $\{0 \le s \le t\}$ in probability in case (i); and for all $t>0$ in case (ii).
\Halmos

\emph{(d)} Let $(M^{\prime n}_s)_{s\ge0}$ and $(M^{\prime\prime n}_s)_{s\ge0}$ denote the $\mathscr{F}_{sn\Delta}$-martingales given by
\begin{align*}
	M^{\prime n}_s	& \coloneqq \frac{\xi_n}{v_{n\Delta}\eta_n^d} \int_0^{sn\Delta} \varphi_\Delta(r) \nabla h_n(X_r)\transpose\sigma(X_r)\mathrm{d}W_r, \\
	M^{\prime\prime n}_s & \coloneqq \frac{\xi_n}{v_{n\Delta}\eta_n^d}  \varphi_\Delta(r)(h_n(X_-+w)-h_n(X_-))\mathbbm{1}_{\lVert w\rVert\le1} \star (\mathfrak{m}-\mathfrak{n})_{sn\Delta},
\end{align*}
where $\varphi_\Delta(r) \coloneqq \Delta - (r-\lfloor r/\Delta\rfloor\Delta)$. The predictable quadratic variation of $M^{\prime n}$ satisfies
\begin{align*}
	\langle M^{\prime n}, M^{\prime n} \rangle_s & = \frac{\xi_n^2}{v_{n\Delta}^2\eta_n^{2d}} \int_0^{sn\Delta} \varphi_\Delta(r)^2 \nabla h_n(X_r)\transpose c(X_r)\nabla h_n(X_r)\dt \\
	&	 \le \frac{\zeta(1+ \lVert x\rVert^2)\Delta^2\xi_n^2}{v_{n\Delta}\eta_n^{d+2}} S^{\prime n,\Delta,\eta_n}_s.
\end{align*}
As $\Delta\xi_n\eta_n^{-2} \to 0$ and $v_{n\Delta}\eta_n^d\to\infty$, $M^{\prime n}_s \to 0$ uniformly on $\{0 \le s \le t\}$ in probability in case (i); and for all $t>0$ in case (ii).

In addition, the predictable quadratic variation of $M^{\prime\prime n}$ satisfies
\begin{align*}
	\langle M^{\prime\prime n}, M^{\prime\prime n} \rangle_s & = \frac{\xi_n^2}{v_{n\Delta}^2\eta_n^{2d}} \varphi_\Delta(r)^2 (h_n(X_-+w)-h_n(X_-))^2 \mathbbm{1}_{\lVert w\rVert\le1} \star \mathfrak{n}_{sn\Delta} \\
		& \le \frac{\Delta^2\xi_n^2}{v_{n\Delta}^2\eta_n^{2d}} \int_0^{sn\Delta} \dr \int_{\lVert w\rVert\le1} F(X_r,\dw) (h_n(X_r+w)-h_n(X_r))^2.
\end{align*}
Let $\zeta'>1$ and $\kappa=1\wedge 2/(\beta+d)$ be as in step (b). By \cref{D eq: Fh-eta bound},
\begin{align*}
	\lefteqn{\left|\int_{\lVert w\rVert\le1} F(z,\dw) (h_n(z+w)-h_n(z))^2\right|} \\
	& \quad \le 
	\begin{cases}
		\zeta(1+\lVert z\rVert)\eta_n^{-2}\int_{\lVert w\rVert\le 1} \bar F(\dw)\lVert w\rVert^2, & \text{for } \lVert z-x\rVert \le \zeta'\eta_n^\kappa, \\
			\zeta(1+\lVert z\rVert)\eta_n^{-\kappa\beta}\int_{\lVert w\rVert\le 1} \bar F(\dw)\lVert w\rVert^\beta, & \text{for } \zeta'\eta_n^\kappa<\lVert z -x\rVert\le1+\eta_n, \\
			0, & \text{else.}
	\end{cases}
\end{align*}
Therefore,
\begin{align*}
	\langle M^{\prime\prime n}, M^{\prime\prime n} \rangle_s 
	\le \frac{\zeta(1 + \lVert x\rVert)\Delta\xi_n}{v_{n\Delta}\eta_n^{d}} \left(\frac{\Delta\xi_n\eta_n^{\kappa d}}{\eta_n^{d+2}} S_s^{\prime n,\Delta,\zeta'\eta_n^\kappa} + \frac{\Delta\xi_n}{\eta_n^{d+\kappa\beta}} S_s^{\prime n,\Delta,1+\eta_n}\right).
\end{align*}
Again since $\Delta\xi_n\eta_n^{-2-d(1-\kappa)}\to 0$, $M^{\prime\prime n}_s \to 0$  uniformly on $\{0 \le s \le t\}$ in probability in case (i); and for all $t>0$ in case (ii).
\end{proof}

\subsection{Auxiliary martingale limit theorem}\label{D ProofMartingale}
The theorem presented in this subsection serves as a preliminary result for the proof of our central limit theorem (\cref{D t: CLT,D c: CLT}). It is a non-standard limit theorem for a triangular, martingale array scheme. 

Here, we work on the extension \cref{C eq:ExtendedPSpace} of the probability space,  $L$ denotes the Mittag-Leffler process of order $0<\delta\le1$, and $W=(W^i)_{i\in I}$ denotes an $I$-dimensional standard Wiener process such that $L$, $W$ and $\mathscr{F}$ are independent.
\begin{Theorem}\label{D p: AuxMart}
	For $n\in \bbn^\ast$, let $(\mathscr{G}^n_s)_{s>0}$ be the filtration given by $\mathscr{G}^n_s\coloneqq \mathscr{F}_{\lfloor sn\rfloor\Delta}$, and $I$ be a finite index set. Moreover, let $h_n:E\times E\to \bbr^I$ be such that $\lVert h_n \rVert_\infty \to 0$ as $\nto$.
	Grant \cref{C a: HR,C a: DK}, and suppose that the process $M^n$ given by
\begin{gather}\label{D eq:Martingale form}
	M^n_s \coloneqq  \sum_{k=1}^{\lfloor sn\rfloor} h_n(X_{(k-1)\Delta}, \Delta^n_kX)
\end{gather}
is a $\mathscr{G}^n_s$-martingale such that the predictable quadratic co-variation $\langle M^{ni},M^{nj}\rangle$ is identically zero for every $i\neq j$ and all $n$ large enough. 
If $(\langle M^{ni},M^{ni}\rangle)_{i\in I}$ converges stably in law in $\mathcal{D}(\bbr^I)$ to $(\varsigma_i^2L)_{i\in I}$, then
\[
	M^n \LimdstN (\varsigma_i W^i_L)_{i\in I}.
\]
\end{Theorem}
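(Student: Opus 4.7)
The plan is to apply a functional martingale central limit theorem with \emph{mixed Gaussian} limit under stable convergence --~a variant of Theorem~IX.7.28 of \citet{jacodshir} in the spirit of the limit results developed in \citet{Touati1987} and used on pp.\,119--124 of \citet{Hoepfner1990}. Roughly, such a theorem states that if a sequence of local martingales has vanishing jumps and its matrix of predictable quadratic covariations converges stably in law to a continuous, non-decreasing (random) limit $C$, then the sequence itself converges stably in law to a continuous Gaussian martingale whose predictable covariation is $C$; such a limit admits a time-changed Brownian motion representation on a suitable extension of the probability space.

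Verifying the hypotheses is straightforward. The Lindeberg (or asymptotic-negligibility-of-jumps) condition holds trivially: because of \cref{D eq:Martingale form} and $\lVert h_n\rVert_\infty \to 0$, any jump of any component $M^{ni}$ is bounded in modulus by $\lVert h_n\rVert_\infty$, so for every $\varepsilon>0$ and all $n$ with $\lVert h_n\rVert_\infty<\varepsilon$ the process $M^n$ has no jumps of size exceeding $\varepsilon$ at all. Furthermore, the off-diagonal entries of the matrix of predictable quadratic covariations vanish identically for $n$ large, while the diagonal entries converge stably to $(\varsigma_i^2 L)_{i\in I}$ by assumption. Hence the full matrix converges stably in $\mathcal{D}(\bbr^{I\times I})$ to $\diag(\varsigma_i^2)_{i\in I}\cdot L$, which is continuous and non-decreasing since $L$ is a.\,s.\ continuous (as the right-inverse of a strictly increasing $\delta$-stable subordinator for $\delta<1$, and the identity for $\delta=1$).

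The stable martingale CLT therefore yields stable convergence of $M^n$ to a continuous Gaussian martingale $\tilde M$ on an extension of $(\Omega,\mathscr{F},\p^\pi)$ whose predictable covariation equals $\diag(\varsigma_i^2)_{i\in I}\cdot L$. By a Dambis--Dubins--Schwarz type argument, $\tilde M$ admits the representation $(\varsigma_i W^i_L)_{i\in I}$ on the extension \cref{C eq:ExtendedPSpace}, where $W=(W^i)_{i\in I}$ is an $I$-dimensional standard Wiener process jointly independent of $L$ and $\mathscr{F}$. I expect the main technical obstacle to be the precise invocation of a martingale CLT which simultaneously accommodates the random limiting clock~$L$ and the stable mode of convergence, together with the identification of the limit on the prescribed extension~\cref{C eq:ExtendedPSpace}; by contrast, the two conditions above follow from $\lVert h_n\rVert_\infty\to 0$ and the hypothesis with virtually no further work.
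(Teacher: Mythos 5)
There is a genuine gap, and it sits exactly where you flag ``the main technical obstacle'': the martingale CLT you invoke does not exist in the generality you need. Theorem~IX.7.28 of \citet{jacodshir} and its stable-convergence variants require the predictable brackets to converge \emph{in probability} to a limit that is \emph{measurable with respect to the original $\sigma$-field} $\mathscr{F}$ (this is what makes the limit an $\mathscr{F}$-conditionally Gaussian martingale). Here, for $0<\delta<1$, the limiting bracket $\varsigma_i^2L$ involves the Mittag-Leffler process $L$, which lives only on the extension \cref{C eq:ExtendedPSpace} and is \emph{independent of} $\mathscr{F}$; moreover the hypothesis gives only stable convergence in law of the brackets, not convergence in probability. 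Reducing this situation to a usable CLT is the entire content of the theorem, not a routine invocation. A Dambis--Dubins--Schwarz representation at the end would also not deliver the required \emph{independence} of $W$ from $L$ and from $\mathscr{F}$, since the DDS Brownian motion is built from the martingale itself.

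The paper's proof shows what is actually needed. After disposing of $\delta=1$ (where $L_s=s$ is deterministic and your argument is fine), it sets $\bar L^n=\sum_i\langle M^{ni},M^{ni}\rangle$, lets $K^n$ be its right inverse, and studies the time-changed martingale $N^n=M^n\circ K^n$. The point of this time change is that $\langle N^{ni},N^{ni}\rangle_s=L^{ni}_{K^n_s}\to(\varsigma_i/\bar\varsigma)^2s$, a \emph{deterministic} limit, so the standard CLT of Section~VIII.3c applies to $N^n$. One must then prove \emph{joint} convergence of the pair $(\bar L^n,N^n)$ and the independence of the limits; the latter uses the Höpfner--Löcherbach argument that the $\delta$-stable subordinator $K$ is purely discontinuous while $W$ is continuous, so if both are processes with independent increments relative to a common filtration they must be independent. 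Finally $M^n_s=N^n(\bar L^n_s)-\Delta M^n_{K^n(\bar L^n_s)}$ with the jump term vanishing by $\lVert h_n\rVert_\infty\to0$, and stable convergence is obtained as in \cref{C lemma:Stable}. None of these steps (random time change, joint convergence, independence of clock and Brownian motion) appears in your proposal, so it cannot be completed as written for $\delta<1$.
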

\begin{proof}
	Let $\delta = 1$. Then we have $L_s = s$. Therefore, the convergence of $M^n$ to $(\varsigma_i^2W^i)_{i\in I}$ follows directly from standard results (see section VIII.3c of \citet{jacodshir}).

	For the remainder, let $0<\delta<1$. 	We consider the processes $L^{ni}$, $\bar L^n$, $K^n$ and $N^n$ given by
\begin{gather*}
	L^{ni}_s \coloneqq \langle M^{ni},M^{ni}\rangle_s = \sum_{k=1}^{\lfloor sn\rfloor} \E^{X_{(k-1)\Delta}} h^{i}_n(X_{(k-1)\Delta}, \Delta^n_kX)^2, \\
	\bar L^{n}_s \coloneqq \sum_{i\in I}L^{ni}_s, \quad
	K^n_u \coloneqq \inf\left\{s>0: \bar L^n_s > u\right\} \mAnd  N^n_s \coloneqq M^n_{K^n_s}.
\end{gather*}
We emphasise that $N^n(\bar L^n_s) = M^n_s + \Delta M^n_{K^n(\bar L^n_s)}$ holds for all $s$. 
As $\lVert \Delta M^n\rVert\le\lVert h_n\rVert\to 0$, it is sufficient to prove that we have the following stable convergence in law in $\mathcal{D}(\bbr\times\bbr^I)$:
\begin{gather}
	(\bar L^n,N^n) \LimdstN \left(\bar{\varsigma}^2L, \left((\varsigma_i/\bar\varsigma)W^{i}\right)_{i\in I}\right), \quad\text{where } \bar\varsigma^2 \coloneqq \sum_{i\in I} \varsigma_i^2.
\end{gather}

First, by the continuous mapping theorem, we obtain
\begin{gather}\label{D eq: Lntilde Convergence}
	\left( \bar L^n, \left( L^{ni}\right)_{i\in I} \right) \LimdstN \left( \bar\varsigma^2L, \left( \varsigma_i^2L \right)_{i\in I} \right).
\end{gather}
Second, we remark that $K^n_u$ is a predictable $\mathscr{G}^n_s$-stopping time for all $u\ge0$. Thus, $N^n$ is a martingale \wrt\ the time-changed filtration $\mathscr{H}^n_s\coloneqq \mathscr{G}^n_{K^n_s}$. Moreover, we observe that its predictable quadratic variation satisfies
\[
	\langle N^{ni}, N^{ni} \rangle_s = L^{ni}_{K^n_s}.
\]
By \cref{D eq: Lntilde Convergence}, we have that $|L^{ni} - (\varsigma_i/\bar\varsigma)^2\bar L^n| \to 0$ uniformly on compacts in probability for all $i\in I$. We note that the (scaled) Mittag-Leffler process $\bar\varsigma^2L$ is a.\,s.\ continuous. Its right-inverse $K$ given by $K_u\coloneqq \inf\{s>0: \bar\varsigma^2L_s > u\}$ is a (deterministically time-changed) $\delta$-stable L\'evy process, hence, without fixed time of discontinuity. By (3.2) of \citet{Hoepfner1990}, therefore,
$L^{ni}_{K^n_s} \to (\varsigma_i/\bar\varsigma)^2s$ in law for every $s\ge0$; hence, in probability. By construction, we have that $\lVert \Delta N^n_s\rVert$ is bounded above by $\lVert h_n\rVert_\infty$. This bound converges to zero. By standard results (see above), consequently,
\begin{gather}\label{D eq: Nn Convergence}
	N^n \LimdstN \left((\varsigma_i/\bar\varsigma)W^{i}\right)_{i\in I}.
\end{gather}
In analogy to the proof of \cref{C lemma:JointCLT} and Steps 6 and 7 on pp.\,122--124 of \citet{Hoepfner1990}, we obtain that the pair $(\bar L^n,N^n)$ converges in law in $\mathcal{D}(\bbr\times\bbr^I)$ to $(\bar{\varsigma}^2L, \left((\varsigma_i/\bar\varsigma)W^{i}\right)_{i\in I})$. Finally, the stable convergence in law and the independence from $\mathscr{F}$ follows in analogy to \cref{C lemma:Stable}.
\end{proof}

\subsection{Proof of \texorpdfstring{\cref{D t: LLN}}{Theorem 2.9}}\label{D ProofLLN}
Throughout the remainder of \cref{D Proofs}, we work under the law $\p^\pi$ for some inital probability $\pi$ on $E$, and we denote $E_\oplus \coloneqq \{x\in E: \mu'(x)>0, F(x,E)>0\}$. 

We consider the processes $G^{n,\Delta,\eta}$ and $R^{n,\Delta,\eta}$ given by
\begin{align}
	G^{n,\Delta,\eta}_s(x,y) &\coloneqq \frac{1}{v_{n\Delta}}\sum_{k=1}^{\lfloor sn\rfloor} g_1^{\eta,x}(X_{(k-1)\Delta})g_2^{\eta,y}(\Delta^n_kX), \label{D eq:Gn}\\
	R^{n,\Delta,\eta}_s(x) & \coloneqq \frac{\Delta}{v_{n\Delta}}\sum_{k=1}^{\lfloor sn\rfloor}g_1^{\eta,x}(X_{(k-1)\Delta}). \label{D eq:Sn}
\end{align}

\begin{Lemma}\label{D l: P1}
	Grant \cref{D a: BCn,D a: C-weak,C a: HR}. Let $\eta_n=\eta_{1,n}$ be such that \cref{D eq: LLN} holds, and let $x\in E_\oplus$.
	\begin{enumerate}
		\item If $n\Delta^2\to0$, then,
		\begin{gather}\label{D eq:FamTight0}
			\text{the family } \left\{ \mathscr{L}\left(R^{n,\Delta,\eta_n}_1(x) \mid \p^\pi\right): n\in\bbn^\ast   \right\} \text{ is tight.}
		\end{gather}
		\item Grant \cref{C a: DK} in addition. If $(n\Delta)^{1-\delta}\Delta\to 0$, then, \cref{D eq:FamTight0} holds as well.
	\end{enumerate}
	In both cases, each limit point of the family in \cref{D eq:FamTight0} is the law $\mathscr{L}(\mu'(x)\tilde L)$ for some positive random variable $\tilde L$.
\end{Lemma}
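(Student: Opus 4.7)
The plan is to relate the Riemann-sum functional $R^{n,\Delta,\eta_n}_1(x)$ to the continuous-time normalised sojourn time $v_{n\Delta}^{-1}\int_0^{n\Delta} g_1^{\eta_n,x}(X_r)\,\dr$ up to a negligible discretisation error, and then to import both the tightness and the characterisation of limit points from the continuous-time setting developed in \cref{C Estimation}. Two ingredients do all the work: the sojourn-time approximation \cref{D p: Sojourn Time Approximation} for the discretisation error, and \cref{C lemma: TightFam1} for the continuous-time side.

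For the discretisation step I will apply \cref{D p: Sojourn Time Approximation} to the rescaled functions $h_n \coloneqq \eta_n^d g_1^{\eta_n,x}$, which are uniformly bounded together with their rescaled derivatives $\eta_n^{|m|}\partial^m h_n$ and which are supported on $\ball{x}{\eta_n}$. Taking $\xi_n = 1$ and $\bar v = v$ (the underlying deterministic equivalent of $X$, which in case~(ii) coincides with the regularly varying function from \cref{D eq: Darling Kac}), part~(i) of the proposition applies when $n\Delta^2 \to 0$ and part~(ii) when $(n\Delta)^{1-\delta}\Delta \to 0$; both reduce, at $s = 1$, to
\begin{equation*}
	\left|R^{n,\Delta,\eta_n}_1(x) - v_{n\Delta}^{-1}\int_0^{n\Delta} g_1^{\eta_n,x}(X_r)\,\dr\right| \limpPiN 0.
\end{equation*}

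For the continuous-time term I will mimic the argument in the proof of \cref{C theo:LLN}: with $t_n \coloneqq n\Delta \to \infty$ and the random clock $J$ from \cref{C eq:J},
\begin{equation*}
	v_{t_n}^{-1}\int_0^{t_n} g_1^{\eta_n,x}(X_r)\,\dr = S^{v_{t_n},\eta_n}_{J_{t_n}/v_{t_n}}(x) + v_{t_n}^{-1}\int_{T_{J_{t_n}}}^{t_n} g_1^{\eta_n,x}(X_r)\,\dr,
\end{equation*}
where the remainder converges to zero in $\p^\pi$-probability by the Markov-inequality argument behind \cref{C eq: RemainderConvergence}. \cref{C lemma: TightFam1} then yields that the family $\{\mathscr{L}(S^{v_{t_n},\eta_n}_{J_{t_n}/v_{t_n}}(x)\mid\p^\pi) : n\in\bbn^\ast\}$ is tight with every limit point of the form $\mathscr{L}(\mu'(x)\tilde L)$ for some positive random variable $\tilde L$. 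A final application of Slutsky's theorem combines the two steps and delivers the stated tightness of $\{\mathscr{L}(R^{n,\Delta,\eta_n}_1(x)\mid\p^\pi)\}$ together with the desired description of its limit points.

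The substantive probabilistic content is entirely packaged inside \cref{D p: Sojourn Time Approximation} and \cref{C lemma: TightFam1}; the only non-routine point of care is to verify that the discretisation side-condition $\Delta\eta_n^{-2-d[(1-2/(\beta+d))\vee0]}\to 0$ required by \cref{D p: Sojourn Time Approximation} with $\xi_n = 1$ is ensured by the bandwidth regime (namely \cref{D eq: LLN} together with \cref{D eq: LLNb}) under which this lemma is invoked inside the proof of \cref{D t: LLN}. This is the main bookkeeping obstacle, but it is a matter of checking that the two sets of conditions are compatible rather than of introducing new probabilistic arguments.
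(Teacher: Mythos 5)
Your proposal is correct and follows essentially the same route as the paper: the paper's proof likewise cites \cref{C lemma: TightFam1} for tightness and the form of the limit points of the continuous-time sojourn functional, and \cref{D p: Sojourn Time Approximation} (with $\xi_n=1$) for the negligibility of the Riemann-sum discretisation error. Your explicit handling of the remainder $v_{t}^{-1}\int_{T_{J_t}}^{t}g_1^{\eta,x}(X_r)\,\dr$ and your observation that the side-condition of \cref{D p: Sojourn Time Approximation} really comes from \cref{D eq: LLNb} rather than \cref{D eq: LLN} alone are both sound and, if anything, slightly more careful than the paper's own write-up.
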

\begin{proof}
	Let $S^{t,\eta}_s(x) \coloneqq v_{t}^{-1}\int_0^{st}g^{\eta,x}_2(X_r)\dr$. By \cref{C lemma: TightFam1}, the family $\{\mathscr{L}(S^{n\Delta,\eta_n}_1(x) \mid \p^\pi): n\in\bbn^\ast\}$ is tight; moreover, each of its limit points is the law $\mathscr{L}(\mu'(x)\tilde L)$ for some random variable $\tilde L>0$. In both cases (i) and (ii), since $\eta_n$ is such that \cref{D eq: LLN} holds, we have
	\begin{gather*}
		\left|S^{n\Delta,\eta_{n}}_1(x) - R^{n,\Delta,\eta_n}_1(x)\right| \limpPiN 0
	\end{gather*}
by \cref{D p: Sojourn Time Approximation}. Consequently, the family $\{\mathscr{L}(R^{n,\Delta,\eta}_1(x) \mid \p^\pi): n\in\bbn^\ast\}$ is tight; moreover, each of its limit points is a limit point of the family $\{\mathscr{L}(S^{t,\eta}_1(x) \mid \p^\pi): t>0\}$, hence, the law $\mathscr{L}(\mu'(x)\tilde L)$ for some random variable $\tilde L>0$.
\end{proof}

\begin{Lemma}\label{D l: P2a}
	Grant \cref{D a: BCn,D a: C-weak}. Let $\eta_n=(\eta_{1,n},\eta_{2,n})$ be such that $\eta_{1,n}\to 0$, $\eta_{2,n}\to0$ and $\Delta\eta_{2,n}^{-2\vee (\beta+d)} \to 0$. Moreover, let $(x,y) \in E_\oplus \times E^\ast$, and let $g$ be a $\mathcal{C}^2$-function with compact support. Then
	\begin{gather}\label{D eq: Small Time Applied}
		\lim_\nto \sup_{z\in\ball{x}{\eta_{1,n}}} \left\lvert \frac{1}{\Delta} \E^zg^{\eta_n,y}(\Delta^n_1X) - f(x,y)\int g(w)\dw \right\rvert = 0.
	\end{gather}
\end{Lemma}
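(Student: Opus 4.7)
The plan is to reduce \cref{D eq: Small Time Applied} to \cref{D p: Small Time Asymptotic} together with the continuity of $f$ at $(x,y)$. First I would split, via the triangle inequality,
\begin{align*}
\lefteqn{\Big|\tfrac{1}{\Delta}\E^z g^{\eta_n,y}(\Delta_1^n X) - f(x,y)\int g(w)\dw\Big|} \\
& \qquad \le \Big|\tfrac{1}{\Delta}\E^z g^{\eta_n,y}(\Delta_1^n X) - F g^{\eta_n,y}(z)\Big| + \Big|F g^{\eta_n,y}(z) - f(x,y)\int g(w)\dw\Big|,
\end{align*}
and control each summand uniformly in $z \in \ball{x}{\eta_{1,n}}$.

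The second summand is the easier one. A change of variables gives $F g^{\eta_n,y}(z) = \int f(z, y + \eta_{2,n} u)\,g(u)\,\du$. Since $f$ is continuous on $E \times E^\ast$ by \cref{D a: C-weak} and $g$ has compact support, the integrand converges uniformly in $(z, u) \in \overline{\ball{x}{\eta_{1,n}}} \times \supp g$ to $f(x, y)\,g(u)$, so dominated convergence yields $\sup_{z \in \ball{x}{\eta_{1,n}}}|F g^{\eta_n,y}(z) - f(x,y)\int g(w)\dw| \to 0$.

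For the first summand I would fix any $\eta_0 \in (0, \lVert y \rVert)$ and apply \cref{D p: Small Time Asymptotic} to the compact set $A = \overline{\ball{x}{\eta_0/2}} \times \{y\} \subset E \times E^\ast$: for $n$ large enough so that $\ball{x}{\eta_{1,n}} \subseteq \overline{\ball{x}{\eta_0/2}}$, $\eta_{2,n} < \eta_0$, and $\Delta \le 1$, the estimate there is uniform in $z \in \ball{x}{\eta_{1,n}}$ and, for any fixed $m \in \bbn^\ast$, is a multiple of
\[
\Delta^{(\alpha \wedge 1)/2} + \frac{\Delta}{\eta_{2,n}^{\gamma}}\Big(1 + \sum_{k=1}^{m} \frac{\Delta^k}{\eta_{2,n}^{2k}}\Big) + \frac{\Delta^m}{\eta_{2,n}^{2(m+1)+d}}
\]
with $\gamma \coloneqq 2 \vee (\beta + d)$. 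The hypothesis $\Delta \eta_{2,n}^{-\gamma} \to 0$ combined with $\eta_{2,n} \to 0$ and $\gamma \ge 2$ gives $\Delta \eta_{2,n}^{-2} \to 0$; hence each summand in the middle bracket tends to $0$ for every fixed $m$. Rewriting the remainder as $(\Delta/\eta_{2,n}^{\gamma})^{m}\, \eta_{2,n}^{m(\gamma - 2) - 2 - d}$, any $m \ge (2 + d)/(\gamma - 2)$ forces the exponent in $\eta_{2,n}$ to be nonnegative, so the remainder also vanishes.

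The main obstacle is the borderline regime $\gamma = 2$ (that is, $\beta + d \le 2$), where no fixed $m$ renders $m(\gamma - 2) - 2 - d$ nonnegative and the previous exponent-counting argument breaks down. Here one has to invoke the sharpening of \cref{D p: Small Time Asymptotic} flagged in the Remark following it, exploiting the local boundedness of $f$ on $E \times E^\ast$ (which holds under \cref{D a: C-weak}) to dispose of the offending leftover term altogether.
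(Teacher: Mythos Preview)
Your approach is the same as the paper's: split by the triangle inequality, handle $|\Delta^{-1}\E^z g^{\eta_n,y}(\Delta_1^n X) - Fg^{\eta_n,y}(z)|$ uniformly in $z$ via \cref{D p: Small Time Asymptotic}, and handle the rest by continuity of $f$. The paper does the latter in two sub-steps (first $|Fg^{\eta_n,y}(z)-Fg^{\eta_n,y}(x)|\le\zeta\eta_{1,n}^{\alpha\wedge1}$ via H\"older continuity, then $Fg^{\eta_n,y}(x)\to f(x,y)\int g$ via Lebesgue differentiation); you fold these into one, which is fine.

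You are in fact more careful than the paper about the first summand: the paper simply writes ``where we choose $m$ large enough'' and stops, whereas you correctly notice that when $\gamma=2\vee(\beta+d)=2$ the remainder $\Delta^m/\eta_{2,n}^{2(m+1)+d}=(\Delta/\eta_{2,n}^2)^m\eta_{2,n}^{-(2+d)}$ need not vanish for any fixed $m$ under the bare hypothesis $\Delta/\eta_{2,n}^2\to0$. Your proposed rescue, however, does not land: the Remark after \cref{D p: Small Time Asymptotic} improves only the \emph{middle} term of \cref{D eq: Small Time Asymptotic} (the factor $\eta^{2\vee(\beta+d)}$), not the remainder $\Delta^m/\eta^{2(m+1)+d}$, and it requires $f$ locally bounded on $E\times E$ (across $y=0$), not merely on $E\times E^\ast$ as guaranteed by \cref{D a: C-weak}. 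So the borderline case $\beta+d\le2$ is a genuine gap---one the paper itself shares (and tacitly acknowledges in that Remark); it evaporates under the polynomial bandwidth regimes actually used downstream, but not under the lemma's hypotheses as stated.
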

\begin{proof}
	First, by \cref{D p: Small Time Asymptotic} -- where we choose $m$ large enough -- we have
	\[
		\lim_\nto \sup_{z\in\ball{x}{\eta_{1,n}}} \left|\Delta^{-1}\E^{z}g^{\eta,x}(\Delta^n_1X) - Fg^{\eta,y}(z)\right| = 0.
	\]
	Second, under \cref{D a: C-weak}, $f\in\mathcal{C}^\alpha_\loc(E\times E^\ast)$ for some $\alpha>0$. Therefore,
	\[
		\lim_\nto \sup_{z\in B_{\eta_{1,n}}(x)}  \big|Fg^{{\eta_n},y}(z) - Fg^{\eta_n,y}(x)\big| \le \lim_{\nto} \zeta\eta_{1,n}^{\alpha\wedge1} = 0.
	\]
	Third, by Lebesgue's differentiation theorem, we observe 
	\[
		\lim_\nto \left\lvert Fg^{\eta_n,y}(x) - f(x,y)\int g(w)\dw \right\rvert = 0.
	\]
\end{proof}

\begin{Lemma}\label{D l: P2}
	Grant \cref{D a: BCn,D a: C-weak,C a: HR}. Let $\eta_n=(\eta_{1,n},\eta_{2,n})$ be such that \cref{D eq: LLN} holds. Moreover, let $(x,y) \in E_\oplus \times E^\ast$. Then, in both cases as in \cref{D l: P1},
	\begin{gather}\label{D eq:FamTight1}
			\text{the family } \left\{\mathscr{L}\left(G^{n,\Delta,\eta_n}_1(x,y), R^{n,\Delta,\eta_n}_1(x) \mid \p^\pi \right): n\in\bbn^\ast\right\} \text{ is tight.}
	\end{gather}
Moreover, each limit point of the family in \cref{D eq:FamTight1} is the law $\mathscr{L}(f(x,y)\mu'(x)\tilde L,\mu'(x)\tilde L)$ for some positive random variable $\tilde L$.
\end{Lemma}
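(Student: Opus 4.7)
The plan is to reduce the joint tightness and characterisation of limit points to the already-established marginal result of \cref{D l: P1} by proving the approximation
\[
	G^{n,\Delta,\eta_n}_1(x,y) = f(x,y)\,R^{n,\Delta,\eta_n}_1(x) + o_{\p^\pi}(1),
\]
and then invoking the continuous mapping theorem along convergent subsequences.

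To this end, I will perform a Doob-type decomposition $G^{n,\Delta,\eta_n}_1(x,y) = H_n + M_n$ with respect to the coarse filtration $(\mathscr{F}_{k\Delta})_{k}$, where
\[
	H_n \coloneqq \frac{1}{v_{n\Delta}}\sum_{k=1}^{n} g_1^{\eta_n,x}(X_{(k-1)\Delta})\,\E^{X_{(k-1)\Delta}}\!\big[g_2^{\eta_n,y}(\Delta^n_1X)\big]
\]
is the predictable compensator and $M_n$ is the resulting square-integrable martingale. Applying \cref{D l: P2a} to $g_2$ yields, uniformly for $z$ in the support $\ball{x}{\eta_{1,n}}$ of $g_1^{\eta_n,x}$, that $\Delta^{-1}\E^z[g_2^{\eta_n,y}(\Delta^n_1X)] \to f(x,y)$, so that $|H_n - f(x,y)R^{n,\Delta,\eta_n}_1(x)| \le \varepsilon_n\,R^{n,\Delta,\eta_n}_1(x)$ for some deterministic $\varepsilon_n \to 0$; the tightness of $R^{n,\Delta,\eta_n}_1(x)$ from \cref{D l: P1} then forces $H_n - f(x,y)R^{n,\Delta,\eta_n}_1(x) \limpPiN 0$.

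For the martingale contribution, I will bound the predictable quadratic variation and conclude via Lenglart's inequality. The identity $(g_2^{\eta_n,y})^2=\eta_{2,n}^{-d}(g_2^2)^{\eta_n,y}$ together with \cref{D l: P2a} applied to the $\mathcal{C}^2$-function $g_2^2$ gives $\E^z[g_2^{\eta_n,y}(\Delta^n_1X)^2] \le C\Delta\eta_{2,n}^{-d}$ uniformly on $\ball{x}{\eta_{1,n}}$. Combined with the analogous identity $(g_1^{\eta_n,x})^2=\eta_{1,n}^{-d}(g_1^2)^{\eta_n,x}$ and the observation that the proof of \cref{D l: P1} adapts verbatim to $g_1^2$ in place of $g_1$, this produces
\[
	\langle M_n, M_n\rangle_1 \;\le\; \frac{C}{v_{n\Delta}\eta_{1,n}^d\eta_{2,n}^d}\,\tilde R_n,
\]
where $\tilde R_n$ is the analogue of $R^{n,\Delta,\eta_n}_1(x)$ built from $g_1^2$; this vanishes in probability by \cref{D eq: LLN}, whence $M_n \limpPiN 0$. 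Together with the compensator estimate, this yields $G^{n,\Delta,\eta_n}_1(x,y) - f(x,y)R^{n,\Delta,\eta_n}_1(x) \limpPiN 0$, from which both the joint tightness and the characterisation of limit points follow from \cref{D l: P1}.

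The main delicate point will be verifying that the hypotheses of \cref{D l: P2a} are in force: its auxiliary condition $\Delta\eta_{2,n}^{-2\vee(\beta+d)}\to 0$ is not literally part of \cref{D eq: LLN}, but is guaranteed by combining $v_{n\Delta}\eta_{1,n}^d\eta_{2,n}^d\to\infty$ with the discretisation hypothesis underlying the relevant case of \cref{D l: P1} (namely $n\Delta^2\to 0$ in case~(i), respectively $(n\Delta)^{1-\delta}\Delta\to 0$ in case~(ii)); this verification should be made explicit before invoking \cref{D l: P2a}.
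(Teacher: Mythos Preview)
Your approach is essentially identical to the paper's: the same Doob decomposition $G = f(x,y)R + H + M$, the same appeal to \cref{D l: P2a} for the compensator, and the same predictable-quadratic-variation argument for the martingale part (the paper cites Theorem~VIII.2.4 of \citet{jacodshir} rather than Lenglart, which is cosmetic).

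Two minor points need fixing. First, the bound $|H_n - f(x,y)R^{n,\Delta,\eta_n}_1(x)| \le \varepsilon_n\, R^{n,\Delta,\eta_n}_1(x)$ is not correct as written: a kernel $g_1$ of order $\alpha_1>2$ necessarily changes sign, so $R^{n,\Delta,\eta_n}_1(x)$ need not dominate the corresponding absolute sum. The paper handles this by replacing $g_1$ with a $\mathcal{C}^2$ function $h\ge|g_1|$; the tightness argument via \cref{D l: P1} then carries over unchanged. Second, your claimed derivation of $\Delta\eta_{2,n}^{-2\vee(\beta+d)}\to0$ from \cref{D eq: LLN} together with the case hypotheses of \cref{D l: P1} fails: for instance, in the ergodic univariate setting ($\delta=1$, $v_t=t$, $d=1$, $\beta=0$) the choice $\Delta=n^{-0.6}$, $\eta_{1,n}=n^{-0.01}$, $\eta_{2,n}=n^{-0.35}$ satisfies $n\Delta^2\to0$ and \cref{D eq: LLN}, yet $\Delta\eta_{2,n}^{-2}=n^{0.1}\to\infty$. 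The condition must simply be assumed; the paper's own statement of the lemma has the same omission, but \cref{D eq: LLNb} is in force wherever the lemma is applied (namely in the proof of \cref{D t: LLN}).
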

\begin{proof}
	We note that $G^{n,\Delta,\eta}_s(x,y) = f(x,y)R^{n,\Delta,\eta}_s(x) + H^{n,\Delta,\eta}_s(x,y) + M^{n,\Delta,\eta}_s(x,y),$ where
	\begin{align}
		H^{n,\Delta,\eta}_s(x,y) & = \frac{1}{v_{n\Delta}} \sum_{k=1}^{\lfloor sn\rfloor} g_1^{\eta,x}(X_{(k-1)\Delta})\left(\E^{X_{(k-1)\Delta}}[g_2^{\eta,x}(\Delta^n_1X)] - \Delta f(x,y)\right), \label{D eq:Hn}\\
		M^{n,\Delta,\eta}_s(x,y) & = \frac{1}{v_{n\Delta}} \sum_{k=1}^{\lfloor sn\rfloor} g_1^{\eta,x}(X_{(k-1)\Delta})\left(g_2^{\eta,y}(\Delta^n_kX) - \E^{X_{(k-1)\Delta}}[g_2^{\eta,x}(\Delta^n_1X)]\right). \label{D eq:Mn}
	\end{align}
	By \cref{D l: P1}, it is sufficient to prove that $H^{n,\Delta,\eta_n}_1(x,y)$ and $M^{n,\Delta,\eta_n}_1(x,y)$ converge to zero in probability as $\nto$.

\bigskip\emph{(H)} We observe 
\begin{gather}\label{D eq:ineq1}
	\big\lvert H^{n,\Delta,\eta}_1(x,y)\big\rvert \le \sup_{z\in B_{\eta_{1}}(x)} \left\lvert \Delta^{-1}\E^{z}[g_2^{\eta,x}(\Delta^{n}_1X)] - f(x,y) \right\rvert v_{n\Delta}^{-1}\sum_{k=1}^{n} \Delta h^{\eta,x}(X_{(k-1)\Delta}),
\end{gather}
where $h$ is a $\mathcal{C}^2$-function dominating $|g_1|$. The sequence $(v_{n\Delta}^{-1}\sum_{k=1}^{n} \Delta h^{\eta_n,x}(X_{(k-1)\Delta}))_{n\in\bbn^\ast}$ is tight in analogy to \cref{D l: P1}.
As $\sup_{z\in B_{\eta_{1,n}}(x)}  \lvert \Delta^{-1}\E^{z}[g_2^{\eta_n,x}(\Delta^n_1X)] - f(x,y)  \rvert \to 0$ by \cref{D l: P2a}, we have $H^{n,\Delta,\eta_n}_1(x,y)\to0$ in law, hence, in probability. \Halmos

\emph{(M)} We observe that $M^{n,\Delta,\eta}$ is an $\mathscr{F}_{\lfloor sn\rfloor\Delta}$-martingale. We note $\sup_{s\le1} \lVert \Delta M^{n,\Delta,\eta_n}_s\rVert \le (v_{n\Delta}\eta_{1,n}^d\eta_{2,n}^d)^{-1}\lVert g_1\rVert_\infty\lVert g_2\rVert_\infty \to 0$ by \cref{D eq: LLN}. By Theorem VIII.2.4 of \citet{jacodshir}, thus, it is sufficient to show that the predictable quadratic variation of $M^{n,\Delta,\eta_n}$ at time one, denoted $\langle M^{n,\Delta,\eta_n},M^{n,\Delta,\eta_n} \rangle_1$, converges to zero in probability.

We observe 
\begin{gather*}
	\big\langle M^{n,\Delta,\eta},M^{n,\Delta,\eta} \big\rangle_1 \le \frac{\lVert g_1\rVert_\infty}{v_{n\Delta}\eta_{1}^d\eta_{2}^d}\sup_{z\in B_{\eta_{1}}(x)}\left|\frac{\eta_{2}^d}{\Delta} \E^z g_2^{\eta,y}(\Delta^n_1X)^2\right| v_{n\Delta}^{-1}\sum_{k=1}^{n} \Delta h^{\eta,x}(X_{(k-1)\Delta}).
\end{gather*}
By \cref{D l: P2a}, $\sup_{z\in B_{\eta_{1,n}}(x)}  \lvert \Delta^{-1}\E^{z}\eta_{2,n}^dg_2^{\eta_n,x}(\Delta^n_1X)^2| \to f(x,y)\int g_1(w)^2\dw$. In analogy to step (H), since $v_{n\Delta}\eta_{1.n}^d\eta_{2,n}^d\to\infty$, we have $\langle M^{n,\Delta,\eta_n},M^{n,\Delta,\eta_n} \rangle_1\to0$ in law, hence, in probability.
\end{proof}

\begin{proof}[of \cref{D t: LLN}]
We recall the results from \cref{D l: P2}. Let $\tilde L>0$ be a random variable such that the law $\mathscr{L}(f(x,y)\mu'(x)\tilde L, \mu'(x)\tilde L)$ is a limit point of the family in \cref{D eq:FamTight1}, and let $(n_k)_{k\in\bbn^\ast}$ be a sequence such that
\[
	\left(G^{n_k,\Delta,\eta_{n_k}}_1(x,y), R^{n_k,\Delta,\eta_{n_k}}_1(x)\right) \limdK \left(f(x,y)\mu'(x)\tilde L,\mu'(x)\tilde L\right).
\]
Since $\mu'(x)>0$, by the continuous mapping theorem, we conclude 
\begin{gather*}
	\hat f_{n_k}^{\Delta,\eta_{n_k}}(x,y) = \frac{G^{n_k,\Delta,\eta_{n_k}}_1(x,y)}{R^{n_k,\Delta,\eta_{n_k}}_1(x)} \limdK f(x,y).
\end{gather*}
As this limit is unique and independent of the particular limit point of the family in \cref{D eq:FamTight1}, we have that $\hat f_{n}^{\Delta,\eta_n}(x,y)$ converges to $f(x,y)$ in law, hence, in probability.
\end{proof}

\subsection{Proofs of \texorpdfstring{\cref{D t: CLT,D c: CLT}}{Theorem 2.10 and Corollary 2.11}}\label{D ProofCLT}
Throughout this subsection, 
we work on the extension \cref{C eq:ExtendedPSpace} of the probability space, $L$ denotes the Mittag-Leffler process of order $0<\delta\le1$, and $W=(W^i)_{i\in I}$ denotes an $I$-dimensional standard Wiener process such that $L$, $W$ and $\mathscr{F}$ are independent.

We consider the processes $G^{n,\Delta,\eta}$ and $R^{n,\Delta,\eta}$ given by \cref{D eq:Gn} and \cref{D eq:Sn}, and the processes $U^{n,\Delta,\eta}$ and $R^{\prime n,\Delta,\eta}$ given by
\begin{align}
	U^{n,\Delta,\eta}_s(x,y) & \coloneqq \sqrt{v_{n\Delta}\eta_1^d\eta_2^d} \left(G^{n,\Delta,\eta}_s(x,y) - \frac{\mu(g_1^{\eta,x}Fg_2^{\eta,y})}{\mu(g_1^{\eta,x})} R_s^{n,\Delta,\eta}(x) \right) \\
	R^{\prime n,\Delta,\eta}_s(x) & \coloneqq \frac{\Delta}{v_{n\Delta}} \sum_{k=1}^{\lfloor sn\rfloor} \eta_1^d g_1^{\eta,x}(X_{(k-1)\Delta})^2.
\end{align}
We recall that, under Darling--Kac's condition, we have Théo\-rème\,3 of \citet{Touati1987} at hand (see \cref{C prop:ML-Convergence}).
First, we obtain an extension of \cref{D l: P1}.
\begin{Lemma}\label{D l: P1b}
Grant \cref{D a: BCn,D a: C-weak,C a: HR,C a: DK}. 
Let $\eta_n=\eta_{1,n}$ be such that \cref{D eq: LLN} and \cref{D eq: LLNb} hold, and let $(x_i)_{i\in I}$ be a family of pairwise distinct points in $E_\oplus$. If $(n\Delta)^{1-\delta}\Delta \to 0$, then, under any law $\p^\pi$, we have the following stable convergence in law in $\mathcal{D}(\bbr^{2I})$:
\begin{gather}\label{D eq: P1b}
	\left( R^{n,\Delta,\eta_n}(x_i), R^{\prime n,\Delta,\eta_n}(x_i) \right)_{i\in I} \LimdstN \left(\mu'(x_i)L, \mu'(x_i)\int g_2(w)^2\dw L \right)_{i\in I}.
\end{gather}
\end{Lemma}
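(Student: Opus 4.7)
My plan is to separate the argument into two stages: first use \cref{D p: Sojourn Time Approximation} to replace the two Riemann sums by their continuous-time counterparts, and then identify the joint Mittag--Leffler limit by combining \cref{C prop:ML-Convergence} with an ergodic decomposition in the spirit of \cref{C lemma:JnSn}.

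For the first stage, I introduce
\[
	S^{n,\Delta,\eta}_s(x) \coloneqq \frac{1}{v_{n\Delta}}\int_0^{\lfloor sn\rfloor\Delta} g_1^{\eta,x}(X_r)\dr, \quad S^{\prime n,\Delta,\eta}_s(x) \coloneqq \frac{1}{v_{n\Delta}}\int_0^{\lfloor sn\rfloor\Delta} \eta_1^d g_1^{\eta,x}(X_r)^2\dr.
\]
Both $h_n \coloneqq g_1^{\eta_n,x_i}$ and $h'_n \coloneqq \eta_{1,n}^d (g_1^{\eta_n,x_i})^2$ are smooth bumps supported on $\ball{x_i}{\eta_n}$ with the appropriate derivative scaling in $\eta_n$, so case (ii) of \cref{D p: Sojourn Time Approximation} applies with $\xi_n=1$ under the conditions \cref{D eq: LLN}, \cref{D eq: LLNb} and $(n\Delta)^{1-\delta}\Delta\to0$, exactly as in the proof of \cref{D l: P1}. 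This yields
\[
	\sup_{s\le 1}\bigl|R^{n,\Delta,\eta_n}_s(x_i)-S^{n,\Delta,\eta_n}_s(x_i)\bigr|\limpPiN 0 \mAnd \sup_{s\le 1}\bigl|R^{\prime n,\Delta,\eta_n}_s(x_i)-S^{\prime n,\Delta,\eta_n}_s(x_i)\bigr|\limpPiN 0
\]
for every $i\in I$, reducing the lemma to the analogous joint stable convergence for the continuous-time family $(S^{n,\Delta,\eta_n}(x_i), S^{\prime n,\Delta,\eta_n}(x_i))_{i\in I}$.

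For the second stage, I fix a non-negative bounded $\mathcal{C}^2$-function $\bar h$ with compact support and $\mu(\bar h)=1$; the process $\bar H_t \coloneqq \int_0^t \bar h(X_r)\dr$ is a $\mu$-integrable non-decreasing additive functional of $X$, so \cref{C prop:ML-Convergence} delivers the stable convergence $(v_{n\Delta}^{-1}\bar H_{sn\Delta})_{s\ge0} \Rightarrow L$ in $\mathcal{D}(\bbr)$. I then decompose
\[
	S_s^{n,\Delta,\eta_n}(x_i) = \mu(g_1^{\eta_n,x_i})\cdot \frac{\bar H_{\lfloor sn\rfloor\Delta}}{v_{n\Delta}} + \frac{1}{v_{n\Delta}}\int_0^{\lfloor sn\rfloor\Delta}\bigl(g_1^{\eta_n,x_i} - \mu(g_1^{\eta_n,x_i})\bar h\bigr)(X_r)\dr,
\]
and write $S^{\prime n,\Delta,\eta_n}(x_i)$ analogously with $\eta_{1,n}^d(g_1^{\eta_n,x_i})^2$ in place of $g_1^{\eta_n,x_i}$. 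Lebesgue's differentiation theorem gives $\mu(g_1^{\eta_n,x_i})\to\mu'(x_i)$ and $\mu(\eta_{1,n}^d (g_1^{\eta_n,x_i})^2)\to\mu'(x_i)\!\int\! g_1(w)^2\dw$; combining this with the vanishing of the centred integrals (see below), Slutsky's theorem and the $\mathscr{F}$-independence transfer from \cref{C lemma:Stable} produce all $2|I|$ coordinates of the limit as deterministic multiples of the \emph{same} Mittag--Leffler variable $L$.

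The main obstacle will be controlling the centred error $v_{n\Delta}^{-1}\int_0^{\lfloor sn\rfloor\Delta}(g_1^{\eta_n,x_i}-\mu(g_1^{\eta_n,x_i})\bar h)(X_r)\dr$ uniformly for $s\in[0,1]$, which demands a variance/martingale bound at the shrinking scale $\eta_n$ --- essentially the continuous-time analogue of the potential-kernel martingale argument used in the proof of \cref{C lemma:AdditiveFunctional-LLN}. Once this uniform negligibility is in hand, joint convergence across the indices $i\in I$ is automatic since $\ball{x_i}{\eta_n}\cap\ball{x_j}{\eta_n}=\emptyset$ eventually for distinct $i,j$, so the coordinate-wise decompositions all share the common denominator $v_{n\Delta}^{-1}\bar H_{sn\Delta}$ and differ only through the deterministic prefactors.
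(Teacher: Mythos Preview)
Your two-stage strategy---sojourn-time approximation via \cref{D p: Sojourn Time Approximation}, then identification of the joint Mittag--Leffler limit via \cref{C prop:ML-Convergence} plus a triangular-array ergodic argument---is correct and matches the paper's proof exactly at the structural level.

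The one substantive difference lies in how the ``main obstacle'' you flag (the uniform negligibility of the centred error at shrinking scale $\eta_n$) is discharged. You propose a direct continuous-time reference-functional decomposition $S = \mu(g_1^{\eta_n,x})\,v_{n\Delta}^{-1}\bar H + (\text{centred remainder})$ and a continuous-time potential-kernel/martingale bound. The paper instead routes this through the auxiliary Markov chain $Z$ of \cref{C AuxYZ}: it invokes \cref{C lemma:AdditiveFunctional-LLN} (via \cref{C lemma:JnSn}) to obtain $S^{v_t,\eta_t}_s\Rightarrow s\mu'(x)$ in the chain's clock, and then time-changes back by $L^t=J_t/v_t$ using \cref{C prop:ML-Convergence} and the Hoepfner--Touati device, exactly as in \cref{C cor: LtSvt} and \cref{C cor:Joint-CLT}; the stable upgrade comes from \cref{C lemma:Stable}. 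Both routes amount to the same Birkhoff-type estimate---yours is a genuine continuous-time rewriting, the paper's reuses the discrete machinery already built in \cref{C Proofs}. Your observation that joint convergence across $i\in I$ is automatic because all coordinates share the common factor $v_{n\Delta}^{-1}\bar H$ is precisely why the paper can simply cite ``in analogy to \cref{C cor:Joint-CLT}'' without further work.
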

\begin{proof}
	Let $S^{t,\eta}_s(x) \coloneqq v_{t}^{-1}\int_0^{st}g^{\eta,x}_1(X_r)\dr$ and $S^{\prime t,\eta}_s(x) \coloneqq v_{t}^{-1}\int_0^{st}\eta^dg^{\eta,x}_1(X_r)^2\dr$. We note that $\mu(g^{\eta_n,x}_1) \to \mu'(x)$ and $\mu(\eta_n^d(g_1^{\eta_n,x})^2) \to \mu'(x)\int g_1(w)^2\dw$ for all $x$. By \cref{C lemma:AdditiveFunctional-LLN,C prop:ML-Convergence}, 
	we deduce -- in analogy to \cref{C cor:Joint-CLT} 
	-- that
	\[
		\left(S^{n\Delta,\eta_n}(x_i), S^{\prime n\Delta,\eta_n}(x_i) \right)_{i\in I} \LimdstT \left(\mu'(x_i)L, \mu'(x_i)\int g_2(w)^2\dw L \right)_{i\in I}.
	\]
	For every $x$, moreover, we deduce from \cref{D p: Sojourn Time Approximation} that
	\[
		\left\lvert R^{n,\Delta,\eta_n}(x) -  S^{n\Delta,\eta_n}(x)\right\rvert \LimucpN 0 \mAnd  \left\lvert R^{\prime n,\Delta,\eta_n}(x) -  S^{\prime n\Delta,\eta_n}(x) \right\rvert \LimucpN 0.
	\]
	Consequently, we obtain \cref{D eq: P1b}.
\end{proof}
In view of \cref{D p: AuxMart}, we obtain the following preliminary result.
\begin{Lemma}\label{D l: P4}
	Grant \cref{D a: BCn,D a: C-weak,C a: HR,C a: DK,D a: C-strong}. Let $\eta_n=(\eta_{1,n},\eta_{2,n})$ be such that \cref{D eq: LLN} and \cref{D eq: Discrete} hold, and let $(x_i,y_i)_{i\in I}$ be a finite family of pairwise distinct points in $E_\oplus\times E^\ast$. If $(n\Delta)^{1-\delta}\Delta \to 0$, then, under any law $\p^\pi$, we have the following stable convergence in law in $\mathcal{D}(\bbr^I)$:
	\begin{gather}\label{D eq:P4}
		\left( R^{n,\Delta,\eta_n}(x_i), U^{n,\Delta,\eta_n}(x_i,y_i) \right)_{i\in I} \LimdstN \big(\mu'(x_i)L,  \sigma(x_i,y_i)\mu'(x_i)W^i_L\big)_{i\in I},
	\end{gather}
where $\sigma(x,y)^2$ is given by \cref{D eq: Variance}.
\end{Lemma}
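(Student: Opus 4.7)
The plan is to follow the same strategy as in the proof of the continuous-time counterpart \cref{C lemma:UnCLT}, but with the auxiliary martingale theorem \cref{D p: AuxMart} in place of Theorem VIII.3.33 of \citet{jacodshir}. I would decompose
\[
U^{n,\Delta,\eta}_s(x,y) = \sqrt{v_{n\Delta}\eta_1^d\eta_2^d}\bigl(H^{n,\Delta,\eta}_s(x,y) + M^{n,\Delta,\eta}_s(x,y)\bigr),
\]
where $M^{n,\Delta,\eta}$ is the $\mathscr{F}_{\lfloor sn\rfloor\Delta}$-martingale given by \cref{D eq:Mn} and $H^{n,\Delta,\eta}_s(x,y)$ is the compensator-type term
\[
	\frac{1}{v_{n\Delta}}\sum_{k=1}^{\lfloor sn\rfloor} g_1^{\eta,x}(X_{(k-1)\Delta})\Bigl(\E^{X_{(k-1)\Delta}}\bigl[g_2^{\eta,y}(\Delta^n_1 X)\bigr] - \Delta\,\tfrac{\mu(g_1^{\eta,x}Fg_2^{\eta,y})}{\mu(g_1^{\eta,x})}\Bigr).
\]

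First, I would show that $\sqrt{v_{n\Delta}\eta_{1,n}^d\eta_{2,n}^d}\,H^{n,\Delta,\eta_n}(x,y) \Rightarrow 0$ in ucp. Write the conditional expectation as $\Delta Fg_2^{\eta,y}(z) + r_n(z)$ using \cref{D p: Small Time Asymptotic}; the residual $r_n$ produces error terms whose contributions, after multiplication by $\sqrt{v_{n\Delta}\eta_{1,n}^d\eta_{2,n}^d}$ and summation, are precisely controlled by \cref{D eq: CLTb,D eq: CLTc}. The remaining part is $\tfrac{\Delta}{v_{n\Delta}}\sum g_1^{\eta,x}(X_{(k-1)\Delta})\bigl(Fg_2^{\eta,y}(X_{(k-1)\Delta}) - c^{\eta_n}(x,y)\bigr)$ with $c^{\eta}(x,y) = \mu(g_1^{\eta,x}Fg_2^{\eta,y})/\mu(g_1^{\eta,x})$; by \cref{D p: Sojourn Time Approximation} its Riemann-sum approximation differs from the corresponding integral by a term vanishing in probability, and the integral itself is treated exactly as in step (i) of the proof of \cref{C lemma:UnCLT}, because $c^\eta$ is the $\mu$-centred average of $Fg_2^{\eta,y}$ against the kernel $g_1^{\eta,x}$.

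Second, I would apply \cref{D p: AuxMart} to $\sqrt{v_{n\Delta}\eta_{1,n}^d\eta_{2,n}^d}\,M^{n,\Delta,\eta_n}$. The jumps are of the form \cref{D eq:Martingale form} with $\|h_n\|_\infty \le \|g_1\|_\infty\|g_2\|_\infty /\sqrt{v_{n\Delta}\eta_{1,n}^d\eta_{2,n}^d}\to0$ by \cref{D eq: LLN}. For $i\neq j$ and $n$ sufficiently large the supports of $g_1^{\eta_n,x_i}g_1^{\eta_n,x_j}$ (or, when $x_i=x_j$, of $g_2^{\eta_n,y_i}g_2^{\eta_n,y_j}$) are disjoint, so the predictable co-variations vanish. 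The predictable quadratic variation equals
\[
	\tfrac{\eta_{1,n}^d\eta_{2,n}^d}{v_{n\Delta}}\sum_{k=1}^{\lfloor sn\rfloor} g_1^{\eta_n,x}(X_{(k-1)\Delta})^2\operatorname{Var}^{X_{(k-1)\Delta}}\!\bigl(g_2^{\eta_n,y}(\Delta^n_1X)\bigr).
\]
By \cref{D p: Small Time Asymptotic} and the conditions in \cref{D eq: Discrete}, $\operatorname{Var}^{z}(g_2^{\eta_n,y}(\Delta^n_1X)) = \Delta\eta_{2,n}^{-d}f(x,y)\int g_2^2 + o(\Delta\eta_{2,n}^{-d})$ uniformly on $\ball{x}{\eta_{1,n}}$; substituting, the predictable quadratic variation is, up to an asymptotically negligible remainder, $f(x,y)\int g_1^2\int g_2^2\cdot R^{\prime n,\Delta,\eta_n}(x)$, which by \cref{D l: P1b} converges stably to $\sigma(x,y)^2\mu'(x)^2\,L$. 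Thus \cref{D p: AuxMart} yields the stable convergence of the martingale part to $(\sigma(x_i,y_i)\mu'(x_i)W^i_L)_{i\in I}$.

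Finally, joint stable convergence with $(R^{n,\Delta,\eta_n}(x_i))_{i\in I}$ in \cref{D eq:P4} follows because \cref{D l: P1b} identifies the common Mittag-Leffler clock $L$ in both marginals -- the same limit that drives the time change in $W^i_L$ through the convergence of the predictable quadratic variations above; C-tightness of the pair combined with identification of any joint limit (as in the proof of \cref{C lemma:JointCLT}) completes the argument. The main obstacle in this scheme is the first step: the discretisation bias $H^{n,\Delta,\eta_n}$ mixes a ``small-time asymptotic error'' (requiring \cref{D eq: CLTb,D eq: CLTc}) with a ``Riemann-sum discretisation error'' (requiring \cref{D eq: LLNb} and $(n\Delta)^{1-\delta}\Delta\to0$ via \cref{D p: Sojourn Time Approximation}), and the bookkeeping of which exponent of $\eta_1,\eta_2$ is needed for which term -- dictated by the different regimes in \cref{D eq: Small Time Asymptotic} -- is the most delicate part.
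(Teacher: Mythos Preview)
Your proposal is correct and follows essentially the same approach as the paper's proof: the same martingale decomposition, the same split of the compensator term into a small-time-asymptotic residual (handled via \cref{D p: Small Time Asymptotic} and \cref{D eq: CLTb,D eq: CLTc}) and a Riemann-sum error (handled via \cref{D p: Sojourn Time Approximation} and step~(i) of \cref{C lemma:UnCLT}), the same treatment of the predictable quadratic variation via \cref{D l: P2a} and \cref{D l: P1b}, and the same application of \cref{D p: AuxMart}. The only cosmetic difference is in the final joint-convergence step: the paper observes directly that the predictable quadratic variation equals $f(x,y)\int g_2^2\, R^{\prime n,\Delta,\eta_n}(x)$ up to ucp-negligible terms, so \cref{D l: P1b} already delivers the \emph{joint} stable convergence of $(R^{n,\Delta,\eta_n},\langle M^{n},M^{n}\rangle)$, after which \cref{D p: AuxMart} together with eq.~(3.5) of \citet{Hoepfner1990} suffices---slightly more direct than your C-tightness/identification sketch, but equivalent in substance.
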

\begin{proof}
	Let $(\mathscr{G}^n_s)_{s\ge0}$ be given by $\mathscr{G}^n_s = \mathscr{F}_{\lfloor sn\rfloor\Delta}$, and let the process $M^{n,\Delta,\eta}$ be  given by
	\[
		M^{n,\Delta,\eta}_s(x,y) \coloneqq \sqrt{\frac{\eta_1^d\eta_2^d}{v_{n\Delta}}} \sum_{k=1}^{\lfloor sn\rfloor}g_1^{\eta,x}(X_{(k-1)\Delta})\left(g_2^{\eta,y}(\Delta^n_kX) - \E^{X_{(k-1)\Delta}}g_2^{\eta,y}(\Delta^n_1X)\right).
	\]
	We note that $M^{n,\Delta,\eta}$ is a $\mathscr{G}^n_s$-martingale of the form \cref{D eq:Martingale form}. The proof is divided into four steps:
	First, we prove
	\begin{gather}\label{D eq:P4-1}
		\left\lvert U^{n,\Delta,\eta_n}(x,y) - M^{n,\Delta,\eta_n}(x,y) \right\rvert \LimucpN 0.
	\end{gather}
	Second, we show that the predictable quadratic variation of $M^{n,\Delta,\eta}(x,y)$ satisfies 
	\begin{gather}\label{D eq:P4-2}
		\left(\left\langle M^{n,\Delta,\eta_n}(x_i,y_i), M^{n,\Delta,\eta_n}(x_i,y_i) \right\rangle\right)_{i\in I} \LimdstN \big([\sigma(x_i,y_i)\mu'(x_i)]^2L\big)_{i\in I}
	\end{gather}
	in $\mathcal{D}(\bbr^I)$. Third, we show that $\langle M^{n,\Delta,\eta_n}(x_i,y_i),M^{n,\Delta,\eta_n}(x_j,y_j) \rangle$ vanishes for all $n$ large enough if $i\neq j$.
	Last, we argue
	\begin{gather*}\label{D eq:P4-3}
		\left(R ^{n,\Delta,\eta_n}(x_i), \left\langle M^{n,\Delta,\eta_n}(x_i,y_i),M^{n,\Delta,\eta_n}(x_i,y_i) \right\rangle\right)_{i\in I} \LimdstN \left(\mu'(x_i)L, [\sigma(x_i,y_i)\mu'(x_i)]^2L\right)_{i\in I}
	\end{gather*}
	in $\mathcal{D}(\bbr^{2I})$. By \cref{D p: AuxMart} and (3.5) of \citet{Hoepfner1990}, we then have \cref{D eq:P4}.

\bigskip\emph{(i)}
We note $U^{n,\Delta,\eta}_s(x,y) - M^{n,\Delta,\eta}_s(x,y) = H^{n,\Delta,\eta}_s(x,y) + H^{\prime n,\Delta,\eta}_s(x,y)$ with
\begin{align*}
	H^{n,\Delta,\eta}_s(x,y) & \coloneqq \sqrt{v_{n\Delta}\eta_{1}^d\eta_{2}^d}\frac{\Delta}{v_{n\Delta}}\sum_{k=1}^{\lfloor sn\rfloor}g_1(X_{(k-1)\Delta}) \left(Fg_2^{\eta,y}(X_{(k-1)\Delta}) - \frac{g_1^{\eta,x}Fg_2^{\eta,y}}{\mu(g_1^{\eta,x})} \right), \\
	|H^{\prime n,\Delta,\eta}_s(x,y)| &\le  \sqrt{v_{n\Delta}\eta_{1}^d\eta_{2}^d} \sup_{z\in B_{\eta_{1}}(x)} \left\lvert  \frac{1}{\Delta}\E^z g_2^{\eta,y}(\Delta^n_1X) - Fg_2^{\eta,y}(z)\right\rvert R^{\prime\prime n, \Delta,\eta}_s(x),
\end{align*}
where $R^{\prime\prime n, \Delta,\eta}_s(x) = \Delta v_{n\Delta}^{-1}\sum_{k=1}^{\lfloor sn\rfloor}h^{\eta,x}(X_{(k-1)\Delta})$ for some $\mathcal{C}^2$-function $h$, dominating $|g_1|$. Under \cref{D a: C-strong}, $Fg_2^{\eta,y}$ is twice continuously differentiable. Since \cref{D eq: Discrete} holds, by \cref{D p: Sojourn Time Approximation} and step (i) in the proof of \cref{C lemma:UnCLT}, $H^{n,\Delta,\eta_n}(x,y) \Rightarrow 0$ in ucp as $\nto$. By \cref{D p: Small Time Asymptotic} --~where we choose $m$ large enough~-- we have
	\[
		\sup_{z \in B_{\eta_{1}}(x)} \left \lvert \frac{1}{\Delta}\E^z g_2^{\eta,y}(\Delta^n_1X) - Fg_2^{\eta,y}(z) \right \rvert \le \zeta\left(\sqrt{\Delta} +  \Delta\eta_2^{-2\vee(\beta+d)}\right)
	\]
	since \cref{D eq: LLNb} holds. 
Since, moreover, \cref{D eq: Discrete} holds, therefore,
\begin{gather}\label{D eq: P4X}
	\sqrt{v_{n\Delta}\eta_{1,n}^d\eta_{2,n}^d} \sup_{z\in B_{\eta_{1,n}}(x)} \left\lvert  \frac{1}{\Delta}\E^z g_2^{\eta_n,y}(\Delta^n_1X) - Fg_2^{\eta_n,y}(z)\right\rvert \limN 0.
\end{gather}
In analogy to \cref{D l: P1b}, $R^{\prime\prime n,\Delta,\eta_n}(x)$ converges stably in law. Thus, $|H^{\prime n,\Delta,\eta}_s(x,y)|\Rightarrow 0$ in ucp as $\nto$. Consequently, \cref{D eq:P4-1} holds.
\Halmos

\emph{(ii)} We note $\langle M^{n,\Delta,\eta}(x,y),M^{n,\Delta,\eta}(x,y) \rangle_s = K^{n,\Delta,\eta}_s(x,y) - K^{\prime n,\Delta,\eta}_s(x,y)$, where
\begin{align*}
	K^{n,\Delta,\eta}_s(x,y) & = \frac{\eta_1^d\eta_2^d}{v_{n\Delta}} \sum_{k=1}^{\lfloor sn \rfloor} g_1^{\eta,x}(X_{(k-1)\Delta})^2\left(\E^{X_{(k-1)\Delta}}g_2^{\eta,y}(\Delta^n_1X)^2\right), \\
\shortintertext{and}
	|K^{\prime n,\Delta,\eta}_s(x,y)| &\le \sup_{z\in B_{\eta_1}(x)} \left|\frac{1}{\Delta^2}\left(\E^{X_{(k-1)\Delta}}g_2^{\eta,y}(\Delta^n_1X)\right)^2\right| \Delta\eta_2^d R^{\prime n,\Delta,\eta}_s(x).
\end{align*}
By \cref{D l: P2a} and the continuous mapping theorem, 
\[
	\sup_{z\in B_{\eta_{1,n}}(x)} \left| \frac{1}{\Delta^2}\left(\E^{z}g_2^{\eta_n,y}(\Delta^n_1X)\right)^2 \right| \limN  f(x,y)^2.
\]
By \cref{D l: P1b}, $R^{\prime n,\Delta,\eta_n}_s(x)$ converges stably in law. Since $\Delta\eta_{2,n}^d\to0$, we observe that $|K^{\prime n,\Delta,\eta_n}_s(x,y)|$ converges to zero uniformly on compacts in probability as $\nto$.

Again by \cref{D l: P2a},
\[
	\sup_{z\in B_{\eta_{1,n}}(x)} \left| \frac{\eta_{2,n}^d}{\Delta}\E^{X_{(k-1)\Delta}}g_2^{\eta_n,y}(\Delta^n_1X)^2 - f(x,y)\int g_2(w)^2\dw \right| \limN 0.
\]
In analogy to $K^{\prime n,\Delta,\eta}(x,y)$, therefore,
\begin{gather}\label{D eq:P4-2a}
	\left| K^{n,\Delta,\eta_n}(x,y) - f(x,y)\int g_1(w)^2\dw R^{\prime n,\Delta,\eta_n}(x) \right| \LimucpN 0.
\end{gather}
By \cref{D l: P1b}, consequently,
\[
	\big(K^{n,\Delta,\eta_n}(x_i,y_i)\big)_{i\in I} \LimdstN \left(f(x_i,y_i)\int g_1(w)^2\dw \mu'(x_i)\int g_2(z)^2\dz L\right)_{i\in I};
\]
hence, \cref{D eq:P4-2} holds. \Halmos

\emph{(iii)} Let $i,j\in I$. We note that for all $n$ large enough such that $\eta_{1,n},\eta_{2,n}$ are small enough, we have $g_1^{\eta_n,x_i}g_1^{\eta_n,x_j}\equiv 0$ whenever $x_i\neq x_j$, and $g_2^{\eta_n,y_i}g_2^{\eta_n,y_j}\equiv 0$ whenever $y_i\neq y_j$. For all $\omega$ and $n$ large enough, thus, $ \langle M^{n,\Delta,\eta_n}(x_i,y_i), M^{n,\Delta,\eta_n}(x_j,y_j)  \rangle_s \equiv 0$ if $i\neq j$.
\Halmos

\emph{(iv)} By \cref{D l: P1b} and \cref{D eq:P4-2a}, we obtain the joint convergence of $(R^{n,\Delta,\eta_n}(x_i))_{i\in I}$ and $\langle M^{n,\Delta,\eta_n}(x_i,y_i),M^{n,\Delta,\eta_n}(x_i,y_i)\rangle)_{i\in I}$ to the required limit.
\end{proof}

\begin{proof}[of \cref{D t: CLT}]
	For every $n$, and $(x,y)\in E_\oplus\times E^\ast$, we have
	\[
		\sqrt{v_{n\Delta}\eta_{1,n}^d\eta_{2,n}^d}\left(\hat f^{\Delta,\eta_n}_n(x,y) - \bar f^{\eta_n}(x,y)\right)
			=
		\frac{U^{n,\Delta,\eta_n}_1(x,y)}{R^{n,\Delta,\eta_n}_1(x)},
	\]
where $\bar f^{\eta}(x,y)\coloneqq {\mu(g_1^{\eta,x}Fg_2^{\eta,y})}/{\mu(g_1^{\eta,x})}$.
	Since $L$ and $W$ are independent, $V(x_i,y_i)\coloneqq L_1^{-1/2}W^i_{L_1}$ defines an $I$-dimensional standard Gaussian random vector such that $L$, $V$ and $\mathscr{F}$ are independent.
	By the continuous mapping theorem and \cref{D l: P4}, consequently,
	\[
		\sqrt{v_{n\Delta}\eta_{1,n}^d\eta_{2,n}^d}\left(\hat f^{\Delta,\eta_n}_n(x_i,y_i) - \bar f^{\eta_n}(x_i,y_i)\right)_{i\in I}
			\limdstN
		\left({\sigma(x_i,y_i)}V(x_i,y_i)L_1^{-1/2}\right)_{i\in I},
	\]
	where $\sigma(x,y)^2$ is given by \cref{D eq: Variance}.
	
	In addition, let $\eta_n=(\eta_{1,n},\eta_{2,n})$ be such that \cref{D eq: CLT} holds as well. It remains to prove $({v_{n\Delta}\eta_{1,n}^d\eta_{2,n}^d})^{1/2}(\bar f^{\eta_n}(x,y)-f(x,y))\to \gamma(x,y)$. This, however, follows in analogy to the proof of \cref{C theo:CLT}.
\end{proof}
\begin{proof}[of \cref{D c: CLT}]
In analogy to the proof of \cref{D t: CLT}, by \cref{D l: P4} it remains to show that $({v_{n\Delta}\eta_{1,n}^d\eta_{2,n}^d})^{1/2}\hat\gamma^{\eta_n}_n(x,y)$ is a consistent estimator for $\gamma(x,y)$. This, however, follows in analogy to the proof of \cref{C cor:CLT}.
\end{proof}

\appendix
\section{On the auxiliary Markov chains \emph{Z} and \emph{Z'}}\label{C OnZ}
In this appendix, we derive an explicit representation for the transition kernel $\Phi$ of the auxiliary process $Z^\prime$, and  (in-)equalities for expectations of the form $\E^x(\int_0^{T_1}h(X_s)\ds)^k$. In addition, we derive representations for the stationary probability measures $\psi$ and $\varphi$ of the processes $Z$ and $Z^\prime$.

We invoke technical results on resolvents of semi-groups. The \emph{resolvent} $({R}_{\lambda})_{\lambda>0}$ of a semi-group $(P_t)_{t\ge0}$ is given by ${R}_{\lambda} \coloneqq \int_{0}^\infty \exp(-\lambda t)P_{t}\dt$. For bounded measurable functions $h$, the generalised resolvent kernel ${R}_{h}$ is given by
\[
	{R}_{h}(x,A) \coloneqq \E^x \int_{0}^\infty e^{-\int_{0}^t h(X_{s})\ds} \mathbbm{1}_{A}(X_{t})\dt \qquad \forall x\in E, A\in\mathscr{E}.
\]
These kernels were first introduced by \citet{Neveu1972}. For a comprehensive interpretation, we refer to section 4 of \citet{Down1995}.

\begin{Lemma}\label{C lemma:ModifiedResolvent}
Let $(R_\lambda)_{\lambda>0}$ be the resolvent of $X$, and let $(R^\ast_\lambda)_{\lambda>0}$ be given by
\begin{gather}\label{C eq:Rast}
	R^\ast_\lambda \coloneqq R_\lambda \sum_{k=0}^\infty\big((\Iota_{ q}-\Iota_{ q}\bar\Pi){R}_{\lambda}\big)^k, \quad\text{where}\quad \Iota_q h(x)\coloneqq q(x)h(x).
\end{gather}
Then $(R^\ast_\lambda)_{\lambda>0}$ is the resolvent of a positive contraction semi-group. For its corresponding process $X^\ast$, we have that the laws of $X^\ast\mathbbm{1}_{[\![0,T_{1}[\![}$ and $X\mathbbm{1}_{[\![0,T_{1}[\![}$ are equal.
\end{Lemma}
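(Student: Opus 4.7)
The strategy is to identify \cref{C eq:Rast} as the resolvent of the Markov process $X^\ast$ obtained from $X$ by ``suppressing'' the $T_1$-type jumps. The formal rewriting $R^\ast_\lambda = (I - R_\lambda A)^{-1} R_\lambda = (\lambda - G - A)^{-1}$, with $A\coloneqq \Iota_q(I-\bar\Pi)$ and $G$ the generator of $X$, makes this interpretation transparent: using $f(x,y)=q(x)p(x,y)$, one computes
\[
	Ah(x) = \int F(x,\mathrm{d}y)\,\mathbbm{1}_{\varepsilon<\lVert y\rVert<\varepsilon'}\mathbbm{1}_C(x)\bigl(h(x)-h(x+y)\bigr),
\]
which is precisely the negative of the $T_1$-type jump contribution to $Gh(x)$. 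Hence $G+A$ is the generator of a candidate process $X^\ast$ whose law agrees with that of $X$ except that no $T_1$-type jumps occur.

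For the first claim, I would first pass from \cref{C eq:Rast} to the probabilistically transparent integral equation
\[
	R^\ast_\lambda f = R_{\lambda+q}f + R_{\lambda+q}\Iota_q\bar\Pi R^\ast_\lambda f
\]
using the Feynman--Kac perturbation identity $R_\lambda = R_{\lambda+q}+R_{\lambda+q}\Iota_q R_\lambda$ together with its symmetric companion $R_\lambda\Iota_qR_{\lambda+q}=R_{\lambda+q}\Iota_qR_\lambda=R_\lambda-R_{\lambda+q}$. Monotone iteration of the fixed-point equation starting from zero produces an increasing sequence of positive kernels dominated by $1/\lambda$, by virtue of the elementary identity $\lambda R_{\lambda+q}1+R_{\lambda+q}\Iota_q 1 = 1$. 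Thus $R^\ast_\lambda$ is well-defined for \emph{every} $\lambda>0$, positive, and satisfies $\lambda R^\ast_\lambda 1\le 1$. The resolvent equation follows by direct manipulation using the resolvent equation for $R_{\lambda+q}$, and Hille--Yosida then produces the positive contraction semi-group $(P^\ast_t)_{t\ge 0}$ and the corresponding Markov process $X^\ast$.

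For the second claim, I would construct $X^\ast$ probabilistically on an enlargement of $(\Omega,\mathscr{F},\p^x)$: on $[\![0,T_1[\![$ set $X^\ast_t=X_t$; at $T_1$ suppress the jump by declaring $X^\ast_{T_1}=X_{T_1-}$; then, independently of $\mathscr{F}_{T_1}$, continue from $X_{T_1-}$ with a fresh copy of $X$, iterating the same suppression rule at each subsequent $T_1$-type jump of the current copy. This yields a strong Markov process with generator $G+A$, whose resolvent is $R^\ast_\lambda$ by the previous step. By construction the paths of $X^\ast$ and $X$ coincide on $[\![0,T_1[\![$, which gives the second assertion.

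The main technical obstacle I anticipate is justifying the two intensity-based identities
\[
	\E^x\!\int_0^{T_1}\! e^{-\lambda t}h(X_t)\,\mathrm{d}t = R_{\lambda+q}h(x), \qquad \E^x\bigl[e^{-\lambda T_1}g(X_{T_1-})\bigr] = R_{\lambda+q}(\Iota_q g)(x),
\]
which underpin the fixed-point equation. Both rest on the fact that the Lévy system \cref{C eq:Levy-system} with $H_t=t$ makes $T_1$ a totally inaccessible stopping time whose $\mathscr{F}_t$-compensator is the continuous functional $\int_0^{\cdot\wedge T_1}q(X_s)\,\mathrm{d}s$; extracting the survival identity $\p^x(T_1>t\mid\mathscr{F}_t)=\exp(-\int_0^t q(X_s)\,\mathrm{d}s)$ from this requires a careful appeal to the theory of multiplicative functionals. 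The algebraic passage between the Neumann series \cref{C eq:Rast} and the fixed-point equation is otherwise a routine operator calculation.
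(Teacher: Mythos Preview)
Your intuition that $X^\ast$ is $X$ with the $T_1$-type jumps suppressed is correct, but the fixed-point equation $R^\ast_\lambda f = R_{\lambda+q}f + R_{\lambda+q}\Iota_q\bar\Pi R^\ast_\lambda f$ is \emph{not} equivalent to \cref{C eq:Rast}. Formally, \cref{C eq:Rast} yields $(R^\ast_\lambda)^{-1}=\lambda-G-A$, whereas your equation forces $(R^\ast_\lambda)^{-1}=R_{\lambda+q}^{-1}-\Iota_q\bar\Pi=\lambda-G+\Iota_q-\Iota_q\bar\Pi=\lambda-G+A$; the sign of $A$ is reversed. The root cause is the identity $\E^x\!\int_0^{T_1}e^{-\lambda t}h(X_t)\,\mathrm dt=R_{\lambda+q}h(x)$, which is false in general. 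Since $T_1$ is a jump time of $X$ itself (not an independent clock), the event $\{T_1>t\}$ is already $\mathscr{F}_t$-measurable, so the ``survival identity'' $\p^x(T_1>t\mid\mathscr{F}_t)=\exp\bigl(-\int_0^t q(X_s)\,\mathrm ds\bigr)$ cannot hold --- this is not a technicality to be overcome, but a genuine obstruction. What \emph{is} true (and what the paper records in \cref{C p:Representation}) is that $\E^x\!\int_0^{T_1}e^{-\lambda t}h(X_t)\,\mathrm dt$ equals the Feynman--Kac resolvent $R^\ast_{\lambda+q}h(x)$ of the \emph{modified} process $X^\ast$, not of $X$; plugging this into your scheme makes the argument circular. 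For a concrete check: take $X$ a rate-$\mu$ Poisson process with every jump of $T_1$-type, so $X^\ast$ is constant and $R^\ast_\lambda h=h/\lambda$; your fixed-point equation applied to $h(x)=e^{i\xi x}$ fails unless $e^{i\xi}=1$.

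The paper bypasses this entirely by invoking the abstract perturbation theory of resolvents: since $\Iota_q\bar\Pi$ is a bounded kernel, Theorem~4.2 of Bass (1979) gives directly that \cref{C eq:Rast} defines the resolvent of a positive contraction semi-group, and Sawyer (1970) together with Chapter~6 of Bass (1979) identify the corresponding process as the one obtained by deleting the $T_1$-type jumps from $X$ --- exactly your probabilistic construction in the second paragraph. So that part of your plan is on target; it is the analytic route via $R_{\lambda+q}$ that must be replaced.
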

\begin{proof}
Since $\Iota_q\bar\Pi$ is a bounded kernel, $(R^\ast_\lambda)_{\lambda>0}$ is the resolvent of a positive contraction semi-group by Theorem 4.2 of \citet{Bass1979}. It follows from \citet{Sawyer1970} and Chapter\,6 of \citet{Bass1979} that, for the process $X^\ast$ (corresponding to $({R}^\ast_{\lambda})_{\lambda>0}$), we have that the laws of $X^\ast\mathbbm{1}_{[\![0,T_{1}[\![}$ and $X\mathbbm{1}_{[\![0,T_{1}[\![}$ are equal.
\end{proof}
\begin{Lemma}\label{C p:Representation}
Let $h$ be a measurable function on $E$. Then
\begin{gather}\label{C eq:Representation}
	\E^x h(Z^\prime_1) = R_q^\ast I_q h(x)
	\quad\text{and}\quad
	\E^x \int_0^{T_1} h(X_s)\ds = R_q^\ast h(x) ,
\end{gather}
where $R_q^\ast$ denotes the generalised resolvent kernel associated with the modified resolvent $(R^\ast_\lambda)_{\lambda>0}$ and the function $q$. For every $\lambda_{q}\ge\lVert  q\rVert_{\infty}$, we have $R_q^\ast = \sum_{k=0}^\infty R^\ast_{\lambda_{q}}(\Iota_{\lambda_{q}- q}R^\ast_{\lambda_{q}})^k$.
\end{Lemma}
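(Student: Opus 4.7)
My plan is to prove the integral identity $\E^x \int_0^{T_1} h(X_s)\ds = R_q^\ast h(x)$ first, then derive $\E^x h(Z'_1) = R_q^\ast \Iota_q h(x)$ from the Lévy system, and finally verify the series expansion by a Dyson expansion. The guiding picture is that $X^\ast$ behaves like $X$ with the $T_1$-type jumps removed, so that ``$X$ killed at $T_1$'' has the same resolvent structure as ``$X^\ast$ weighted by the Feynman--Kac factor $e^{-\int q(X^\ast)}$''; this will be made rigorous via an operator-algebraic identification.

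For $\lambda>0$ set $U_\lambda h(x) \coloneqq \E^x \int_0^{T_1} e^{-\lambda s} h(X_s)\ds$ and $R^{\ast,q}_\lambda h(x) \coloneqq \E^{\ast,x}\int_0^\infty e^{-\lambda s-\int_0^s q(X^\ast_u)\du}h(X^\ast_s)\ds$. Splitting $R_\lambda h(x) = \E^x\int_0^\infty e^{-\lambda s}h(X_s)\ds$ at the stopping time $T_1$ via the strong Markov property and using \cref{C eq:Pibar-CondExp} to factor the post-jump expectation gives
\[
	\E^x\int_{T_1}^\infty e^{-\lambda s}h(X_s)\ds = \E^x\bigl[e^{-\lambda T_1}\,\bar\Pi R_\lambda h(X_{T_1-})\bigr].
\]
Applying \cref{C eq:Levy-system} to the predictable integrand $s\mapsto e^{-\lambda s}\bar\Pi R_\lambda h(X_{s-})$ --- together with the fact, itself a consequence of the Lévy system, that $\int_0^{\cdot\wedge T_1}q(X_s)\ds$ is the compensator of $\mathbbm{1}_{T_1\le\cdot}$ --- rewrites the right-hand side as $U_\lambda(\Iota_q\bar\Pi R_\lambda h)(x)$, yielding the resolvent equation $R_\lambda = U_\lambda + U_\lambda\Iota_q\bar\Pi R_\lambda$; equivalently, $U_\lambda = R_\lambda(I+\Iota_q\bar\Pi R_\lambda)^{-1}$.

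On the $X^\ast$ side, the Dyson/Feynman--Kac identity $R^\ast_\lambda = R^{\ast,q}_\lambda + R^\ast_\lambda\Iota_q R^{\ast,q}_\lambda$ combined with the defining relation $R^\ast_\lambda = R_\lambda + R_\lambda(\Iota_q - \Iota_q\bar\Pi)R^\ast_\lambda$ from \cref{C eq:Rast} yields, after using $R^\ast_\lambda(I-\Iota_q R^{\ast,q}_\lambda) = R^{\ast,q}_\lambda$ to eliminate one occurrence of $R^\ast_\lambda$ and simplifying, $(I + R_\lambda\Iota_q\bar\Pi)R^{\ast,q}_\lambda = R_\lambda$. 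The classical identity $(I+AB)^{-1}A = A(I+BA)^{-1}$, valid for $\lambda>\lVert q\rVert_\infty$ by Neumann-series convergence (since $\lVert R_\lambda\Iota_q\bar\Pi\rVert_\infty \le \lVert q\rVert_\infty/\lambda$), then gives $U_\lambda = R^{\ast,q}_\lambda$ for such $\lambda$; the identity extends to all $\lambda>0$ by the analyticity of both sides. Passing $\lambda\downarrow0$ via dominated convergence --- justified since $\mu(q)>0$ forces $T_1<\infty$ $\p^x$-a.s.\ and $h$ is bounded --- yields the integral identity. The identity $\E^x h(Z'_1) = R_q^\ast\Iota_q h(x)$ follows by applying \cref{C eq:Levy-system} to $g(x,y) = h(x)\mathbbm{1}_{\{x\in C,\,\varepsilon<\lVert y\rVert<\varepsilon'\}}$, which gives $\E^x h(X_{T_1-}) = \E^x\int_0^{T_1} q(X_s) h(X_s)\ds$, and invoking the integral identity with $\Iota_q h$ in place of $h$. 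Finally, the series $R_q^\ast = \sum_k R^\ast_{\lambda_q}(\Iota_{\lambda_q-q}R^\ast_{\lambda_q})^k$ follows by expanding $e^{-\int_0^t q(X^\ast_s)\ds} = e^{-\lambda_q t}\sum_k\tfrac{1}{k!}\bigl(\int_0^t(\lambda_q-q)(X^\ast_s)\ds\bigr)^k$, Fubini, and iterating the Markov property of $X^\ast$.

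The main obstacle is the operator-algebraic coupling of the four resolvent-type equations for $R_\lambda$, $R^\ast_\lambda$, $R^{\ast,q}_\lambda$, and $U_\lambda$ --- in particular the substitution leading to $(I + R_\lambda\Iota_q\bar\Pi)R^{\ast,q}_\lambda = R_\lambda$ --- together with the care needed for Neumann-series convergence and for the passage $\lambda\downarrow0$ in a potentially recurrent setting where $R_\lambda$ itself may blow up. Once this operator algebra is in place, the probabilistic identifications are standard applications of the strong Markov property and the Lévy system.
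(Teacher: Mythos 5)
Your proof is correct, but it takes a genuinely different route from the paper's. The paper argues probabilistically: it invokes the law equality of $X^\ast\mathbbm{1}_{[\![0,T_1[\![}$ and $X\mathbbm{1}_{[\![0,T_1[\![}$ from \cref{C lemma:ModifiedResolvent}, asserts (with references to Sawyer and Bass) that $T_1$ is then an independent killing time for $X^\ast$ governed by the multiplicative functional $\exp(-\int_0^\cdot q(X^\ast_s)\ds)$, reads off both identities in \cref{C eq:Representation} from the formulas (19)--(20) of Down, Meyn and Tweedie for generalised resolvent kernels, and cites Neveu for the series expansion. You instead make the identification $\E^x\int_0^{T_1}e^{-\lambda s}h(X_s)\ds = R^{\ast,q}_\lambda h(x)$ rigorous by pure resolvent algebra, coupling the first-passage decomposition $R_\lambda = U_\lambda(I+\Iota_q\bar\Pi R_\lambda)$ with the perturbation definition \cref{C eq:Rast} and the Feynman--Kac identity for $X^\ast$ to obtain $(I+R_\lambda\Iota_q\bar\Pi)R^{\ast,q}_\lambda = R_\lambda$ and hence $U_\lambda=R^{\ast,q}_\lambda$; I checked the elimination step and it is sound. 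You then recover the first identity of \cref{C eq:Representation} from the second via the Lévy system, and you prove the Neveu series from scratch by expanding the exponential. What your approach buys is self-containedness: it replaces the citation-heavy probabilistic identification by explicit operator manipulations (in effect re-deriving the resolvent-level content of \cref{C lemma:ModifiedResolvent}), at the cost of extra bookkeeping on Neumann-series convergence and the passage $\lambda\downarrow0$. One minor imprecision: the lemma assumes only that $h$ is measurable, so the limit $\lambda\downarrow0$ should be taken by monotone convergence for $h\ge0$ (with the signed case by decomposition when finite), rather than by dominated convergence --- $T_1<\infty$ a.s.\ alone does not supply an integrable dominating function.
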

\begin{proof}
We recall that the laws of $X^\ast\mathbbm{1}_{[\![0,T_{1}[\![}$ and $X\mathbbm{1}_{[\![0,T_{1}[\![}$ are equal. The expectation of $h(Z^\prime_1)$ under $\p^x$, therefore, coincides with the expectation of $h(X^\ast)$ sampled at an independent killing time according to the multiplicative functional $\exp(-\int_{0}^\cdot  q(X^\ast_{s})\ds)$. In formulas, we have 
\begin{equation*}
	\E^x h(Z^\prime_1) = \E^x \int_{0}^\infty e^{-\int_{0}^t q(X^\ast_{s})\ds} q(X^\ast_{t})h(X^\ast_{t})\dt.
\end{equation*}
By eq.\ (19) of \citet{Down1995}, hence, $\E^x h(Z^\prime_1) = R_q^\ast \Iota_q h(x)$, where $R_q^\ast$ denotes the generalised resolvent kernel associated with the modified resolvent $(R_\lambda^\ast)_{\lambda>0}$. 
By Chapter~7 of \citet{Neveu1972}, $R_q^\ast = \sum_{k=0}^\infty R^\ast_{\lambda_{q}}(\Iota_{\lambda_{q}- q}R^\ast_{\lambda_{q}})^k$ holds for every $\lambda_{q}\ge\lVert  q\rVert_{\infty}$.

Similarly, we observe 
\begin{gather}\label{C eq:RepA}
	\E^x \int_0^{T_1} h(X_s)\ds = \E^x \int_{0}^\infty e^{-\int_{0}^t q(X^\ast_{u})\du} q(X^\ast_{t})\int_{0}^t h(X^\ast_{s})\ds  \dt.
\end{gather}
By Fubini's theorem (cf., eq. (20) of \citet{Down1995}), consequently,
\[
	\E^x \int_0^{T_1} h(X_s)\ds = \E^x \int_{0}^\infty e^{-\int_{0}^t q(X^\ast_{s})\ds}  h(X^\ast_{t})\dt = R_q^\ast h(x).
\]
\end{proof}
\begin{Remark}
It is immediate from  \cref{C lemma:OnYZ} that $\Phi=\bar\Pi R^\ast_q\Iota_q$.
\end{Remark}
We obtain two corollaries:
\begin{Corollary}\label{C cor:Representation}
Let $h_1,\dotsc,h_k$ be measurable functions on $E$. Then
\begin{gather}\label{C eq:Prod-hk}
	\E^x \prod_{j=1}^k \int_0^{T_1} h_j(X_s)\ds =
	\sum_{j=1}^k \E^x \int_{0}^\infty e^{-\int_{0}^t q(X^\ast_{u})\du} h_j(X^\ast_t) \prod_{l\neq j} \int_0^{t} h_l(X^\ast_s)\ds\dt.
\end{gather}
\end{Corollary}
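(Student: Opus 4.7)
The plan is to derive the identity in three steps by reducing to the auxiliary process $X^\ast$ and then applying the ordinary product rule combined with Fubini's theorem.

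First, I would reduce the left-hand side to an expectation over the modified process $X^\ast$. By \cref{C lemma:ModifiedResolvent}, the laws of $X\mathbbm{1}_{[\![0,T_1[\![}$ and $X^\ast\mathbbm{1}_{[\![0,T_1[\![}$ coincide, with $T_1$ on the right representing the killing time of $X^\ast$ with respect to the multiplicative functional $\exp(-\int_0^\cdot q(X^\ast_s)\ds)$. Mimicking the derivation of \cref{C eq:RepA} in the proof of \cref{C p:Representation}, and conditioning on the path of $X^\ast$ (so that the killing time has conditional ``density'' $q(X^\ast_t)\exp(-\int_0^t q(X^\ast_u)\du)$), I would obtain
\begin{gather*}
	\E^x \prod_{j=1}^k \int_0^{T_1} h_j(X_s)\ds = \E^x \int_0^\infty q(X^\ast_t)\,e^{-\int_0^t q(X^\ast_u)\du} \prod_{j=1}^k \int_0^t h_j(X^\ast_s)\ds\,\dt.
\end{gather*}

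Second, I would apply the product rule to the absolutely continuous function $t\mapsto \prod_{j=1}^k \int_0^t h_j(X^\ast_s)\ds$, which vanishes at $t=0$ and has derivative $\sum_{j=1}^k h_j(X^\ast_t)\prod_{l\neq j}\int_0^t h_l(X^\ast_s)\ds$. This yields
\begin{gather*}
	\prod_{j=1}^k \int_0^t h_j(X^\ast_s)\ds = \sum_{j=1}^k \int_0^t h_j(X^\ast_s)\prod_{l\neq j}\int_0^s h_l(X^\ast_r)\dr\,\ds.
\end{gather*}

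Third, I would substitute this expression into the display from step one and interchange the order of integration in $(s,t)$ with $0\le s\le t$ via Fubini's theorem. The inner integral over $t\in[s,\infty)$ collapses:
\begin{gather*}
	\int_s^\infty q(X^\ast_t)\,e^{-\int_0^t q(X^\ast_u)\du}\dt = e^{-\int_0^s q(X^\ast_u)\du},
\end{gather*}
and after swapping expectation with the finite sum over $j$, the right-hand side of \cref{C eq:Prod-hk} emerges.

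The main obstacle is justifying the Fubini/interchange step in the final argument, since no integrability hypothesis is imposed on the $h_j$ in the statement. I would handle this by first establishing the identity for non-negative measurable $h_j$ (where Tonelli's theorem applies directly and both sides take values in $[0,\infty]$), and then extending to general measurable $h_j$ under the implicit assumption that the left-hand side is finite, by decomposing each $h_j$ into positive and negative parts and expanding the product multilinearly.
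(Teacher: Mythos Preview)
Your proposal is correct and follows essentially the same route as the paper: reduce to the killed process $X^\ast$ via the analogue of \cref{C eq:RepA}, apply the Leibniz rule to the product of integrals, and then invoke Fubini's theorem. The paper's proof is terser and does not spell out the inner $t$-integral $\int_s^\infty q(X^\ast_t)e^{-\int_0^t q(X^\ast_u)\du}\dt = e^{-\int_0^s q(X^\ast_u)\du}$, nor does it discuss the integrability issue you flag; your explicit treatment of both points is a welcome addition but not a different argument.
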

\begin{proof}
In analogy to \cref{C eq:RepA}, we observe
\[
	\E^x \prod_{j=1}^k \int_0^{T_1} h_j(X_s)\ds =  \E^x \int_{0}^\infty e^{-\int_{0}^t q(X^\ast_{u})\du} q(X^\ast_{t})\prod_{j=1}^k \int_0^{t} h_j(X^\ast_s)\ds\dt.
\]
By the Leibniz rule, moreover,
\[
	\prod_{j=1}^k \int_0^{t} h_j(X^\ast_s)\ds = \sum_{j=1}^k \int_0^t h_j(X^\ast_s) \prod_{l\neq j} \int_0^{s} h_l(X^\ast_r)\dr\ds.
\]
By Fubini's theorem, therefore, we have \cref{C eq:Prod-hk}.
\end{proof}
\begin{Corollary}\label{C cor:kthMoment-Inequality}
Let $h$ be a bounded measurable function on $E$. For all $k\in\bbn^\ast$, if $\inf_{x\in\supp(h)}q(x)>0$, then
\begin{gather}\label{C eq:R2}
	\E^x\left(\int_0^{T_1} h(X_s)\ds\right)^k \le \frac{k! \lVert h\rVert_\infty^{k-1}}{(\inf_{x\in\supp(h)}q(x))^{k-1}} R^\ast_q |h|(x).
\end{gather}
\end{Corollary}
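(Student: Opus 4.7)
My plan is to prove the bound by induction on $k$. Since $\bigl(\int_0^{T_1}h(X_s)\ds\bigr)^k\le\bigl(\int_0^{T_1}|h|(X_s)\ds\bigr)^k$ (trivially when the left-hand side is negative) and the right-hand side of \cref{C eq:R2} is non-negative, I may assume $h\ge0$ throughout. The base case $k=1$ is the second identity in \cref{C p:Representation}.

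For the inductive step, starting from the symmetrisation
\[
	\left(\int_0^{T_1}h(X_s)\ds\right)^k = k!\int\limits_{0\le s_1\le\cdots\le s_k\le T_1}\prod_{i=1}^k h(X_{s_i})\,\mathrm{d}s_1\cdots\mathrm{d}s_k,
\]
I take expectations, apply Fubini to isolate the innermost variable $s_1$, and use the Markov property at the deterministic time $s_1$ on the $\mathscr{F}_{s_1}$-event $\{s_1<T_1\}$. Since $T_1$ is an $(\mathscr{F}_t)$-stopping time, $T_1-s_1 = T_1\circ\theta_{s_1}$ on $\{s_1<T_1\}$, and its conditional law given $\mathscr{F}_{s_1}$ is that of $T_1$ under $\p^{X_{s_1}}$. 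This yields the recursion
\[
	\E^x\left(\int_0^{T_1}h(X_s)\ds\right)^{k} = k\,\E^x\int_0^{T_1}h(X_s)\,\E^{X_s}\left(\int_0^{T_1}h(X_u)\du\right)^{k-1}\ds,
\]
which is exactly what \cref{C cor:Representation} gives with $h_1=\dotsb=h_k=h$, rewritten for $X$ rather than $X^\ast$. Applying the inductive hypothesis pointwise under the outer integral bounds the inner expectation by $\bigl((k-1)!\lVert h\rVert_\infty^{k-2}/c^{k-2}\bigr)R^\ast_q h(X_s)$, where $c\coloneqq\inf_{\supp(h)}q>0$.

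The key remaining ingredient is the uniform bound $R^\ast_q h(z)\le\lVert h\rVert_\infty/c$. Since $h$ vanishes off $\supp(h)$ and $q\ge c$ there, we have $h\le(\lVert h\rVert_\infty/c)\,q$ pointwise on $E$, so by \cref{C p:Representation}
\[
	R^\ast_q h(z) = \E^z\int_0^{T_1}h(X_u)\,\du \le \frac{\lVert h\rVert_\infty}{c}\,\E^z\int_0^{T_1}q(X_u)\,\du.
\]
The last expectation equals $1$: \cref{C eq:Levy-system} applied with $g(x,y)=\mathbbm{1}_C(x)\mathbbm{1}_{\{\varepsilon<\lVert y\rVert<\varepsilon'\}}$ identifies $\int_0^{\,\cdot\,}q(X_u)\du$ as the compensator of the counting process of qualifying jumps, and optional stopping at $T_1\wedge T$ followed by $T\to\infty$ gives $\E^z\int_0^{T_1}q(X_u)\du=\p^z(T_1<\infty)=1$, using $\mu(q)>0$ (cf.\ \cref{C eq:Choice}) and Harris recurrence. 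Combining everything and using \cref{C p:Representation} once more to rewrite $\E^x\!\int_0^{T_1}h(X_s)\ds = R^\ast_q h(x)$ produces the claimed constant $k!\lVert h\rVert_\infty^{k-1}/c^{k-1}$. The only delicate step is the Markov reduction to the $\E^{X_s}$-form above; everything else is bookkeeping.
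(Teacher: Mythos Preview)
Your proof is correct and follows the same inductive strategy as the paper. The only cosmetic difference is that you derive the recursion directly from the strong Markov property of $X$ and the terminal-time identity $T_1=s+T_1\circ\theta_s$ on $\{s<T_1\}$, whereas the paper invokes \cref{C cor:Representation} (the $X^\ast$ representation) and applies the pointwise bound $|h|\le(\lVert h\rVert_\infty/c)\,q$ to the outermost factor before the inductive step; both routes hinge on the same inequality and on $R^\ast_q q\equiv1$, which you establish via the compensator argument.
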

\begin{proof}[by induction]
By \cref{C p:Representation}, we immediately have \cref{C eq:R2} for $k=1$. We assume that \cref{C eq:R2} holds for some $k\in\bbn^\ast$. Then we deduce from \cref{C cor:Representation} and $|h| \le q \lVert h\rVert_\infty/(\inf_{x\in\supp(h)}q(x))$ that \cref{C eq:R2} holds for $k+1$.
\end{proof}

\begin{Lemma}\label{C lemma:Mu-Invariant}
$\mu \Iota_{ q}\bar\Pi {R}^\ast_{ q} = \mu$.
\end{Lemma}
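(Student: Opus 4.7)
The plan is to prove $\mu\Iota_q\bar\Pi R^\ast_q=\mu$ as equality of $\sigma$-finite measures by testing both sides against an arbitrary bounded non-negative measurable function $h$ of compact support (so that $\mu(h)<\infty$) and showing $\mu(h)=\mu\Iota_q\bar\Pi R^\ast_q h$. Three ingredients will drive the computation: the Lévy system formula \cref{C eq:Levy-system} with $H_t=t$ (from \cref{C a:AC}), the probabilistic representation $R^\ast_q h(x)=\E^x\int_0^{T_1} h(X_s)\ds$ from \cref{C p:Representation}, and the invariance of $\mu$ under $(P_t)_{t\ge 0}$, which guarantees $X_s\sim\mu$ under $\p^\mu$ for every $s\ge 0$.

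First, I would apply \cref{C eq:Levy-system} under $\p^\mu$ to the integrand $g(x,y)\coloneqq\mathbbm{1}_{x\in C}\mathbbm{1}_{\varepsilon<\lVert y\rVert<\varepsilon'}R^\ast_q h(x+y)$. Invariance of $\mu$ turns the time integral on the right-hand side into $t\cdot\mu\Iota_q\bar\Pi R^\ast_q h$. The jump sum on the left-hand side enumerates exactly the stopping times $T_k$ and, after applying the strong Markov property at each $T_k$ together with the representation of $R^\ast_q$, reduces to
\[\E^\mu\sum_{k\ge 1:\,T_k\le t}\int_{T_k}^{T_{k+1}} h(X_s)\ds=\E^\mu\,\mathbbm{1}_{T_1\le t}\int_{T_1}^{T_{N_t+1}} h(X_s)\ds,\]
where $N_t\coloneqq\#\{k\ge 1: T_k\le t\}$.

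The remainder is a telescoping comparison with $t\mu(h)=\E^\mu\int_0^t h(X_s)\ds$, based on the pathwise decomposition
\[\int_0^t h(X_s)\ds=\int_0^{T_1\wedge t} h(X_s)\ds+\mathbbm{1}_{T_1\le t}\Bigl(\int_{T_1}^{T_{N_t+1}} h(X_s)\ds-\int_t^{T_{N_t+1}} h(X_s)\ds\Bigr).\]
The only delicate term is $\E^\mu\mathbbm{1}_{T_1\le t}\int_t^{T_{N_t+1}} h(X_s)\ds$. For this, I would use the pathwise identity $\int_t^{T_{N_t+1}} h(X_s)\ds=\bigl[\int_0^{T_1} h(X_s)\ds\bigr]\circ\theta_t$, valid because the first jump of the prescribed type strictly after~$t$ is precisely $t+T_1\circ\theta_t$. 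Combining this with the Markov property at $t$ and $X_t\sim\mu$ yields
\[\E^\mu\int_t^{T_{N_t+1}} h(X_s)\ds=\int\mu(\dx)\,\E^x\!\int_0^{T_1}h(X_r)\dr=\E^\mu\int_0^{T_1} h(X_s)\ds.\]
Taking expectations in the decomposition above and substituting this identity, the leftover boundary terms $\E^\mu\int_0^{T_1\wedge t}h-\E^\mu\int_0^{T_1}h$ and $\E^\mu\mathbbm{1}_{T_1>t}\int_t^{T_1}h$ cancel symmetrically on the event $\{T_1>t\}$, producing $t\mu(h)=t\cdot\mu\Iota_q\bar\Pi R^\ast_q h$ for every $t>0$.

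The main obstacle will be the bookkeeping of the undershoot $\int_0^{T_1\wedge t}h$ and the overshoot $\int_t^{T_{N_t+1}}h$ around time~$t$: the pathwise identity $\int_t^{T_{N_t+1}}h=[\int_0^{T_1}h]\circ\theta_t$ is precisely what lets $\mu$-invariance enter the argument, and is therefore the crux of the cancellation. Once $\mu(h)=\mu\Iota_q\bar\Pi R^\ast_q h$ is established for non-negative bounded $h$ with $\mu(h)<\infty$, the extension to the claimed equality of $\sigma$-finite measures follows by monotone convergence and $\sigma$-additivity.
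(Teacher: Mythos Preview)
Your approach is correct in spirit and genuinely different from the paper's. The paper gives a two-line resolvent-algebraic argument: from \citet{Bass1979} and \citet{Neveu1972} one has the operator identity $(\Iota_q\bar\Pi-(\Iota-R_1^{-1}))R^\ast_q=\Iota$, and since $\mu$ is $(P_t)$-invariant, $\mu R_1=\mu=\mu R_1^{-1}$, whence $\mu\Iota_q\bar\Pi R^\ast_q=\mu(\Iota_q\bar\Pi-(\Iota-R_1^{-1}))R^\ast_q=\mu$. Your route is instead a probabilistic cycle decomposition: you apply the Lévy system formula under the stationary (possibly $\sigma$-finite) law $\p^\mu$ to identify $t\,\mu\Iota_q\bar\Pi R^\ast_q h$ with the expected contribution of the complete excursions $[T_k,T_{k+1}]$ contained in $[T_1,T_{N_t+1}]$, and then compare with $t\mu(h)=\E^\mu\int_0^t h$ via the shift identity $\int_t^{T_{N_t+1}}h=[\int_0^{T_1}h]\circ\theta_t$. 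The paper's proof is shorter but relies on cited resolvent machinery; yours is self-contained and makes the renewal/regeneration structure explicit.

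One technical point deserves care. In the null recurrent case $\mu(E)=\infty$, the quantity $\E^\mu\int_0^{T_1}h=\mu(R^\ast_q h)$ may well be infinite even for bounded compactly supported $h$ (since $R^\ast_q h$, though bounded when $\supp h\subseteq C$, is integrated against an infinite measure). Your cancellation step, as written, passes through $\E^\mu\int_0^{T_1}h-\E^\mu\int_t^{T_{N_t+1}}h$, which is then $\infty-\infty$. This is easily repaired: all four terms in $t\mu(h)=\E^\mu\int_0^{T_1\wedge t}h+t\,\mu\Iota_q\bar\Pi R^\ast_q h-\E^\mu\mathbbm{1}_{T_1\le t}\int_t^{T_{N_t+1}}h$ are individually finite (the first two are bounded by $t\mu(h)$; the third by $\mu(q)\lVert R^\ast_q h\rVert_\infty$), so divide by $t$ and let $t\to\infty$, noting that both boundary terms are bounded by $\lVert R^\ast_q h\rVert_\infty\,\E^\mu\mathbbm{1}_{T_1\le t}$ or $t\mu(h)\wedge\mu(R^\ast_q h\mathbbm{1}_{T_1>t})$-type quantities and hence $o(t)$. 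Alternatively, restrict first to $h\le q$ so that $\mu(R^\ast_q h)\le\mu(R^\ast_q q)=\mu(\E^\cdot\mathbbm{1}_{T_1<\infty})$ stays controlled, then extend.
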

\begin{proof}
By Theorem 4.2 of \citet{Bass1979} and Section 7 of \citet{Neveu1972}, we have 
\[
	(\Iota_{ q}\bar\Pi - (\Iota - {R}_{1}^{-1}) ){R}^\ast_{ q} = \Iota,
\]
where the formal inverse of $R_1$ is defined by $R_1^{-1} \coloneqq \sum_{k=0}^\infty (\Iota-R_1)^k$.
Since $\mu$ is invariant \wrt\ $(P_{t})_{t\ge0}$, we also have $\mu {R}_{1} = \mu$ and $\mu = \mu {R}_{1}^{{-1}}$. Hence, $\mu \Iota_{ q}\bar\Pi = \mu(\Iota_{ q}\bar\Pi -  (\Iota - {R}_{1}^{-1})).$
Therefore, $\mu \Iota_{ q}\bar\Pi {R}^\ast_{ q} = \mu$.
\end{proof}
\begin{Corollary}\label{C cor:upsilon-psi-invariant}
The measures $\varphi \coloneqq (\mu(q))^{-1}\mu\Iota_q$ and $\psi\coloneqq \varphi\Psi$ are the invariant probability measures \wrt\ $\Phi$ and $\Psi$.
\end{Corollary}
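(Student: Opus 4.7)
The plan is to exploit the identity $\Phi = \bar\Pi R^\ast_q \Iota_q$, observed in the remark after \cref{C p:Representation}, together with \cref{C lemma:Mu-Invariant}, to verify $\Phi$-invariance of $\varphi$ by a one-line calculation; then to bootstrap to $\Psi$-invariance of $\psi$ via the projection structure coming from \cref{C lemma:OnYZ}.

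First, I would check that $\varphi$ is a well-defined probability measure. By \cref{C eq:Choice} and $\mu(q) > 0$ we have $0 < \mu(q) < \infty$ (finiteness because $q$ is bounded and supported on $C$, so $\mu(q) \le \lVert q\rVert_\infty \mu(C)$; alternatively this is already implicit in the discussion following the definition of $T_n$). Hence $\varphi(E) = \mu(q)/\mu(q) = 1$. Next, I would compute
\[
	\varphi \Phi = \frac{1}{\mu(q)}\,\mu\Iota_q \,\bar\Pi R^\ast_q \Iota_q = \frac{1}{\mu(q)}\,\mu\Iota_q = \varphi,
\]
where the middle equality is exactly \cref{C lemma:Mu-Invariant}. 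This establishes the first half of the statement.

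For the second half, the key point is that, by \cref{C eq:Pibar-CondExp}, the conditional law of the entire path $(X_{T_k + s})_{s \ge 0}$ given $\mathscr{F}_{T_k-}$ depends only on $X_{T_k-}$. Consequently, $\Psi((g,t,x), \mathbf{A})$ depends only on the third coordinate $x \in C$, so I would write $\Psi(x, \mathbf{A})$ with abuse of notation. Moreover, pushing $\Psi(x, \cdot)$ forward under the projection $\pi_3(g,t,x') = x'$ gives exactly $\Phi(x, \cdot)$ (this is \cref{C eq:Upsilon} combined with the remark that $\Phi = \bar\Pi R^\ast_q \Iota_q$). Then for any measurable $\mathbf{A} \subseteq \mathbf{D}$,
\[
	\psi\Psi(\mathbf{A}) = \int \varphi(dx')\int \Psi(x', d(g,t,x))\,\Psi(x,\mathbf{A}) = \int (\varphi\Phi)(dx)\,\Psi(x,\mathbf{A}) = \varphi\Psi(\mathbf{A}) = \psi(\mathbf{A}),
\]
using $\varphi\Phi = \varphi$ from the first step. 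Finally, $\psi$ is a probability since $\varphi$ is and $\Psi$ is a Markov kernel.

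There is no significant obstacle here: once \cref{C lemma:Mu-Invariant} is granted and the identity $\Phi = \bar\Pi R^\ast_q \Iota_q$ is recorded, both invariances are immediate algebraic consequences. The only care required is to notice that $\Psi((g,t,x),\cdot)$ factors through the third coordinate, which is what allows the reduction of $\psi\Psi$ to $\varphi\Phi\Psi$.
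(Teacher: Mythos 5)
Your proof is correct and follows essentially the same route as the paper: $\Phi$-invariance of $\varphi$ is the one-line consequence of $\Phi=\bar\Pi R^\ast_q\Iota_q$ and \cref{C lemma:Mu-Invariant}, and $\Psi$-invariance of $\psi$ follows because $\Psi$ factors through the third coordinate with $\pi_3$-marginal $\Phi$, which is exactly what \cref{C eq:PsiK} encodes (the paper writes this as $\varphi\Psi^{2}=\varphi\Phi\Psi=\varphi\Psi$).
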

\begin{proof}
Since $\Phi=\bar\Pi {R}^\ast_{ q}\Iota_{ q}$, we observe  $\mu \Iota_{ q} \Phi = \mu \Iota_{ q}$. By \cref{C eq:PsiK}, $\varphi\Psi^{k+1}=\varphi\Phi^k\Psi=\varphi\Psi$ in addition.
\end{proof}

{
\section{Simulations}\label{sec:Simul}
In this appendix, we present a small simulation study for our estimator. We implemented a numerical simulation scheme for a process with infinite activity. In particular, we considered the univariate Itō semi-martingale with characteristics $(B,C,\mathfrak{n})$ given by 
$\mathrm{d}B_t = -bX_t\dt$, $\mathrm{d}C_t = c\dt$, and $\mathfrak{n}(\dt,\dy) = f(X_t,y)\dt\dy$, 
where $b,c>0$ and the density of the Lévy kernel is a stable density with state-dependent intensities; in particular,
\begin{gather}\label{eq:IALevy}
	f(x,y) \coloneqq \Big(\zeta_+(x)\mathbbm{1}_{\bbr_+^\ast}(y) + \zeta_-(x)\mathbbm{1}_{\bbr_-^\ast}(y)\Big)|y|^{-1-\alpha}, 
\end{gather}
where $\zeta_+(x) = 2$ if $x \in {} ]{-\infty},{-\xi}]$, $\zeta_+(x) = 2 - (1 + \cos(\pi x/\xi))/2$ if $x \in {} ]{-\xi},0]$, $\zeta_+(x) = (1 + \cos(\pi x/\xi))/2$ if $x \in {} ]0, \xi]$, $\zeta_+(x) = 0$ if $x \in {} ]\xi, \infty[$, and $\zeta_-(x) \coloneqq 2 - \zeta_+(x)$.
We emphasise the singularity on the set $\bbr\times\{0\}$; also, we note that $f$ is not twice continuously differentiable for $x \in \{-\xi,\xi\}$ -- since we do not estimate close to these points in the sequel, this has no impact on our simulations.

We chose the parameters of the process as follows: $b=c=1$, $\xi=3$, and $\alpha = 0.9$; and investigated the scenarios 
d1) $t_1 = 1000$ and $\Delta_1 = 0.01$, that is 100\,000 observations; 
d2)	$t_2 = 1000$ and $\Delta_2 = 0.0025$, that is 400\,000 observations; and
d3)	$t_3 = 2500$ and $\Delta_3 = 0.0025$, that is 1\,000\,000 observations.
We simulated the process with the Euler scheme; as step length, we chose 1/10-th of the observation time-lag $\Delta$. Given the value $X_{k\Delta/10}$, we simulated a stable increment with Lévy density $y\mapsto f(X_{k\Delta/10},y)$ and a Brownian increment with drift $-bX_{k\Delta/10}$ and volatility $c$. Iteratively, we obtained an approximate sample $X_0, X_{\Delta/10},\dotsc, X_{n\Delta}$. Finally, we only kept every tenth observation.

We have implemented the kernel density estimator \cref{D eq: Estimator} using the so-called \emph{bi-weight kernel} $g(z) \coloneqq 0.9375(1-z^2)^2 \mathbbm{1}_{[-1, 1]}(z).$ 
To calculate asymptotic confidence intervals derived from \cref{D c: CLT} which are non-negative, we invert a test-statistic following, for instance, \citet[p.\ 24]{Hansen2009}. We compare our estimates $\hat f^{\Delta,\eta}_n(x,y)$ in terms of their functional properties: 

We observe a significant influence of the bandwidth choice. In scenario d1, for instance, we observe that $\eta_1>0.2$ (resp., $\eta_1>0.3$) is necessary to obtain reasonable estimates at $x=0$ (resp., at $x=2$). On the neighbourhoods $\{|y| \le \eta_2 + 0.3\}$ of the origin, the bias due to discretisation is dominant. At $x=0$, we obtain good estimates on the sets $\{0.5 \le |y| \le 1\}$ and $\{0.75 \le |y| \le 4\}$ for the bandwidth choices $\eta=(0.2,0.2)$ and $\eta = (0.4, 0.4)$, respectively. At $x=2$, we obtain good estimates on the sets $\{-3.5 < |y| < -0.75\}$ and $\{0.75 < y < 1.5\}$ for $\eta=(0.4,0.4)$. In scenario d2, where the observation time-lag is one quarter of the time-lag of scenario d1, first, we observe that the bias due to discretisation is dominant on the set $\{|y| \le \eta_2 + 0.2\}$. Apart from the improvement for $|y|$ small, the estimates in scenario d2 are similar to those of scenario d1. Finally, we observe that, for scenarios d2 and d3 where the observation time-lag is equal, the set on which the bias due to discretisation is dominant coincides. Nevertheless, the estimation for $|y|$ large improves significantly. At $x=0$, we obtain very good estimates on the sets $\{0.4 < |y| < 3\}$ and $\{0.6 < |y| < 5\}$ for $\eta=(0.4,0.2)$ and $\eta=(0.2,0.4)$, respectively. At $x=2$, we obtain very good estimates on the sets $\{-4 < y < -0.6\}$ and $\{0.6 < y < 2\}$ for $\eta=(0.2,0.4)$. We present our results for scenario d3 in Figure~\ref{fig:d3}. 

In summary, on the one hand, we have seen that larger bandwidths give better 
estimates in terms of variability and the degree of smoothing for 
$|y|$ large. On the 
other hand, smaller bandwidths allow for more reasonable estimates closer to zero 
than larger ones. Moreover, increasing the number of observations without reducing 
the observation time-lag does not give better estimates close to zero. 
For further details and a study of the finite activity case, we refer to \citet{UeltzhoeferPhD}.
}

\section*{Acknowledgment}
I am grateful to Jean Jacod for introducing me to this topic, illuminating discussions, and helpful comments on earlier versions of this manuscript. {I would also like to thank an anonymous referee for his valuable remarks which improved this presentation.}

\bibliography{library}
\bibliographystyle{DissertationAY}
\begin{figure}
\centering
\includegraphics[width=.98\textwidth]{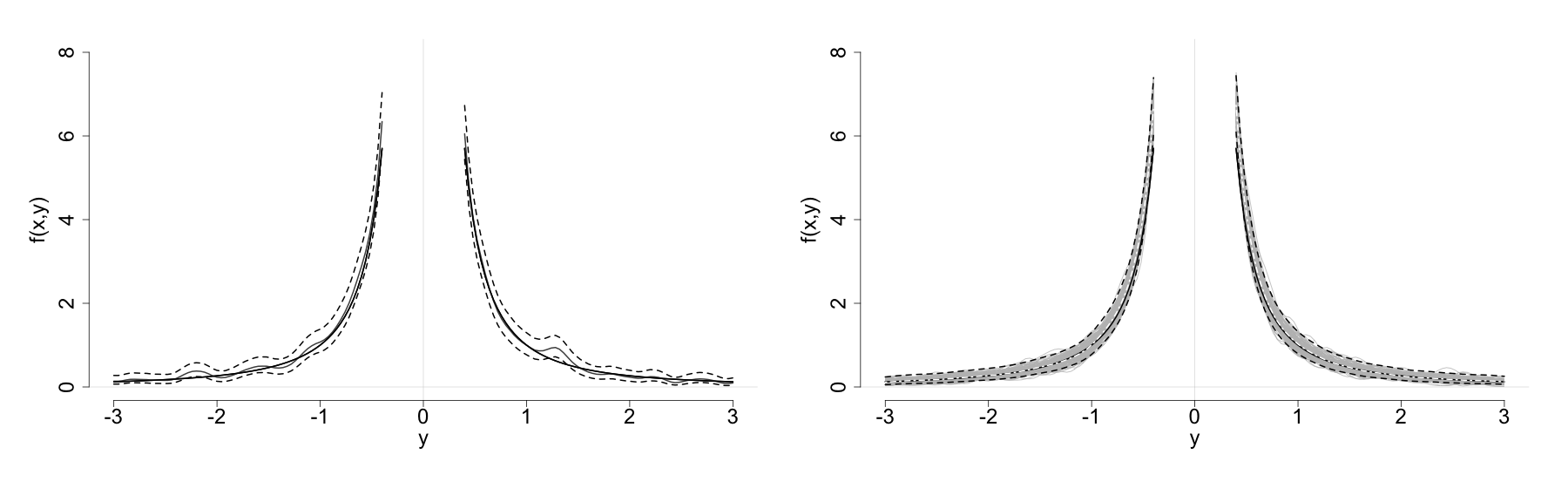}
\includegraphics[width=.98\textwidth]{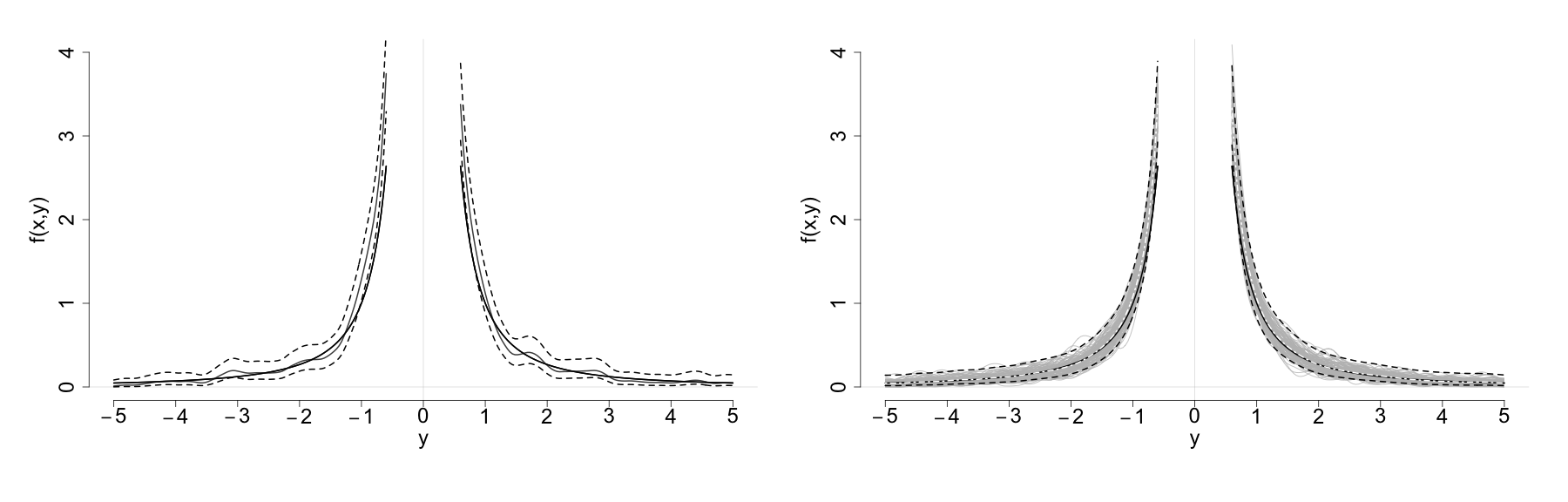}
\includegraphics[width=.98\textwidth]{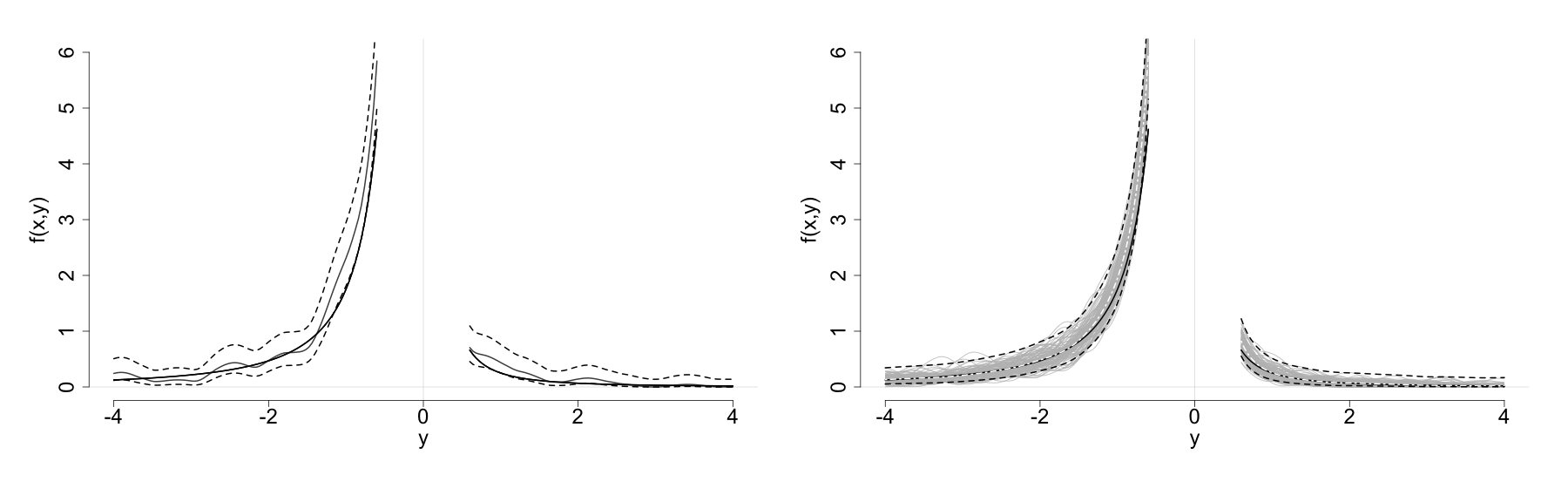}
\caption[Scenario d3 --- Estimation at $x=0$ and $x=2$]{Scenario d3 --- Estimation of the Lévy density $f(x,y)$ given by \cref{eq:IALevy} at $x=0$ with $\eta=(0.4,0.2)$ (top row), at $x=0$ with $\eta=(0.2,0.4)$ (middle row), and at $x=2$ with $\eta=(0.2,0.4)$ (bottom row) based on discrete observations with lag $\Delta=0.0025$ up to time $n\Delta=2500$. Left: One typical estimate (grey) is compared to the true Lévy density (black). The upper and lower bounds of the estimated (pointwise) 95\%\,-confidence intervals (dashed lines) are shown. Right: Estimates based on 100 trajectories (grey) are compared to the true Lévy density (black). The (pointwise) mean of the estimates (white dashed/dotted line) and mean of the upper and lower bounds of the 95\%\,-confidence intervals (black dashed lines) are shown.}\label{fig:d3}
\end{figure}
\end{document}